\documentclass[aos,preprint,reqno]{imsart}
\setattribute{journal}{name}{}
\RequirePackage[OT1]{fontenc}
\RequirePackage{amsthm,amsmath}
\usepackage{mathdots}
\RequirePackage[colorlinks,citecolor=blue,urlcolor=blue]{hyperref}
\allowdisplaybreaks[4]
\usepackage{amssymb}
\usepackage{anysize}
\usepackage{hyperref}
\usepackage{extarrows}
\usepackage{multirow}
\usepackage{enumerate,bm}
\usepackage{graphicx,psfrag,epsfig, subfigure, mathrsfs}
\usepackage{ulem}

\pubyear{\today}

\startlocaldefs
\numberwithin{equation}{section}
\newtheorem{thm}{Theorem}[section]
\newtheorem{lem}[thm]{Lemma}
\newtheorem{pro}[thm]{Proposition}
\newtheorem{cor}[thm]{Corollary}
\newtheorem{defi}[thm]{Definition}
\newtheorem{rem}[thm]{Remark}

\newtheorem{ass}[thm]{Assumption}

\newcommand{\be}{\begin{equation}}
\newcommand{\ee}{\end{equation}}
\newcommand{\bea}{\begin{eqnarray*}}
\newcommand{\eea}{\end{eqnarray*}}
\newcommand{\ntr}{\mathrm{tr}\,}

\makeatletter

\newcommand{\Rmnum}[1]{\expandafter\@slowromancap\romannumeral #1@}
\makeatother
\makeatletter 
\@addtoreset{equation}{section}
\makeatother  


\endlocaldefs

\newcommand{\E}{\mathbb{E}}

\renewcommand{\P}{\mathbb{P}}

\newcommand{\De}{\Delta}
\newcommand{\la}{\lambda}

\newcommand{\de}{\delta}

\newcommand{\ga}{\gamma}

\newcommand{\cH}{{\mathcal H}}

\newcommand{\scM}{{\mathscr M}}

\newcommand{\A}{{\bf A}}

\newcommand{\B}{{\bf B}}

\newcommand{\U}{{\bf U}}

\renewcommand{\(}{\left(}
\renewcommand{\)}{\right)}

\newcommand{\lb}{\label}
\newcommand{\no}{\nonumber\\}

\begin{document}

\begin{frontmatter}
\title{Canonical correlation coefficients of high-dimensional  Gaussian vectors: finite rank case}
\runtitle{High-dimensional CCA}

\begin{aug}
\author{\fnms{Zhigang} \snm{Bao}\thanksref{t1}\ead[label=e1]{mazgbao@ust.hk}},
\author{\fnms{Jiang} \snm{Hu}\thanksref{t4}\ead[label=e4]{huj156@nenu.edu.cn}},
\author{\fnms{Guangming} \snm{Pan}\thanksref{t2}\ead[label=e2]{gmpan@ntu.edu.sg}}
\and
\author{\fnms{Wang} \snm{Zhou}\thanksref{t3}
\ead[label=e3]{stazw@nus.edu.sg}
\ead[label=u1,url]{http://www.sta.nus.edu.sg/~stazw/}}

\thankstext{t1}{Z.G. Bao was  supported by a startup fund from HKUST}
\thankstext{t4}{J. Hu  was partially supported by CNSF 11301063}
\thankstext{t2}{G.M. Pan was partially supported by the Ministry of Education, Singapore, under grant \# ARC 14/11}
\thankstext{t3}{ W. Zhou was partially supported by the Ministry of Education, Singapore, under grant MOE2015-T2-2-039(R-155-000-171-112).}
\runauthor{Z.G. Bao et al.}

\affiliation{Hong Kong University of Science and Technology\\ Northeast Normal
University \\ Nanyang Technological University \\and National University of
 Singapore}

\address{Department of Mathematics,  \\ Hong Kong University of Science and Technology, \\ Hong Kong\\
\printead{e1}\\
\phantom{E-mail:\ }}

\address{KLASMOE and School of Mathematics $\And$ Statistics\\ Northeast Normal
University \\P. R. China 130024\\
\printead{e4}\\
\phantom{E-mail:\ }}

\address{Division of Mathematical Sciences, \\ Nanyang Technological University, \\ Singapore 637371\\
\printead{e2}\\
\phantom{E-mail:\ }}

\address{Department of Statistics and Applied Probability,\\ National University of
 Singapore,\\ Singapore 117546\\
\printead{e3}\\
\printead{u1}}
\end{aug}

\begin{abstract}
Consider a Gaussian vector $\mathbf{z}=(\mathbf{x}',\mathbf{y}')'$, consisting of two sub-vectors $\mathbf{x}$ and $\mathbf{y}$ with dimensions $p$ and $q$ respectively. With $n$ independent observations of $\mathbf{z}$, we study the correlation between $\mathbf{x}$ and $\mathbf{y}$, from the perspective of the Canonical Correlation  Analysis. We investigate the  high-dimensional case: both $p$ and $q$ are proportional to the sample size $n$. Denote by $\Sigma_{uv}$ the population  cross-covariance matrix of  random vectors $\mathbf{u}$ and $\mathbf{v}$, and denote by $S_{uv}$ the sample counterpart. The canonical correlation coefficients between $\mathbf{x}$ and $\mathbf{y}$ are known as the square roots of the nonzero eigenvalues of the canonical correlation matrix $\Sigma_{xx}^{-1}\Sigma_{xy}\Sigma_{yy}^{-1}\Sigma_{yx}$. In this paper, we focus on the case that $\Sigma_{xy}$ is of finite rank $k$, i.e. there are $k$ nonzero canonical correlation coefficients, whose squares are denoted by $r_1\geq\cdots\geq r_k>0$. We study the sample counterparts of $r_i,i=1,\ldots,k$, i.e. the largest $k$ eigenvalues of the sample canonical correlation matrix $ S _{xx}^{-1} S _{xy} S _{yy}^{-1} S _{yx}$, denoted by $\lambda_1\geq\cdots\geq \lambda_k$.  We show that there exists a threshold $r_c\in(0,1)$, such that for each $i\in\{1,\ldots,k\}$, when $r_i\leq r_c$, $\lambda_i$ converges almost surely to the right edge of the limiting spectral distribution of the sample canonical correlation matrix, denoted by $d_{+}$. When $r_i>r_c$, $\lambda_i$ possesses an almost sure limit in $(d_{+},1]$, from which we can recover $r_i$'s in turn,  thus provide an estimate of the latter in the high-dimensional scenario. We also obtain the limiting distribution of  $\lambda_i$'s under appropriate normalization. Specifically, $\lambda_i$ possesses Gaussian type fluctuation if $r_i>r_c$, and follows Tracy-Widom distribution if $r_i<r_c$.  Some applications of our results are also discussed.
\end{abstract}

\begin{keyword}[class=MSC]
\kwd{62H20, 60B20,60F99}
\end{keyword}

\begin{keyword}
\kwd{Canonical correlation analysis}
\kwd{ Random Matrices}
\kwd{ MANOVA ensemble}
\kwd{ High-dimensional data}
\kwd{ finite rank perturbation}
\kwd{ largest eigenvalues}
\end{keyword}

\end{frontmatter}
\section{Introduction}
In multivariate analysis,  the most general and favorable method to investigate the relationship between two  random vectors $\mathbf{x}$ and $\mathbf{y}$, is the Canonical Correlation Analysis (CCA), which was raised in the seminal work of Hotelling \cite{Hotelling1936}. CCA aims at finding two sets of basis vectors,  such that the correlations between the projections of the variables  $\mathbf{x}$ and $\mathbf{y}$ onto these basis vectors are mutually maximized, namely,
 seeking vectors $\mathbf{a}=\mathbf{a}_1$ and $\mathbf{b}=\mathbf{b}_1$ to maximize the correlation coefficient
\begin{eqnarray}
\rho\equiv\rho(\mathbf{a},\mathbf{b}):=\frac{\text{Cov}(\mathbf{a}'\mathbf{x},\mathbf{b}'\mathbf{y})}{\sqrt{\text{Var}(\mathbf{a}'\mathbf{x})}\cdot\sqrt{\text{Var}(\mathbf{b}'\mathbf{y})}}. \label{17050701}
\end{eqnarray}
Conventionally,  $\rho_1:=\rho(\mathbf{a}_1,\mathbf{b}_1)$ is called the {\it{first canonical correlation coefficient}}. Having obtained the first $m$ canonical correlation coefficients $\rho_i,i=1,\ldots,m$ and the corresponding vector pairs $(\mathbf{a}_i,\mathbf{b}_i),i=1\ldots,m$, one can proceed to seek vectors $(\mathbf{a}_{m+1},\mathbf{b}_{m+1})$ maximizing $\rho$ subject to the constraint that $(\mathbf{a}_{m+1}'\mathbf{x},\mathbf{b}_{m+1}'\mathbf{y})$ is uncorrelated with $(\mathbf{a}'_i\mathbf{x},\mathbf{b}_{i}'\mathbf{y})$ for all $i=1,\ldots,m$. Analogously, we call $\rho_i$ the {\it{$i$th canonical correlation coefficient}} if it is nonzero. Denoting by $\Sigma_{uv}$ the population cross-covariance matrix of arbitrary two random vectors $\mathbf{u}$ and $\mathbf{v}$, it is well known that
$r_i:=\rho_i^2$ is the $i$th largest eigenvalue of the  {\it{ (population) canonical correlation matrix}} \[\Sigma_{xx}^{-1}\Sigma_{xy}\Sigma_{yy}^{-1}\Sigma_{yx}.\] Let  $\mathbf{z}_i=(\mathbf{x}_i',\mathbf{y}_i')', i=1,\ldots,n$ be  $n$ independent observations of the vector $\mathbf{z}:=(\mathbf{x}',\mathbf{y}')'\sim N(\boldsymbol{\mu},\Sigma)$ with mean vector $\boldsymbol{\mu}$ and covariance matrix
\begin{eqnarray*}
\Sigma=\left(\begin{array}{cc}
\Sigma_{xx} &\Sigma_{xy}\\
\Sigma_{yx} &\Sigma_{yy}
\end{array}\right),
\end{eqnarray*}
We can study the canonical correlation coefficients via their sample counterparts. To be specific, we employ the notation $S_{uv}$ to represent the sample cross-covariance matrix for arbitrary two random vectors $\mathbf{u}$ and $\mathbf{v}$, where the implicit sample size of $(\mathbf{u}',\mathbf{v}')'$ is assumed to be $n$, henceforth. Then the square of the $i$th sample canonical correlation coefficient is defined as the $i$th largest eigenvalue of  the {\it{sample canonical correlation matrix}} (CCA matrix in short) \[ S _{xx}^{-1} S _{xy} S _{yy}^{-1} S _{yx},\]
denoted by $\lambda_i$ in the sequel.

Let $p$ and $q$ be the dimensions of the sub-vectors $\mathbf{x}$ and $\mathbf{y}$, respectively. In the classical low-dimensional setting, i.e., both $p$ and $q$ are fixed but $n$ is large, one can safely use $\lambda_i$ to estimate $r_i$, considering the convergence of the sample cross-covariance matrices towards their population counterparts. However, nowadays, due to the increasing demand in the analysis of high-dimensional data springing up in various fields such as genomics, signal processing, microarray, finance and proteomics, putting forward a theory on high-dimensional CCA is much needed.    So far, there are only a handful of works  devoted to this topic.
 Fujikoshi in \cite{Fujikoshi1} derived
 the asymptotic distributions of the
  canonical correlation coefficients when $q$ is fixed while $p$ is proportional to $n$.
    Oda et al. in \cite{Fujikoshi2} considered the problem of   testing  for redundancy
in high dimensional canonical correlation analysis. Recently, with certain sparsity assumption,  the theoretical  results and potential applications of high-dimensional sparse CCA have been discussed in \cite{GaoM2014S,GaoM2015M}.   In the null case, i.e., $\mathbf{x}$ and $\mathbf{y}$ are independent, the Tracy-Widom law for the largest canonical correlation coefficients has been studied in \cite{Johnstone2008, HanP16T, HPY}, when $p, q, n$ are proportional.    Recently,  in \cite{JO15} Johnstone and Onatski derived the asymptotics of the likelihood ratio processes of  CCA corresponding to the null hypothesis of no spikes and the alternative of a single spike.

In this paper, we will work with the following high-dimensional setting.
\begin{ass}[On the dimensions] \label{ass.070301} We assume that
$p:=p(n)$, $q:=q(n)$, and
\begin{eqnarray*}
p/n= c_1\to y_1\in(0,1),\quad q/n= c_2\to y_2\in (0,1),\quad  \text{as}\quad n\to \infty,  \quad \text{s.t.} \quad y_1+y_2\in (0,1).
\end{eqnarray*}
Without loss of generality, we always work with the additional assumption
\begin{eqnarray*}
p> q, \quad \text{thus} \quad c_1> c_2.
\end{eqnarray*}
\end{ass}
 Observe that here $y_1$ and $y_2$ are asymptotic parameters, while $c_1$ and $c_2$ are non-asymptotic parameters. Hence, in general,  a $c_1, c_2$-dependent random variable $X(c_1,c_2)$ can not serve as a limiting target of a random sequence $X_n(c_1,c_2), n\geq 1$.  Nevertheless, to ease the presentation, from time to time, we still write $X_n(c_1, c_2)\to X(c_1,c_2)$,  if $X_n(c_1,c_2)-X(c_1,c_2)\to 0$, where the convergence could be in distribution,  in probability or a.s., etc.

Let $\bar{\mathbf{x}}$ and $\bar{\mathbf{y}}$ be the sample means of $n$ samples $\{\mathbf{x}_i\}_{i=1}^n$ and $\{\mathbf{y}_i\}_{i=1}^n$ respectively, and use the notation $\mathring{\mathbf{x}}_i:=\mathbf{x}_i-\bar{\mathbf{x}}$ and $\mathring{\mathbf{y}}_i:=\mathbf{y}_i-\bar{\mathbf{y}}$ for $i=1,\ldots,n$.  We can then write
\begin{eqnarray*}
 S _{ab}=\frac{1}{n-1}\sum_{i=1}^n\mathring{\mathbf{a}}_i\mathring{\mathbf{b}}_i',\qquad \mathbf{a}, \mathbf{b}=\mathbf{x} \text{  or   }\mathbf{y}
\end{eqnarray*}
It is well known that there exist $n-1$ i.i.d. Gaussian vectors $\tilde{\mathbf{z}}_i=(\tilde{\mathbf{x}}_i', \tilde{\mathbf{y}}_i')'
\sim N(\mathbf{0},\Sigma),
$
such that
\begin{eqnarray*}
 S _{ab}=\frac{1}{n-1}\sum_{i=1}^{n-1}\tilde{\mathbf{a}}_i\tilde{\mathbf{b}}'_i,\qquad \mathbf{a}, \mathbf{b}=\mathbf{x} \text{  or   }\mathbf{y}.
\end{eqnarray*}
For simplicity, we recycle the notation $\mathbf{x}_i$ and $\mathbf{y}_i$ to replace $\tilde{\mathbf{x}}_i$ and $\tilde{\mathbf{y}}_i$,  and work with $n$ instead of $n-1$, noticing that such a replacement on sample size is harmless to Assumption \ref{ass.070301}. Hence, we can and do assume that $\mathbf{z}$ is centered in the sequel and denote
\begin{align*}
S_{ab}=\frac{1}{n} \sum_{i=1}^n \mathbf{a}_i\mathbf{b}_i',\qquad \mathbf{a}, \mathbf{b}=\mathbf{x}\;  \text{or} \; \mathbf{y}.
\end{align*}
 Furthermore, for brevity, we introduce the notation
\begin{align}
 C_{xy}:= S _{xx}^{-1} S _{xy} S _{yy}^{-1} S _{yx}, \label{17031001}
\end{align}
for the CCA matrix, and $ C_{yx}$ can be analogously defined via switching the roles of $x$ and $y$ in (\ref{17031001}). Notice that  $ C_{xy}$ and $ C_{yx}$ possess the same non-zero eigenvalues.

By our assumption $p>q$, there are at most $q$ non-zero canonical correlations, either population ones or sample ones. An elementary fact is that $\lambda_{i},r_i\in[0,1]$ for all $i=1,\ldots, q$. Note that $\lambda_i,i=1,\ldots,q$ are also eigenvalues of the $q \times q$ matrix $ C_{yx}$, whose empirical spectral distribution (ESD) will be denoted by
\begin{align}
F_n(x):=\frac{1}{q}\sum_{i=1}^q\mathbf{1}_{\{\lambda_i\leq x\}}. \label{17032801}
\end{align}
We use the following notations for the data matrices
\begin{eqnarray*}
\mathscr{X}:=\mathscr{X}_n=(\mathbf{x}_1,\ldots,\mathbf{x}_n),\quad \mathscr{Y}:=\mathscr{Y}_n=(\mathbf{y}_1,\ldots, \mathbf{y}_n).
\end{eqnarray*}
 Let $\text{vec}(\mathscr{X})=(\mathbf{x}_1',\ldots,\mathbf{x}_n')'$ be the vectorization of the matrix $\mathscr{X}$, and define $\text{vec}(\mathscr{Y})$ be the analogously. We see that $\text{vec}(\mathscr{X})\sim N(\mathbf{0},  I_n\otimes \Sigma_{xx})$ and $\text{vec}(\mathscr{Y})\sim N(\mathbf{0},  I_n\otimes \Sigma_{yy})$.
Our aim, in this work, is to study the asymptotic behavior of a few   largest sample canonical correlation coefficients $\sqrt{\lambda}_i$'s, and try to get the information about the population ones $\rho_i=\sqrt{r_i}$ from the sample ones. We will focus on the case of finite rank, i.e.,   there is some fixed nonnegative integer $k$, such that
\begin{eqnarray*}
r_1\geq\ldots\geq r_k\geq r_{k+1}=\ldots=r_q=0.
\end{eqnarray*}
Specifically, we make the following assumption throughout the work.
\begin{ass}[On the rank of the population matrix] \label{ass.070302} We assume that $\mathrm{rank}(\Sigma_{xy})\leq k$ for some fixed positive integer $k$. Furthermore, setting $r_0=1$, we denote by $k_0$ the nonnegative integer satisfying
\begin{eqnarray}1=r_0\geq  \ldots \geq r_{k_0}> r_c\geq r_{k_{0}+1}\geq \ldots r_k\geq  r_{k+1}=0, \label{062806}
\end{eqnarray}
where
\begin{eqnarray}
r_c\equiv r_c(c_1,c_2):=\sqrt{\frac{c_1c_2}{(1-c_1)(1-c_2)}}. \label{062805}
\end{eqnarray}
\end{ass}
In Section 1.2 we will state our main results. Before that, we introduce in Section 1.1 some known results in the {\it{null case}}, i.e. $k=0$, which will be the starting point of our discussion.
\subsection{The null case: MANOVA ensemble}\label{section. null monova} At first, we introduce some known results on the limiting behavior of $\{\lambda_i\}_{i=1}^q$ in the null case,
i.e. $\mathbf{x}$ and $\mathbf{y}$ are independent, or else,  $r_i=0$ for all $i=1,\ldots,q$. It is elementary to see that the canonical correlation coefficients are invariant under the block diagonal transformation $(\mathbf{x}_i,\mathbf{y}_i)\to(\A\mathbf{x}_i,\B\mathbf{y}_i)$, for any $p\times p$ matrix $\A$ and $q\times q$ matrix $\B$, as long as both of them are nonsingular. Hence, without loss of generality,  in this section, we tentatively assume that $\Sigma_{xx}= I_p$ and $\Sigma_{yy}= I_q$. Under our high-dimensional setting, i.e. Assumption \ref{ass.070301}, it is known that $\lambda_i$'s do not converge to $0$ even in the null case, instead, they typically spread out over an interval contained in $[0,1]$. Specifically, we have the following theorem on $F_n(x)$ (c.f. (\ref{17032801})), which is essentially due to Wachter \cite{Wachter1980}.
\begin{thm} \label{thm.070301}When $\mathbf{x}$ and $\mathbf{y}$ are independent Gaussian and Assumption \ref{ass.070301} holds, almost surely, $F_n$ converges weakly to a deterministic probability distribution $F(x)$ with density
\begin{align}
f(x)=\frac{1}{2\pi c_2}\frac{\sqrt{(d_{+}-x)(x-d_{-})}}{x(1-x)}\mathbf{1}(d_{-}\leq x\leq d_{+}), \label{17040310}
\end{align}
where

	\begin{align}
	d_{\pm}=\big(\sqrt{c_1(1-c_2)}\pm \sqrt{c_2(1-c_1)}\big)^2 \label{17031301}
	\end{align}

 \end{thm}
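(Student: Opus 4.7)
The plan is to reduce $C_{yx}$ to a standard MANOVA (Jacobi) ensemble whose limiting spectrum is classical. Concretely, the sample canonical correlation coefficients are invariant under the block-diagonal transformation $(\mathbf{x}_i,\mathbf{y}_i)\mapsto(\A\mathbf{x}_i,\B\mathbf{y}_i)$ for nonsingular $\A,\B$, which under the null hypothesis lets us assume without loss of generality that $\Sigma_{xx}=I_p$ and $\Sigma_{yy}=I_q$, so that $\mathscr{X}$ and $\mathscr{Y}$ are independent Gaussian matrices with i.i.d.\ $N(0,1)$ entries. One then rewrites
\begin{align*}
C_{yx}=(\mathscr{Y}\mathscr{Y}')^{-1}\mathscr{Y}P_{\mathscr{X}}\mathscr{Y}',
\end{align*}
where $P_{\mathscr{X}}=\mathscr{X}'(\mathscr{X}\mathscr{X}')^{-1}\mathscr{X}$ is the orthogonal projection in $\mathbb{R}^n$ onto the row-span of $\mathscr{X}$, which has rank exactly $p$ a.s.\ since $p<n$ under Assumption \ref{ass.070301}.

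Next, I would choose a measurable orthonormal basis $U\in\mathbb{R}^{n\times p}$ of that row-span and complete it with $V\in\mathbb{R}^{n\times(n-p)}$; set $A:=\mathscr{Y}U\in\mathbb{R}^{q\times p}$ and $B:=\mathscr{Y}V\in\mathbb{R}^{q\times(n-p)}$. Conditional on $\mathscr{X}$, rotational invariance of the Gaussian $\mathscr{Y}$ makes $A$ and $B$ independent matrices with i.i.d.\ $N(0,1)$ entries; since this law is free of $\mathscr{X}$, the conclusion is in fact unconditional. Then
\begin{align*}
C_{yx}=(AA'+BB')^{-1}AA',
\end{align*}
the MANOVA/Jacobi matrix with parameters $(q,p,n-p)$: $AA'$ and $BB'$ are independent Wisharts with $p$ and $n-p$ degrees of freedom, and the eigenvalues of $C_{yx}$ correspond to those of the F-type matrix $(BB')^{-1}AA'$ via $t\mapsto t/(1+t)$.

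Finally, one invokes Wachter \cite{Wachter1980} directly --- he treats precisely $(W_1+W_2)^{-1}W_1$ for independent Wisharts whose aspect ratios converge to $y_1/y_2$ and $(1-y_1)/y_2$ respectively --- or else re-derives the limit through the Stieltjes transform $m_n(z)=q^{-1}\ntr(C_{yx}-zI)^{-1}$: combining the Marchenko--Pastur limits for $W_1=AA'/n$ and $W_2=BB'/n$ with a standard leave-one-out / martingale concentration argument yields a quadratic fixed-point equation for the a.s.\ limit $m(z)$, and Stieltjes--Perron inversion produces the density (\ref{17040310}) supported on $[d_-,d_+]$ with $d_\pm$ as in (\ref{17031301}).

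The main obstacle is the explicit algebra that turns the fixed-point equation into the clean endpoints $d_\pm=(\sqrt{c_1(1-c_2)}\pm\sqrt{c_2(1-c_1)})^2$ and the MANOVA density, together with verifying the absence of atoms at $0$ and $1$ under the regime $y_1+y_2<1$ (where $F(d_-)=0$ and $1-F(d_+)=0$). Promoting convergence in probability to almost sure convergence is then routine via Borel--Cantelli once the fluctuations of the Stieltjes transform are controlled.
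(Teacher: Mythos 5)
Your proposal follows essentially the same route as the paper: reduce to the null case with $\Sigma_{xx}=I_p$, $\Sigma_{yy}=I_q$ via the block-diagonal invariance, exhibit $C_{yx}$ as the MANOVA matrix $(\mathscr{Y}P_x\mathscr{Y}'+\mathscr{Y}(I-P_x)\mathscr{Y}')^{-1}\mathscr{Y}P_x\mathscr{Y}'$ with independent Wishart pieces (the paper invokes Cochran's theorem where you spell out the orthonormal-basis/rotational-invariance argument, but they are the same thing), and then cite Wachter for the limiting spectral density. Nothing to add.
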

 \begin{rem} In the null case, the convergence of the ESD of the CCA matrix actually holds under a more general distribution assumption, see \cite{YP2012}.
 \end{rem}
 Conventionally, we call $F(x)$ in Theorem \ref{thm.070301} the limiting spectral distribution (LSD) of $ C_{yx}$.
 One might note that $F(x)$ is also the LSD of the so-called MANOVA ensemble with appropriately chosen parameters, which is widely studied in the Random Matrix Theory (RMT). Actually, when $\mathbf{x}$ and $\mathbf{y}$ are Gaussian and independent, the CCA matrix $ C_{yx}$ is exactly a MANOVA matrix. To see this, we note that  $ P_x:=\mathscr{X}'(\mathscr{X}\mathscr{X}')^{-1}\mathscr{X}$ is a projection matrix independent of $ Y$. Hence, we can write
 \begin{eqnarray*}
 C_{yx}=(\mathscr{Y}( I- P_x)\mathscr{Y}'+\mathscr{Y} P_x\mathscr{Y}')^{-1}\mathscr{Y} P_x\mathscr{Y}'.
 \end{eqnarray*}
Using Cochran's theorem, we see that $\mathscr{Y}( I- P_y)\mathscr{Y}'$ and $\mathscr{Y} P_x\mathscr{Y}'$ are independent Wishart,
 \begin{eqnarray*}
\mathscr{Y}( I- P_x)\mathscr{Y}'\sim \mathcal{W}_q( I_q, n-p),\quad \mathscr{Y} P_x\mathscr{Y}'\sim\mathcal{W}_q( I_q, p).
 \end{eqnarray*}
 Hereafter, we use the notation $\mathcal{W}_m(\cdot, \cdot)$ to denote the Wishart distribution $\text{Wishart}_m(\cdot, \cdot)$ for short.
Consequently, $\lambda_i,i=1,\ldots,q$ are known to possess the following joint density function,
 \begin{eqnarray*}
 p_n(\lambda_1,\ldots,\lambda_q)=C_n \prod_{i<j}^q|\lambda_i-\lambda_j|\prod_{i=1}^q(1-\lambda_i)^{(n-p-q-1)/2}\lambda_i^{(p-q-1)/2}\mathbf{1}(\lambda_{i}\in [0,1]),
  \end{eqnarray*}
 where $C_n$ is the normalizing constant, see Muirhead \cite{Muirhead1982}, page 112, for instance. Or else, one can refer to \cite{Johnstone2008}, for more related discussions.  In the context of RMT, the point process possessing the above joint density is also called Jacobi ensemble.

 Throughout the paper, we will say that an $n$-dependent event $A\equiv A(n)$ holds with {\it{overwhelming probability}}, if  for any given positive number $\ell$, there exists a constant $C_\ell$ such that
 \begin{eqnarray*}
 \mathbb{P}(A)\geq 1-C_\ell n^{-\ell}.
 \end{eqnarray*}
  Especially, for any fixed integer $K\geq 0$, we have $\cap_{i=1}^{n^K} A_i$ holds with overwhelming probability if $A_i$ holds with overwhelming probability individually with the common $C_\ell$'s.
  The next  known result concerns the  convergence of the largest eigenvalues.
 \begin{thm} \label{thm.071501} When $\mathbf{x}$ and $\mathbf{y}$ are independent and Assumption \ref{ass.070301} holds, we have
 \begin{eqnarray}
 \lambda_i-d_{+}\stackrel{\text{a.s.}} \longrightarrow 0, \quad  \label{071501}
 \end{eqnarray}
 for any fixed positive integer $i$. Moreover, for any small constant $\varepsilon>0$,
 \begin{eqnarray}
\lambda_1\leq d_{+}+\varepsilon \label{071502}
 \end{eqnarray}
 holds with overwhelming probability.
 \end{thm}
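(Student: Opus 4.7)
The plan is to exploit the MANOVA/Wishart decomposition of $C_{yx}$ recalled just before the theorem and reduce the edge analysis to that of a classical Fisher matrix. Conditionally on $\mathscr{X}$, the projector $P_x = \mathscr{X}'(\mathscr{X}\mathscr{X}')^{-1}\mathscr{X}$ has rank $p$ and is independent of the Gaussian $\mathscr{Y}$, so
\[
B := \mathscr{Y} P_x \mathscr{Y}' \sim \mathcal{W}_q(I_q,p), \qquad A := \mathscr{Y}(I - P_x)\mathscr{Y}' \sim \mathcal{W}_q(I_q,n-p)
\]
are independent, and $\{\lambda_i\}$ is exactly the spectrum of $(A+B)^{-1}B$. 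The monotone bijection $\mu = \lambda/(1-\lambda)$ on $[0,1)$ identifies $\{\lambda_i\}$ with the eigenvalues $\{\mu_i\}$ of the Fisher-type matrix $A^{-1}B$, and in particular sends the edge $d_{+}$ to $\mu_{+} := d_{+}/(1-d_{+})$.

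The core step is the overwhelming-probability upper bound (\ref{071502}). Modulo the deterministic factor $p/(n-p)$, $A^{-1}B$ is a Fisher matrix; its largest eigenvalue is known to converge a.s.\ to $\mu_{+}$, the right edge of its LSD, which is the F-matrix analogue of the Bai--Yin theorem. To upgrade the a.s.\ statement to a polynomial probability bound one can either invoke the rigidity/local-law machinery developed for the Jacobi ensemble (whose joint density is given at the end of Section \ref{section. null monova}), as in \cite{Johnstone2008,HanP16T,HPY}, which yields $\mathbb{P}(\mu_1 > \mu_{+} + \varepsilon') \leq n^{-\ell}$ for any $\ell$, or exploit the Gaussian structure directly: after whitening, $A$ and $B$ are quadratic functions of an i.i.d.\ Gaussian array, and on the high-probability event $\{\lambda_{\min}(A) \geq cn\}$ the quantities $\|A\|$, $\|B\|$ and $\lambda_{\min}(A)^{-1}$ are Lipschitz in that array, so Borell--TIS yields sub-Gaussian tails of order $n^{-1/2}$ around their MP-limits. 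Either route, combined with the a.s.\ edge, produces (\ref{071502}) with overwhelming probability via $\lambda = \mu/(1+\mu)$.

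The a.s.\ statement (\ref{071501}) then follows readily. The bound (\ref{071502}) together with Borel--Cantelli gives $\limsup_n \lambda_1 \leq d_{+}$ a.s., hence $\limsup_n \lambda_i \leq d_{+}$ for each fixed $i$. For the matching lower bound, Theorem \ref{thm.070301} asserts $F_n \Rightarrow F$ a.s., and the LSD $F$ puts positive mass on every interval $[d_{+} - \varepsilon, d_{+}]$; the number of sample eigenvalues lying in that interval therefore grows linearly in $n$ almost surely, which forces $\liminf_n \lambda_i \geq d_{+} - \varepsilon$ a.s.\ for every fixed $i$ and every $\varepsilon > 0$.

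The main technical obstacle is the transfer between $\mu_1$ and $\lambda_1$ near $\lambda = 1$, where $\mu \mapsto \mu/(1+\mu)$ degenerates; this amounts to controlling $\lambda_{\min}(A)$ away from zero. Assumption \ref{ass.070301} gives $(n-p)/q \to (1-y_1)/y_2 > 1$, so $A$ is a well-conditioned Wishart with $\lambda_{\min}(A) \geq cn$ both a.s.\ and with overwhelming probability by standard MP-edge estimates. This decouples the two regimes and makes both the Lipschitz/Borell--TIS step and the transfer between the spectra of $A^{-1}B$ and $(A+B)^{-1}B$ rigorous.
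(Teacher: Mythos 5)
Your proposal follows essentially the same route as the paper's, which is stated in Remark 1.7: the overwhelming-probability bound (\ref{071502}) is drawn from existing edge results for the Jacobi/F ensemble (the paper cites the small-deviation estimate of \cite{Katz2012}; you invoke \cite{Johnstone2008,HanP16T,HPY} after the monotone transformation to a Fisher matrix, which is a cosmetic reformulation), and the a.s.\ convergence (\ref{071501}) is then a direct consequence of (\ref{071502}) together with the LSD of Theorem \ref{thm.070301} --- precisely your Borel--Cantelli-plus-positive-mass argument. So the logic matches.

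One caveat worth noting: the Borell--TIS aside you offer as an alternative route for (\ref{071502}) does not close as written. Borell--TIS concentrates a Lipschitz functional around its \emph{mean}, not around the spectral edge $\mu_+$; to conclude $\mu_1(A^{-1}B)\le\mu_++\varepsilon$ you would still need $\mathbb{E}\,\mu_1(A^{-1}B)\to\mu_+$, which is essentially the content of what is being proved. Likewise, separately concentrating $\|A\|$, $\|B\|$ and $\lambda_{\min}(A)^{-1}$ does not control $\mu_1(A^{-1}B)$, since the latter is not a simple function of those three quantities. Since your primary route through Jacobi/Fisher-matrix edge rigidity works and is the one the paper uses, this is not a gap in the overall argument, but the Borell--TIS branch should not be treated as a self-contained substitute.
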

 \begin{rem} The estimate (\ref{071502}),  is actually  implied by some existing results in the literature directly. For example, one can refer to the small deviation estimate of the largest eigenvalue of the Jacobi ensemble in \cite{Katz2012} . Moreover, (\ref{071501}) is a direct consequence of (\ref{071502}) and Theorem \ref{thm.070301}.
 \end{rem}
 \begin{rem} As we mentioned in Introduction , on the fluctuation level,  $\lambda_1$  possesses a Type 1 Tracy-Widom limit after appropriate normalization. Such a result has been  established recently in \cite{Johnstone2008,HanP16T, HPY}.
 \end{rem}
\subsection{Finite rank case} We now turn to the case we are interested in: the finite rank case. To wit, Assumption \ref{ass.070302} holds. It will be clear that the CCA matrix in such a finite rank case can be viewed as a finite rank perturbation of that in the null case. Consequently, the global behavior, especially the LSD, turns out to coincide with the null case. However, finite rank perturbation may significantly alter the behavior of the extreme eigenvalues,  when the perturbation is strong enough. Similar problems have been studied widely for various random matrix models, not trying to be comprehensive, we refer to the spiked sample covariance matrices \cite{Johnstone2001, BS2006, Paul2007, BY2008, BBP2005, FP2009}, the deformed Wigner matrices \cite{CDF2009, peche2006, FP2007, CDF2012, KY2013}, the deformed unitarily invariant matrices \cite{BBCF2012, Kargin2014}, and  some other deformed models \cite{BN2011, BGM2011, WY}. In this work, for our CCA matrix $ C_{xy}$,  we study the limits  and the fluctuations of its largest eigenvalues, i.e. squares of the largest sample canonical correlation coefficients, under Assumption \ref{ass.070302}.
Our main results are the following three theorems. Let
\begin{align}
\gamma_i:=r_i(1-c_1+c_1r_i^{-1})(1-c_2+c_2r_i^{-1}). \label{17050801}
\end{align}
Recall $r_c$ and $d_+$ defined in (\ref{062805}) and (\ref{17031301}), respectively.  It is easy to check that $\ga_i\geq d_+$ if $r_i\geq r_c$.

\begin{thm}[Limits] \label{thm.061901} Under Assumptions \ref{ass.070301} and \ref{ass.070302}, the squares of the largest canonical correlation coefficients exhibit the following convergence as $n\to\infty$.
\begin{itemize}
\item[(i):] (Outliers) For $1\leq i\leq k_0$, we have
\begin{eqnarray*}
\lambda_i-\ga_i\stackrel{a.s.}\longrightarrow 0.
\end{eqnarray*}
\item[(ii):] (Sticking eigenvalues)  For  each fixed $i\geq k_0+1$, we have
\begin{eqnarray*}
\lambda_i-d_{+} \stackrel{a.s.}\longrightarrow 0 .
\end{eqnarray*}
\end{itemize}
\end{thm}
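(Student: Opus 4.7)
The plan is to follow the classical BBP-type strategy for finite-rank deformations, adapted to the multiplicative structure of the CCA matrix. First I would exploit the invariance of the canonical correlations under block-diagonal nonsingular transformations $(\mathbf{x}_i,\mathbf{y}_i)\mapsto(\A\mathbf{x}_i,\B\mathbf{y}_i)$ noted in Section~\ref{section. null monova} to reduce to the canonical case $\Sigma_{xx}=\I_p$, $\Sigma_{yy}=\I_q$, with $\Sigma_{xy}$ supported in the top-left $k\times k$ block and having singular values $\sqrt{r_1},\ldots,\sqrt{r_k}$. In this normalization each sample splits as $\mathbf{x}_i=(\mathbf{x}^{(1)\prime}_i,\mathbf{x}^{(2)\prime}_i)^{\prime}$ and $\mathbf{y}_i=(\mathbf{y}^{(1)\prime}_i,\mathbf{y}^{(2)\prime}_i)^{\prime}$, where the tails $\mathbf{x}^{(2)}_i\in\mathbb{R}^{p-k}$ and $\mathbf{y}^{(2)}_i\in\mathbb{R}^{q-k}$ are standard Gaussian and independent of the $2k$-dimensional heads, which carry all of the signal; thus $C_{yx}$ is a genuine finite-rank perturbation of a null CCA matrix.

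\textbf{Step 2 (Master equation).} Next I would recast the eigenvalue problem $\det(C_{yx}-\la\I)=0$ as a determinantal equation of bounded size. The nonzero eigenvalues of $C_{yx}$ coincide with those of $\mathbf{P}_x\mathbf{P}_y$ on $\mathbb{R}^n$, where $\mathbf{P}_x=\mathscr{X}^{\prime}(\mathscr{X}\mathscr{X}^{\prime})^{-1}\mathscr{X}$ and $\mathbf{P}_y$ is analogous; expanding $\mathbf{P}_x$ and $\mathbf{P}_y$ against the signal/noise decomposition through a Schur complement isolates a rank-$O(k)$ correction relative to the null case. For $\la$ outside the null-case spectrum this yields an equivalent characterization $\det\mathbf{M}_n(\la)=0$, with $\mathbf{M}_n(\la)$ a matrix of bounded dimension (at most $4k\times 4k$) whose entries are bilinear forms of the signal data $\mathscr{X}^{(1)},\mathscr{Y}^{(1)}$ sandwiched around resolvents of null MANOVA-type matrices built from $\mathscr{X}^{(2)},\mathscr{Y}^{(2)}$.

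\textbf{Step 3 (Asymptotics of the master equation).} I would then apply concentration/local-law estimates for the null MANOVA resolvent to show that $\mathbf{M}_n(\la)$ converges entrywise, uniformly on compact subsets of $(d_{+},1]$, to a deterministic limit $\mathbf{M}(\la)$; since $\mathscr{X}^{(1)}$ and $\mathscr{Y}^{(1)}$ have fixed row dimension, a law of large numbers gives each limiting entry in closed form through the Stieltjes transform of the Wachter density in Theorem~\ref{thm.070301}. The equation $\det\mathbf{M}(\la)=0$ decouples into $k$ scalar equations whose roots in $(d_{+},1]$ are exactly $\la=\ga_i$. One checks directly that $r\mapsto r(1-c_1+c_1/r)(1-c_2+c_2/r)$ is strictly increasing on $(r_c,1]$ and meets $d_{+}$ precisely at $r=r_c$, which identifies the threshold appearing in Assumption~\ref{ass.070302}. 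A standard outlier-counting argument---matching the number of roots of the characteristic polynomial in a neighborhood of each $\ga_i$ with the order of the corresponding solution of $\det\mathbf{M}(\la)=0$---then upgrades this to a.s.\ convergence $\la_i\to\ga_i$, proving part~(i).

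\textbf{Step 4 (Sticking eigenvalues and main obstacle).} For part~(ii) I would combine Step~3 with Cauchy/Weyl interlacing between $C_{yx}$ and its null counterpart, which differ by a rank-$O(k)$ perturbation: the $(k+j)$-th eigenvalue of $C_{yx}$ is bounded above by the $j$-th eigenvalue of the null matrix, itself at most $d_{+}+\varepsilon$ with overwhelming probability by (\ref{071502}), while the bulk edge of $F$ from Theorem~\ref{thm.070301} bounds $\la_i$ from below for each fixed $i$. Since Step~3 has already accounted for exactly $k_0$ outliers, this forces $\la_i\to d_{+}$ almost surely for every fixed $i>k_0$. The principal technical obstacle I anticipate is Step~3: because the CCA matrix is a \emph{multiplicative} rather than additive low-rank deformation, the Schur-complement reduction to $\mathbf{M}_n(\la)$ is appreciably more delicate than in the classical spiked-covariance setting, and one needs uniform anisotropic estimates on bilinear forms $\mathbf{u}^{\prime}\mathbf{G}(\la)\mathbf{v}$ of MANOVA resolvents $\mathbf{G}(\la)$ down to the spectral edge $d_{+}$, together with careful bookkeeping of the coupled resolvents associated to the two blocks $\mathscr{X}^{(2)}$ and $\mathscr{Y}^{(2)}$.
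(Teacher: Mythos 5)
Your proposal takes essentially the same route as the paper: reduce to $\Sigma_{xx}=I_p$, $\Sigma_{yy}=I_q$, $\Sigma_{xy}=T$ of rank $k$; view $C_{xy}$ as a finite-rank perturbation of a null CCA matrix; Schur-complement the characteristic equation down to a bounded-size master equation $\det M_n(\lambda)=0$ involving null MANOVA resolvents sandwiched between signal vectors; show $M_n\to M$ uniformly on compacts of $\mathbb{C}\setminus[d_-,d_+]$, solve the decoupled scalar equations $m_i(z)=0$ to identify $\gamma_i$ and check monotonicity through the threshold $r_c$; upgrade to a.s.\ convergence of the roots via Rouch\'e; and finally combine the outlier count with Cauchy interlacing against the null spectrum and the overwhelming-probability edge bound to get the sticking eigenvalues. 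The one minor deviation is cosmetic: the paper parameterizes $X=W+TY$ with $W$ independent of $Y$ and splits only $W$ into $W_1,W_2$, which yields a clean $k\times k$ master matrix and a rank-$k$ perturbation, whereas your split of both $\mathscr{X}$ and $\mathscr{Y}$ into heads and tails would produce a somewhat larger (rank-$2k$) perturbation and master matrix, making the same calculation a bit bulkier but structurally equivalent.
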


The next two results are on fluctuations of $\lambda_i$'s. We need the following definition.
\begin{defi} \label{def.well separated} For two (possibly) $n$-dependent numbers $a(n),b(n)\in \mathbb{C}$, we say $a(n)$ is well separated from $b(n)$, if there exists a small positive constant $\varepsilon$ such that $|a(n)-b(n)|\geq \varepsilon$ for sufficiently large $n$.
\end{defi}

The next theorem is on the fluctuations of the outliers.
\begin{thm}[Fluctuations of the outliers] \label{thm.fluctuation}  Suppose that   Assumptions \ref{ass.070301} and \ref{ass.070302}  hold.
 Let $l_0$ be the cardinality of the set  $\Gamma:=\{r_1, \ldots, r_{k_0}\}$ (not counting multiplicity), and  denote by $r_1=r_{(1)}>\cdots>r_{(l_0)}=r_{k_0}$ the $l_0$ different values in $\Gamma$.
Set $n_0=0$ and denote by  $n_l$ the multiplicity of  $r_{(l)}$ in $\Gamma$ for $1\leq l\leq l_0$. Let  $J_l=[\sum_{i=0}^{l-1}n_i+1,\sum_{i=0}^{l}n_i]\cap \mathbb{Z}$ for $1\leq l\leq l_0$.   If $r_{(\ell)}$ is well separated from $r_c, 1 $, $r_{(\ell-1)}$ and $r_{(\ell+1)}$,  the $n_l$-dimensional random vector
	\begin{align*}
	\Big\{\frac{\sqrt{n}(\lambda_{j}-\gamma_j)}{\xi(r_j)},~j\in J_l\Big\}
	\end{align*}
	converges weakly to the distribution of the ordered eigenvalues of an $n_l$-dimensional symmetric Gaussian random matrix $G=(g_{ij})$ with  independent (up to symmetry) entries $g_{ij}\sim N(0, 1+\delta_{ij})$.  Here
	\begin{align}
	&\xi^2(r_j):=\frac{(1-r_j)^2\Big(2(1-c_1)(1-c_2) r_j+c_1+c_2-2c_1c_2\Big)\Big((1-c_1)(1-c_2) r_j^2-c_1c_2\Big)}{r_j^2}. \label{17040780}
	\end{align}
\end{thm}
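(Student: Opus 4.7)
I will first reduce to a canonical form. By the invariance of canonical correlations under block-diagonal nonsingular transformations $(\mathbf x,\mathbf y) \to (A\mathbf x, B\mathbf y)$, I may assume WLOG that $\Sigma_{xx} = I_p$, $\Sigma_{yy} = I_q$, and $\Sigma_{xy} = R^{1/2} P$ with $R = \mathrm{diag}(r_1,\ldots,r_k,0,\ldots,0) \in \mathbb R^{q\times q}$ and $P = (I_q,\,0) \in \mathbb R^{q\times p}$. Gaussianity then yields the regression representation $\mathscr Y = R^{1/2} P \mathscr X + (I-R)^{1/2}\mathscr E$, where $\mathscr E$ is a $q\times n$ matrix of i.i.d.\ $N(0,1)$ entries, independent of $\mathscr X$. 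All subsequent analysis uses this joint representation.

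\textbf{Step 2 (Master determinantal equation).} Next I reduce the outlier eigenvalue problem to a finite-dimensional determinantal equation. Set $P_{\mathscr X} := \mathscr X'(\mathscr X\mathscr X')^{-1}\mathscr X$. Since $\mathscr X(I - P_{\mathscr X}) = 0$, one checks $\mathscr Y(I - P_{\mathscr X})\mathscr Y' = (I - R)^{1/2}\mathscr E(I - P_{\mathscr X})\mathscr E'(I-R)^{1/2}$, so the equation $\det(\lambda S_{yy} - S_{yx} S_{xx}^{-1} S_{xy}) = 0$ becomes
\begin{eqnarray*}
\det\!\big(\mathcal B(\lambda) + (\lambda - 1) n\, \mathcal N\big) = 0,
\end{eqnarray*}
with $\mathcal B(\lambda) := (I - R)^{1/2}[\lambda\,\mathscr E\mathscr E' - \mathscr E P_{\mathscr X}\mathscr E'](I - R)^{1/2}$ the null-case bulk part and $\mathcal N := R^{1/2} P\, S_{xy} + (I-R)^{1/2} S_{\mathscr E x} P' R^{1/2}$ the signal perturbation. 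Since $R^{1/2}$ has only $k$ nonzero rows/columns, $\mathrm{rank}(\mathcal N)\le 2k$, so I may factor $\mathcal N = \mathcal U \mathcal V'$ with $\mathcal U, \mathcal V \in \mathbb R^{q \times m}$, $m \le 2k$. For $\lambda$ bounded away from $[d_-, d_+]$, $\det\mathcal B(\lambda) \neq 0$ holds with overwhelming probability by Theorem \ref{thm.071501}, and the matrix determinant lemma reduces the eigenvalue equation to
\begin{eqnarray*}
\det\!\big(I_m + (\lambda - 1) n\, \mathcal V' \mathcal B(\lambda)^{-1} \mathcal U\big) = 0.
\end{eqnarray*}

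\textbf{Step 3 (Deterministic limit, CLT, and GOE identification).} Applying anisotropic local laws for the partial-Wishart-type resolvent $\mathcal B(\lambda)^{-1}$ and Gaussian quadratic-form concentration, I will compute the deterministic limit $F_\infty(\lambda)$ of $F_n(\lambda) := I_m + (\lambda - 1)n\, \mathcal V' \mathcal B(\lambda)^{-1} \mathcal U$. Solving $\det F_\infty(\gamma) = 0$ reproduces \eqref{17050801} and pins down the $\gamma_j$'s (matching Theorem \ref{thm.061901}(i)). The stochastic correction $\sqrt n [F_n(\gamma_j) - F_\infty(\gamma_j)]$, being a linear combination of centred quadratic forms in independent Gaussians, converges by the classical Lindeberg CLT to a Gaussian random matrix $\mathcal G$. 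Well-separation of $r_{(\ell)}$ guarantees that the $n_\ell \times n_\ell$ sub-block of $F_\infty'(\gamma_{r_{(\ell)}})$ is a nonzero scalar times $I_{n_\ell}$, so substituting the expansion $F_n(\lambda_j) = [F_n(\gamma_{r_{(\ell)}}) - F_\infty(\gamma_{r_{(\ell)}})] + F_\infty'(\gamma_{r_{(\ell)}})(\lambda_j - \gamma_{r_{(\ell)}}) + o_p(n^{-1/2})$ and dividing through, the master equation localises to
\begin{eqnarray*}
\det\!\big((\lambda - \gamma_{r_{(\ell)}})\, I_{n_\ell} + n^{-1/2} \mathcal G_\ell / \xi(r_{(\ell)}) + o_p(n^{-1/2})\big) = 0.
\end{eqnarray*}
This yields joint convergence of $\{\sqrt n (\lambda_j - \gamma_j)/\xi(r_j) : j \in J_\ell\}$ to the ordered eigenvalues of $\mathcal G_\ell$, and an entry-wise covariance computation will identify $\mathcal G_\ell$ as the claimed GOE-type matrix.

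\textbf{Main obstacle.} The principal difficulty is the $\sqrt n$-precise identification of the covariance structure of $\mathcal G_\ell$, together with the explicit constant $\xi^2(r_j)$ in \eqref{17040780}. The Gaussian quadratic forms contributing to distinct entries of $\mathcal G_\ell$ share common factors (the signal rows of $\mathscr X$ and the projection $P_{\mathscr X}$), so establishing that the off-diagonals have exactly half the diagonal variance — producing the characteristic GOE repulsion — demands careful bookkeeping with the anisotropic local law for $\mathscr E(I - P_{\mathscr X})\mathscr E'$ and related resolvents. A secondary technical point is uniform control of the master equation over $\lambda$ in a $n^{-1/2}$-neighbourhood of $\gamma_{r_{(\ell)}}$, which I expect to handle by an equicontinuity argument for $F_n(\cdot) - F_\infty(\cdot)$ combined with rigidity estimates inherited from the null-case MANOVA ensemble.
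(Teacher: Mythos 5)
Your proposal takes a genuinely different route from the paper.  The paper regresses $\mathbf{x}$ on $\mathbf{y}$ ($\mathscr X = \mathscr W + T\mathscr Y$ with $\mathscr W \perp \mathscr Y$), then splits $\mathscr W$ into the $k$ ``signal'' rows $W_1$ and the bulk $W_2$, and reduces the characteristic equation to $\det M_n(z)=0$ for a $k\times k$ matrix $M_n$ via Schur complement.  Crucially this split makes the bulk block $D_{22}$ (and hence the deterministic core $\mathscr A(z)$) depend \emph{only} on $W_2$ and $Y$, leaving $W_1$ and the Haar column of the $Y$-SVD as clean, independent sources of fluctuation; the CLT is then extracted by Tikhomirov's characteristic-function method.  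You instead regress $\mathbf y$ on $\mathbf x$ and factor the resulting rank-$\le 2k$ perturbation through the matrix determinant lemma, giving a $2k\times 2k$ master equation.  This is a legitimate alternative framing — the perturbative structure of the outliers is the same — but the choice of regression direction matters more than your write-up suggests: in your $\mathcal B(\lambda)$ the projector $P_{\mathscr X}$ depends on all $p$ rows of $\mathscr X$, including the $k$ signal rows that also build $\mathcal N$, so $\mathcal V'\mathcal B(\lambda)^{-1}\mathcal U$ is \emph{not} a quadratic form in independent Gaussians with a fixed middle matrix, and a ``classical Lindeberg CLT'' does not directly apply.  You would have to first split $P_{\mathscr X} = P_{\mathscr X_2} + (\text{rank-}k)$, absorb the correction, and re-do the independence bookkeeping — precisely the decoupling that the paper's $\mathscr X = \mathscr W + T\mathscr Y$ and $W = (W_1; W_2)$ decomposition buys for free.

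A second gap concerns Step 3.  You assert that well-separation of $r_{(\ell)}$ forces the $n_\ell\times n_\ell$ sub-block of $F_\infty'(\gamma_{r_{(\ell)}})$ to be a scalar multiple of $I_{n_\ell}$.  In the paper's $k\times k$ formulation this is automatic because $M(z)$ is \emph{diagonal} with entries $m_i(z)$.  Your $F_\infty$ is a $2k\times 2k$ (or $m\times m$, $m\le 2k$) matrix whose structure mirrors the non-diagonal factorization $\mathcal N = \mathcal U\mathcal V'$; neither diagonality of $F_\infty$ nor the fact that $\det F_\infty$ has a zero of multiplicity exactly $n_\ell$ at $\gamma_{r_{(\ell)}}$ is obvious, and both would require additional argument before the ``localisation'' expansion can be divided through by a scalar.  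Finally, because your reduction is $2k$-dimensional rather than $k$-dimensional, $\det F_n$ generically has extra zeros (all below $d_+$ in probability, so harmless for the limits, but one still must show they do not interfere with the Rouch\'e/continuity argument used to pair up solutions near $\gamma_{r_{(\ell)}}$).  None of these issues are fatal, but they are not side-remarks either; the paper's choice of regression direction and the Schur-complement-to-$k\times k$ reduction are exactly what make the CLT step and the GOE identification tractable.
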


\begin{rem} \label{rem.separation1} The assumption that $r_{\ell}$ is away from $r_c, 1 $, $r_{(\ell-1)}$ and $r_{(\ell+1)}$ by a constant distance $\varepsilon>0$ (well separated) is not necessary. It is possible to reduce $\varepsilon$ to some $n$-dependent distance $n^{-\alpha}$ for sufficiently small $\alpha$. But we do not pursue this direction here.
\end{rem}

Our last theorem  is on the fluctuation of the sticking eigenvalues.
\begin{thm}[Fluctuations of the sticking eigenvalues] \label{thm. fluctuation of sticking evs} Suppose that   Assumptions \ref{ass.070301} and  \ref{ass.070302} hold. In addition, we assume that $r_c$ is well separated from $r_{k_0}$ and $r_{k_0+1}$. There exists a CCA matrix in the null case with the same parameters $p,q,n$, whose nonzero eigenvalues are denoted by $\mathring{\lambda}_1>\mathring{\lambda}_2\ldots>\mathring{\lambda}_q$, such that for any fixed positive integer $\mathfrak{m}$ and any small constant $\varepsilon>0$, we have
\begin{align}
\max_{1\leq i\leq \mathfrak{m}}|\lambda_{k_{0}+i}-\mathring{\lambda}_i| \leq n^{-1+\varepsilon} \label{17052821}
\end{align}
in probability. This implies
\begin{align}
\frac{n^{\frac{2}{3}}(\lambda_{k_0+1}-d_+)}{\xi_{tw}} \Longrightarrow F_1, \label{17052822}
\end{align}
where  $F_1$ is the Type 1 Tracy-Widom distribution and
\begin{align*}
\xi^3_{tw}=\frac{ d_+^2(1- d_+)^2}{\sqrt{c_1c_2(1-c_1)(1-c_2)}}.
\end{align*}
\end{thm}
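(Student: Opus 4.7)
The plan is to establish the sticking statement (\ref{17052821}) by coupling $C_{yx}$ with a carefully chosen null-case CCA matrix $\mathring C$ on the same probability space and then analyzing a determinantal master equation for the eigenvalues of $C_{yx}$. Once (\ref{17052821}) is in hand, the Tracy-Widom limit (\ref{17052822}) is immediate from the null-case Tracy-Widom theorem mentioned in the remark after Theorem \ref{thm.071501} (see \cite{HanP16T, HPY}), since $n^{-1+\varepsilon}$ is negligible against the edge scale $n^{-2/3}$. Invoking the block-diagonal invariance $(\mathbf{x}_i,\mathbf{y}_i)\mapsto(\A\mathbf{x}_i,\B\mathbf{y}_i)$, one may first assume $\Sigma_{xx}=I_p$ and $\Sigma_{yy}=I_q$ and further rotate coordinates so that $\Sigma_{yx}$ is supported on a fixed $k$-dimensional coordinate pair with diagonal entries $\sqrt{r_1},\ldots,\sqrt{r_k}$. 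One can then write $\mathbf{y}_i=\Sigma_{yx}\mathbf{x}_i+\mathbf{e}_i$ with $\mathbf{e}_i\sim N(\mathbf{0},I_q-\Sigma_{yx}\Sigma_{xy})$ independent of $\mathbf{x}_i$; on an enlarged probability space, introduce an independent $N(\mathbf{0},\Sigma_{yx}\Sigma_{xy})$ vector $\mathbf{f}_i$ and set $\mathring{\mathbf{y}}_i:=\mathbf{e}_i+\mathbf{f}_i\sim N(\mathbf{0},I_q)$, independent of $\mathbf{x}_i$. The null CCA matrix $\mathring C$ built from $(\mathbf{x}_i,\mathring{\mathbf{y}}_i)$ has nonzero eigenvalues $\mathring\lambda_1>\cdots>\mathring\lambda_q$ and satisfies Theorems \ref{thm.070301}--\ref{thm.071501}.

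Under this coupling $\mathscr{Y}$ and $\mathring{\mathscr{Y}}$ differ by the matrix $\mathscr{F}=(\mathbf{f}_1,\ldots,\mathbf{f}_n)$, whose rows are supported on a fixed $k$-dimensional coordinate subspace. Repeated Schur-complement expansions then give, for $\lambda\notin\mathrm{spec}(\mathring C)$, the characterization $\lambda\in\mathrm{spec}(C_{yx})$ if and only if $\det\Xi_n(\lambda)=0$, where $\Xi_n(\lambda)$ is an $O(k)\times O(k)$ matrix whose entries are polynomial combinations of the $\sqrt{r_i}$'s and quadratic forms $\mathbf{u}'R(\lambda)\mathbf{v}$, with $R(\lambda)$ built from the null resolvents $(\mathring C_{yx}-\lambda)^{-1}$ and $(\mathscr{X}\mathscr{X}'/n-z)^{-1}$ and $\mathbf{u},\mathbf{v}$ deterministic or only weakly correlated with those resolvents.

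The heart of the argument is to localize the zeros of $\det\Xi_n$ using the edge local law for the null MANOVA ensemble established in \cite{HPY}. On the annular region $\cR:=\{\lambda:|\lambda-d_+|\leq n^{-2/3+\varepsilon},\ |\lambda-\mathring\lambda_j|\geq n^{-1+\varepsilon}\text{ for every }j\}$, the local law supplies entrywise concentration of $\Xi_n(\lambda)$ around the deterministic matrix $\Xi_\infty(\lambda)$ built from the Stieltjes transform $m(\lambda)$ of $F$, with error $n^{-1/2+\varepsilon}$. In the subcritical regime $r_i<r_c$ for all $i\geq k_0+1$, a direct computation using the explicit density in Theorem \ref{thm.070301} shows that $|\det\Xi_\infty(\lambda)|$ is uniformly bounded below on $\cR$; hence every eigenvalue of $C_{yx}$ inside the edge window must lie within $n^{-1+\varepsilon}$ of some $\mathring\lambda_j$. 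A Weyl/interlacing count, exploiting that $C_{yx}-\mathring C_{yx}$ is a rank-$O(k)$ perturbation and that the $k_0$ outliers $\lambda_1,\ldots,\lambda_{k_0}$ are already placed near $\ga_1,\ldots,\ga_{k_0}$ by Theorem \ref{thm.061901}(i), then matches the remaining eigenvalues index-by-index, yielding $|\lambda_{k_0+i}-\mathring\lambda_i|\leq n^{-1+\varepsilon}$ for each fixed $i\leq\mathfrak{m}$.

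The main obstacle is attaining the precision $n^{-1+\varepsilon}$, which is finer than the $n^{-2/3+\varepsilon}$ rigidity scale at the soft edge and therefore requires the optimal entrywise local law for the null MANOVA resolvent down to the spectral parameter scale $\eta=n^{-1+\varepsilon}$ throughout the $n^{-2/3+\varepsilon}$-window at $d_+$. A second subtlety is the quantitative lower bound on $|\det\Xi_\infty|$, which must not degenerate as $r_i\uparrow r_c$ or as $r_j\downarrow r_c$; this is precisely why the theorem imposes well-separation of $r_c$ from both $r_{k_0}$ and $r_{k_0+1}$. Once these two ingredients are in place, (\ref{17052821}) follows, and (\ref{17052822}) is inherited from the Tracy-Widom law for the null CCA matrix at $d_+$.
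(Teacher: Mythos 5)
Your overall strategy—couple $C_{yx}$ to a null CCA matrix on the same probability space, characterize the eigenvalues via a $k\times k$ determinantal equation, and then use an edge local law at scale $\eta=n^{-1+\varepsilon}$ to show no eigenvalue can sit in the gaps between null eigenvalues—matches the paper's in spirit, and you correctly identify the two hard technical ingredients (the optimal local law, taken from \cite{HanP16T}, and the well‑separation assumption as the source of the lower bound on $|\det|$). The coupling you propose (write $\mathbf{y}_i=\Sigma_{yx}\mathbf{x}_i+\mathbf{e}_i$ and set $\mathring{\mathbf{y}}_i=\mathbf{e}_i+\mathbf{f}_i$) is different from the paper's, which works with $X=W+TY$, first compares $C_{yx}$ with the intermediate null object $C_{w_2y}$ (built by deleting the first $k$ rows of $W$) via Proposition \ref{pro. comparison of evs}, and then constructs $\mathring C=C_{y\hat x}$ with $\hat X=W+T\hat Y$ and compares $\mathring C$ with the same $C_{w_2y}$ by the same proposition applied at $r=0$; (\ref{17052821}) then follows by triangle inequality. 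Both couplings are legitimate.

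The genuine gap is in the final index-by-index matching. You invoke ``a Weyl/interlacing count, exploiting that $C_{yx}-\mathring C_{yx}$ is a rank-$O(k)$ perturbation,'' but a rank-$2k$ perturbation only gives $\mathring\lambda_{i+2k}\le\lambda_i\le\mathring\lambda_{i-2k}$, so after removing the $k_0$ outliers you only learn that $\lambda_{k_0+i}$ is within $n^{-1+\varepsilon}$ of some $\mathring\lambda_j$ with $j$ in a window of size $O(k)$ around $i$; since consecutive $\mathring\lambda_j$ near the edge are spaced only $\sim n^{-2/3}j^{-1/3}$ apart, the local-law exclusion does not by itself force $j=i$. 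The paper resolves exactly this with a rank-one peeling recursion: at each step one extra row of $W$ is removed, yielding the sharp Cauchy interlacing $\lambda_1\ge\widetilde\lambda_1\ge\lambda_2\ge\widetilde\lambda_2\ge\cdots$ (\ref{17042906}), so that after the local-law exclusion each $\lambda_i$ sits in a $n^{-1+\varepsilon}$-neighborhood of either $\widetilde\lambda_{i-1}$ or $\widetilde\lambda_i$; the tie is then broken by the monotonicity of $(M_n)_{jj}(z)$ on each gap (Lemma \ref{lem. monotonicity}) together with the sign of $m_j(d_+)$, which is controlled precisely by whether $r_j\gtrless r_c$. Without a substitute for the monotonicity argument (or an explicit eigenvalue-counting argument that pins down the index $j$ to the exact value $i$ rather than to an $O(k)$-window), your proof of (\ref{17052821}) does not close.
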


\begin{rem}\label{rem.17052825} From (\ref{17052821}), it is easy to conclude (\ref{17052822}), by using the Tracy-Widom limit of $\mathring{\lambda}_1$ derived in \cite{Johnstone2008,  HanP16T, HPY}. Observe that in these references, the Tracy Widom law is stated for the logit transform of  $\mathring{\lambda}_1$, i.e., $\log (\mathring{\lambda}_{1}/(1-\mathring{\lambda}_1))$. Using a Taylor expansion, it is elementary to check that that $n^{\frac{2}{3}}(\mathring{\lambda}_{1}-d_+)/\xi_{tw} \Longrightarrow F_1$ from the Tracy-Widom law for the logit transform of $\mathring{\lambda}_1$ (see Theorem 1 and Section 2.1.1.in \cite{Johnstone2008} for more details).
\end{rem}

\begin{rem} Similarly to Remark \ref{rem.separation1},  the assumption that $r_c$ is away from $r_{k_0}$ and $r_{k_0+1}$ by a constant distance can be weakened. But we do not pursue this direction here.
\end{rem}

To illustrate the result in Theorems \ref{thm.061901}, \ref{thm.fluctuation} and \ref{thm. fluctuation of sticking evs}, we did some numerical simulations.
The different limiting behavior of $\lambda_i$ in (i) and (ii) of Theorem \ref{thm.061901} can be observed in Figure \ref{fig1}.
The fluctuation for $\lambda_i$ in Theorem \ref{thm.fluctuation} and Theorem \ref{thm. fluctuation of sticking evs}  can be seen from Figures \ref{fig2} and \ref{fig1}.
\begin{figure}[h]
\begin{center}
\includegraphics[width=16cm]{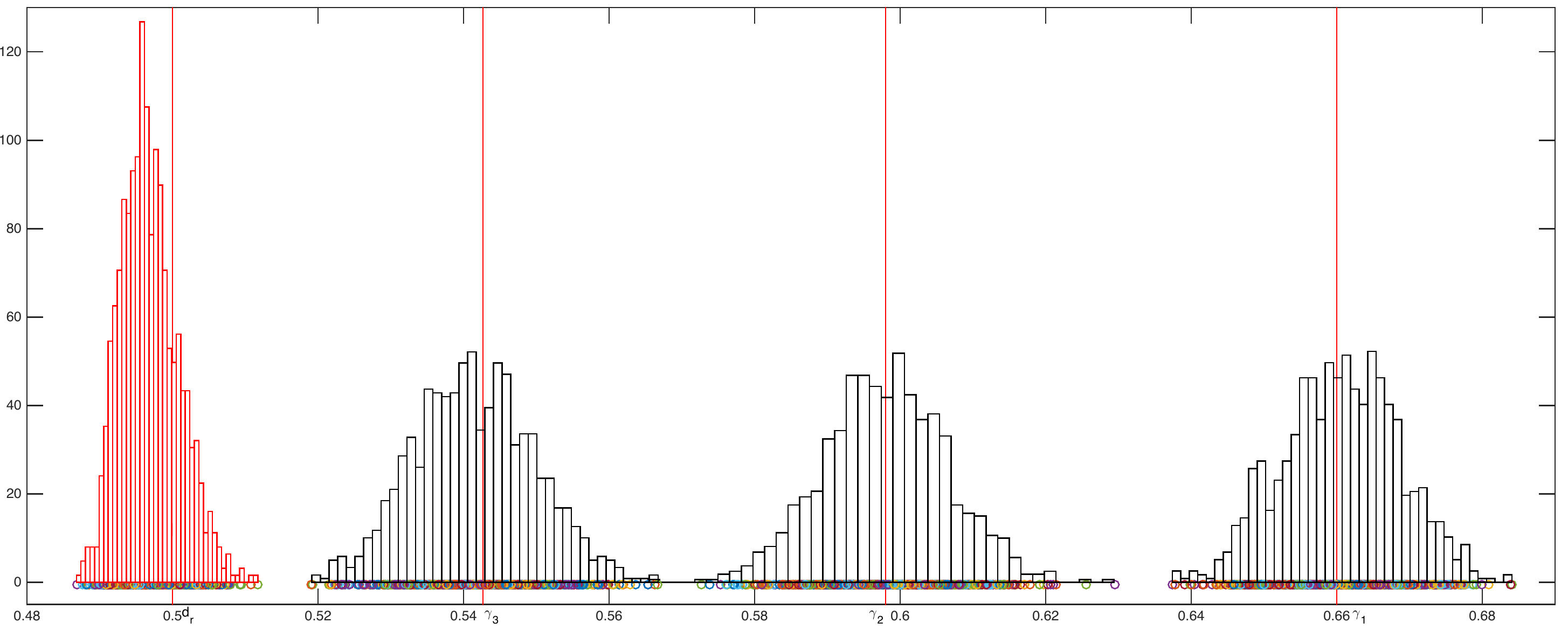}
\end{center}
\caption{ We chose a Gaussian vector $\mathbf{z}=(\mathbf{x}',\mathbf{y}')'$ with $p=500$ and $q=1000$. The sample size is $n=5000$. Hence, $c_1=0.1$, $c_2=0.2$. Then $r_c\approx 0.17$ and $d_{+}=0.5$. We chose $k=4$ and $(r_1, r_2, r_3, r_4)=(0.5, 0.4, 0.3, 0.16)$. Then $\gamma_1\approx 0.66$, $\gamma_2\approx 0.6$, $\gamma_3\approx 0.54$ and $\gamma_4\approx 0.50$ . The above simulation result is based on 1000
replications. The abscises of the vertical segments represent $d_{+}$ and $\{\gamma_1,\gamma_2,\gamma_3\}$. The four histograms represent  the distributions of the largest  four  eigenvalues of the CCA matrix respectively.} \label{fig1}
\end{figure}

\begin{figure}[h]
	\begin{center}
		\includegraphics[width=16cm]{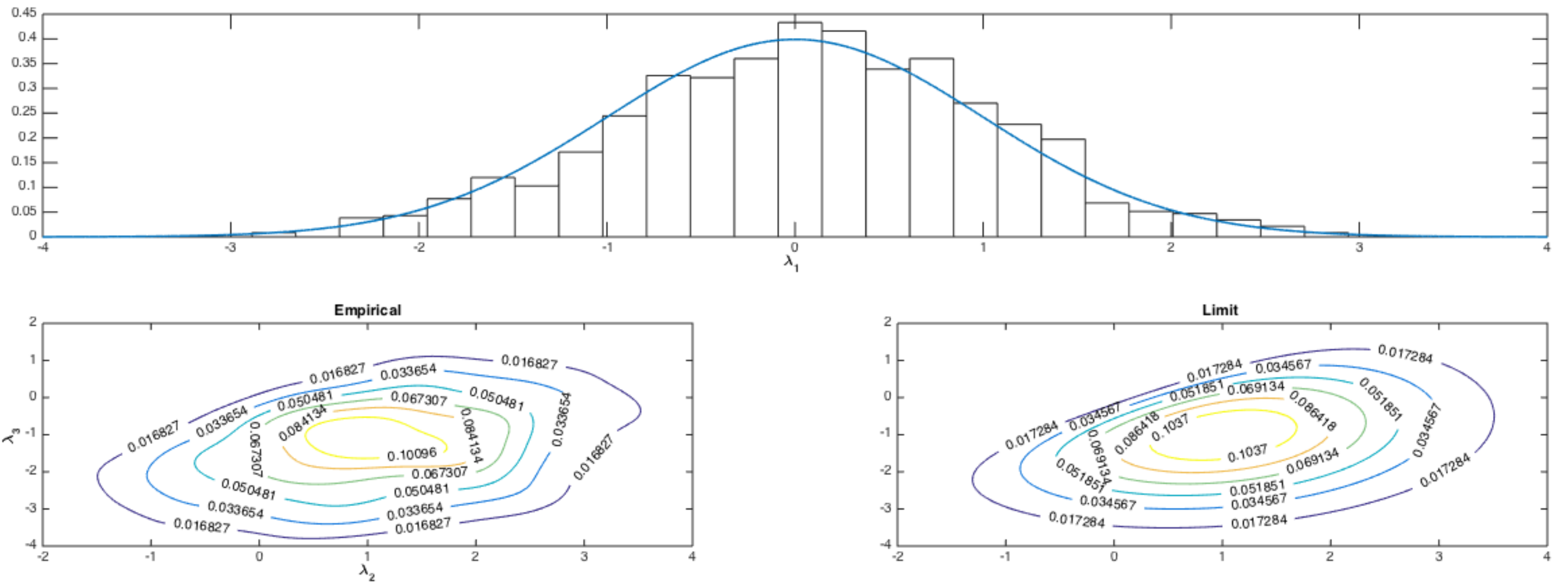}
	\end{center}
	\caption{We chose a Gaussian vector $\mathbf{z}=(\mathbf{x}',\mathbf{y}')'$ with $p=500$ and $q=1000$. The sample size is $n=5000$. Hence, $c_1=0.1$, $c_2=0.2$. Then $r_c\approx 0.17$ and $d_{+}=0.5$. We chose $k=4$ and $(r_1, r_2, r_3, r_4)=(0.5, 0.4, 0.4, 0.16)$. Then $\gamma_1\approx 0.66$ and $\gamma_2=\gamma_3\approx 0.6$. This result is based on 1000
		replications. The upper panel shows that the  frequency histogram of $\lambda_1$ compared to its Gaussian limits (dashed lines).  The lower panels show that the contour plots of empirical joint density function of $\lambda_2$ and
		 $\lambda_3$ (left plot, after centralisation and scaling) and contour plots of their limits (right plot). The empirical  joint density function is displayed by using the two-dimensional kernel density estimates.} \label{fig2}
\end{figure}

\subsection{Organization and notations}
Our paper is organized as follows. In Section \ref{s.app}, we display some applications of our results. We introduce in Section \ref{s. preliminary} some necessary preliminaries. In Section \ref{s. determinant eq}, we will reformulate the sample canonical correlation matrix in the finite rank case as a perturbation of that in the null case, thereby obtaining a determinant equation for the largest eigenvalues. By solving the limiting counterpart of the determinant equation, we can get the limits of the largest eigenvalues in Section \ref{s. proof of main theorem}, i.e. prove Theorem \ref{thm.061901}. In Section \ref{section.fluctuation} we derive the fluctuations of the outliers (Theorem \ref{thm.fluctuation}), and in Section \ref{s. fluctuation of sticking}, we study the fluctuations of the sticking eigenvalues (Theorem \ref{thm. fluctuation of sticking evs}).  Sections \ref{sec5} and \ref{sec6} are then devoted to proving some  technical lemmas used in previous sections.

Throughout the paper,  the notation $C$ represents some generic constants whose values which may vary from line to line. The notation $\mathbf{0}_{k\times \ell}$ is used to denote the $k$ by $\ell$ null matrix, which will be abbreviated to $\mathbf{0}_{k}$ if $k=\ell$, and from time to time we also use the abbreviation $\mathbf{0}$ if the dimension $k\times \ell$ is clear from the context. For any matrix $\A$, its $(i,j)$th entry will be written as $\A_{ij}$. When $\A$ is square, we denote by  $\text{Spec}(\A)$ its spectrum. For a function $f:\mathbb{C}\to\mathbb{C}$ and an Hermitian matrix $\A$ with spectral decomposition $\U_\A\bm\Lambda_\A\U^*_\A$, we define $f(\A)$ as usual, in the sense of functional calculus, to wit, $f(\A)=\U_\A f(\bm\Lambda_\A)\U^*_\A$, where $f(\bm\Lambda_\A)$ is the diagonal matrix obtained via maping the eigenvalues of $\A$ to their images under $f$.
We will conventionally use the notations $\|\A\|$ and $\|A\|_{\text{HS}}$ to represent the operator norm and Hilbert-Schmidt norm of a matrix $\A$, respectively,  while $\|\mathbf{b}\|$ stands for the Euclidean norm of a vector $\mathbf{b}$. Throughout this paper, we use $o_p(1)$ to denote  a scalar negligible (in probability) or a fixed-dimensional random matrix with negligible (in probability) entries. And the notations $o_{\text{a.s}}(1)$,  $O_p(1)$ and $O_{\text{a.s.}}(1)$ are used in a similar way.

For brevity, we further introduce the notations
\begin{align}
\varpi:=c_1c_2,\qquad \vartheta:=(1-c_1)(1-c_2).  \label{17050702}
\end{align}

\section{Applications} \label{s.app} In this section, we discuss three applications of our  results Theorems \ref{thm.061901}, \ref{thm.fluctuation} and  \ref{thm. fluctuation of sticking evs} in
hypothesis testing, estimation of the number and the values of the canonical correlation coefficients  (CCC for short).
At the end, we present an experiment on a real rain forest data.
\subsection{Application 1: Power of testing for the independence of two high dimensional normal vectors} Testing for the independence of random vectors is a very classical
problem. For two normal random vectors $\mathbf{x}$ and $\mathbf{y}$ with dimensions $p$ and $q$ respectively, we consider the test
\begin{align*}
H_0: \Sigma_{xy}=\mathbf{0}_{p\times q}\quad\mbox{v.s.} \quad H_1: \mbox{not } H_0.
\end{align*}
Currently,  for high  dimensional cases, there are  three kinds of widely discussed test procedures: (i)  Corrected likelihood ratio tests (see \cite{JiangB13T,JiangY13C}); (ii) Trace tests (see \cite{JiangB13T,YangP15I, HyodoS15T,YamadaH17T,BaoH17T}); (iii) Largest eigenvalue tests (see \cite{Johnstone2008,Johnstone09A,HanP16T}).
It has been shown by numerical results  in \cite{BaoH17T,HanP16T} that if $ \Sigma_{\bf{x}\bf{y}} $ is sparse,  the corrected likelihood ratio tests and trace tests fail and the largest eigenvalue tests works well.  In the following, we propose a statistic based on the CCC of $\mathbf{x}$ and $\mathbf{y}$, and show that it is powerful against finite rank case. It is well known that testing for the independence of two high dimensional normal vectors
equals to testing  their first canonical correlation coefficient being zero or not, i.e.,
\begin{align*}
H_0: r_1=0\quad\mbox{v.s.} \quad H_1: r_1>0.
\end{align*}
Therefore, a natural test statistic is the first eigenvalue  $\lambda_1$ of the sample canonical correlation matrix $C_{xy}$.  Then according to Section 2.1.1.in \cite{Johnstone2008}  and Theorem 2.1 in \cite{HanP16T}, under the null hypothesis and Assumption \ref{ass.070301} we  have
\begin{align}\label{teststa1}
\frac{n^{2/3}(\lambda_1-d_+)}{\xi_{tw}}\Longrightarrow F_1.
\end{align}
 Therefore, we reject $H_0$ if \begin{align}\label{test}
\lambda_1>n^{-2/3}q_\alpha\xi_{tw}+ d_+,
\end{align} where $q_\alpha$ is the $1- \alpha$ quantile  of Tracy-Widom distribution $F_1$. If  the sample is under the Assumption \ref{ass.070302} and $r_1>r_c$, then this test will be able to detect the alternative hypothesis with a power tending to one as the dimension tends to infinity.
\begin{thm}[Power function]\label{thm:power}
	Suppose that the assumptions in  Theorems \ref{thm.fluctuation} and \ref{thm. fluctuation of sticking evs}  hold with $r_1>r_c$. Then as  $n\to\infty$, the power function of the test procedure (\ref{test})
	\begin{align*}
	{Power}=\mathbb{P}\(\frac{\sqrt{n}(\lambda_1-\ga_1)}{\xi(r_1)}>\frac{n^{-1/6}q_\alpha\xi_{tw}}{\xi(r_1)}+\frac{\sqrt{n}( d_+-\ga_1)}{\xi(r_1)}\)\to1.
	\end{align*}
\end{thm}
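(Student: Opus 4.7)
The plan is to decompose the rejection event into a stochastically bounded random term and a deterministic threshold that diverges to $-\infty$, whereupon the power tends to $1$ automatically. The key ingredients are the Gaussian-type CLT for $\lambda_1$ supplied by Theorem \ref{thm.fluctuation} (available since $r_1>r_c$ and $r_1$ is well separated from $r_c$, $1$, and $r_{(2)}$ by assumption), the strict gap $\gamma_1-d_+>0$ implied by $r_1>r_c$, and the negligibility of the Tracy--Widom threshold on the $\sqrt{n}$ scale.

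The equality displayed in the statement is a purely algebraic rewriting of the rejection event $\{\lambda_1>n^{-2/3}q_\alpha\xi_{tw}+d_+\}$ after subtracting $\gamma_1$ from both sides and dividing by $\xi(r_1)/\sqrt{n}$; no probabilistic input is needed. Write $T_n$ for the resulting left-hand random variable and $R_n$ for the deterministic right-hand threshold. By Theorem \ref{thm.fluctuation} applied with $\ell=1$, $T_n$ converges weakly to an a.s.\ finite real random variable $T^{\ast}$ (the largest eigenvalue of an $n_1$-dimensional GOE-type matrix), so $\{T_n\}$ is tight. To control $R_n$, I would expand (\ref{17050801}) as
\begin{align*}
\gamma(r)=(1-c_1)(1-c_2)\,r+(c_1+c_2-2c_1c_2)+\frac{c_1c_2}{r},
\end{align*}
so that $\gamma'(r)=(1-c_1)(1-c_2)-c_1c_2/r^2$ vanishes exactly at $r=r_c$ by (\ref{062805}). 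A direct substitution then gives $\gamma(r_c)=2\sqrt{c_1c_2(1-c_1)(1-c_2)}+c_1+c_2-2c_1c_2=d_+$ by (\ref{17031301}), and $\gamma$ is strictly increasing on $[r_c,1]$. Hence $\gamma_1-d_+\geq c_0$ for some constant $c_0>0$ depending on $r_1-r_c$ and on $y_1,y_2$; since $\xi(r_1)$ and $\xi_{tw}$ are bounded positive constants, we conclude
\begin{align*}
R_n=\frac{n^{-1/6}q_\alpha\xi_{tw}}{\xi(r_1)}-\frac{\sqrt{n}(\gamma_1-d_+)}{\xi(r_1)}\longrightarrow -\infty.
\end{align*}

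Combining the two pieces, for any $\epsilon>0$ pick $M>0$ large enough that $\mathbb{P}(T^{\ast}\leq -M)<\epsilon$; for all sufficiently large $n$ we have $R_n<-M$, whence
\begin{align*}
\mathrm{Power}=\mathbb{P}(T_n>R_n)\geq\mathbb{P}(T_n>-M)\longrightarrow\mathbb{P}(T^{\ast}>-M)>1-\epsilon
\end{align*}
by the Portmanteau theorem. Letting $\epsilon\downarrow 0$ yields $\mathrm{Power}\to 1$. I do not foresee any genuine obstacle here: once Theorem \ref{thm.fluctuation} is available, the entire argument reduces to checking the strict gap $\gamma(r)>d_+$ for $r>r_c$, which is the elementary one-variable calculation above, plus a soft weak-convergence argument.
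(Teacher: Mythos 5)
Your proof is correct and follows essentially the same route as the paper: rewrite the rejection event, note the Tracy--Widom threshold term vanishes, show the deterministic centering term $\sqrt{n}(d_+-\gamma_1)/\xi(r_1)\to-\infty$ via the strict gap $\gamma_1>d_+$ for $r_1>r_c$, and invoke Theorem~\ref{thm.fluctuation} for tightness of the Gaussian-fluctuation piece. You spell out two things the paper leaves terse---the one-variable verification that $\gamma(r)>d_+$ strictly for $r>r_c$ (which in the paper is implicit in (\ref{0607002})) and the final Portmanteau step---but these are elaborations of the same argument, not a different one.
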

\begin{proof}
	Under the conditions in  Theorems \ref{thm.fluctuation} and \ref{thm. fluctuation of sticking evs}, for any $\alpha $, we have  $ \frac{n^{-1/6}q_\alpha\xi_{tw}}{\xi(r_1)}\to0$. From the conclusion of   Theorem \ref{thm.061901} and $r_1>r_c$, we  have $\lim_{n\to\infty} ( d_+-\ga_1)<0$, which implies $\lim_{n\to\infty}\frac{\sqrt{n}( d_+-\ga_1)}{\xi(r_1)}\to-\infty$. In addition, by applying Theorem \ref{thm.fluctuation}, $\frac{\sqrt{n}(\lambda_1-\ga_1)}{\xi(r_1)}$ converges weakly to the distribution of the largest eigenvalue of a symmetric Gaussian random matrix. Therefore,  combining the above results, we then complete the proof of Theorem \ref{thm:power}.
\end{proof}

\subsection{Application 2: Estimating  the number  of the population  CCC}

 As a fundamental problems in CCA, the estimation of   the number  and the values of the population  CCC are widely investigated.
In this subsection,  we first apply our results to   determine  the number  of the outliers of  high dimensional population  CCC (not counting multiplicity).
Actually, due to the threshold in our results, we cannot detect the population  CCC which are smaller than $\sqrt{r_c}$. To the best of our knowledge, there is no effective method in general to successfully detect the population spikes below the threshold when the dimension is large,  even for the simpler spike PCA problem. The reason  can be find in the  recent work  \cite{BaiF17H} for instance.

We propose our estimator of  the number of the outliers of population  CCC $k_0$ by the number of eigenvalues of  the sample canonical correlation matrix which are larger than $d_+$:
\begin{align}\label{estk0}
\hat k_0:=\max \{i:\lambda_i\geq d_++\epsilon_n\},
\end{align}
where $\epsilon_n$ is a sequence of positive number only depending on $n$ and satisfying $\epsilon_n \sqrt{n}\to0$ and $\epsilon_n {n}^{2/3}\to\infty$.
Then the estimator $\hat k_0$ is weakly consistent according to the following theorem.
\begin{thm}[Weak consistency of  $\hat k_0$] Suppose that the assumptions in   Theorems \ref{thm.fluctuation} and \ref{thm. fluctuation of sticking evs} hold.  As  $n\to\infty$, the  estimator $\hat k_0$ in \eqref{estk0} is  weakly consistent for  the  number of the outliers of population  CCC $k_0$, that is
	\begin{align}\label{esteq0}
	\P (\hat k_0=k_0)\to1.
	\end{align}
\end{thm}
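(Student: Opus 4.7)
The plan is to sandwich the event $\{\hat k_0 = k_0\}$ between the two events $\{\lambda_{k_0}\geq d_++\epsilon_n\}$ and $\{\lambda_{k_0+1}<d_++\epsilon_n\}$, and to show that each has probability tending to one. The calibration of $\epsilon_n$ is the point of the argument: the condition $\epsilon_n\sqrt{n}\to 0$ is matched to the $n^{-1/2}$ fluctuation scale of the outliers (Theorem \ref{thm.fluctuation}), while $\epsilon_n n^{2/3}\to\infty$ is matched to the $n^{-2/3}$ Tracy--Widom scale of the sticking eigenvalues (Theorem \ref{thm. fluctuation of sticking evs}). Since $\lambda_1\geq\cdots\geq\lambda_q$, controlling the two borderline indices $k_0$ and $k_0+1$ is enough to pin down $\hat k_0$.

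For the lower bound $\hat k_0\geq k_0$, I would first note that by Assumption \ref{ass.070302} and the well-separation hypothesis already built into Theorems \ref{thm.fluctuation}--\ref{thm. fluctuation of sticking evs}, $r_{k_0}$ is bounded away from $r_c$, so $\gamma_{k_0}-d_+$ is a fixed positive constant $c>0$ (recall $\gamma_i>d_+$ precisely when $r_i>r_c$). Theorem \ref{thm.fluctuation} applied to the block containing index $k_0$ yields tightness of $\sqrt{n}(\lambda_{k_0}-\gamma_{k_0})/\xi(r_{k_0})$, and hence $\lambda_{k_0}=\gamma_{k_0}+O_p(n^{-1/2})$. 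Writing
\begin{align*}
\lambda_{k_0}-(d_++\epsilon_n)=(\gamma_{k_0}-d_+)+(\lambda_{k_0}-\gamma_{k_0})-\epsilon_n=c+O_p(n^{-1/2})-o(1),
\end{align*}
this quantity is positive with probability tending to one. Since $\lambda_j\geq\lambda_{k_0}$ for $j\leq k_0$, we obtain $\hat k_0\geq k_0$ with probability tending to one.

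For the upper bound $\hat k_0\leq k_0$, Theorem \ref{thm. fluctuation of sticking evs} provides
\begin{align*}
n^{2/3}(\lambda_{k_0+1}-d_+)/\xi_{tw}\Longrightarrow F_1,
\end{align*}
so $\lambda_{k_0+1}-d_+=O_p(n^{-2/3})$. Consequently
\begin{align*}
n^{2/3}\bigl(\lambda_{k_0+1}-(d_++\epsilon_n)\bigr)=O_p(1)-\epsilon_n n^{2/3}\longrightarrow -\infty
\end{align*}
in probability, because $\epsilon_n n^{2/3}\to\infty$ by assumption. Hence $\P(\lambda_{k_0+1}<d_++\epsilon_n)\to 1$, which together with $\lambda_j\leq\lambda_{k_0+1}$ for $j\geq k_0+1$ gives $\hat k_0\leq k_0$ with probability tending to one. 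Intersecting the two high-probability events proves \eqref{esteq0}.

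There is no serious obstacle once Theorems \ref{thm.fluctuation} and \ref{thm. fluctuation of sticking evs} are at hand; the argument is essentially a two-sided comparison between the deterministic threshold $\epsilon_n$ and the two distinct fluctuation scales $n^{-1/2}$ and $n^{-2/3}$. The only subtle point is to invoke the correct separation hypothesis: because $r_{k_0}>r_c\geq r_{k_0+1}$ are assumed well-separated, both Theorem \ref{thm.fluctuation} (applied to the multiplicity block containing $k_0$) and Theorem \ref{thm. fluctuation of sticking evs} are available without any additional work.
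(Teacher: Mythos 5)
Your proposal is correct and follows essentially the same route as the paper: bound $\P(\hat k_0 \neq k_0)$ by the two probabilities $\P(\lambda_{k_0}<d_++\epsilon_n)$ and $\P(\lambda_{k_0+1}\geq d_++\epsilon_n)$, then kill the first using Theorem \ref{thm.fluctuation} together with $\epsilon_n\sqrt{n}\to 0$ (and the fact that $\gamma_{k_0}-d_+>0$ is a fixed constant), and the second using Theorem \ref{thm. fluctuation of sticking evs} together with $\epsilon_n n^{2/3}\to\infty$. The only cosmetic difference is that you phrase the conclusion as intersecting two high-probability events, whereas the paper writes the same union bound explicitly as (\ref{esteq1}).
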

\begin{proof}
	According to the definition of $\hat k_0$ in \eqref{estk0}, we have that
	\begin{align}\label{esteq1}
	\P (\hat k_0=k_0)=&\P\big(\lambda_{k_0}\geq d_++\epsilon_n, \lambda_{k_0+1}< d_++\epsilon_n\big)\nonumber\\
		\geq&1- \P(\lambda_{k_0}< d_++\epsilon_n)-\P(\lambda_{k_0+1}\geq  d_++\epsilon_n).
	\end{align}

	By Theorem \ref{thm.fluctuation} and $\epsilon_n \sqrt{n}\to0$, we obtain that
	\begin{align}\label{esteq2}
	\P(\lambda_{k_0}< d_++\epsilon_n)=\P\big(\sqrt{n}(\lambda_{k_0}-\ga_{k_0})< \sqrt{n}(d_+-\ga_{k_0}+\epsilon_n)\big)\to0.
	\end{align}
	Analogously, from Theorem \ref{thm. fluctuation of sticking evs} and $\epsilon_n n^{2/3}\to\infty$, we also have
	\begin{align*}
	\P(\lambda_{k_0+1}\geq d_++\epsilon_n)=\P\big(n^{2/3}(\lambda_{k_0+1}-d_+)\geq n^{2/3}\epsilon_n\big)\to 0,
	\end{align*}
	which together with \eqref{esteq1} and \eqref{esteq2} implies \eqref{esteq0}.
	Therefore,  we complete the proof.
\end{proof}

\begin{rem}
	Although in theory,  any sequence $\epsilon_n$ which  satisfies the conditions $\epsilon_n \sqrt{n}\to0$ and $\epsilon_n {n}^{2/3}\to\infty$ is applicable, we have to choose one in practice. Thus,  it is worth to notice that the  smaller   $\epsilon_n$ one chooses, the easier one overestimates $k_0$ and vice versa.     In the simulation, based on the  idea in  the paper \cite{PASSEMIERY12D,WY},  we choose   $\epsilon_n$  to be $\log\log(n)/n^{2/3}$, which makes our  estimator conservative.
	
\end{rem}
In the following, we  report a short simulation result to illustrate the performance of our estimator. For  comparison, we also show the result of other three estimators  in \cite{FujikoshiS16H} by  using  the following model selection criteria: AIC,  BIC and  $C_p$  respectively, which  are
\begin{align*}
\hat k_A=\mathop{\arg\min}_{j\in\{0,1,\dots,q\}}AIC_j, \quad  \hat k_B=\mathop{\arg\min}_{j\in\{0,1,\dots,q\}}BIC_j, \quad  \hat k_C=\mathop{\arg\min}_{j\in\{0,1,\dots,q\}}CP_j.
\end{align*}
Here for $j\in\{1,\dots,q\}$,
\begin{align*}
AIC_j&=-n\log\Big[\prod\limits_{i=j+1}^{q}(1-\la_i)\Big]-2(p-j)(q-j),\\
BIC_j&=-n\log\Big[\prod\limits_{i=j+1}^{q}(1-\la_i)\Big]-\log(n)(p-j)(q-j),\\
CP_j&=n\sum_{i=j+1}^{q}\frac{\la_i}{1-\la_i} -2(p-j)(q-j)
\end{align*}
and $AIC_0=BIC_0=CP_0=0$.

 According to the results in  Table \ref{tab1} and Table \ref{tab2}, we can find that the performance  of  our estimator is much better than AIC,  BIC and $C_p$ estimators when the dimensions are big. If the dimensions are small  and the sample size is big enough, then besides our  estimator, the AIC  and $C_p$ estimators perform well, but BIC estimator performs badly. That coincides with the conclusion in \cite{FujikoshiS16H}, which showed if $q$ is fixed and $p/n\to c_1>0$ but small,  AIC and $C_p$ estimators are weakly consistent but BIC estimator is not.
\begin{table}[htbp]
	\renewcommand\arraystretch{1.5}
	\begin{tabular}{ccccccccccccc}
		\hline
		\multirow{2}{2em}&\multicolumn{4}{c}{$p$=10,  $r_c=0.0071$} &\multicolumn{4}{c}{$p$=60, $r_c=0.0445$} &\multicolumn{4}{c}{$p$=110, $r_c=0.0854$} \\ \cline{2-13}
		&$\hat k_0$&$\hat k_A$&$\hat k_B$&$\hat k_C$&$\hat k_0$ &$\hat k_A$&$\hat k_B$&$\hat k_C$&$\hat k_0$&$\hat k_A$&$\hat k_B$&$\hat k_C$ \\ \hline
		$\leq2$    &0&0&0&0&0&0& 375 &0& 0&0&1000 &0 \\ \hline
		$3$     &0&0&0&0&90& 0 &625&0& 95&0&0 &0 \\ \hline	
		$4$    & {\bf 1000}&{\bf 937}&{\bf 1000}&{\bf 935}&{\bf1000}& {\bf846} &\bf0&{\bf463}& \bf905&\bf488&\bf0 &\bf0   \\ \hline	
		$5$     &0&63&0&65& 0 &151&0& 465&0&458 &0&26 \\ \hline	
		$\geq 6$  &0&0&0&0&0& 3&0&72& 0&54&0 &974 \\ \hline	
		\hline
		&\multicolumn{4}{c}{$p$=160,  $r_c=0.1305$} &\multicolumn{4}{c}{$p$=210, $r_c=0.1809$} &\multicolumn{4}{c}{$p$=260, $r_c=0.2379$} \\ \cline{2-13}
		
		$\leq2$    &0&0&1000&0&0& 0 &1000&0& 61&0&1000 &0 \\ \hline
		$3$     &714&0&0&0&27& 0 &0&0& \bf939&\bf0&\bf0 &\bf0 \\ \hline	
		$4$    & {\bf 286}&{\bf 62}&{\bf 0}&{\bf 0}&{\bf793}& {\bf0} &\bf0&{\bf0}& 0&0&0 &0  \\ \hline	
		$5$     &0&349&0&0& 0 &1&0& 0&0&0 &0&0 \\ \hline	
		$\geq 6$  &0&589&0&1000&0& 999&0&1000& 0&1000&0 &1000 \\ \hline	
	\end{tabular}
	\caption{ We fix $(r_1, r_2, r_3, r_4)=(0.8, 0.6, 0.4, 0.2)$, $r_5=\cdots r _q=0$, $p/q=2$, $n=1000$, and let $\{p,q\}$ vary. The results are obtained  based on 1000 replications with centered normal distributions. The tuning parameter $\epsilon_n$ is chosen to be $\log\log(n)/n^{2/3}$. We present the counting numbers  of AIC, BIC, $C_p$ estimators proposed in \cite{FujikoshiS16H} and our estimator which are equal to the values in the first column.  The correct numbers for the estimators are marked {\bf bold}. Notice that as $r_c$ changes along with $\{p,q\}$,  the true value of $k_0$ are different.  }\label{tab1}
\end{table}
\begin{table}[htbp]
	\renewcommand\arraystretch{1.5}
	\begin{tabular}{ccccccccccccc}
		\hline
		\multirow{2}{2em}&\multicolumn{4}{c}{$p$=10,  $q=5$, $n=100$} &\multicolumn{4}{c}{$p$=60, $q=30$, $n=600$} &\multicolumn{4}{c}{$p$=110, $q=55$, $n=1100$} \\ \cline{2-13}
		&$\hat k_0$&$\hat k_A$&$\hat k_B$&$\hat k_C$&$\hat k_0$ &$\hat k_A$&$\hat k_B$&$\hat k_C$&$\hat k_0$&$\hat k_A$&$\hat k_B$&$\hat k_C$ \\ \hline
		$\leq2$    &44&0&88&0&0&0& 1000 &0& 0&0&1000 &0 \\ \hline
		$3$     &877&65&732&39&186& 2 &0&0& 22&0&0 &0 \\ \hline	
		$4$    & {\bf 79}&{\bf 871}&{\bf 180}&{\bf 876}&{\bf814}& {\bf744} &\bf0&{\bf118}& \bf978&\bf559&\bf0 &\bf2   \\ \hline	
		$5$     &0&64&0&85& 0 &238&0& 525&0&400 &0&51 \\ \hline	
		$\geq 6$  &0&0&0&0&0& 16&0&357& 0&41&0 &947 \\ \hline	
		\hline
		&\multicolumn{4}{c}{$p$=160,  $q=80$, $n=1600$} &\multicolumn{4}{c}{$p$=210, $q=105$, $n=2100$} &\multicolumn{4}{c}{$p$=260, $q=130$, $n=2600$} \\ \cline{2-13}
		
		$\leq2$    &0&0&1000&0&0& 0 &1000&0& 0&0&1000 &0 \\ \hline
		$3$     &6&0&0&0&3& 0 &0&0& 0&0&0 &0 \\ \hline	
		$4$    & {\bf 994}&{\bf 426}&{\bf 0}&{\bf 0}&{\bf1000}& {\bf289} &\bf0&{\bf0}& \bf1000&\bf188&\bf0 &\bf0  \\ \hline	
		$5$     &0&482&0&0& 0 &547&0& 0&0&546 &0&0 \\ \hline	
		$\geq 6$  &0&92&0&1000&0& 164&0&1000& 0&266&0 &1000 \\ \hline	
	\end{tabular}
	\caption{ We fix $(r_1, r_2, r_3, r_4)=(0.8, 0.6, 0.4, 0.2)$, $r_5=\cdots r _q=0$,  $c_1=0.1$, $c_2=0.05$,  $r_c=0.077$, $k_0=4$, and let $\{p,q,n\}$ change. The results are obtained  based on 1000 replications with centered normal distributions. The tuning parameter $\epsilon_n$ is chosen to be $\log\log(n)/n^{2/3}$.  We present the counting numbers  of AIC, BIC, $C_p$ estimators proposed in \cite{FujikoshiS16H} and our estimator which are equal to the values in the first column.  The correct numbers are marked {\bf bold}. }\label{tab2}
\end{table}
\subsection{Application 3: Estimating  the values  of the population  CCC}
As a measure to detect the  correlation level between two random vectors,  the  population   CCC are also important. Traditionally,  one uses the sample CCC to estimate the population ones directly, since under the traditional assumption that $p,q$ are fixed and  $n\to\infty$,  the sample CCC tend to the  population ones almost surely (see Chapter 12 in \cite{Anderson03I}).  However, it has been  noticed  that sample CCC suffer from inflation, which occurs when the number of observations is not sufficient (see  \cite{RencherP80I,DutilleulP08M,ZhengJ14I}). But the good news is that now we can  easily explain the reason of  the   inflation properties  of the sample CCC by our Theorem \ref{thm.061901}.
In this subsection, we will give an approach to estimate the  population  CCC which are bigger than $\sqrt{r_c}$.

Solving the equation \eqref{17050801} via  replacing $\ga_i$ by $\la_i$, we have two solutions
\begin{align}\label{eqri}
 \hat r_i:=\frac{2c_1c_2-c_1-c_2+\la_i\pm\sqrt{(\la_i-d_-)(\la_i-d_+)}}{2(c_1c_2-c_1-c_2+1)}.
\end{align}
 Notice that the product of two solutions equals to $c_1c_2/[(1-c_1)(1-c_2)]=r_c^2$. Therefore,  according to the fact that  $\la_i>d_+$ and $\hat r_i\to r_i>r_c$ when  $n\to\infty$,  we should choose a plus sign in (\ref{eqri}). Hence,  the estimator of  $r_i$ is chosen to be
 \begin{align}\label{esthatt}
\hat r_i:=\phi(\la_i)=\frac{2c_1c_2-c_1-c_2+\la_i+\sqrt{(\la_i-d_-)(\la_i-d_+)}}{2(c_1c_2-c_1-c_2+1)},
\end{align}
Then according to Theorem \ref{thm.061901}, we have the following theorem.
\begin{thm}
Under  the same conditions of Theorem \ref{thm.061901}, for any $1\leq i\leq k_0$, we have almost surely
\begin{align*}
\hat r_i\to r_i.
\end{align*}
\end{thm}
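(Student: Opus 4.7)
The plan is to reduce the statement to the continuous mapping theorem applied to Theorem \ref{thm.061901}(i). Since Theorem \ref{thm.061901}(i) gives $\lambda_i - \gamma_i \stackrel{a.s.}\to 0$ for $1\leq i\leq k_0$, it suffices to verify two things: first, that the function $\phi$ defined in (\ref{esthatt}) is continuous at $\gamma_i$; second, that $\phi(\gamma_i)=r_i$. The first is clear once we check that the expression under the square root is strictly positive at $\gamma_i$, which follows from the observation (made in the paper right before Theorem \ref{thm.061901}) that $\gamma_i \geq d_+$ when $r_i \geq r_c$, with strict inequality when $r_i > r_c$ (i.e.\ for $i\leq k_0$).

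The crux is the algebraic identity $\phi(\gamma_i)=r_i$. I would proceed by recasting the defining relation (\ref{17050801}) as a quadratic in $r_i$. Writing $\varpi=c_1c_2$ and $\vartheta=(1-c_1)(1-c_2)$, one expands
\begin{align*}
r_i\gamma_i = \bigl((1-c_1)r_i+c_1\bigr)\bigl((1-c_2)r_i+c_2\bigr) = \vartheta r_i^2 + (c_1+c_2-2\varpi)r_i+\varpi,
\end{align*}
so that $r_i$ satisfies
\begin{align*}
\vartheta r_i^2 - (\gamma_i-c_1-c_2+2\varpi)\,r_i + \varpi = 0.
\end{align*}
The two roots multiply to $\varpi/\vartheta = r_c^2$, so they are of the form $r$ and $r_c^2/r$. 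A short computation shows that the discriminant of this quadratic is precisely
\begin{align*}
(\gamma_i-c_1-c_2+2\varpi)^2 - 4\vartheta\varpi = (\gamma_i-d_-)(\gamma_i-d_+),
\end{align*}
using $d_++d_-=2(c_1+c_2-2\varpi)$ and $d_+d_-=(c_1-c_2)^2$. Consequently the two roots of the quadratic coincide with the two branches of the expression (\ref{esthatt}), evaluated at $\gamma_i$ in place of $\lambda_i$. Since $r_i>r_c$ corresponds to the larger of the two roots (the smaller one is $r_c^2/r_i<r_c$), and the larger root is obtained by taking the $+$ sign, we conclude $\phi(\gamma_i)=r_i$.

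Combining these two ingredients via the continuous mapping theorem gives $\hat r_i = \phi(\lambda_i) \stackrel{a.s.}\to \phi(\gamma_i) = r_i$, as desired. The only real content is the discriminant identity above; once that is verified, everything else is formal. I do not anticipate any genuine obstacle, since $\phi$ is smooth on the relevant range $(d_+,1]$ and Theorem \ref{thm.061901}(i) provides the required almost-sure convergence of the argument.
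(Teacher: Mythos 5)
Your proposal is correct and takes essentially the same approach the paper implicitly relies on: the paper derives the estimator by solving (\ref{17050801}) as a quadratic in $r_i$, observes that the product of the two roots is $r_c^2$ to select the $+$ branch, and then states the theorem as an immediate consequence of Theorem \ref{thm.061901}(i) and continuity without further elaboration. Your write-up merely makes explicit the discriminant identity $(\gamma_i-c_1-c_2+2\varpi)^2-4\vartheta\varpi=(\gamma_i-d_-)(\gamma_i-d_+)$ and the continuous mapping step that the paper leaves to the reader.
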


Next, we display a short simulation result  in Table \ref{tab3_1} and Table \ref{tab3_2}, which are the sample  means  and  standard deviations  (s.d.) of the estimators $\hat r_i$ . In  the simulation, we first estimate $k_0$ by \eqref{estk0} with  the tuning parameter $\epsilon_n=\log\log(n)/n^{2/3}$, and then for $i\leq \hat k_0$, we   obtain the estimators $\hat r_i$ by \eqref{esthatt}.
 \begin{table}[htbp]
	\renewcommand\arraystretch{1.5}
	\begin{tabular}{ccccccccc}
		\hline
		\multirow{2}{2em}&\multicolumn{2}{c}{$\hat r_1$}&\multicolumn{2}{c}{$\hat r_2$} &\multicolumn{2}{c}{$\hat r_3$}&\multicolumn{2}{c}{$\hat r_4$}  \\ \cline{2-9}
		&mean&s.d.&mean&s.d.&mean&s.d.&mean&s.d.\\ \hline
		$p$=10,  $q=5$, $n=100$ &0.798&0.036&0.588&0.066&0.362&0.072&0.241&0.033\\ \hline
		$p$=60,  $q=30$, $n=600$     &0.800&0.016&0.598&0.027&0.392&0.035&0.190&0.029\\ \hline
		$p$=110,  $q=55$, $n=1100$& 0.799&0.012&0.599&0.020&0.397&0.025&0.189&0.026\\ \hline
		$p$=160,  $q=80$, $n=1600$&0.800&0.010&0.598&0.017&0.397&0.020&0.191&0.023\\ \hline
		$p$=210,  $q=105$, $n=2100$ & 0.800&0.008&0.600&0.014&0.397&0.019&0.194&0.020\\ \hline
		$p$=260,  $q=130$, $n=2600$ & 0.800&0.008&0.599&0.013&0.399&0.017&0.194&0.018\\ \hline
		
	\end{tabular}
	\caption{ We fix $(r_1, r_2, r_3, r_4)=(0.8, 0.6, 0.4, 0.2)$, $r_5=\cdots r _q=0$,  $c_1=0.1$, $c_2=0.05$,  $r_c=0.077$, $k_0=4$, and let $\{p,q,n\}$ vary. The results are obtained  based on 1000 replications with centered normal distributions. The tuning parameter $\epsilon_n$ is chosen to be $\log\log(n)/n^{2/3}$.  }\label{tab3_1}
\end{table}
\begin{table}[htbp]
	\renewcommand\arraystretch{1.5}
	\begin{tabular}{ccccccccc}
		\hline
		\multirow{2}{2em}&\multicolumn{2}{c}{$\hat r_1$}&\multicolumn{2}{c}{$\hat r_2$} &\multicolumn{2}{c}{$\hat r_3$}&\multicolumn{2}{c}{$\hat r_4$}  \\ \cline{2-9}
		&mean&s.d.&mean&s.d.&mean&s.d.&mean&s.d.\\ \hline
		$p$=10,  $q=5$, $n=100$ &
		0.800&0.037&0.594&0.061&0.409&0.066&0.288&0.051\\ \hline
		$p$=60,  $q=30$, $n=600$     &
		0.799&0.015&0.599&0.027&0.419&0.029&0.356&0.031\\ \hline
		$p$=110,  $q=55$, $n=1100$&
		0.799&0.011&0.599&0.020&0.417&0.021&0.370&0.021\\ \hline
		$p$=160,  $q=80$, $n=1600$&
		0.800&0.010&0.600&0.015&0.415&0.018&0.376&0.018\\ \hline
		$p$=210,  $q=105$, $n=2100$ &
		0.800&0.008&0.600&0.014&0.413&0.015&0.381&0.016\\ \hline
		$p$=260,  $q=130$, $n=2600$ &
		0.800&0.007&0.600&0.014&0.412&0.014&0.383&0.015\\ \hline
		
	\end{tabular}
	\caption{ We fix $(r_1, r_2, r_3, r_4)=(0.8, 0.6, 0.4, 0.4)$, $r_5=\cdots r _q=0$,  $c_1=0.1$, $c_2=0.05$,  $r_c=0.077$, $k_0=4$, and let $\{p,q,n\}$ vary. The results are obtained  based on 1000 replications with centered normal distributions. The tuning parameter $\epsilon_n$ is chosen to be $\log\log(n)/n^{2/3}$.  }\label{tab3_2}
\end{table}
According to the results we find that our estimator is excellent, especially when the population CCC are not equal to each other.  If the multiplicity of  some CCC is bigger than one,  as the results shown in Table \ref{tab3_2}, there should be  some $\hat r_i$'s  close to each other. In this case, although $r_i=r_{i+1}$ for certain multiple population CCC, the estimator $\hat{r}_i$ may differ from $\hat{r}_{i+1}$ by a certain amount since our $\lambda_i$'s are ordered. Suppose now we can determine $r_i=r_{i+1}$ from the information in the CCA matrix, we may then use the average of $\hat{r}_i$ and $\hat{r}_{i+1}$ to get more precise estimate of both $r_i$ and $r_{i+1}$.
Hence,  determining  if there are multiple $r_i$'s would be important for the estimate of $r_i$ as well,   although sometimes directly using $\hat r_i$ is sufficient. In the following,  we propose a statistic to test the hypothesis:
\begin{align}\label{apptest2}
H_0: r_{j_0-1}>r_{j_0}=r_{j_0+1}=\dots= r_{j_0+j_1-1}>r_{j_0+j_1}\quad\mbox{v.s.}\quad H_1: \mbox{not}~ H_0,
\end{align}
where $j_0\geq1$, $j_0+j_1-1\leq k_0$ and $r_0=1$. If this test is not rejected, we then estimate $r_{j_0}=r_{j_0+1}=\dots= r_{j_0+j_1-1}$ by $j_1^{-1}\sum_{i=j_0}^{j_0+j_1-1}\hat r _i $.
Under the null hypothesis in (\ref{apptest2}), using Theorem \ref{thm.fluctuation},  we know that  the $j_1$-dimensional random vector
\begin{align*}
\Big\{\frac{\sqrt{n}(\lambda_{j}-\gamma_j)}{\xi(r_{j_0})},~j\in \{j_0,\dots,j_1-1\}\Big\}
\end{align*}
converges weakly to the distribution of the ordered eigenvalues of  $j_1$-dimensional symmetric Gaussian random  matrix $G$.  Thus we can naturally use  the testing statistics $$T_n:=\frac{\sqrt{n}(\la_{j_0}-\la_{j_0+j_1-1})}{\xi({\hat r_{j_0}})},$$
where $ \hat{r}_{j_0}=j_1^{-1}\sum_{i=j_0}^{j_0+j_1-1}\hat r _i $, to test the hypothesis \eqref{apptest2}. We have the following theorem.
\begin{thm}
	Suppose that the assumptions in  Theorem \ref{thm.fluctuation} hold. Under the null hypothesis of  \eqref{apptest2}, we have for any $x\in \mathbb{R}$,
	\begin{align*}
	|\P(T_n\leq x)-\P(\la^G_{1}-\la^G_{j_1}\leq x)|\to 0,
	\end{align*}
	where $\la^G_1$ and  $\la^G_{j_1}$ are  the largest and smallest  eigeivalues of $j_1$-dimensional symmetric Gaussian random matrix $G$ (defined in Theorem \ref{thm.fluctuation}) respectively. In addition,  if the alternative hypothesis $H_1$ is: there exists some $j_2\in[0,j_1-2]\cap \mathbb{Z}$ such that $ r_{j_0+j_2}$ and $r_{j_0+j_2+1}$ are well separated (c.f. Definition \ref{def.well separated}), we have the power function
	\begin{align*}
	Power=\P(T_n>q_{\alpha}(j_1))\to 1,
	\end{align*}
	where $q_{\alpha}(j_1)$ is the $1-\alpha$  quantile
	of the distribution $\la^G_{1}-\la^G_{j_1}$.
\end{thm}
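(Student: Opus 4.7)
The plan is to obtain both parts from Theorems \ref{thm.061901} and \ref{thm.fluctuation} together with Slutsky's lemma, after verifying that $\xi(\hat r_{j_0})$ behaves well in each regime.

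For the null-hypothesis statement, I would set $r:=r_{j_0}=r_{j_0+1}=\cdots=r_{j_0+j_1-1}$. Because the map $r\mapsto\gamma(r)$ given by \eqref{17050801} is deterministic, all $\gamma_j$'s in $J=\{j_0,\dots,j_0+j_1-1\}$ collapse to the single value $\gamma(r)$. Hence
\begin{align*}
\frac{\sqrt{n}(\lambda_{j_0}-\lambda_{j_0+j_1-1})}{\xi(r)}=\frac{\sqrt{n}(\lambda_{j_0}-\gamma(r))}{\xi(r)}-\frac{\sqrt{n}(\lambda_{j_0+j_1-1}-\gamma(r))}{\xi(r)},
\end{align*}
and Theorem \ref{thm.fluctuation} (applied to the block $J_l=J$, whose corresponding $r_{(\ell)}=r$ is well separated from the neighboring distinct population correlations and from $r_c,1$ by the definition of the hypothesis) yields that this difference converges in distribution to $\lambda_1^G-\lambda_{j_1}^G$. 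To finish, I need only to replace $\xi(r)$ with $\xi(\hat r_{j_0})$. Since $\hat r_i\stackrel{a.s.}{\to}r$ for every $i\in J$ by the consistency result preceding the theorem, the average $\hat r_{j_0}=j_1^{-1}\sum_{i\in J}\hat r_i$ also converges a.s.\ to $r$. As $\xi$ is a rational function of $r$ that is continuous and nonzero at $r\in(r_c,1)$ (inspected directly from \eqref{17040780}), one has $\xi(\hat r_{j_0})/\xi(r)\to 1$ a.s. Slutsky's lemma then gives $T_n\Rightarrow\lambda_1^G-\lambda_{j_1}^G$, and since this limit has a continuous distribution function, the pointwise convergence $|\P(T_n\le x)-\P(\lambda_1^G-\lambda_{j_1}^G\le x)|\to 0$ follows.

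For the power statement, I would exploit the strict monotonicity of $\gamma(r)$ on $(r_c,1)$. A direct differentiation of $\gamma(r)=(1-c_1)(1-c_2)r+[(1-c_1)c_2+c_1(1-c_2)]+c_1c_2/r$ gives $\gamma'(r)=(1-c_1)(1-c_2)-c_1c_2/r^2$, which is strictly positive precisely on $r>r_c$. Under the alternative, there is an index $j_0+j_2\in J$ with $r_{j_0+j_2}-r_{j_0+j_2+1}\ge\varepsilon>0$; since both of these lie in $(r_c,1)$ and are well separated from $r_c$ (being outliers), monotonicity gives a constant $\delta>0$ with $\gamma(r_{j_0+j_2})-\gamma(r_{j_0+j_2+1})\ge\delta$. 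By Theorem \ref{thm.061901} and the ordering of the $\lambda_i$'s,
\begin{align*}
\lambda_{j_0}-\lambda_{j_0+j_1-1}\ge\lambda_{j_0+j_2}-\lambda_{j_0+j_2+1}\xrightarrow{a.s.}\gamma(r_{j_0+j_2})-\gamma(r_{j_0+j_2+1})\ge\delta.
\end{align*}
Meanwhile $\hat r_{j_0}$, as a continuous function of the $\lambda_i$'s via \eqref{esthatt}, converges a.s.\ to the average of the $r_i$'s in $J$, which lies in $(r_c,1)$; hence $\xi(\hat r_{j_0})$ is a.s.\ bounded above by some finite constant $C$. Consequently $T_n\ge\sqrt{n}\,\delta/(2C)\to\infty$ a.s., and in particular $\P(T_n>q_\alpha(j_1))\to 1$.

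The main obstacle is really only the handling of $\xi(\hat r_{j_0})$: under the null everything is clean via Slutsky, but under the alternative one must ensure that the denominator $\xi(\hat r_{j_0})$ cannot blow up nor collapse to zero, since otherwise the divergence of $T_n$ could fail. This reduces to the observation that $\hat r_{j_0}$ is an almost-sure continuous functional of the bounded sequence $\{\lambda_i\}$ and that the limit is always bounded strictly inside $(r_c,1)$ by the outlier assumption. Once this is in place, both parts reduce to bookkeeping with the already-established Theorems \ref{thm.061901} and \ref{thm.fluctuation}.
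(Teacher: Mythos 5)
Your proposal is correct, and it takes essentially the approach the paper sketches (the paper's proof is a one-line citation to Skorohod's strong representation theorem and Theorem \ref{thm.fluctuation}). You substitute Slutsky's lemma for Skorohod to absorb the replacement of $\xi(r)$ by $\xi(\hat r_{j_0})$, which is an equivalent and arguably more direct device, and you correctly supply the additional ingredient the paper's one-liner leaves implicit for the power statement, namely Theorem \ref{thm.061901} together with the strict monotonicity of $r\mapsto\gamma(r)$ on $(r_c,1)$ and the a.s.\ boundedness of $\xi(\hat r_{j_0})$. The only point worth flagging for completeness: in the null part you should note that Assumption \ref{ass.070301} keeps $c_1,c_2$ bounded in a compact subset of $(0,1)$ and the well-separation hypothesis keeps $r$ uniformly inside $(r_c,1)$, so $\xi(r)$ is bounded away from $0$ uniformly in $n$; this is what turns $\hat r_{j_0}-r\to 0$ a.s.\ into $\xi(\hat r_{j_0})/\xi(r)\to 1$ a.s.\ despite the $n$-dependence of $c_1,c_2$ (you implicitly invoke this, and it is needed for the argument to close).
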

This theorem can be easily obtained by using Skorohod strong representation  theorem and Theorem \ref{thm.fluctuation}, thus we omit the proof here. For convenient application, 	under the null hypothesis the 95\% empirical quantile
of the distribution $\la^G_{1}-\la^G_{j_1}$ is shown in Table \ref{tab4} with some different $j_1$ by $10^8$ bootstrap replicates.
\begin{table}[htbp]
	\begin{tabular}{ccccccccccc}
		\hline
		&$j_1=2$&$j_1=3$&$j_1=4$&$j_1=5$&$j_1=6$&$j_1=7$&$j_1=8$&$j_1=9$&$j_1=10$&$\cdots$ \\ \hline
		$q_{5\%}(j_1)$    &3.462&4.593&5.459&6.191&6.838&7.424&7.964 &8.468&8.942&$\cdots$  \\ \hline
		
	\end{tabular}
	\caption{ The 5\% empirical quantile
		of the distribution $\la^G_{1}-\la^G_{j_1}$ with 4 digits. }\label{tab4}
\end{table}

Finally, we    give a  procedure of  estimating the population CCC. The algorithm is shown in Table \ref{tab5}.
\begin{table}[htbp]
	\begin{tabular}{|l|}
		\hline
		\\
		Step 1: $\{\la_1\geq\cdots\geq\la_q\}=$ Eigenvalue($C_{yx}$);\\
		Step 2: Test whether $r_1=0$. \\
	\qquad ~~~	If not reject, then continue;\\
		Step 3: $\hat k_0:=\max \{i:\lambda_i\geq d_++\epsilon_n\}$;\\
		Step 4: $\hat r_i=\phi(\la_i )$, $i=1,\dots,\hat k_0$;\\
		Step 5:  If there exists some $ \{j_0,\dots,j_0+j_1-1\}$,\\
		\qquad ~~~ such that $\frac{\sqrt{n}(\la_{j_0}-\la_{j_0+j_1-1})}{\xi({\hat r_{j_0}})}\leq q_{5\%}(j_1)$,\\
			\qquad ~~~  then reset $\hat r_{j_0}=\dots=\hat  r_{j_0+j_1-1}=j_1^{-1}\sum_{i=j_0}^{j_0+j_1-1}\hat r _i $.\\
			Step 6: $\hat\rho_i=\sqrt{\hat r_i}$.
		\\\hline
		
	\end{tabular}
	\caption{ Algorithm for estimating the population CCC. }\label{tab5}
\end{table}

\subsection{The  limestone grassland community data}
To illustrate the application of canonical correlation, we apply our result   to  a
 limestone grassland community data which  can  be  obtained  from  Table A-2 of  \cite{Gittins85C}. This data  records eight species ({\bf x}, $p=8$) and   six  soil variables ({\bf y}, $q=6$)  from  forty-five ($n=44$) 10m $\times$ 10m stands
  in a limestone grassland community
 in Anglesey, North Wales. This experiment concerns relationships between the abundances of several plant
 species and associated soil characteristics.
More explanations can be found in Chapter 7 of  \cite{Gittins85C}. Notice that although the dimensions $p,q$ and the sample size $n$ are not so ``large", from the simulations (some results are not reported in this manuscript), we can already observe the high-dimensional effect when the dimensions are bigger than 5 and sample sizes  are
 bigger than 20, i.e.,  the classical estimates are already not reliable.  The eigenvalues of $C_{yx}$ are as follows:
\begin{align*}
\la_1=0.829,\quad \la_2=0.520,\quad \la_3=0.359,\quad \la_4=0.107,\quad \la_5=0.094,\quad \la_6=0.038.
\end{align*}
In addition, we have $d_+=0.533$ and $\xi^3_{tw}=0.468$.  Then we use
\eqref{teststa1} to test the hypothesis $r_1=0$, and obtain the p-value to be $3.71\times10^{-5}$. Thus we have strong evidence to  reject the null hypothesis,  which suggests the existence of a  linear relationship between {\bf x} and {\bf y}. Since there  is only one eigenvalue of $C_{yx}$ which is bigger that $d_+$,  we can only determine the existence of the  first CCC  $\hat\rho_1=\sqrt
{\hat r_1} = 0.864$,  which is estimated by \eqref{esthatt}.  For the rest CCC, we have no enough evidence to ensure their existence.

\section{Preliminaries} \label{s. preliminary}
In this section, we introduce some basic notions and known technical results escorting our proofs and calculations in the subsequent sections.

For any given probability distribution $\sigma(\lambda)$, its Stieltjes transform is known as
\begin{eqnarray*}
s_{\sigma}(z):=\int\frac{1}{\lambda-z}d\sigma(\lambda),\quad z\in \mathbb{C}^+:=\{\omega\in\mathbb{C}: \Im \omega>0\}.
\end{eqnarray*}
From the definition, we can immediately get the fact that $\Im s_{\sigma}(z)>0$ for $z\in\mathbb{C}^+$. Actually,
the definition of $s_{\sigma}(z)$ can be extended to the domain $\mathbb{C}\setminus\text{supp}(\sigma)$,
by setting $s_\sigma(\bar{z})=\overline{s_\sigma(z)}$, where $\text{supp}(\sigma)$ represents the support of $\sigma(\lambda)$. Then $s_\sigma(z)$ is holomorphic on $\mathbb{C}\setminus \text{supp}(\sigma)$.

In the sequel, we will also need the Stieltjes transform of the MANOVA matrices $A(A+B)^{-1}$ and $\mathcal{A}(\mathcal{A}+\mathcal{B})^{-1}$, where
\begin{align*}
A \sim \mathcal{W}_p( I_{p}, q),\quad  B\sim\mathcal{W}_p( I_{p}, n-q), \quad \mathcal{A} \sim \mathcal{W}_q( I_{q}, p), \quad \mathcal{B}\sim\mathcal{W}_q( I_{q}, n-p)
\end{align*}
From \cite{BaiH15C}, we know that the Stieltjes transform of $A(A+B)^{-1}$  converges (a.s.) to
\begin{align}
\check{s}(z):= \frac{z-c_1-c_2-\sqrt{(z-d_-)(z-d_+)}}{2c_1z(z-1)}-\frac{1}{z}, \label{17040350}
\end{align}
and  the Stieltjes transform of $\mathcal{A}(\mathcal{A}+\mathcal{B})^{-1}$ converges to
\begin{align}
\tilde{s}(z):= \frac{z-c_1-c_2-\sqrt{(z-d_-)(z-d_+)}}{2c_2z(z-1)}-\frac{1}{z}.  \label{17040351}
\end{align}
Note that $\tilde{s}(z)$ is the Stieltjes transform of the distribution given in (\ref{17040310}), according to the discussions in Section \ref{section. null monova}.

Throughout the paper, we will often use the well known large deviation result of the extreme eigenvalues of Wishart matrices. Assume that $ S \sim \mathscr{W}_{n_1}( I_{n_1}, n_2)$ for some $n_1:=n_1(n)$ and $n_2:=n_2(n)$ satisfying $n_1/n\to a_1\in(0,1)$, $n_2/n\to a_2\in(0,1)$  as $n$ tends to infinity, and $a:=a_1/a_2\in (0,1)$. Denoting $\lambda_1(n_2^{-1} S )$ and $\lambda_{n_1}(n_2^{-1} S )$ the largest and the smallest eigenvalues of $n_2^{-1} S $ respectively, it is well known that for any given positive number $\varepsilon>0$,
\begin{eqnarray}\label{eq4.10}
(1+\sqrt{a})^2+\varepsilon\geq \lambda_1(n_2^{-1} S )\geq \lambda_{n_1}(n_2^{-1} S )\geq (1-\sqrt{a})^2-\varepsilon \label{0726100}
\end{eqnarray}
holds with overwhelming probability ( see Theorem 2.13 of \cite{DS})

\section{Determinant equation} \label{s. determinant eq} In this section, we derive a determinant equation for the eigenvalues of $C_{xy}$ which are not in the spectrum of the null case. We will see that the canonical correlation matrix in the finite rank case can be viewed as a finite rank perturbation of that in the null case.

As mentioned above, the canonical correlation coefficients are invariant under the block diagonal transformation $(\mathbf{x}_i,\mathbf{y}_i)\to(\A\mathbf{x}_i,\B\mathbf{y}_i)$, for any $p\times p$ matrix $\A$ and $q\times q$ matrix $\B$, as long as both of them are nonsingular. Hence, to study $\lambda_i$, we can start with the following setting
\begin{eqnarray*}
\left(
\begin{array}{cc}
\mathscr{X}\\
\mathscr{Y}
\end{array}
\right)=\Sigma^{1/2}
V,\quad \Sigma=\left(
\begin{array}{ccc}
 I_p & T\\
T' &  I_q
\end{array}
\right),
\end{eqnarray*}
where  $V$ is a $(p+q)\times n$ matrix with i.i.d. $N(0,1)$ entries, and \[T=\text{diag}(\sqrt{r_1},\ldots, \sqrt{r_k})\oplus \mathbf{0}_{(p-k)\times (q-k)},\] e.g. see Muirhead \cite{Muirhead1982}, page 530, formula (7).  Apparently, there exists a Gaussian matrix $\mathscr{W}$ independent of $\mathscr{Y}$ such that
\begin{align}
\left(
\begin{array}{cc}
\mathscr{X}\\
\mathscr{Y}
\end{array}
\right)=\left(
\begin{array}{cc}
\mathscr{W}+T\mathscr{Y}\\
\mathscr{Y}
\end{array}
\right). \label{052203}
\end{align}
Here
\begin{align*}
\text{vec}(\mathscr{W})\sim N\big(\mathbf{0},  I_n\otimes  (I_p-TT')\big).
\end{align*}
Hence, $\mathscr{W}$ is a $p\times n$ matrix with independent entries. More specifically, $w_{ij}\sim N(0, 1-r_i)$.
According to the above definitions, we have
\begin{eqnarray*}
\Sigma_{xx}= I_p,\quad \Sigma_{yy}= I_q,\quad \Sigma_{xy}= T,\quad \Sigma_{yx}= T'.
\end{eqnarray*}

For brevity, from now on, we use symbols $ W, X, Y$ to denote the scaled data matrices, i.e.,
\begin{align}
 W:=\frac{1}{\sqrt{n}} \mathscr{W},\qquad  X:=\frac{1}{\sqrt{n}} \mathscr{X},\qquad  Y:=\frac{1}{\sqrt{n}} \mathscr{Y}.  \label{17040101}
\end{align}
Correspondingly, we introduce the notations
\begin{align}
 S _{ww}= W W',\quad  S _{wy}= W Y',\quad  S _{yw}= Y W' ,\quad
 S _{yy}= Y Y'. \label{17052401}
\end{align}
In light of (\ref{052203}), we have
\begin{eqnarray}
& S _{xx}= S _{ww}+ T S _{yw}+ S _{wy} T'+ T S _{yy} T',&\nonumber\\
& S _{xy}= S _{wy}+ T S _{yy},\quad  S _{yx}= S _{yw}+ S _{yy} T'.& \label{061809}
\end{eqnarray}
Hence, due to the assumption that $\text{rank}( T)=k$, $ C_{xy}$ can be regarded as a finite rank perturbation of $ C_{wy}$.
Note that, with probability $1$, the eigenvalues of $ C_{xy}$  are the solutions for $\lambda$ of the characteristic equation
\begin{eqnarray}
\det(D):=\det( S _{xy} S _{yy}^{-1} S _{yx}-\lambda  S _{xx})=0. \label{072501}
\end{eqnarray}
In light of (\ref{061809}), it is equivalent to
\begin{eqnarray*}
\det( S _{wy} S _{yy}^{-1} S _{yw}-\lambda  S _{ww}+(1-\lambda)( T S _{yw}+ S _{wy} T'+ T S _{yy} T'))=0.
\end{eqnarray*}
Denote by
\begin{align}
 P_{y}= Y'( Y Y')^{-1} Y . \label{projection y}
\end{align}
 Then both $ P_{y}$ and $ I_n- P_{y}$ are projections with  $\text{rank}( P_{y})=q $ and $\text{rank}( I_n- P_{y})=n-q$ almost surely.
 Further, we decompose the matrix $ W$ and $ T$ as follows,
\begin{align}
 W=\binom{ W_1}{ W_2} ,\qquad  T=\binom{ T_1}{\mathbf{0}}, \label{16121201}
\end{align}
where $ W_1$ (resp. $ T_1$) is a $ k\times n$ (resp.  $k\times q$) matrix composed by  the first $k$ rows of $ W$ (resp.  $ T$).
Correspondingly, we can introduce the notations like $ S _{y w_2}=YW_2'$, $ S _{w_2w_2}=W_2W_2'$, analogously to (\ref{17052401}). Applying the decompositions in (\ref{16121201}), we can rewrite   (\ref{072501}) in the following form
\begin{align}
\det(D)=\det\left(\begin{array}{cc}{D}_{11}& {D}_{12}\\ {D}_{21} & {D}_{22}\end{array}\right)=0 \label{17051402}
\end{align}
where
\begin{align}
&{D}_{11}\equiv{D}_{11}(\la):= W_1 P_{y} W_1' -\lambda  W_1 W_1'+(1-\lambda)( T_1 Y W_1'+ W_1 Y' T_1'+ T_1 Y Y' T_1'),\nonumber\\
&{D}_{12}\equiv {D}_{12}(\la):= W_1 P_{y} W_2' -\lambda  W_1 W_2'+(1-\lambda) T_1 Y W_2', \nonumber\\
&{D}_{21}\equiv {D}_{21}(\la):= W_2 P_{y} W_1' -\lambda  W_2 W_1'+(1-\lambda) W_2 Y' T_1' \label{16121228}
\end{align}
and
\begin{align}
{D}_{22}\equiv {D}_{22}(\la):= W_2 P_{y} W_2' -\lambda  W_2 W_2'=S_{w_2w_2} (C_{w_2y}-\lambda), \label{16121221}
\end{align}
where $ C_{w_2y}$ is defined analogously to (\ref{17031001}).
In case   ${D}_{22}$ is invertible, we have the equation
 $\det({ D })=\det({D}_{22})\det({D}_{11}-{D}_{12}{D}_{22}^{-1}{D}_{21})=0$, which implies \begin{align}\lb{3.9.1}
\det({D}_{11}(\lambda)-{D}_{12}(\lambda){D}_{22}^{-1}(\lambda) {D}_{21}(\lambda))=0.
\end{align}
Apparently, the sufficient condition of   ${D}_{22}(\lambda)$ being  invertible is that $\la$ is not an eigenvalue of $ C_{w_2y}$. Notice that since $k$ is fixed and $W_2$ is independent of $Y$, we can apply the results in Theorems \ref{thm.070301} and  \ref{thm.071501} to the matrix $ C_{w_2y}$ as well.
Hence,  if we want to investigate the eigenvalues which are not in $\text{Spec} (C_{w_2y})$, it suffices to solve \eqref{3.9.1} and find the  properties of the limits of its solutions.

For brevity, we introduce the notation
\begin{eqnarray}
 M_n(z):={D}_{11}(z)-{D}_{12}(z){D}_{22}^{-1}(z){D}_{21}(z) \label{071801}
\end{eqnarray}
which is a well defined $k\times k$ matrix-valued function for all $z\in \mathbb{C}\setminus \text{Spec}( C_{w_2y})$.
using (\ref{16121228}), (\ref{16121221}) and the fact $YP_y=Y$, we can write
\begin{align}
M_n(z)= \mathcal{Z}_1 \mathscr{A}(z) \mathcal{Z}_1', \label{17050302}
\end{align}
where
\begin{align}
\mathcal{Z}_1=W_1+T_1Y, \qquad
 \mathscr{A}(z)=( P_{y}-z)-( P_{y}-z)  W_2' \big( W_2( P_{y}-z)  W_2'\big)^{-1} W_2( P_{y}-z).  \label{17050301}
\end{align}
Hereafter, we often use $z$ to represent  $zI$ in short when the dimension of the identity matrix is clear.

\section{Limits} \label{s. proof of main theorem}In this section, we provide the proof of Theorem \ref{thm.061901} based on several lemmas, whose proofs will be postponed. Our discussion consists of two parts, aimed at (i) and (ii)  in Theorem \ref{thm.061901}, respectively:
(1) For the outliers, we locate them by deriving the limits of the solutions to the equation (\ref{3.9.1}); (2) For the eigenvalues sticking to $d_{+}$, we simply use Cauchy's interlacing property to get the conclusion.
\begin{itemize}
\item {\it{The outliers}}
\end{itemize}
To locate the outliers, we start with the equation (\ref{3.9.1}), i.e., $\det M_n(z)=0$.
Intuitively, if $ M_n(z)$ is {\it{close}} to some deterministic matrix-valued function $M(z)$ in certain sense, it is reasonable to expect that the solutions of (\ref{3.9.1}) are close to those of the equation $\det [M(z)]=0$. Such an implication can be explicitly formulated in the {\it {location lemma}} given later, see Lemma \ref{lem.062902}. To state the result, we introduce more notations. Set for  any positive constant $\delta$ the domain
\begin{align}
\mathcal{D}\equiv\mathcal{D}(\delta):=\big\{z\in \mathbb{C}: d_{+}+\delta< \Re z\leq 2, |\Im z|\leq 1\big\}. \label{16121240}
\end{align}
 Define the functions $s(z)$, $m_i(z): \mathcal{D}\to \mathbb{C}$ as follows:
\begin{align}
    &s(z):= \frac{z-c_1-c_2-\sqrt{(z-d_{-})(z-d_{+})}}{2 (z-1)}. \label{17040110}\\
  & m_i(z):=\big(c_2-(1+c_1)z-(1-z) s(z)\big)(1-r_i)+(1-z)\big(1-\frac{1-z}{c_2} s(z)\big)r_i, \label{17033101}
\end{align}
where
the  square root  is specified as the one with positive imaginary part in case $z\in \mathbb{C}^+$, and $s(\bar{z})=\overline{s(z)}$.  It is elementary to see that $s(z)$ and $m_i(z)$'s are all holomorphic on $\mathbb{C}\setminus [d_-, d_+]$ and thus also on $\mathcal{D}$. In addition, from the definition (\ref{17040110}), $s(z)$ satisfies the following equation
\begin{align}
(z-1)s^2(z)+(c_1+c_2-z) s(z)-c_1c_2=0. \label{17040301}
\end{align}
Recall $\check{s}(z)$ and $\tilde{s}(z)$ from (\ref{17040350}) and (\ref{17040351}). We see that
\begin{align}
\check{s}(z)=\frac{1}{c_1z}s(z)-\frac{1}{z},\qquad \tilde{s}(z)=\frac{1}{c_2z}s(z)-\frac{1}{z}. \label{17040615}
\end{align}

Further, we  define the diagonal matrix
\begin{eqnarray*}
M(z):=\text{diag}\big(m_1(z),\dots,m_k(z)\big).
\end{eqnarray*}
Recall $\delta$ in the definition of $\mathcal{D}$ in (\ref{16121240}).
We introduce the following event
\begin{align}
& \Xi_1\equiv \Xi_1(n,\delta):=\Big{\{} \| C_{w_2y}\|\leq d_{+}+\frac{\delta}{2}\Big{\}}, \label{17040361}
\end{align}
Note that, Theorem \ref{thm.071501} tells us that $ \Xi_1$ holds with overwhelming probability. Hence, we have $\|(C_{w_2y}-z)^{-1}\|=O(1)$ for all $z\in \mathcal{D}$ with overwhelming probability.  This also implies that, in $ \Xi_1$, $ M_n(z)$ is  holomorphic on $\mathcal{D}$ almost surely.

Our main technical task for the limits part is the following lemma.
\begin{lem} \label{lem.062802}
For any given $\delta>0$, and any sufficiently small $\varepsilon>0$,
\begin{align*}
\sup_{z\in \mathcal{D}}\sup_{i,j=1,\ldots,k}|( M_n)_{ij}(z)- M_{ij}(z)|\leq n^{-\varepsilon}
\end{align*}
holds almost surely.
\end{lem}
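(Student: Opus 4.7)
The plan is to exploit the representation (\ref{17050302}) together with the independence of $W_1$ from $(W_2,Y)$. Writing $\mathcal{Z}_1=W_1+T_1Y$ and expanding $M_n(z)=\mathcal{Z}_1\mathscr{A}(z)\mathcal{Z}_1'$ into four blocks, I would first observe that the $i$th row of $W_1$ consists of i.i.d.\ $N(0,(1-r_i)/n)$ entries independent of the other rows and of $(W_2,Y)$. This yields the conditional expectation
\begin{equation*}
\mathbb{E}\bigl[(M_n(z))_{ij}\,\big|\,W_2,Y\bigr]=\delta_{ij}(1-r_i)\cdot\tfrac{1}{n}\,\mathrm{tr}\,\mathscr{A}(z)+\sqrt{r_ir_j}\,Y_i\mathscr{A}(z)Y_j',
\end{equation*}
splitting the problem into a concentration step for $W_1$ and a deterministic-equivalent step for the $(W_2,Y)$-measurable quantities on the right.

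For the concentration step, on $\Xi_1$ one has $\|\mathscr{A}(z)\|=O(1)$ and hence $\|\mathscr{A}(z)\|_{\mathrm{HS}}^2=O(n)$ uniformly for $z\in\mathcal{D}$; a Hanson--Wright bound applied to the Gaussian quadratic form $W_1\mathscr{A}(z)W_1'$, together with standard concentration for the cross terms $W_1\mathscr{A}(z)Y'T_1'$, then delivers $(M_n(z))_{ij}=\mathbb{E}[(M_n(z))_{ij}\mid W_2,Y]+O(n^{-1/2+\varepsilon})$ with overwhelming probability. For the deterministic equivalents, the key simplification is $Y(P_y-z)=(1-z)Y$, which collapses the bilinear form to
\begin{equation*}
Y\mathscr{A}(z)Y'=(1-z)S_{yy}-(1-z)^2 S_{yw_2}\bigl(W_2(P_y-z)W_2'\bigr)^{-1}S_{w_2y}.
\end{equation*}
Conjugating by $S_{yy}^{-1/2}$, the rows of $\tilde Y:=S_{yy}^{-1/2}Y$ form a Haar-distributed orthonormal frame of $\mathrm{range}(P_y)$, independent of $W_2$. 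Rotational invariance plus Haar concentration then force $\tilde Y G\tilde Y'$ to be asymptotically a scalar multiple of $I_q$ for any $(\mathrm{range}(P_y),W_2)$-measurable $G$; in particular the off-diagonal entries $Y_i\mathscr{A}(z)Y_j'$ for $i\neq j$ vanish in the limit, and the diagonal reduces to a scalar. The remaining scalar limit, and the parallel computation of $\frac{1}{n}\mathrm{tr}\,\mathscr{A}(z)$ via $\mathrm{tr}\,\mathscr{A}(z)=\mathrm{tr}(P_y-z)-\mathrm{tr}[(W_2(P_y-z)W_2')^{-1}W_2(P_y-z)^2W_2']$, reduce to the Stieltjes transform of the $(p-k)$-dimensional MANOVA matrix $A'(A'+B')^{-1}$ with $A'=W_2P_yW_2'$ and $B'=W_2(I-P_y)W_2'$; invoking (\ref{17040350})--(\ref{17040615}) then identifies both deterministic limits as precisely the two building blocks of $m_i(z)$ in (\ref{17033101}).

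The pointwise bounds are upgraded to the uniform statement on $\mathcal{D}$ by a standard holomorphy/net argument: on $\Xi_1$ the entries of $M_n(z)-M(z)$ are holomorphic on $\mathcal{D}$ with derivative bounded by a polynomial in $n$ (by Cauchy's integral formula and the resolvent bound $\|(C_{w_2y}-z)^{-1}\|=O(1)$ on $\mathcal{D}$), so control at an $n^{-3}$-net combined with a Lipschitz argument suffices; a union bound plus Borel--Cantelli then promotes overwhelming probability to the almost-sure statement in the lemma. The main technical obstacle I anticipate is the entrywise control of $Y_i\mathscr{A}(z)Y_j'$ with rate $n^{-\varepsilon}$: while the averaged MANOVA local law of \cite{BaiH15C} supplies the trace limits almost immediately, the bilinear-form bounds require an anisotropic local-law-type estimate, which can be obtained either by sharpened Haar concentration for $\tilde Y$ together with an entrywise MANOVA resolvent bound, or by a direct integration-by-parts argument exploiting the joint Gaussianity of $(Y,W_2)$.
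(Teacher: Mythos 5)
Your plan matches the paper's proof: the paper decomposes $M_n=\mathscr{M}_1+\mathscr{M}_2+\mathscr{M}_2'+\mathscr{M}_3$ as in (\ref{17031505}), applies the Gaussian large-deviation bound (Lemma \ref{let}) to the $W_1$-quadratic form, passes to the SVD $Y=U_y\Lambda_y V_y$ so that $\mathscr{A},\mathscr{B},\mathscr{D}$ are $(\Lambda_y,V_y,W_2)$-measurable and only the Haar factor $U_y$ remains random in $\mathscr{M}_2,\mathscr{M}_3$, and identifies the deterministic limits through the MANOVA Stieltjes transforms $\check s,\tilde s$ in Lemma \ref{lem.17032510}, exactly as you invoke (\ref{17040350})--(\ref{17040615}). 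One clarification on the step you flag as the main obstacle: no anisotropic MANOVA resolvent estimate is needed, because the paper replaces the first $k$ rows of $U_y$ by i.i.d.\ Gaussian vectors via Gram--Schmidt (Lemma \ref{lem.17032401}), after which $Y_i\mathscr{A}(z)Y_j'$ reduces by Hanson--Wright-type concentration to $\delta_{ij}\tfrac1q\ntr\mathscr{D}(z)$ plus an $O(n^{-1/2+\varepsilon})$ error, so only averaged trace control is required.
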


The proof of Lemma \ref{lem.062802}  will be postponed  to Section \ref{sec5}. The following  location lemma is a direct consequence of Lemma \ref{lem.062802}. At first, we remark here, it will be clear that the solutions of the equation $\det (M(z)) =0$ can only be real.
\begin{lem}[The location lemma] \label{lem.062902} For any given and sufficiently small $\delta>0$,  let $z_1>\cdots>z_{\kappa}$ be the solutions in $(d_{+}+\delta,1)$ of the equation $\det (M(z))=0$, with multiplicities $n_1,\ldots,n_{\kappa}$ respectively. Then for any fixed $\eta>0$ and each $i\in\{1,\ldots,\kappa\}$, almost surely, there exists $z_{n,i,1}>\cdots>z_{n,i,\kappa_i}$ with multiplicities $m_{i,1},\ldots,m_{i,\kappa_i}$ respectively,  satisfying $\sum_{j=1}^{\kappa_i}m_{i,j}=n_i$, such that
\begin{eqnarray}
\sup_{j=1\ldots, \kappa_i}|z_{n,i, j}- z_i|\leq \eta,  \label{062807}
\end{eqnarray}
and $\cup_{i=1}^{s}\{z_{n,i,j},j=1,\ldots, \kappa_i\}$ is the collection of all solutions (multiplicities counted) of the equation $\det  M_n(z)=0$ in $(d_{+}+\delta,1)$.
\end{lem}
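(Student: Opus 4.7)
The strategy is a standard Hurwitz argument, built on Lemma \ref{lem.062802} as the analytic input. On the event $\Xi_1$ introduced in (\ref{17040361}), which by Theorem \ref{thm.071501} holds with overwhelming probability and hence almost surely for all large $n$ by Borel--Cantelli, both $M_n(z)$ and $M(z)$ are holomorphic $k\times k$ matrix-valued functions on $\mathcal{D}$. Lemma \ref{lem.062802} provides the almost sure convergence $\sup_{z\in\mathcal{D}}\max_{i,j}|(M_n)_{ij}(z)-M_{ij}(z)|\to 0$, and since the determinant is a polynomial in the entries of a fixed-size ($k\times k$) matrix, this upgrades at once to the uniform convergence $\det M_n(z)\to\det M(z)$ on $\mathcal{D}$ almost surely.

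Next, I would verify that every zero of $\det M_n$ in $\mathcal{D}$ is real and that its order as a zero of the holomorphic function $\det M_n$ coincides with its multiplicity as an eigenvalue of $C_{xy}$. The Schur-complement identity used in Section \ref{s. determinant eq} gives $\det D(z)=\det D_{22}(z)\cdot\det M_n(z)$ wherever $D_{22}(z)$ is invertible. Here $\det D(z)$ and $\det D_{22}(z)$ are real-coefficient polynomials in $z$ whose zero sets are $\text{Spec}(C_{xy})$ and $\text{Spec}(C_{w_2y})$, both real, so $\det M_n(z)$ is a rational function in $z$ whose zeros in $\mathbb{C}\setminus\text{Spec}(C_{w_2y})$ form a real submultiset of $\text{Spec}(C_{xy})$. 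On $\Xi_1$ the domain $\mathcal{D}$ is disjoint from $\text{Spec}(C_{w_2y})$ (all eigenvalues of $C_{w_2y}$ lie in $[0,d_++\delta/2]$), so every zero of the holomorphic function $\det M_n|_{\mathcal{D}}$ is real, and its order equals the eigenvalue multiplicity in $C_{xy}$.

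For any $\eta>0$ small enough that the closed disks $\overline{B(z_i,\eta)}$ are pairwise disjoint, contained in $\mathcal{D}$, and isolate $z_i$ from the other zeros of $\det M$, Hurwitz's theorem applied on each $B(z_i,\eta)$ (together with the uniform convergence of the first paragraph) implies that, almost surely for all sufficiently large $n$, $\det M_n$ has exactly $n_i$ zeros (counting multiplicity) in $B(z_i,\eta)$. By the realness from the previous paragraph these zeros lie in $(z_i-\eta,z_i+\eta)\subset(d_++\delta,1)$, yielding the ordered list $z_{n,i,1}>\cdots>z_{n,i,\kappa_i}$ with multiplicities $m_{i,j}$ summing to $n_i$. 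To rule out extra solutions, I would invoke compactness: on the compact real set $K:=[d_++\delta,1]\setminus\bigcup_i(z_i-\eta,z_i+\eta)$ the real-analytic function $\det M$ is bounded below in modulus by a positive constant (its only real zeros in $(d_++\delta,1)$ are the $z_i$'s), and uniform convergence then forces $|\det M_n|$ to be bounded below on $K$ as well for all large $n$, preventing zeros there.

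The only non-routine point is the simultaneous treatment of \emph{realness} and \emph{matched multiplicities}: Hurwitz supplies the correct complex zero count in each small disk, but one must know that none of those zeros are non-real and that their holomorphic orders agree with the eigenvalue multiplicities of $C_{xy}$. Both are delivered by the polynomial factorization in the second paragraph; after that, the rest of the argument is elementary complex analysis.
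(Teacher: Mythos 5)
Your proof is correct and follows essentially the same route as the paper: upgrade the uniform entrywise bound of Lemma \ref{lem.062802} to uniform convergence of $\det M_n\to\det M$ on $\mathcal{D}$ (the determinant being a polynomial in finitely many entries), note that $\det M_n$ has only real zeros because, via the Schur-complement identity $\det D=\det D_{22}\cdot\det M_n$, its zeros on $\mathcal{D}$ are eigenvalues of $C_{xy}$, and then apply Rouch\'e/Hurwitz on small disks about each $z_i$. Your explicit discussion of the rational-function factorization (giving both realness and matching multiplicities) and the compactness argument ruling out stray zeros away from the $z_i$'s flesh out exactly the steps the paper compresses into ``It is obvious that $\det[M_n(z)]$ only has real roots \ldots{} Then by Rouch\'e's theorem, we can immediately get (\ref{062807}).''
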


\begin{proof}[Proof of Lemma \ref{lem.062902} with Lemma \ref{lem.062802} granted] At first, as mentioned above, in $ \Xi_1$, $ M_n(z)$ is holomorphic on $\mathcal{D}$. Moreover, according to (\ref{0726100}), both $ Y$ and $ W$ are bounded by some constant in operator norm with overwhelming probability. In addition, we know that $ \Xi_1$ holds with overwhelming probability. Hence, according to the definition of $ M_n(z)$ in (\ref{071801}), one sees that $\| M_n(z)\|$ is bounded uniformly on $\mathcal{D}$ with overwhelming probability. Therefore, the entries of $ M_n(z)$ are all bounded in magnitude with overwhelming probability as well. In addition, it is clear that $M(z)$ is holomorphic and its entries are also bounded in magnitude on $\mathcal{D}$.
Hence, Lemma \ref{lem.062802} implies that almost surely,
\begin{eqnarray*}
\sup_{z\in \mathcal{D}}\big|\det ( M_n(z))-\det (M(z))\big|\leq n^{-\varepsilon},
\end{eqnarray*}
taking into account the fact that the determinant is a multivariate polynomial of the matrix entries. It is obvious that $\det [ M_n(z)] $  only has real roots since the equation (\ref{072501}) does. Then by Rouche's theorem, we can immediately get (\ref{062807}).
\end{proof}
 Now, with the aid of Lemma \ref{lem.062902}, we prove (i) of Theorem \ref{thm.061901}.
\begin{proof}[Proof of (i) of Theorem \ref{thm.061901}] According to Lemma \ref{lem.062902}, it suffices to solve $\det (M(z)) =0$ in $(d_{+}+\delta, 1]$ to get $z_i$, for sufficiently small $\delta>0$. By the definition of $M(z)$, we shall solve the equation
 \begin{align}\lb{071601}
m_i(z)=0.
\end{align}

It  will be clear that there is a unique simple solution for the above equation. We denote it by $\gamma_i$ in the sequel.

First, if $r_i=1$, we get from the definition (\ref{17033101}) that
\begin{align*}
m_i(\ga_i)=(1-\ga_i)\Big(1-\frac{1-\ga_i}{c_2} s(\ga_i)\Big)=0.
\end{align*}
It is elementary to check that the only solution is $\ga_i=1$.  If $r_i<1$, we introduce the shorthand notation
\begin{align}
t_i=\frac{r_i}{1-r_i}. \label{170529100}
\end{align}

According to the definition in (\ref{17033101}), we see that (\ref{071601}) is equivalent to
\begin{eqnarray}\lb{0623002}
&&\sqrt {\ga_i^2+2(2c_1c_2-c_1-c_2)\ga_i+(c_1-c_2)^2} \left( c_2-{t_i}\ga_i+
{t_i} \right) \no
&=&t_i\ga_i^2+(t_i(c_2-c_1-1)+c_2-2c_1c_2)\ga_i+(c_1-c_2)t_i+c_1c_2-c_2^2.
\end{eqnarray}
Notice that if $\ga_i\in (d_+, 1]$, we have
\begin{align*}
\ga_i^2+2(2c_1c_2-c_1-c_2)\ga_i+(c_1-c_2)^2=(\ga_i-d_{-})(\ga_i-d_{+})> 0
\end{align*}
and $ c_2-{t_i}\ga_i+{t_i} > 0.$
Thus
under the restriction that
\begin{eqnarray}
t_i\ga_i^2+(t_i(c_2-c_1-1)+c_2-2c_1c_2)\ga_i+(c_1-c_2)t_i+c_1c_2-c_2^2\geq  0,\label{062901}
\end{eqnarray}
solving (\ref{0623002}), we get
\begin{eqnarray}
\gamma_i=\frac{(1+t_i^{-1}c_1)(1+t^{-1}_ic_2)}{1+t^{-1}_i}=r_i(1-c_1+c_1r_i^{-1})(1-c_2+c_2r_i^{-1}). \label{060901}
\end{eqnarray}
Applying (\ref{170529100}) and  \eqref{060901}, it is  elementary to check that (\ref{062901}) is equivalent to
\begin{align}
r_i\geq r_c= \sqrt{\frac{c_1c_2}{(1-c_1)(1-c_2)}}. \label{17053001}
\end{align}
Note that
\begin{eqnarray}
\gamma_i&=&c_1(1-c_2)+c_2(1-c_1)+r_i(1-c_1)(1-c_2)+r_i^{-1}c_1c_2\nonumber\\
&&\geq c_1(1-c_2)+c_2(1-c_1)+2\sqrt{c_1c_2(1-c_1)(1-c_2)}=d_{+} \label{0607002}
\end{eqnarray}
Moreover, equality holds in the second step of (\ref{0607002})  only if $r_i=r_c$.
 Hence, (\ref{071601}) has solution in $(d_+,\infty)$ only if $r_i> r_c$, with the solution $\gamma_i$ given by (\ref{060901}).
Now, what remains is to check $\gamma_i < 1$. It again follows  from (\ref{17053001}) by elementary calculation.
 Then by (\ref{062807}) in Lemma \ref{lem.062802}, we get that
\begin{eqnarray*}
\lambda_i-\gamma_i\stackrel{\text{a.s.}} \longrightarrow 0, \quad \text{for} \quad i=1,\ldots, k_0.
\end{eqnarray*}
Hence, we conclude the proof of (i) of Theorem  \ref{thm.061901}.
\end{proof}

Now, we  proceed to the  proof of (ii) .
\begin{itemize}
\item {\it{The Sticking eigenvalues}}
\end{itemize}
\begin{proof}[Proof of (\romannumeral2) of Theorem \ref{thm.061901}]
At first, according to the proof of (i) of Theorem \ref{thm.061901}, we see that with overwhelming probability, there are exactly $k_0$ largest eigenvalues  (multiplicities counted) of $ C_{xy}$ in the interval $(d_{+}+\delta,1]$, for any sufficiently small $\delta>0$. Hence, we see that $\limsup_{n\to\infty}\lambda_i\leq d_{+}+\delta$ for all $i\geq k_0+1$ almost surely. Moreover, by the Cauchy's interlacing property  and (\ref{071501}), we always have $\liminf_{n\to\infty}\lambda_i\geq d_{+}-\delta$ almost surely for any fixed $i$. Since $\delta$ can be arbitrarily small, we get the conclusion that $\lambda_i-d_{+}\stackrel{a.s.}\longrightarrow 0$ for any fixed $i\geq k_0+1$.  Hence, we complete the proof.
\end{proof}

\section{Fluctuations of the outliers} \label{section.fluctuation}
 For the fluctuations of the outliers, we first derive a CLT for the entries of $ M_n$, which together with \eqref{3.9.1}  will imply a CLT for the outliers.  Recall from Lemma \ref{lem.062902} the notations $n_1,\ldots, n_\kappa$ as the multiplicities of the solutions of $\det [M(z)] =0$. Set
$$J_l=\Big\{\sum_{i=1}^{l-1}n_i+1,\dots,\sum_{i=1}^{l}n_i\Big\}.$$
We derive the central limit theorem for the $n_l$-packed sample eigenvalues $
\{\sqrt{n}(\lambda_{j}-\gamma_j),~j\in J_l\}.
$
Note that $\ga_j$'s are all the same for $j\in J_l$, and they are $l$-th largest  solution of equation $\det [M(z)] =0$  with  multiplicity $n_l$.
Actually, from  Lemma \ref{lem.062802} and Lemma \ref{lem.062902}, for $j\in J_l$, it would not be difficult to conclude that for any sufficiently small $\varepsilon>0$,
\begin{align}
 \sup_{1\leq \alpha, \beta \leq k}\big|( M_n)_{\alpha\beta}(\lambda_{j})- M_{\alpha\beta}(\gamma_j)\big|\leq n^{-\varepsilon}\label{16121202}
\end{align}
almost surely. However,
for Theorem \ref{thm.fluctuation},  we need   to take a step further to  consider the fluctuation of  the difference in (\ref{16121202}). We split the task into three steps, and the results are collected in Lemmas \ref{lem4.3}-\ref{lem4.5}.  The first lemma is to expand $ M_n(\lambda_{j})$ around $ M_n(\gamma_{j})$.   Recall the notations $\vartheta, \varpi$ defined in (\ref{17050702}).
{\begin{lem}\lb{lem4.3}
Suppose that the assumptions in Theorem \ref{thm.fluctuation} hold. Especially,  $j\in \{1, \ldots, k_0\}$ such that $r_j$ is well separated from $r_c$ and $1$. As $n\to \infty$, we have
\begin{align*}
\sqrt{n}[ M_n(\lambda_{j})- M_n(\gamma_{j})]=-\sqrt{n}(\lambda_{j}-\gamma_{j})\big( \Delta+o_{\text{p}}(1)\big),
\end{align*}
where $\De$ is a diagonal matrix defined as
\begin{align*}\De=\frac{(1-c_1)\vartheta r_j^2+c_1^2c_2}{\vartheta r_j^2-\varpi} \big(I_k-T_1 T_1'\big)
+\frac{(1-c_1)\vartheta r_j^3+c_1(3c_1c_2-c_1-2c_2+1)r_j-2c_1^2c_2 }{r_j(\vartheta r_j^2-\varpi)} T_1 T_1',
\end{align*}
and $o_{\text{p}}(1)$ represents a generic $k\times k$ matrix with negligible (in probability) entries.
\end{lem}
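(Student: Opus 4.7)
The plan is to view the lemma as a first-order Taylor expansion of the matrix-valued function $M_n(z)$ about $\gamma_j$, evaluated at $\lambda_j$, and then identify the derivative at $\gamma_j$ with the limiting matrix $-\Delta$ using Lemma~\ref{lem.062802}.

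First, on the event $\Xi_1$ (which holds with overwhelming probability), $M_n(z)$ is holomorphic in the domain $\mathcal{D}$, and $\gamma_j$ lies in the interior of $\mathcal{D}$ for $\delta$ small enough. Part (i) of Theorem~\ref{thm.061901} gives $\lambda_j \to \gamma_j$ almost surely, so with overwhelming probability $\lambda_j$ also lies in $\mathcal{D}$ near $\gamma_j$. I can therefore write
\begin{align*}
M_n(\lambda_j)-M_n(\gamma_j) \;=\; (\lambda_j-\gamma_j)\,M_n'(\gamma_j) \;+\; \tfrac{1}{2}(\lambda_j-\gamma_j)^2\,M_n''(\xi_j)
\end{align*}
for some $\xi_j$ on the segment between $\lambda_j$ and $\gamma_j$. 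Multiplying by $\sqrt{n}$ and factoring out $\sqrt{n}(\lambda_j-\gamma_j)$ reduces the proof to two items: (a) the second-order remainder is negligible in the sense that $(\lambda_j-\gamma_j)\,M_n''(\xi_j)=o_{\mathrm p}(1)$; and (b) $M_n'(\gamma_j)=-\Delta+o_{\mathrm p}(1)$.

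For (a), the boundedness of $\|M_n(z)\|$ uniformly on $\mathcal{D}$ established in the proof of Lemma~\ref{lem.062902} together with Cauchy's integral formula, applied on a small circle of fixed radius around $\gamma_j$ contained in $\mathcal{D}$, yields $\|M_n''(z)\|=O(1)$ uniformly in a neighborhood of $\gamma_j$ with overwhelming probability; combined with $\lambda_j-\gamma_j=o_{\mathrm{a.s.}}(1)$ this gives the desired bound on the remainder. For (b), I apply Cauchy's formula once more:
\begin{align*}
M_n'(\gamma_j)-M'(\gamma_j) \;=\; \frac{1}{2\pi i}\oint_{|z-\gamma_j|=\eta}\frac{M_n(z)-M(z)}{(z-\gamma_j)^{2}}\,dz,
\end{align*}
with $\eta>0$ small enough that the contour sits inside $\mathcal{D}$. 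Lemma~\ref{lem.062802} bounds the integrand by $O(n^{-\varepsilon})$ almost surely, so $M_n'(\gamma_j)\to M'(\gamma_j)$, and the task reduces to verifying the identity $M'(\gamma_j)=-\Delta$.

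The identity is proved by explicit differentiation. Decompose $m_i(z)=A(z)(1-r_i)+B(z)\,r_i$ with $A(z):=c_2-(1+c_1)z-(1-z)s(z)$ and $B(z):=(1-z)\bigl(1-(1-z)s(z)/c_2\bigr)$, so $m_i'(z)=A'(z)(1-r_i)+B'(z)\,r_i$. Since $T_1T_1'=\mathrm{diag}(r_1,\ldots,r_k)$ and $I_k-T_1T_1'=\mathrm{diag}(1-r_1,\ldots,1-r_k)$, the diagonal matrix $M'(\gamma_j)$ automatically has the same $(\alpha(I_k-T_1T_1')+\beta\,T_1T_1')$ structure as $\Delta$, and it remains to match the scalar coefficients. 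I compute $s'(\gamma_j)$ by implicitly differentiating equation~\eqref{17040301}, namely $s'(z)=(s^2-s)/(2(z-1)s+c_1+c_2-z)$, and substitute the value $s(\gamma_j)$ determined from $m_j(\gamma_j)=0$ together with the explicit form $\gamma_j=r_j(1-c_1+c_1/r_j)(1-c_2+c_2/r_j)$ from \eqref{060901}. The common factor $\vartheta r_j^2-\varpi$ in the denominators of $\Delta$ will emerge naturally from these substitutions.

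The main obstacle is the algebraic verification in the final step: the raw expressions for $A'(\gamma_j)$ and $B'(\gamma_j)$ contain the square root implicit in $s(\gamma_j)$, and reducing them to the clean rational forms in the statement requires repeated use of the quadratic relation satisfied by $s$ and of the explicit formula for $\gamma_j$. All other steps (Taylor expansion, derivative bounds via Cauchy's formula, transfer of Lemma~\ref{lem.062802} to first derivatives) are standard and follow the structure established in the location-lemma argument of Section~\ref{s. proof of main theorem}.
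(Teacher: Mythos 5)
Your approach is sound and represents a genuinely different route from the paper's. The paper proves Lemma~\ref{lem4.3} by writing $\mathscr{M}_1(\lambda_j)-\mathscr{M}_1(\gamma_j)=(\lambda_j-\gamma_j)\sum_{a=0}^{4}W_1\mathcal{A}_a W_1'$ via the resolvent identity~\eqref{resolvent expansion 1}, and then evaluating each trace limit separately using Lemma~\ref{lemf_1}; it treats $\mathscr{M}_2$ and $\mathscr{M}_3$ similarly, invoking Lemma~\ref{lemf_2}. You instead leverage the uniform a.s.\ estimate of Lemma~\ref{lem.062802} on $\mathcal{D}$, pass to derivatives through Cauchy's integral formula, and reduce the whole lemma to the scalar identity $M'(\gamma_j)=-\Delta$. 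This collapses the term-by-term resolvent bookkeeping and renders Lemmas~\ref{lemf_1}--\ref{lemf_2} (and the computation \eqref{le4.3pro1}--\eqref{le4.3pro3}) unnecessary for this step, which is a genuine structural simplification. The contour-of-fixed-radius argument is valid because the well-separation hypothesis keeps $\gamma_j$ a fixed distance inside $\mathcal{D}$, and the Taylor remainder control via a uniform $O(1)$ bound on $M_n''$ near $\gamma_j$ is also fine.

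There is, however, a substantive issue in the step you defer. With $m_i(z)$ as written in~\eqref{17033101}, and your $A(z)=c_2-(1+c_1)z-(1-z)s(z)$, one gets $A'(\gamma_j)=-(1+c_1)+s(\gamma_j)-(1-\gamma_j)s'(\gamma_j)$; but the $(I_k-T_1T_1')$-component of $-\Delta$ equals $c_1-1+s(\gamma_j)+(\gamma_j-1)s'(\gamma_j)$ (this is exactly the coefficient derived in~\eqref{le4.3pro1}, and one can confirm numerically via \eqref{s in terms of r} that it matches $-\Delta$). The two expressions differ by the constant $2c_1$, so the identity $M'(\gamma_j)=-\Delta$ fails with the stated formula. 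The discrepancy traces back to a sign slip in~\eqref{17040615}'s companion calculation~\eqref{17032620}: redoing $\frac1n\ntr\mathscr{A}$ gives $\frac{q}{n}-z+z\frac{p-k}{n}-(1-z)\frac1n\ntr(E\Phi)$, i.e.\ the coefficient of $z$ in the limiting $A(z)$ is $-(1-c_1)$, not $-(1+c_1)$; equivalently, one checks directly that $m_j(\gamma_j)=0$ with the explicit $\gamma_j$ of~\eqref{060901} only under this correction. Once $A(z)$ is corrected, $A'(\gamma_j)$ matches and your verification goes through; without noticing this, the final algebra you describe as "standard" will not close. A separate minor slip: implicit differentiation of~\eqref{17040301} gives $s'(z)=(s-s^2)/\big(2(z-1)s+c_1+c_2-z\big)$, the negative of what you wrote.
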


 Recall $\mathscr{A}(z)$ defined in (\ref{17050301}). We further introduce the following matrices
 \begin{align}
  \mathscr{M}_1&\equiv \mathscr{M}_1(z):= W_1\mathscr{A}(z) W_1', \nonumber\\
  \mathscr{M}_2&\equiv \mathscr{M}_2(z):=T_1Y\mathscr{A}(z) W_1', \nonumber\\
 \mathscr{M}_3&\equiv \mathscr{M}_3(z):=T_1Y\mathscr{A}(z) Y'T_1'.
 \label{16123001}
 \end{align}
 According to the definitions in  (\ref{071801}), (\ref{16121228}) and (\ref{16121221}),  it is easy to check that
 \begin{align}
  M_n(z)=\scM_1+\scM_2+\scM_2'+\scM_3 . \label{17031505}
 \end{align}

  We denote the SVD of $ Y$ by
\begin{align}
 Y=U_{y} \Lambda_{y} V_{y}. \label{17032720}
\end{align}
It is well known that $U_{y}$ and $V_{y}$ are Haar distributed, and $U_{y}, \Lambda_{y}, V_{y}$ are mutually independent (see (3.9) of \cite{ER05} for instance).
Then we see that
\begin{align}
 P_{y}=V_{y}'  \Lambda_{y}' \big( \Lambda_{y}\Lambda_{y}'\big)^{-1} \Lambda_{y}  V_{y}. \label{17040601}
\end{align}
is independent of $U_{y}$.  With the above notations, we introduce
\begin{align}
\mathscr{B}\equiv \mathscr{B}(z):=\Lambda_{y} V_{y}\mathscr{A}(z),\qquad  \mathscr{D}\equiv \mathscr{D}(z):= \Lambda_{y} V_{y}\mathscr{A}(z)V_{y}' \Lambda_{y} '. \label{17032301}
\end{align}
Therefore, we can write
\begin{align}
 \mathscr{M}_1= W_1\mathscr{A} W_1', \qquad \mathscr{M}_2&= T_1 U_{y}\mathscr{B} W_1', \qquad \mathscr{M}_3= T_1 U_{y}\mathscr{D} U_{y}' T_1'. \label{17040420}
 \end{align}

Set
\begin{align*}
\mathcal{M}(\ga_j):=\frac{1}{n}\ntr \mathscr{A}(\ga_j) \cdot \big(I_k-T_1T_1'\big)+\frac{1}{q} \ntr \mathscr{D}(\ga_j) \cdot  T_1 T_1'
\end{align*}

 The next lemma depicts the fluctuation of $ M_n(\gamma_{j})$ around $\mathcal{M}(\ga_j)$.
\begin{lem}\lb{lem4.4}
Suppose that the assumptions in Theorem \ref{thm.fluctuation} hold. For any $j\in \{1, \ldots, k_0\}$ such that $r_j$ is well separated from $r_c$ and $1$, we have that
\begin{align*}
\sqrt{n}\Big( M_n(\gamma_{j})-\mathcal{M}(\ga_j)\Big)
\end{align*}
converges weakly to a ${k\times k}$ symmetric  Gaussian matrix $ \mathscr{R} $ with independent (up to symmetry) mean zero entries and the variance of $\mathscr{R} _{\alpha\beta}$ is
\begin{align}
&\frac{1+\de_{\alpha\beta}}{r_j^2(\vartheta r_j^2-\varpi )}\Big(\big((1-c_1)r_j+c_1\big)^2\big(\vartheta r_j-\varpi\big)^2\big((1-c_2)r_j^2+c_2\big)  (1-r_\alpha)(1-r_\beta)\nonumber\\
&\qquad+(1-c_2)\big((1-c_1)r_j+c_1\big)^2(1-r_j)^2\big(\vartheta r_j-\varpi\big)^2(r_{\alpha}(1-r_\beta)+r_{\beta}(1-r_\alpha)) \nonumber\\
&\qquad+\big((1-c_1)r_j+c_1\big)^2\big(\vartheta r_j-\varpi \big)^2\big((1-c_2)r_j^2+2c_1(1-c_2)r_j+c_1(1-2c_2)\big) r_\alpha r_\beta  \Big). \label{17040788}
\end{align}
\end{lem}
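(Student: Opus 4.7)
The plan is to center $M_n(\gamma_j)$ conditionally on the $\sigma$-algebra $\mathcal{G}:=\sigma(V_y,\Lambda_y,W_2)$, on which $\mathscr{A}(\gamma_j)$ and $\mathscr{D}(\gamma_j)$ (cf. (\ref{17050301})--(\ref{17032301})) are deterministic. Using the SVD (\ref{17032720}) one checks that $P_y=V_y'V_y$, hence $\mathscr{A}$ and $\mathscr{D}$ are $\mathcal{G}$-measurable, while $W_1$ (Gaussian with row-$\alpha$ variance $(1-r_\alpha)/n$) and $U_y$ (Haar on $O(q)$) are independent of $\mathcal{G}$ and of each other. A short Gaussian calculation then yields $\E[\mathscr{M}_1\mid \mathcal{G}]=\frac{1}{n}\ntr \mathscr{A}\cdot(I_k-T_1T_1')$, $\E[\mathscr{M}_2\mid \mathcal{G}]=0$, and the first-order Haar moment identity $\E[(U_y)_{ij}(U_y)_{k\ell}]=\delta_{ik}\delta_{j\ell}/q$ gives $\E[\mathscr{M}_3\mid \mathcal{G}]=\frac{1}{q}\ntr \mathscr{D}\cdot T_1T_1'$. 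Adding these identifies $\mathcal{M}(\gamma_j)=\E[M_n(\gamma_j)\mid\mathcal{G}]$ and produces the decomposition
\begin{align*}
M_n(\gamma_j)-\mathcal{M}(\gamma_j) \;=\; R_1+R_2+R_3,
\end{align*}
with $R_1:=\mathscr{M}_1-\E[\mathscr{M}_1\mid\mathcal{G}]$, $R_2:=\mathscr{M}_2+\mathscr{M}_2'$, $R_3:=\mathscr{M}_3-\E[\mathscr{M}_3\mid\mathcal{G}]$.

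Conditional on $\mathcal{G}$, the independence $W_1\perp U_y$ separates the sources of fluctuation: $R_1$ is a pure Gaussian quadratic form in $W_1$, $R_3$ is a Haar quadratic form in $U_y$, and $R_2$ couples the two linearly. I will apply a conditional CLT to each piece. For $\sqrt{n}\,R_1$ this is the classical CLT for Gaussian quadratic forms, whose conditional covariances are proportional to $\frac{1}{n^2}\ntr\mathscr{A}^2\cdot(1-r_\alpha)(1-r_\beta)$. For $\sqrt{n}\,R_2$, which conditional on $(\mathcal{G},U_y)$ is a Gaussian bilinear form in $W_1$, I integrate first over $W_1$ (picking up a conditional covariance carried by $\frac{1}{n}T_1U_y\mathscr{B}\mathscr{B}'U_y'T_1'$), then apply Haar concentration to replace $U_y\mathscr{B}\mathscr{B}'U_y'$ by $\frac{1}{q}\ntr(\mathscr{B}\mathscr{B}')\cdot I_q$. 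For $\sqrt{n}\,R_3$ I invoke the CLT for orthogonal Haar quadratic statistics (Weingarten-type formulas, or equivalently a Gaussian approximation of $U_y$), yielding Gaussian fluctuations with variance a known function of $\frac{1}{q^2}\ntr\mathscr{D}^2$ and $(\frac{1}{q}\ntr\mathscr{D})^2$. The pieces $R_1,R_3$ are conditionally independent, while the covariance of $R_2$ with each vanishes by odd-moment calculations in $W_1$, so the three pieces combine additively; the symmetry of $M_n$ produces the $(1+\delta_{\alpha\beta})$ factor through the usual on-diagonal/off-diagonal doubling of variance for symmetric Gaussian matrices.

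Matching the precise coefficients in (\ref{17040788}) then reduces to computing deterministic limits of the $\mathcal{G}$-measurable trace functionals
\begin{align*}
\frac{1}{n}\ntr\mathscr{A}^2(\gamma_j),\qquad \frac{1}{nq}\ntr(\mathscr{B}\mathscr{B}'(\gamma_j)),\qquad \frac{1}{q^2}\ntr\mathscr{D}^2(\gamma_j),
\end{align*}
and their diagonal analogues such as $(Y\mathscr{A}\mathscr{A}'Y')_{\alpha\alpha}/n$. I plan to expand $\mathscr{A}$ through the Schur-complement form (\ref{17050301}), reducing each quantity to traces of products of $(P_y-z)^{-1}$ and $(W_2(P_y-z)W_2')^{-1}$, and invoke the limit Stieltjes transforms $s(z), \check s(z), \tilde s(z)$ from (\ref{17040110}) and (\ref{17040615}) together with MANOVA local-law concentration to obtain closed-form limits in $\check s(\gamma_j)$ and $\tilde s(\gamma_j)$. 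The three contributions separate naturally by the powers of $T_1T_1'$ they carry: $R_1$ attaches to the $(1-r_\alpha)(1-r_\beta)$ coefficient, $R_2$ to the $r_\alpha(1-r_\beta)+r_\beta(1-r_\alpha)$ coefficient, and $R_3$ to the $r_\alpha r_\beta$ coefficient, reproducing the three-term structure of (\ref{17040788}). Unconditioning follows from concentration of the $\mathcal{G}$-measurable traces about their deterministic limits.

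The principal obstacle is the trace-asymptotics step: the limits of $\frac{1}{n}\ntr\mathscr{A}^2$ and its cousins at $z=\gamma_j$ must be computed with enough precision to recover the intricate common factor $((1-c_1)r_j+c_1)^2(\vartheta r_j-\varpi)^2/(r_j^2(\vartheta r_j^2-\varpi))$ and the three distinct polynomial coefficients appearing in (\ref{17040788}). This requires combining the MANOVA local law for $W_2$ with the spectral structure of $P_y=V_y'V_y$, and then algebraically reducing expressions naturally written in $\check s(\gamma_j)$ and $\tilde s(\gamma_j)$ to functions of $(r_j,c_1,c_2)$ via the defining equation (\ref{060901}) for $\gamma_j$ and the quadratic identity (\ref{17040301}) for $s$. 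The Haar CLT for $R_3$, though standard, also requires care because $\mathscr{D}(\gamma_j)$ is $\mathcal{G}$-random and its spectral norm and trace must be controlled uniformly along the conditioning, using that $r_j$ is well separated from $r_c$ and $1$ to keep the relevant resolvents bounded.
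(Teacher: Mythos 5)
Your plan is broadly sound and identifies the same key structures as the paper — the same conditioning $\sigma$-algebra, the same centering $\mathcal{M}(\gamma_j)$, and exactly the same trace functionals $\frac{1}{n}\ntr\mathscr{A}^2$, $\frac{1}{q}\ntr\mathscr{B}\mathscr{B}'$, $\frac{1}{q}\ntr\mathring{\mathscr{D}}^2$ that drive the three terms of~(\ref{17040788}) — but the CLT step is executed by a genuinely different route. Your observation that $\mathcal{M}(\gamma_j)=\mathbb{E}[M_n(\gamma_j)\mid\mathcal{G}]$ is a nice conceptual clarification that the paper does not spell out (it defines $\mathcal{M}$ directly). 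Where you diverge is in how Gaussianity is obtained: the paper first replaces the Haar rows $T_1 U_y$ by a Gaussian $\mathcal{T}G_1$ (Lemma~\ref{lem.17032401}, keeping the correction term $-\frac{1}{q}\ntr\mathscr{D}\cdot\mathcal{T}(G_1G_1'-I)\mathcal{T}$ that accounts for the unit-norm constraint on Haar rows), so that the entire matrix $M_n-\mathcal{M}$ becomes a single quadratic form in the Gaussian block $(W_1,\ \mathcal{T}G_1)$; it then proves Gaussianity in one shot by Tikhomirov's method (Gaussian integration by parts producing the ODE $\phi_{\mathbf c}'(t)=-s_n^2 t\,\phi_{\mathbf c}(t)+o_p(1)$ for the conditional characteristic function). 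You instead keep $U_y$ Haar and propose three separate conditional CLTs for $R_1$, $R_2$, $R_3$. That buys conceptual transparency — each term's variance is visibly attached to one of the three coefficients in~(\ref{17040788}) — at the cost of a joint-convergence step that the paper's unified argument avoids.

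Two points to be careful about if you pursue your route. First, the claim that ``the three pieces combine additively'' needs more than vanishing cross-covariance plus marginal Gaussianity: joint convergence of $(R_1,R_2,R_3)$ must be established. The clean fix is to observe that conditionally on $(\mathcal{G},U_y)$, $R_1+R_2$ is exactly a Gaussian quadratic-plus-linear form in $W_1$ (whose conditional variance concentrates by Haar concentration), while $R_3$ is $(\mathcal{G},U_y)$-measurable; asymptotic independence then follows because the conditional law of $R_1+R_2$ becomes asymptotically nonrandom, and a separate Haar CLT applies to $R_3$. Second, for $R_3$ the centering at $\frac{1}{q}\ntr\mathscr{D}$ interacts nontrivially with the unit-norm/orthogonality constraints on the rows of $U_y$: the variance that emerges is governed by $\ntr\mathring{\mathscr{D}}^2$ with $\mathring{\mathscr{D}}=\mathscr{D}-\frac{1}{q}\ntr\mathscr{D}\cdot I$, not $\ntr\mathscr{D}^2$. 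You flag this by listing both $\frac{1}{q^2}\ntr\mathscr{D}^2$ and $(\frac{1}{q}\ntr\mathscr{D})^2$ as relevant, but the bookkeeping is exactly the correction term the paper carries explicitly in Lemma~\ref{lem.17032401}; if you bypass that lemma, you must reproduce the Weingarten fourth-moment computation (or an equivalent Gaussian approximation with the $\|\mathbf g_i\|^2-1$ compensation) to get the right constant on the diagonal entries. Once those two points are secured, the remaining work — the trace asymptotics in Lemma~\ref{lem. traces of squared matrices} via $s(\gamma_j)$, $s'(\gamma_j)$ and identities~(\ref{17040301}),~(\ref{s in terms of r}) — is the same in both approaches.
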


Finally, we claim that the difference between $\mathcal{M}(\ga_j)$ and $ M(\gamma_{j})$ is of order $o_{\text{p}}(\frac{1}{\sqrt{n}})$.
\begin{lem}\lb{lem4.5}
Suppose that the assumptions in Theorem \ref{thm.fluctuation} hold. For any $j\in \{1, \ldots, k_0\}$ such that $r_j$ is well separated from $r_c$ and $1$, we have that
\begin{align*}
\sqrt{n}\big(\mathcal{M}(\ga_j)- M(\gamma_{j})\big)=o_{\text{p}}(1),
\end{align*}
where $o_{\text{p}}(1)$ represents a generic $k\times k$ matrix with negligible (in probability) entries.
\end{lem}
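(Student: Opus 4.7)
The plan is to show that each of the two normalized traces $\tfrac{1}{n}\,\mathrm{tr}\,\mathscr{A}(\gamma_j)$ and $\tfrac{1}{q}\,\mathrm{tr}\,\mathscr{D}(\gamma_j)$ concentrates around a deterministic limit at rate $o_{\mathrm{p}}(n^{-1/2})$. Since $\mathcal{M}(\gamma_j)$ and $M(\gamma_j)$ are both diagonal and the diagonal entries of $M(\gamma_j)$ are affine in $r_\alpha$, the lemma reduces to showing that each normalized trace converges, at rate $o_{\mathrm{p}}(n^{-1/2})$, to the coefficient of $(1-r_\alpha)$ and of $r_\alpha$ respectively in $m_\alpha(\gamma_j)$. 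For $\tfrac{1}{n}\,\mathrm{tr}\,\mathscr{A}(z)$, using $YP_y = Y$, the identity $(P_y - z)^2 = (1-2z)P_y + z^2 I$, and cyclicity of the trace, the definition of $\mathscr{A}(z)$ reduces to an affine function of the Stieltjes transform of $C_{w_2 y} = (W_2W_2')^{-1}W_2 P_y W_2'$, the $(p-k)\times(p-k)$ null-case CCA matrix (since $W_2$ is independent of $Y$). Because $\gamma_j \ge d_+ + \delta$ by the separation assumption on $r_j$ away from $r_c$, the rigidity/local law for the Stieltjes transform of the MANOVA ensemble yields $\tfrac{1}{p-k}\,\mathrm{tr}((C_{w_2 y} - \gamma_j I)^{-1}) = \tilde{s}(\gamma_j) + O_{\mathrm{p}}(n^{-1+\varepsilon})$, from which the first convergence follows.

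For $\tfrac{1}{q}\,\mathrm{tr}\,\mathscr{D}(z) = \tfrac{1}{q}\,\mathrm{tr}(Y\mathscr{A}(z)Y')$, the identity $Y(P_y - z) = (1-z)Y$ gives $Y\,\mathscr{A}(z)\,Y' = (1-z)^2[\,YY' - YW_2'\,K^{-1}\,W_2Y'\,]$ with $K = W_2(P_y-z)W_2'$. I would then introduce the orthonormal decomposition $W_2 = G\bar{Y} + G^\perp \bar{Y}^{\perp}$ with $\bar{Y} = (YY')^{-1/2}Y$ and $\bar{Y}^\perp$ an orthonormal basis of $(\mathrm{row}\,Y)^\perp$. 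The Gaussian matrices $G = W_2\bar{Y}'$ and $G^\perp = W_2\bar{Y}^{\perp}$ have iid $N(0,1/n)$ entries and are jointly independent of $Y$, being $W_2$ projected onto orthonormal frames that are deterministic given $Y$. Writing $Y = (YY')^{1/2}\bar{Y}$ and applying the Woodbury identity to $K = (1-z)GG' - zG^\perp G^{\perp\prime}$, the bilinear form $YW_2'K^{-1}W_2Y'$ reduces to a quadratic form $(YY')^{1/2}\,\Psi(zI_q - (1-z)\Psi)^{-1}(YY')^{1/2}$ with $\Psi = G'(G^\perp G^{\perp\prime})^{-1}G$. The remaining concentration then follows from three ingredients: the chi-square bound $\tfrac{1}{q}\mathrm{tr}(YY') = 1 + O_{\mathrm{p}}(n^{-1})$; for any symmetric $M$ independent of $Y$, Isserlis' formula gives $\mathrm{Var}(\mathrm{tr}(M\,YY')\mid M) = 2n^{-1}\mathrm{tr}(M^2)$, which combined with operator-norm bounds on $\Psi$ from the isotropic/local law for $G^\perp G^{\perp\prime}$ and the separation $\gamma_j > d_+ + \delta$ (keeping $zI - (1-z)\Psi$ uniformly invertible) yields $\tfrac{1}{q}\mathrm{tr}(M\,YY') = \tfrac{1}{q}\mathrm{tr}(M) + O_{\mathrm{p}}(n^{-1})$; and a last rigidity step for the spectrum of $\Psi$ to identify the deterministic limit of $\tfrac{1}{q}\mathrm{tr}(M)$ in terms of $s(\gamma_j)$ via the known LSD of $\Psi$.

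The main obstacle will be the joint $(Y,W_2)$-dependence inside $\mathscr{A}(z)$ when computing $\mathrm{tr}\,\mathscr{D}$; the decoupling device $W_2 = G\bar{Y} + G^\perp \bar{Y}^\perp$ is what makes the conditional Isserlis variance estimate applicable, by isolating the $Y$-randomness into the single factor $YY'$. Identifying the two deterministic limits with the coefficients of $(1-r_\alpha)$ and $r_\alpha$ in $m_\alpha(\gamma_j)$ is then a direct algebraic calculation using the defining equation \eqref{17040301} for $s(z)$ and the relations \eqref{17040615} between $s(z)$, $\check{s}(z)$ and $\tilde{s}(z)$.
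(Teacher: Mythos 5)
Your proof is correct and takes a genuinely different route from the paper's for the second trace $\tfrac1q\mathrm{tr}\,\mathscr{D}(\gamma_j)$. The paper proves Lemma~\ref{lem4.5} by invoking Lemma~\ref{mean value approximation}, which in turn rests on Lemma~\ref{lem.17032510}; there the authors implicitly use the Cochran decomposition of $Y$ with respect to $P_{w_2}$ (giving the independent Wisharts $\mathcal{E}=\mathcal{Y}\mathcal{Y}^*$, $\mathcal{H}=\mathcal{X}\mathcal{X}^*$), estimate $\mathrm{Var}(\tfrac1q\mathrm{tr}\,\Psi)$ by Poincar\'e with a cutoff $\tilde{\mathcal{Q}}$, and obtain the mean via Gaussian integration by parts on the entries of $\mathcal{X}$, leading to a self-consistent equation ultimately in terms of $\tilde s_n$. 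You instead rotate $W_2$ into the row space of $Y$ and its complement, $W_2=G\bar Y+G^\perp\bar Y^\perp$, which is the \emph{dual} decomposition; this isolates all $Y$-randomness into the single Wishart factor $YY'$ and turns the second trace into $(1-z)\tfrac1q\mathrm{tr}(YY')-(1-z)^2\tfrac1q\mathrm{tr}(MYY')$ with $M=-G'K^{-1}G$ independent of $Y$ (note your display has a small slip: the first summand carries one factor of $(1-z)$, not two, and after the push-through step one gets $G'K^{-1}G=-\,\tilde\Psi(zI-(1-z)\tilde\Psi)^{-1}$ with a minus sign). Conditioning on $(G,G^\perp)$ then reduces the fluctuation of $\tfrac1q\mathrm{tr}(MYY')$ to a single conditional Isserlis variance $\tfrac{2}{n}\mathrm{tr}\,M^2=O(n)$, giving $O_{\mathrm p}(n^{-1})$, and the deterministic part is exactly $\tfrac1q\mathrm{tr}\,M=-1-\gamma_j\tilde s_n(\gamma_j)$: the eigenvalues of $\tilde\Psi=G'L^{-1}G$ are $\lambda_i/(1-\lambda_i)$ in terms of the nonzero eigenvalues $\lambda_i$ of $C_{w_2y}$, so $M$ has eigenvalues $\lambda_i/(\gamma_j-\lambda_i)$ and the invertibility of $zI-(1-z)\tilde\Psi$ is equivalent to $\gamma_j\notin\mathrm{Spec}(C_{w_2y})$, which the separation $r_j>r_c$ plus Theorem~\ref{thm.071501} ensure with overwhelming probability. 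Thus both approaches converge on the same hard probabilistic input — the $o_{\mathrm p}(n^{-1/2})$ concentration of the null MANOVA Stieltjes transform $\check s_n$ (equivalently $\tilde s_n$) at $\gamma_j$, established in the paper via the variance/bias split in Lemma~\ref{lem.17032510} — but your decoupling eliminates the integration-by-parts self-consistent-equation bookkeeping for $\tfrac1q\mathrm{tr}\,\Psi$, replacing it with an exact algebraic identity plus one elementary quadratic-form variance bound. Your treatment of $\tfrac1n\mathrm{tr}\,\mathscr{A}$ coincides with the paper's reduction \eqref{17032620}. The final identification of the limits with the coefficients of $(1-r_\alpha)$ and $r_\alpha$ in $m_\alpha(\gamma_j)$ via \eqref{17040301} and \eqref{17040615} is then routine, as you say; one checks for instance $-1-\gamma_j\tilde s(\gamma_j)=-\tfrac{1}{c_2}s(\gamma_j)$, giving $(1-\gamma_j)\bigl(1-\tfrac{1-\gamma_j}{c_2}s(\gamma_j)\bigr)$ in agreement with \eqref{17040260}.
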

}
The proofs of Lemmas \ref{lem4.3}-\ref{lem4.5}  will be postponed to   Section \ref{sec6}.   With the aid of these lemmas, we can now prove Theorem \ref{thm.fluctuation}.

  \begin{proof}[Proof of Theorem  \ref{thm.fluctuation}]
  Note that  $\det  M_n(\la_{j})=0$,  and $ M(\ga_{j})$ is a diagonal matrix with diagonal
elements $m_i(\ga_j)$, $i=1,\dots,k$ (c.f. (\ref{17033101})).  In addition, for any $l=1,\dots,l_0$, $m_i(\ga_j)$ is zero if $i\in J_l$ and nonzero otherwise.
Therefore by Lemmas \ref{lem4.3}-\ref{lem4.5}, the definition of $ T_1$  and   Skorohod strong representation  theorem, we conclude that there exists a random matrix $\mathscr{R}$ defined in Lemma \ref{lem4.4}, such that  for any $j, \alpha, \beta\in J_l $,
\begin{align}\lb{lim11}
\sqrt{n}\Big(( M_n)_{\alpha\beta}(\la_j)- M_{\alpha\beta }(\gamma_{j})\Big)= -\de_{\alpha\beta}\De_{jj}\sqrt{n}(\lambda_{j}-\gamma_{j})(1+o_{\text{p}}(1))
+\mathscr{R} _{\alpha\beta}+o_{\text{p}}(1).
\end{align}
Let $\mathscr{N}^{(l)}$ be a $k\times
k$ diagnoal matrix with $\mathscr{N}^{(l)}_{jj}=n^{1/4}$ if $j\in J_l$ and $\mathscr{N}^{(l)}_{jj}=1$ otherwise. Apparently we have  $\det [\mathscr{N}^{(l)}]=n^{n_l/4}\neq 0$.
Combining Theorem \ref{thm.061901} with  Lemmas \ref{lem4.3}-\ref{lem4.5}, we conclude that
\begin{align*}
\mathscr{N}^{(l)} M_n(\la_j)\mathscr{N}^{(l)}= M(\gamma_{j})+\big(\sqrt{n}( M_n)_{\alpha\beta}(\la_j)\mathbf{1}(j,\alpha, \beta\in J_l)\big)_{k\times k}+o_p(1).
\end{align*}
Notice that $ M(\gamma_{j})$ is  a diagonal matrix with $J_l$ diagonal
elements being zeros and
\begin{align*}
\big(\sqrt{n}( M_n)_{\alpha\beta}(\la_j)\mathbf{1}(j,\alpha,\beta\in J_l)\big)_{k\times k}
\end{align*}
 is a null matrix except the $J_l\times J_l$ block.
According to  $\det (\mathscr{N}^{(l)} M_n(\la_j)\mathscr{N}^{(l)})=0$ and \eqref{lim11}, we obtain that
 \begin{align}
\det\Big(-\lim_{n\to \infty}\Big(\sqrt{n}(\lambda_{j}-\gamma_{j})( I_{n_l}+o_{\text{p}}(1))
+\mathscr{R}(J_l\times J_l)/\De(j,j)\Big)\Big)=0, \label{17033111}
 \end{align}
 where $\mathscr{R}(J_l\times J_l)$ represents the $J_l\times J_l$ block of $\mathscr{R}$.
From (\ref{17033111}), we see that  $\lim_{n\to\infty}\sqrt{n}(\lambda_{j}-\gamma_{j})$ is the eigenvalue of $\mathscr{R}(J_l\times J_l)/\De_{jj}$.   Since the eigenvalues of $ C_{yx}$ are simple almost surely, we see that the $n_l$ random variables $\{\sqrt{n}(\lambda_{j}-\gamma_j),~j\in J_l\}$ converge in probability to the set of eigenvalues of the  matrix $\mathscr{R}(J_l\times J_l)/\Delta_{jj}$. The variance in (\ref{17040780}) can be obtained from  the definition of $\Delta$ in Lemma \ref{lem4.3}, and also  (\ref{17040788}).

 Therefore, we complete the proof of Theorem \ref{thm.fluctuation}.

\end{proof}

\section{Fluctuations of the sticking eigenvalues} \label{s. fluctuation of sticking}

In this section, we prove Theorem \ref{thm. fluctuation of sticking evs}. When $k=1$, the main idea is to compare the eigenvalues of $C_{xy}$ with those of $C_{w_2y}$ directly.   Then we use the Tracy-Widom law for the largest eigenvalues of $C_{w_2y}$ to conclude the fluctuation of the  sticking eigenvalues of $C_{xy}$.  For more general $k$, we use the argument for the case $k=1$ recursively. Our proof strategy is inspired by the work  \cite{BGM2011}.

For the comparison, throughout this section,  we denote by $\widetilde{\lambda}_1\geq  \widetilde{\lambda}_2\geq \cdots\geq \widetilde{\lambda}_q$ the ordered
eigenvalues of $ C_{y w_2}$,  while use $\lambda_1\geq \lambda_2\geq \cdots \geq \lambda_q$ to denote the ordered eigenvalues of $ C_{yx}$ as before.

Our aim is to show the following proposition.
\begin{pro}\label{pro. comparison of evs} Suppose that the assumptions in Theorem \ref{thm. fluctuation of sticking evs} holds. Then for any small positive constant $\varepsilon$ and any fixed  integer $i\geq 1$, the following holds in probability
\begin{align}
|\lambda_{k_0+i}-\widetilde{\lambda}_i|\leq n^{-1+\varepsilon}.  \label{17052820}
\end{align}
\end{pro}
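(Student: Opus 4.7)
The plan is to prove the proposition by induction on $k$, as suggested in the opening paragraph of the section. The base case $k=1$ reduces to an explicit scalar analysis via the determinant equation from Section \ref{s. determinant eq}; the general case follows by peeling off spikes one at a time.

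\textbf{Base case $k=1$.} Here $M_n(z)$ from \eqref{071801} is scalar, and by the factorization $\det D(z)=\det D_{22}(z)\,M_n(z)$ the eigenvalues of $C_{yx}$ outside $\mathrm{Spec}(C_{w_2 y})$ are precisely the zeros of $M_n$. From the representation $M_n(z)=\mathcal{Z}_1\mathscr{A}(z)\mathcal{Z}_1'$ in \eqref{17050302} and the structure of $\mathscr{A}(z)$ in \eqref{17050301}, $M_n$ is a rational function with simple poles at the $\widetilde{\lambda}_i$'s and (positive) residues
\begin{align*}
a_i=\frac{|\mathcal{Z}_1(P_y-\widetilde{\lambda}_i)W_2'\mathbf{v}_i|^2}{\mathbf{v}_i'W_2W_2'\mathbf{v}_i},
\end{align*}
where $\mathbf{v}_i$ solves $W_2(P_y-\widetilde{\lambda}_i)W_2'\mathbf{v}_i=0$. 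Positivity of $a_i$ forces strict interlacing between zeros of $M_n$ and the $\widetilde{\lambda}_i$'s, and exactly one extra zero appears above $\widetilde{\lambda}_1$ iff $r_1>r_c$, accounting for the shift $k_0\in\{0,1\}$. To upgrade this interlacing to the quantitative bound $n^{-1+\varepsilon}$, I would expand near each pole,
\begin{align*}
M_n(z)=\frac{a_i}{z-\widetilde{\lambda}_i}+b_i+O(z-\widetilde{\lambda}_i),
\end{align*}
so the non-outlier zero nearest $\widetilde{\lambda}_i$ sits at distance $\approx |a_i/b_i|$ from $\widetilde{\lambda}_i$. Using $Y(P_y-\widetilde{\lambda}_i)=(1-\widetilde{\lambda}_i)Y$, the independence of $W_1$ from $(W_2,Y)$ and standard Gaussian concentration for $W_1\mathbf{u}$ and $Y\mathbf{u}$ with $\mathbf{u}=(P_y-\widetilde{\lambda}_i)W_2'\mathbf{v}_i$, one shows $|\mathcal{Z}_1\mathbf{u}|^2\lesssim n^{-1+\varepsilon}$ with high probability; combined with $\mathbf{v}_i'W_2W_2'\mathbf{v}_i=O(1)$ (from the large deviation bound \eqref{eq4.10}), this gives $|a_i|\le n^{-1+\varepsilon}$. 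The regular part $b_i$ is essentially $m_1(\widetilde{\lambda}_i)$ from \eqref{17033101}, which is bounded away from zero since $r_1$ is well separated from $r_c$ and hence $\widetilde{\lambda}_i$ near $d_+$ is not a root of $m_1$. Combining yields $|a_i/b_i|\le n^{-1+\varepsilon}$, which is the desired bound.

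\textbf{General $k$.} Define $X^{(0)}=X$ and, for $j=1,\dots,k$, let $X^{(j)}$ be obtained from $X^{(j-1)}$ by deleting the $j$-th row, so that $X^{(k)}=W_2$. Since $p-k>q$, each $C_{yx^{(j)}}$ still has $q$ nonzero eigenvalues. Each transition $X^{(j-1)}\to X^{(j)}$ adds one row carrying a single spike of strength $\sqrt{r_j}$, which is exactly the $k=1$ configuration with the ``base'' matrix being $X^{(j)}$ (carrying further spikes rather than being purely null). A routine extension of the base case argument applies: only control of the spectral edge of the base matrix is needed, which is provided inductively. At each step the $n^{-1+\varepsilon}$ bound propagates with an index shift of $1$ if the peeled spike is an outlier and $0$ otherwise. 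Telescoping and using the well separation of $r_c$ from $r_{k_0}$ and $r_{k_0+1}$ (so that the outlier count remains well-defined throughout) gives
\begin{align*}
|\lambda_{k_0+i}(C_{yx})-\widetilde{\lambda}_i(C_{yw_2})|\le k\cdot n^{-1+\varepsilon'}\le n^{-1+\varepsilon}.
\end{align*}

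\textbf{Main obstacle.} The technical heart is the residue estimate $|a_i|\le n^{-1+\varepsilon}$ uniformly over fixed $i$. Input (i), the Gaussian concentration for $\mathcal{Z}_1\mathbf{u}$, is straightforward because $W_1\perp(W_2,Y)$ and the vector $\mathbf{u}=(P_y-\widetilde{\lambda}_i)W_2'\mathbf{v}_i$ has norm $O(1)$. Input (ii), the control of $\mathbf{v}_i'W_2W_2'\mathbf{v}_i$ and rigidity of $\widetilde{\lambda}_i$ near the edge, requires isotropic local law-type estimates for the null case MANOVA ensemble; these are the most delicate ingredients but are essentially already available in the references cited in Section \ref{section. null monova} and Remark \ref{rem.17052825}. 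The inductive extension to a non-null base matrix adds bookkeeping but no essentially new difficulty.
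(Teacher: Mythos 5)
Your overall strategy --- peel off spikes one row at a time and reduce to a $k=1$ base case settled via the scalar determinant equation, Cauchy interlacing, and edge local-law input --- matches the paper's. For $k=1$ your pole--residue expansion $M_n(z)\approx a_i/(z-\widetilde{\lambda}_i)+b_i$ is a legitimate alternative to the paper's route, which instead shows that $M_n(z)$ is uniformly close to the nonzero constant $m_1(d_+)$ on the domain $\Omega$ that excludes $n^{-1+\varepsilon}$-windows around each $\widetilde{\lambda}_i$ (Lemma \ref{lem.17042902}) and then invokes monotonicity of $M_n$ between poles (Lemma \ref{lem. monotonicity}). Neither route is fundamentally simpler: your ``$b_i$ is essentially $m_1(\widetilde{\lambda}_i)$'' is precisely what Lemma \ref{lem.17042902} has to prove, and while your residue bound $|a_i|\lesssim n^{-1+\varepsilon}$ is plausible, the $T_1Y$-contribution to $\mathcal{Z}_1\mathbf{u}$ is not ``standard Gaussian concentration'' --- since $\mathbf{v}_i$ depends on $Y$ through $P_y$, you have to pass to the SVD $Y=U_y\Lambda_yV_y$ and exploit the Haar randomness of $U_y$, the same mechanism as in Lemma \ref{lem.17032401}.

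The genuine gap is at general $k$. You claim the inductive step with a spike-carrying base (your $X^{(j)}$, the paper's $X_j=W_{2j}+T_{2j}Y$) ``adds bookkeeping but no essentially new difficulty,'' but that is exactly where the difficulty sits. Running your residue argument at stage $j<k$ would require concentration of the bilinear form pairing a spiked row of $X$ with a null vector of $X_j(P_y-\lambda_{i,j})X_j'$; since $X_j$ itself depends on $U_y$ through $T_{2j}Y$, so does that null vector, and the Haar decoupling that works for the null base $W_2$ breaks down. You would need eigenvector/isotropic edge estimates for the spike-perturbed matrix $C_{yx_j}$, which are not in the references you cite and are not routine. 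The paper circumvents this entirely via the Schur complement identity \eqref{17051420}, which expresses the scalar one-step $M$-function as $1/(M_n^{-1})_{11}$ where $M_n$ is the full $k\times k$ determinant-equation matrix built over the \emph{null} base $W_2$. Thanks to that algebraic reduction, the only local-law input ever required is Lemma \ref{lem.17042902} for the null MANOVA ensemble; this reduction is the key idea missing from your proposal.
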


With the aid of Proposition \ref{pro. comparison of evs}, we can prove Theorem \ref{thm. fluctuation of sticking evs} below.
\begin{proof}[Proof of Theorem \ref{thm. fluctuation of sticking evs}] Let
\begin{align}
\hat{X}:=W+T\hat{Y},
\end{align}
where $\hat{Y}$ is a i.i.d. copy of $Y$ and $\hat{Y}$ is independent of $Y$. Denote by $C_{y\hat{x}}$ via replacing $X$ by $\hat{X}$ in $C_{yx}$. Then $C_{yx}$ is a CCA matrix in the null case. We first  compare the eigenvalues of $C_{y\hat{x}}$, denoted by $\mathring{\lambda}_i$'s, with the eigenvalues of $C_{yw_2}$, i.e., $\widetilde{\lambda}_i$.  Applying  Proposition \ref{pro. comparison of evs} to the case $r_1=\cdots=r_k=0$, we can get $\max_{1\leq i\leq \mathfrak{m}}|\mathring{\lambda}_i-\widetilde{\lambda}_i|\leq n^{-1+\varepsilon}$ in probability. This together with (\ref{17052820}) implies (\ref{17052821}). This together with Remark \ref{rem.17052825} completes the proof of Theorem \ref{thm. fluctuation of sticking evs}.
\end{proof}

To prove Proposition \ref{pro. comparison of evs}.  We will need the following  lemmas.
\begin{lem} \label{lem. monotonicity} For any $i=0,\ldots, q$, and $j=1,\ldots, k$, the diagonal entry $( M_n)_{jj}(z)$ is a decreasing function in $z\in(\widetilde{\lambda}_{i+1},\widetilde{\lambda}_i )$, where $\widetilde{\lambda}_0=\infty$ and  $\widetilde{\lambda}_{q+1}=-\infty$.
\end{lem}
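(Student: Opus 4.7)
The plan is to interpret $f_j(z) := (M_n)_{jj}(z)$ as the critical value of a quadratic form and extract the sign of its derivative from the envelope theorem. Recall from (\ref{072501})--(\ref{16121221}) that, writing $D(z) := S_{xy}S_{yy}^{-1}S_{yx} - z\,S_{xx}$, we have the Schur-complement identity $M_n(z) = D_{11}(z) - D_{12}(z)D_{22}^{-1}(z)D_{21}(z)$. Since $D_{22}(z) = S_{w_2 w_2}(C_{w_2 y} - z)$ with $S_{w_2 w_2}$ positive definite almost surely, $D_{22}(z)$ is invertible throughout the open interval $(\widetilde{\lambda}_{i+1}, \widetilde{\lambda}_i)$, and on this interval I can define
\[
u^*(z) \;:=\; -D_{22}(z)^{-1} D_{21}(z)\, e_j \;\in\; \mathbb{R}^{p-k},
\]
the unique critical point in $u$ of the quadratic form $u \mapsto (e_j', u')\,D(z)\,(e_j', u')'$. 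Completing the square yields the representation
\[
f_j(z) \;=\; (e_j', u^*(z)')\,D(z)\,(e_j', u^*(z)')',
\]
which holds regardless of the signature of $D_{22}(z)$.

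The key step is to differentiate this representation in $z$ and apply the envelope theorem. Because $u^*(z)$ is a stationary point in $u$ by construction, the $u^{*\prime}(z)$ term drops out of the chain rule, leaving
\[
\frac{d}{dz} f_j(z) \;=\; (e_j', u^*(z)')\,\frac{dD(z)}{dz}\,(e_j', u^*(z)')' \;=\; -\,(e_j', u^*(z)')\,S_{xx}\,(e_j', u^*(z)')',
\]
since $\frac{dD}{dz} = -S_{xx}$. Under Assumption \ref{ass.070301} one has $c_1 = p/n \in (0,1)$, so $S_{xx} = X X'$ is positive definite almost surely. Because $e_j \neq 0$, the concatenated vector $(e_j', u^*(z)')'$ is also nonzero, so the quadratic form is strictly positive. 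This forces $f_j'(z) < 0$ throughout $(\widetilde{\lambda}_{i+1}, \widetilde{\lambda}_i)$, yielding strict monotonic decrease of $(M_n)_{jj}(z)$ on every such interval.

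I do not anticipate a serious obstacle here; the only conceptual point worth flagging is that the envelope-theorem step needs just stationarity in $u$, not any maximum/minimum property, so it handles uniformly the regimes where $D_{22}(z)$ is positive definite, negative definite, or indefinite (i.e., where $u^*$ is a saddle). Smoothness of $u^*(z)$ in $z$ on each open interval is automatic from $D_{22}(z)$ being an analytic invertible matrix-valued function there, which legitimizes the differentiation.
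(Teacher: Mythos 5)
Your proof is correct and takes a genuinely different route. The paper differentiates the representation $M_n(z)=\mathcal{Z}_1\mathscr{A}(z)\mathcal{Z}_1'$ from (\ref{17050302}) directly and factors $\partial_z\mathscr{A}(z)=-\mathcal{Q}(z)\mathcal{Q}(z)^{T}$ with $\mathcal{Q}(z)=I-(P_y-z)W_2'\bigl(W_2(P_y-z)W_2'\bigr)^{-1}W_2$ (the sign in the paper's displayed $\mathcal{Q}$ is a typo), concluding that $\partial_z M_n(z)$ is negative semi-definite. You instead read the Schur complement $(M_n)_{jj}(z)$ as the stationary value of the quadratic form $u\mapsto(e_j',u')D(z)(e_j',u')'$ and apply the envelope theorem, so that only $\partial_z D(z)=-S_{xx}$ survives the chain rule. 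Your version bypasses the algebraic factorization of $\partial_z\mathscr{A}$, is insensitive to the signature of $D_{22}(z)$ (a point you correctly flag), and sharpens the conclusion to strict decrease since $S_{xx}$ is almost surely positive definite and $(e_j',u^*(z)')'\neq 0$; the paper's version buys a stronger matrix inequality, $\partial_z M_n(z)\preceq 0$, not just control of the diagonal entries. Both arguments share one small caveat: $D_{22}(z)$ is also singular at $z=0\in\mathrm{Spec}(C_{w_2y})$, which lies inside the interval $(\widetilde{\lambda}_{q+1},\widetilde{\lambda}_q)$ for $i=q$, so your claim that $D_{22}(z)$ is invertible throughout each open interval should exclude this point; this mirrors the paper's own ``as long as $z\notin\mathrm{Spec}(C_{w_2y})$'' qualifier and is harmless for the way the lemma is used.
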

\begin{proof} Recall   (\ref{17050302}) and (\ref{17050301}).
It is elementary to check that
\begin{align*}
\frac{\partial \mathscr{A}(z)}{\partial z}= - \mathcal{Q}(z)(\mathcal{Q}(z))^{T},
\end{align*}
where
\begin{align*}
\mathcal{Q}(z):= I+( P_{y}-z)  W_2' \big( W_2( P_{y}-z)  W_2'\big)^{-1} W_2
\end{align*}
Note that, as long as $z\not\in \text{Spec}(  C_{w_2y})$, $\frac{\partial \mathscr{A}(z)}{\partial z}$ is a well-defined negative semi-definite matrix.  Hence,
\begin{align*}
\frac{\partial }{\partial z} ( M_n)_{jj}(z)= \Big( \mathcal{Z}_1 \frac{\partial \mathscr{A}(z)}{\partial z}\mathcal{Z}'_1\Big)_{jj}\leq 0
\end{align*}
in the interval $(\widetilde{\lambda}_{i+1}, \widetilde{\lambda}_i)$ for any $i=0, \ldots, q$, which means that $( M_n)_{jj}(z)$ is decreasing.
\end{proof}

Let $\mathfrak{m}\geq 1$ be any fixed  integer. For  any small constants $\varepsilon, \delta>0$, we introduce the following random domain
\begin{align}
\Omega\equiv \Omega(\mathfrak{m}, \varepsilon, \delta):=  (\widetilde{\lambda}_1+n^{-1+\varepsilon}, d_++\delta)\cup  \Big\{\cup_{i=1}^{\mathfrak{m}} (\widetilde{\lambda}_{i+1}+n^{-1+\varepsilon}, \widetilde{\lambda}_i-n^{-1+\varepsilon})\Big\} \label{17052410}
\end{align}

We also need the following lemma.
\begin{lem} \label{lem.17042902}  With the above notations,   we have the following:

(i): If $i\neq j$, for any small $\varepsilon, \delta>0$, there exists some positive constant $c>0$, such that
\begin{align*}
\sup_{z\in \Omega} |( M_n)_{ij}(z)|\leq n^{-c}
\end{align*}
holds with overwhelming probability.

(ii):  For  any small $\varepsilon>0$ and any small $\varepsilon'>0$, there exists a sufficiently small $\delta$, such that
\begin{align*}
\sup_{z\in \Omega} |( M_n)_{ii}(z)-m_i(d_+)|\leq \varepsilon'
\end{align*}
holds with overwhelming probability.
\end{lem}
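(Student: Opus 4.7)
The plan is to extend the deterministic approximation behind Lemma \ref{lem.062802} from the interior domain $\mathcal{D}$ down to the edge-plus-gaps domain $\Omega$, in which $z$ may lie arbitrarily close to (or just below) $d_+$ provided it stays at distance at least $n^{-1+\varepsilon}$ from every eigenvalue $\widetilde{\lambda}_i$ of $C_{w_2 y}$. The two crucial analytic inputs are edge rigidity and an entrywise/isotropic local law for the null-case MANOVA matrix $C_{w_2 y}$, both available from the references cited in the excerpt (\cite{Johnstone2008, HanP16T, HPY}). Rigidity gives $\widetilde{\lambda}_i - \gamma_i^{cl} = O(n^{-2/3+\varepsilon})$ near the edge with overwhelming probability, so that on $\Omega$ the resolvent norm $\|(C_{w_2 y}-z)^{-1}\|$ is bounded by $n^{1-\varepsilon}$.

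I would start from the representation $M_n(z) = \mathcal{Z}_1 \mathscr{A}(z) \mathcal{Z}_1'$ with $\mathcal{Z}_1 = W_1 + T_1 Y$, and use the SVD $Y = U_y \Lambda_y V_y$ from (\ref{17032720}). The key structural observation is that $\mathscr{A}(z)$ depends on $Y$ only through $P_y = V_y' V_y$, while $U_y$ is Haar-distributed independently of $(V_y, \Lambda_y, W_1, W_2)$. Conditioning on everything but $U_y$ and $W_1$, the object $\mathcal{Z}_1$ becomes the sum of a Gaussian matrix $W_1$ and a Haar rotation $T_1 U_y$ of a fixed matrix $\Lambda_y V_y$. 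Standard Gaussian and Haar concentration then reduce $(M_n)_{ij}(z)$ at a single $z$ to its conditional expectation with error of order $n^{-1/2+\varepsilon} \cdot \|\mathscr{A}(z)\|_\text{HS}/\sqrt{n}$. A dyadic net of size $n^C$ in $\Omega$, combined with the Lipschitz bound on $M_n$ coming from $\|\partial_z \mathscr{A}(z)\| \lesssim \|(C_{w_2 y}-z)^{-1}\|^2$, upgrades pointwise to uniform bounds on $\Omega$.

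For part (i), the conditional expectation of $(M_n)_{ij}(z)$ vanishes for $i \neq j$: the $W_1$-contribution from rows $i$ and $j$ is mean-zero by independence, and the Haar average of $(U_y)_{ia}(U_y)_{jb}$ against deterministic coefficients is proportional to $\delta_{ij}$, which kills the off-diagonal part. For part (ii), the same computation produces
\begin{equation*}
(M_n)_{ii}(z) = (1-r_i)\,\tfrac{1}{n}\ntr \mathscr{A}(z) + r_i\,\tfrac{1}{q}\ntr\big(\Lambda_y V_y \mathscr{A}(z) V_y' \Lambda_y'\big) + o_{\mathrm{p}}(1),
\end{equation*}
and the two normalised traces admit deterministic equivalents expressible through the Stieltjes transforms $\check{s}(z)$ and $\tilde{s}(z)$ from (\ref{17040350})--(\ref{17040351}); after algebraic simplification these combine to give exactly $m_i(z)$, as in the proof of Lemma \ref{lem.062802}. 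The last step is continuity: the hypothesis that $r_c$ is well separated from every $r_i$, $i \leq k$, ensures that $m_i$ is continuous at $d_+$, so choosing $\delta$ small forces $|m_i(z) - m_i(d_+)| \leq \varepsilon'/2$ on $\Omega$, and the stochastic error absorbs the remaining $\varepsilon'/2$.

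The main obstacle is to carry the analysis all the way down to the edge: the crude $\|G(z)\|$-type bounds that suffice in the bulk degrade to $O(n^{1-\varepsilon})$ on $\Omega$ and cannot be used naively. A subtle point is that $\|\mathscr{A}(z)\|$ itself can reach $n^{1-\varepsilon}$ near the edge through the term $(P_y-z)W_2'(W_2(P_y-z)W_2')^{-1}W_2(P_y-z)$, whereas the averaged quantity $\|\mathscr{A}(z)\|_\text{HS}^2/n$ stays of order one; the concentration arguments must be set up so that only this averaged norm enters the error bound, and this is precisely where the isotropic/entrywise local law for the resolvent of $C_{w_2 y}$ is required.
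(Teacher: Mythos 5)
Your proposal correctly identifies the key inputs — edge rigidity, a local law for $C_{w_2y}$, and the Gaussian/Haar split of $\mathcal{Z}_1$ — and the conditional expectation you write down (the $(1-r_i)\frac{1}{n}\ntr\mathscr{A}+r_i\frac{1}{q}\ntr\mathscr{D}$ structure for the diagonal entries, and vanishing off-diagonal) matches the paper's decomposition. But there is a genuine gap: you treat the deterministic equivalents of the traces at real $z\in\Omega$ as if they simply carry over from the Stieltjes transform analysis, whereas that is exactly the step that fails without further work. The local law (Lemma B.2, borrowed from HanP16T) gives $\check{s}_n(z)\approx\check{s}(z)$ only for $\Im z\geq n^{-1+\varepsilon_0}$; for $z$ real, between consecutive $\widetilde\lambda_i$, there is no Ward identity to exploit and no off-the-shelf statement. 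The paper's proof resolves this via a complexification device that your plan omits: it first extends the entrywise approximation $(M_n)_{ij}\approx\delta_{ij}m_i(z)$ to the complex domain $\widetilde{\mathcal D}$ (Lemma B.1), then for each $z\in\Omega$ sets $z_0=z+\mathrm{i}\eta_0$ and proves the comparison $\sup_\Omega|(M_n)_{ij}(z_0)-(M_n)_{ij}(z)|\leq n^{-c}$ by a resolvent expansion of $\mathscr{A}(z_0)-\mathscr{A}(z)$ controlled through rigidity, (B.13) and a Ward-type bound on $\ntr|\widetilde C_{w_2y}-z_0|^{-2}$. Your direct route on $\Omega$ never produces the analogue of Lemma 7.5 at real $z$, so the chain stops at the concentration step.

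A second, smaller issue: your claim that $\|\mathscr{A}(z)\|_{\mathrm{HS}}^2/n=O(1)$ on $\Omega$ is false. For $z\in\Omega$ at distance exactly $n^{-1+\varepsilon}$ from the nearest $\widetilde\lambda_i$, $\ntr\Phi(z)^2\gtrsim n^{2-2\varepsilon}$ and hence $\|\mathscr{A}(z)\|_{\mathrm{HS}}\sim n^{1-\varepsilon}$. With the error rate you state, $n^{-1/2+\varepsilon}\|\mathscr{A}\|_{\mathrm{HS}}/\sqrt n$, this gives an $O(1)$ error, not a small one. (The paper's Lemma 7.3, which yields $\frac{(\log n)^K}{n}\|\mathscr{A}\|_{\mathrm{HS}}\sim n^{-\varepsilon}(\log n)^K$, would still save the concentration step, but even granting this, the trace-equivalent gap above remains.) The paper sidesteps the whole issue on $\Omega$ itself by doing the concentration only on $\widetilde{\mathcal D}$, where the HS bound is obtained from the local law via $\ntr|\widetilde C_{w_2y}-z|^{-2}=\eta^{-1}\Im\ntr(\widetilde C_{w_2y}-z)^{-1}$, and then transferring to $\Omega$ by the shift argument rather than repeating the concentration there.
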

The proof of Lemma \ref{lem.17042902}  is postponed to Appendix B.

In the sequel, we first show the proof of Proposition \ref{pro. comparison of evs} in the case of $k=1$.  At the end of this section, we will extend the discussion to general $k$ case.   Note that when $k=1$, We have $k_0=1$ or $k_0=0$, which means $r_1>r_c$ or $r_1<r_c$, respectively. We restate Proposition \ref{pro. comparison of evs} for $k=1$ in the following proposition.
\begin{pro} \label{pro.17042901} Suppose that $k=1$. We have the following

(i): If $r_1>r_c$, then for any fixed integer $m\geq 1$, and any  $\varepsilon>0$, we have
\begin{align*}
|\lambda_{m+1}- \widetilde{\lambda}_m |\leq n^{-1+\varepsilon}
\end{align*}
in probability.

(ii): If $r_1< r_c$,  then for any fixed integer $m\geq 1$, and any  $\varepsilon>0$, we have
\begin{align*}
|\lambda_{m}- \widetilde{\lambda}_m |\leq n^{-1+\varepsilon}
\end{align*}
in probability.
\end{pro}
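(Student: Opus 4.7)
Since $k=1$, the matrix $M_n(z)$ reduces to the scalar function $(M_n)_{11}(z)$, and by \eqref{3.9.1} every eigenvalue of $C_{xy}$ outside $\text{Spec}(C_{w_2 y})$ is a zero of $(M_n)_{11}$. My plan is to combine the monotonicity provided by Lemma~\ref{lem. monotonicity} with the uniform localization supplied by Lemma~\ref{lem.17042902} to show that each relevant $\lambda_i$ lies within $n^{-1+\varepsilon}$ of some $\widetilde{\lambda}_j$, and then to identify the correct index $j$ by a sign-and-counting argument based on $m_1(d_+)$.

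The first step is to show that on each interval $(\widetilde{\lambda}_{i+1},\widetilde{\lambda}_i)$ with $i=1,\dots,q-1$, and on $(\widetilde{\lambda}_1,+\infty)$, the function $(M_n)_{11}$ is strictly decreasing and runs from $+\infty$ at the left endpoint to $-\infty$ at the right endpoint. Monotonicity is exactly Lemma~\ref{lem. monotonicity}; the boundary values at $\widetilde{\lambda}_i^\pm$ come from the fact that $D_{22}^{-1}$ has a simple pole there with a rank-one residue whose sign is dictated by the spectral decomposition of $C_{w_2 y}$, while at $+\infty$ the leading behavior $(M_n)_{11}(z) \sim -(\det S_{xx}/\det S_{w_2 w_2})\,z$ forces the limit $-\infty$. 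Therefore each such interval carries exactly one zero of $(M_n)_{11}$, and these $q$ zeros, listed in decreasing order, are precisely the nonzero eigenvalues $\lambda_1>\dots>\lambda_q$ of $C_{xy}$.

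Next, I would fix small constants $\varepsilon,\varepsilon'>0$ and take $\delta$ sufficiently small so that Lemma~\ref{lem.17042902}(ii) gives $|(M_n)_{11}(z)-m_1(d_+)|\le \varepsilon'$ uniformly on $\Omega(\mathfrak{m},\varepsilon,\delta)$ with overwhelming probability. The sign of $m_1(d_+)$ then decides the side of each gap to which the zero is pushed. In Case~(i) ($r_1>r_c$), $m_1$ has a unique zero at $\gamma_1>d_+$ and tends to $-\infty$ at $+\infty$, so $m_1(d_+)\ge c>0$ by the well-separation hypothesis; consequently $(M_n)_{11}>0$ on $\Omega$, which forces the zero of $(M_n)_{11}$ inside each gap $(\widetilde{\lambda}_{i+1},\widetilde{\lambda}_i)\subset(d_+-\delta,d_++\delta)$ to lie in $(\widetilde{\lambda}_i-n^{-1+\varepsilon},\widetilde{\lambda}_i)$. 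The unique zero in $(\widetilde{\lambda}_1,+\infty)$ sits near $\gamma_1$ and is $\lambda_1$, so the natural ordering yields $|\lambda_{m+1}-\widetilde{\lambda}_m|\le n^{-1+\varepsilon}$. In Case~(ii) ($r_1<r_c$), $m_1$ has no zero on $(d_+,\infty)$ and $m_1(d_+)\le -c<0$, hence $(M_n)_{11}<0$ on $\Omega$ and every zero is pushed to the \emph{left} endpoint of its gap; in particular the zero in $(\widetilde{\lambda}_1,+\infty)$ is $\lambda_1$ and lies within $n^{-1+\varepsilon}$ of $\widetilde{\lambda}_1$, giving $|\lambda_m-\widetilde{\lambda}_m|\le n^{-1+\varepsilon}$.

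The main obstacle I anticipate lies in the first step: confirming that the pole of $(M_n)_{11}$ at each $\widetilde{\lambda}_i$ is genuine and that its one-sided residues carry the correct signs with overwhelming probability. This amounts to an anisotropic-type estimate showing that the eigenvectors of $C_{w_2 y}$ associated to $\widetilde{\lambda}_i$ are not nearly orthogonal to the direction $W_1+T_1 Y$, which ultimately uses the independence of $W_1$ from $(W_2,Y)$. A secondary but routine point is the almost-sure disjointness $\text{Spec}(C_{xy})\cap\text{Spec}(C_{w_2 y})=\emptyset$, which justifies using the factorization $\det D=\det D_{22}\cdot \det M_n$ to capture every nonzero eigenvalue of $C_{xy}$.
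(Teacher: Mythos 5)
Your sign-and-counting argument using Lemma~\ref{lem. monotonicity}, Lemma~\ref{lem.17042902}, and the sign of $m_1(d_+)$ matches the heart of the paper's proof, and your identification of the key mechanism is correct. However, there is a genuine gap in your first step, and it is a gap the paper avoids entirely. You want to establish that $\lambda_1>\widetilde{\lambda}_1>\lambda_2>\cdots>\widetilde{\lambda}_{q-1}>\lambda_q>\widetilde{\lambda}_q$ by showing $(M_n)_{11}$ blows up to $+\infty$ at the left endpoint and to $-\infty$ at the right endpoint of each gap; you correctly flag this as the ``main obstacle'' but do not actually carry it out. The paper gets this interlacing for free, with no pole or residue analysis: the nonzero eigenvalues of $C_{xy}$ are those of $P_{w}P_yP_{w}$ (where $P_w$ is the projection onto the row span of $X=W+TY$), the nonzero eigenvalues of $C_{w_2y}$ are those of $P_{w_2}P_yP_{w_2}$, and since $P_{w_2}$ projects onto a codimension-one subspace of the range of $P_w$, the Cauchy interlacing theorem immediately gives (\ref{17042906}). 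That one deterministic linear-algebra fact replaces your entire first step. Once interlacing is in hand, Lemma~\ref{lem.17042902} (no zeros of $\det M_n$ on $\Omega$) confines each $\lambda_i$ to within $n^{-1+\varepsilon}$ of one of the two endpoints of its gap, and then exactly your monotonicity-plus-sign-of-$m_1(d_+)$ argument selects the correct endpoint; that part of your proposal agrees with the paper.

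A secondary remark: even if you insisted on the pole route, you are overcomplicating the obstacle. Using $\widetilde{C}_{w_2y}=S_{w_2w_2}^{-1/2}S_{w_2y}S_{yy}^{-1}S_{yw_2}S_{w_2w_2}^{-1/2}$ (which is symmetric and similar to $C_{w_2y}$), one finds that the residue of $(M_n)_{11}=\mathcal{Z}_1\mathscr{A}(z)\mathcal{Z}_1'$ at $\widetilde{\lambda}_i$ is $-\,a_i^2$ with $a_i=\mathcal{Z}_1(P_y-\widetilde{\lambda}_i)W_2'S_{w_2w_2}^{-1/2}\widetilde{\mathbf{u}}_i$, so the sign of the residue is automatic; all that is needed is $a_i\neq 0$, which holds almost surely because $\mathcal{Z}_1$ is a continuous random vector independent of $(W_2,Y)$. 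No quantitative ``anisotropic-type estimate'' with overwhelming probability is required for the interlacing itself. Nevertheless, since the paper's Cauchy-interlacing argument is both shorter and requires no probabilistic input, you should prefer it.
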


With Lemma \ref{lem. monotonicity} and Lemma \ref{lem.17042902}, we  prove Proposition \ref{pro.17042901}.

\begin{proof}[Proof of Proposition \ref{pro.17042901}] Since  $k=1$, we have $\det  M_n= M_n(z)= M_{11}(z)$. Applying Lemma \ref{lem.17042902} with $k=1$, we see that  for any small $\varepsilon'$, there is a sufficiently small $\delta>0$ such that
\begin{align}
\sup_{z\in \Omega} |\det  M_n (z)-m_1(d_+)|\leq \varepsilon' \label{17042905}
\end{align}
with overwhelming probability.  Since we assume that $r_1$  is well separated from $r_c$ , it is easy to check from the definition (\ref{17033101}) that $|m_1(d_+)|\geq c$ for some positive constant $c$.  Hence, choosing $\varepsilon'$ in (\ref{17042905}) sufficiently small,  we see that $|\det  M_n(z)|\neq 0$ uniformly on $\Omega$, with overwhelming probability. That means,   with overwhelming probability, there is no eigenvalue of $ C_{xy}$ in $\Omega$.

Let $\mathbf{w}_i, i=1, \ldots, p$ be the rows of $W$. Note that, in case of $k=1$, we have $\mathbf{w}_1= W_1$. Moreover, $ P_{w}$ is the projection onto the subspace spanned by $\{\mathbf{w}_1+ T_1  Y, \mathbf{w}_2, \cdots, \mathbf{w}_p\}$, and $ P_{w_2}$ is the projection onto the subspace spanned by $\{\mathbf{w}_2, \cdots, \mathbf{w}_p\}$.  Then, by Cauchy interlacing, we know that  $\text{Spec}( P_{w}  P_{y} P_{w})$ and  $ \text{Spec}( P_{w_2}  P_{y} P_{w_2})$  are interlacing. This implies that
\begin{align}
\lambda_1\geq \widetilde{\lambda}_1\geq \lambda_2\geq \cdots \geq \widetilde{\lambda}_{q-1}\geq \lambda_q\geq \widetilde{\lambda}_{q},  \label{17042906}
\end{align}
since the nonzero eigenvalues of  $ C_{wy}$ (resp.   $ C_{w_2y}$) are the same as those of $ P_{w}  P_{y} P_{w}$ (resp. $ P_{w_2}  P_{y} P_{w_2}$).

In case (i): $r_1>r_c$, from Theorem \ref{thm.061901} we  know that  $\lambda_1\to \gamma_1>d_++\delta$ almost surely, for any sufficiently small $\delta>0$.
Since $\lambda_i$'s are solutions to $\det  M_n(z)=0$, and $m_1(d_+)\neq 0$, we see from (\ref{17042905}) and (\ref{17042906}) that
\begin{align*}
 \lambda_i\in [\widetilde{\lambda}_{i+1}, \widetilde{\lambda}_{i+1}+n^{-1+\varepsilon} ]\cup [ \widetilde{\lambda}_{i}-n^{-1+\varepsilon}, \widetilde{\lambda}_{i}], \qquad i=2, \ldots, \mathfrak{m}.
\end{align*}
in probability.

In case (ii): $r_1<r_c$, from Theorem \ref{thm.061901} we know that $\lambda_1\to d_+$ in probability. Hence, we have
\begin{align*}
\lambda_1\in [\widetilde{\lambda}_1, \widetilde{\lambda}_1+n^{-1+\varepsilon}],\qquad  \lambda_i\in [\widetilde{\lambda}_{i+1}, \widetilde{\lambda}_{i+1}+n^{-1+\varepsilon} ]\cup [ \widetilde{\lambda}_{i}-n^{-1+\varepsilon}, \widetilde{\lambda}_{i}], \qquad i=2, \ldots, \mathfrak{m}.
\end{align*}
Therefore, to prove Proposition \ref{pro.17042901}, it suffices to check that for all $i=2, \cdots, \mathfrak{m}$, $ \lambda_i$ is in $[\widetilde{\lambda}_{i+1}, \widetilde{\lambda}_{i+1}+n^{-1+\varepsilon} ]$ or $[ \widetilde{\lambda}_{i}-n^{-1+\varepsilon}, \widetilde{\lambda}_{i}]$. Note that $\det  M_n(z)=( M_n)_{11}(z)$ is decreasing in the interval $(\widetilde{\lambda}_{i+1}, \widetilde{\lambda}_{i})$, according to Lemma \ref{lem. monotonicity}. Furthermore, since $ s(d_+)=\frac{d_+-c_1-c_2}{2(d_+-1)}$, it is elementary to check
\begin{align*}
m_j(d_+)>0, \quad \text{if}\quad r_j>r_c, \qquad m_j(d_+)<0, \quad \text{if}\quad r_j<r_c.
\end{align*}
Therefore, if $r_1>r_c$, according to  (\ref{17042905}), we have  $\det  M_n(z)>0$ on  $(\widetilde{\lambda}_{i+1}+n^{-1+\varepsilon},  \widetilde{\lambda}_{i}-n^{-1+\varepsilon})$. By the monotonicity of $\det  M_n(z)$, we see that $\det  M_n(z)=0$ can only be achieved if $z\in [ \widetilde{\lambda}_{i}-n^{-1+\varepsilon}, \widetilde{\lambda}_{i}]$. Hence, in case of $r_1>r_c$, we have $ \lambda_i\in [ \widetilde{\lambda}_{i}-n^{-1+\varepsilon}, \widetilde{\lambda}_{i}]$  for all $i=2, \ldots, \mathfrak{m}$. In contrast, when $r_1<r_c$, we have $\lambda_i\in [\widetilde{\lambda}_{i+1}, \widetilde{\lambda}_{i+1}+n^{-1+\varepsilon} ]$ for all $i=2, \ldots, \mathfrak{m}.$

This concludes the proof of Proposition \ref{pro.17042901}.

\end{proof}

Now, we prove  Proposition  \ref{pro. comparison of evs}.
\begin{proof}[Proof of Proposition  \ref{pro. comparison of evs}] For general $k$, we need  more notations.  For $1\leq a\leq k$, let
\begin{align*}
W_{1a}:=\left(\begin{array}{cc}
\mathbf{w}_{1}\\
\vdots \\
\mathbf{w}_a
\end{array}
\right), \qquad T_{1a}:=\left(\begin{array}{cccccc}
t_1 & &   &  \\
 &\ddots &  &\mathbf{0}   \\
 & & t_a   &
\end{array}\right)
\end{align*}
be the $a\times n$ matrices composed of the first  $a$ rows of $W$ and $T$,  respectively, and we write
\begin{align*}
W=\binom{W_{1a}}{W_{2a}}, \qquad T=\binom{T_{1a}}{T_{2a}}.
\end{align*}
We take the above as the definitions of $W_{2a}$ and $T_{2a}$.  Further, we denote
\begin{align*}
X_{a}:= W_{2a}+T_{2a} Y.
\end{align*}
Denote the eigenvalues of $C_{x_ay}$ by
\begin{align}
\lambda_{1,a}\geq \lambda_{2,a}\geq \cdots \label{17052470}
\end{align}
Especially, with the above notations, we have $\widetilde{\lambda}_i=\lambda_{i,k}$ and $\lambda_i=\lambda_{i,0}$.
Our aim is compare $\lambda_{i,a-1}$'s with $\lambda_{i,a}$'s. First of all, by Cauchy interlacing, we have
\begin{align}
\lambda_{i-1,a}\geq \lambda_{i,a-1}\geq  \lambda_{i,a}. \label{17051401}
\end{align}
Further, if $\lambda_{i,a-1}\not\in \text{Spec}(C_{x_ay})$, then $\lambda_{i,a-1}$ is a solution to the equation
\begin{align}
\mathbf{z}_{a-1} \mathcal{P}_a \mathbf{z}_{a-1}'=0, \label{17051301}
\end{align}
where
\begin{align*}
\mathbf{z}_{a-1}:= \mathbf{w}_{a-1}+t_{a-1} \mathbf{y}_{a-1},
\end{align*}
and
\begin{align*}
\mathcal{P}_a(z):= (P_y-z)-(P_y-z) X_{a}' (X_{a}(P_y-z)X_{a}')^{-1}X_{a} (P_y-z).
\end{align*}

We start with the case $a=k-1$ and compare $\lambda_{i,a-1}$'s with $\lambda_{i,a}$'s. This is equivalent to discuss the case $k=2$.  Hence, we can set $k=2$ and $a=1$. We recall the decomposition of $D$ in (\ref{17051402}). By Schur complement, we have
\begin{align}
D^{-1}=\left(
\begin{array}{ccc}
D_{11} & D_{12}\\
D_{21}  & D_{22}
\end{array}
\right)^{-1}= \left( \begin{array}{ccc}
(M_n)^{-1} & *\\
* & *
\end{array}
\right), \label{17051404}
 \end{align}
where  $*$'s represent other blocks, whose explicit expressions are irrelevant to us. Observe that $M_n$ (c.f. (\ref{071801}))
is $2\times 2$ by the assumption $k=2$. On the other hand, using a different decomposition,  we can also get by Schur complement that
\begin{align}
D^{-1}=\left( \begin{array}{cccccc}
\mathbf{z}_1(P_y-z)\mathbf{z}_1' & \mathbf{z}_1(P_y-z)X_2\\
X_2'(P_y-z)\mathbf{z}_1'  & X_{2}(P_y-z)X_{2}'
\end{array}
\right)^{-1}=  \left( \begin{array}{ccc}
(\mathbf{z}_{1} \mathcal{P}_2 \mathbf{z}_{1}')^{-1} & *\\
* & *
\end{array}
\right).  \label{17051405}
\end{align}
From  (\ref{17051404}) and  (\ref{17051405}), we obtain
\begin{align}
\mathbf{z}_{1} \mathcal{P}_2 \mathbf{z}_{1}'= \frac{1}{ (M_n^{-1})_{11}}.  \label{17051420}
\end{align}

Then, we first consider the eigenvalues of $C_{x_1y}$, which is a perturbation of $C_{w_2y}$. Recall the notations introduced in (\ref{17052470}).  Using Proposition \ref{pro.17042901}, we see that the following statements hold in probability:

(i): If $r_2>r_c$, we have for any fixed integer $m\geq 0$
\begin{align}
|\lambda_{m+1,1}- \widetilde{\lambda}_m |\leq n^{-1+\varepsilon}. \label{17051410}
\end{align}

(ii): If $r_2<r_c$, we have for any fixed integer $m\geq 0$,
\begin{align}
|\lambda_{m,1}- \widetilde{\lambda}_m |\leq n^{-1+\varepsilon}.  \label{17051436}
\end{align}

According to the assumption $r_1\geq r_2$, it suffices to consider the following  three cases:
\begin{align*}
\text{(i)}':\; r_1\geq r_2>r_c,\quad  \text{(ii)}':\; r_1>r_c>r_2,\quad  \text{(iii)}': \; r_c>r_1>r_2.
 \end{align*}
 First, we consider case (i)'.   Applying Theorem \ref{thm.061901}, we see that
 \begin{align*}
 \lambda_{1,0}\to \gamma_1, \quad \lambda_{2,0}\to \gamma_2, \quad \lambda_{m,0}\to d_+
\end{align*}
almost surely. Recall Cauchy interlacing formula (\ref{17051401}). We see that $\lambda_{m,1} \geq \lambda_{m+1,0}\geq \lambda_{m+1,1}$.  We first show
\begin{align}
\lambda_{m+1,0}\not\in  (\widetilde{\lambda}_{m}+n^{-1+\varepsilon}, \widetilde{\lambda}_{m-1}-n^{-1+\varepsilon}) \label{17051415}
\end{align}
in probability.
Then according to (\ref{17051410}), we have
 \begin{align}
\lambda_{m+1,0}\not\in  (\lambda_{m+1,1}+2n^{-1+\varepsilon}, \lambda_{m,1}-2n^{-1+\varepsilon})
\label{17051416}
\end{align}
in probability.

To see (\ref{17051415}), we use  (\ref{17051420}), which together with Lemma \ref{lem.17042902} implies
\begin{align*}
\sup_{z\in \Omega} |\mathbf{z}_{1} \mathcal{P}_2(z) \mathbf{z}_{1}'-m_1(d_+)|\leq \varepsilon'.
\end{align*}
Hence, (\ref{17051415}) and (\ref{17051416}) follow.  Further, similarly to Lemma \ref{lem. monotonicity}, we can show that $\mathbf{z}_{1} \mathcal{P}_2(z) \mathbf{z}_{1}'$ is decreasing in $(\lambda_{m,1}, \lambda_{m+1,1})$. Then, by the sign of $m_1(d_+)$ and the fact that $\lambda_{i,0}$ is the solution to the equation $\mathbf{z}_{1} \mathcal{P}_2(z) \mathbf{z}_{1}'=0$, we see that
\begin{align}
|\lambda_{m+1,0}-\lambda_{m,1}|\leq 2n^{-1+\varepsilon} \label{17051431}
\end{align}
in probability. This together with (\ref{17051410}) further implies that
\begin{align}
|\lambda_{m+1,0}-\widetilde{\lambda}_{m-1}|\leq 3n^{-1+\varepsilon}. \label{17051430}
\end{align}
Then we get the conclusion for case (i)', by slightly modifying $\varepsilon$.  For case (ii)', it suffices to go through the above discussion, with $(\widetilde{\lambda}_{m-1}, \widetilde{\lambda}_m)$ replaced by $(\widetilde{\lambda}_{m}, \widetilde{\lambda}_{m+1})$ in  (\ref{17051415}) and (\ref{17051430}).  For case (iii)', we first need replace $(\widetilde{\lambda}_{m-1}, \widetilde{\lambda}_m)$ by $(\widetilde{\lambda}_{m}, \widetilde{\lambda}_{m+1})$ in  (\ref{17051415}) and (\ref{17051430}). Then, instead of (\ref{17051431}), we  have
\begin{align*}
|\lambda_{m+1,0}-\lambda_{m+1,1}|\leq 2n^{-1+\varepsilon}
\end{align*}
since the sign of $m_1(d_+)$ changed. This together with (\ref{17051436}) further implies that
\begin{align}
|\lambda_{m+1,0}-\widetilde{\lambda}_{m+1}|\leq 3n^{-1+\varepsilon}. \label{17051430}
\end{align}

The above discussion on the case $k=2$ simply gives the result for $\lambda_{i, k-1}$  for general $k$. Then, we apply the above argument again to compare $\lambda_{i,k-3}$'s with $\lambda_{i, k-2}$'s. This is equivalent to consider the eigenvalues
of $C_{xy}$ with $k=3$. Using the argument recursively, we can finally get the result for $\lambda_i$'s for general $k$. This concludes the proof.

\end{proof}

\section{Proof of Lemma \ref{lem.062802}}\lb{sec5}
To ease the presentation, we introduce the following notations
 \begin{align}
 &\Phi\equiv\Phi(z):=D_{22}^{-1}(z)=( W_2 (P_{y}-z) W_2')^{-1},\nonumber\\
 &\Upsilon\equiv\Upsilon(z):=(  Y  (P_{w_2}-z)  Y')^{-1}, \label{16121220}
 \end{align}
Observe that $\text{rank}( P_{w_2})=p-k$ and $\text{rank}( P_y )=q$ almost surely.
For any $m\times \ell$ rectangle matrix $A$, using SVD, it is easy to check  the identity
\begin{align}
A(A'A-zI_\ell)^{-1}A'=AA'(AA'-z I_m)^{-1}=I_m+z (AA'-z I_m)^{-1}.  \label{17052502}
\end{align}
Moreover, we can write
 \begin{align}
 \Phi(z)= S _{w_2w_2}^{-\frac{1}{2}}(\widetilde{C}_{w_2y}-z)^{-1}  S _{w_2w_2}^{-\frac{1}{2}},\label{17050130}
 \end{align}
 where we introduced the symmetric matrix
  \begin{align}
\widetilde{C}_{w_2y} := S _{w_2w_2}^{-\frac{1}{2}} S _{w_2y} S _{yy}^{-1} S _{yw_2} S _{w_2w_2}^{-\frac{1}{2}}. \label{17052501}
 \end{align}
Using (\ref{17050130}) and (\ref{17052502}), it is not difficult to check
\begin{align}
	 S _{yy}^{-1/2} S _{yw_2}\Phi  S _{w_2y} S _{yy}^{-1/2}
	=  I_q+z S _{yy}^{1/2}\Upsilon  S _{yy}^{1/2}, \label{17031935}
\end{align}
from which we immediately get
\begin{eqnarray}\label{070201}
 S _{yw_2}\Phi  S _{w_2y}= S _{yy}+z  S _{yy}\Upsilon  S _{yy}.
\end{eqnarray}
Further, we introduce the following  notations for Wishart matrices
\begin{align}
	&E:=  W_2 P_y  W_2', \quad  H:=  W_2( I- P_y ) W_2',\nonumber\\
	&\mathcal{E}:= Y P_{w_2} Y',\quad \mathcal{H}:= Y( I- P_{w_2}) Y'. \label{16121501}
\end{align}
By Cochran's Theorem, we know that $E$ and $ H$ are independent, so are $\mathcal{E}$ and $\mathcal{H}$. In addition, in light of (\ref{17040101}), we have
\begin{align}
	&nE\sim \mathcal{W}_{p-k}( I_{p-k}, q),\quad n H\sim\mathcal{W}_{p-k}( I_{p-k}, n-q),\nonumber\\
	& n\mathcal{E}\sim \mathcal{W}_q(  I_q, p-k),\quad n\mathcal{H}\sim \mathcal{W}_q( I_q, n-p+k). \label{17031930}
\end{align}
Observe that
\begin{align}
 & S _{w_2w_2}=E+ H, \qquad  S _{yy}=\mathcal{E}+\mathcal{H},\nonumber\\
& C_{w_2y}=\big( E+ H\big)^{-1} E, \qquad    C_{y w_2}=\big(\mathcal{E}+\mathcal{H}\big)^{-1}\mathcal{E},  \label{pres1}
\end{align}
where $ C_{w_2y}$ and $ C_{y w_2}$ are defined analogously to (\ref{17031001}).
From (\ref{pres1}), we can also write
\begin{align}
\Phi(z)=((1-z)E-z  H)^{-1}\, \qquad \Upsilon(z)=\big((1-z)\mathcal{E}-z \mathcal{H}\big)^{-1}.\label{061911}
\end{align}
Therefore, we have
\begin{eqnarray}
 S _{yy}\Upsilon  S _{yy}&=&(\mathcal{E}+\mathcal{H})\big((1-z)\mathcal{E}-z \mathcal{H}\big)^{-1}(\mathcal{E}+\mathcal{H})\nonumber\\ \nonumber\\
&=&(1-z)^{-1}\mathcal{E}-z^{-1}\mathcal{H}+(z(1-z))^{-1}\big((1-z)\mathcal{H}^{-1}-z \mathcal{E}^{-1}\big)^{-1} \label{061912}
\end{eqnarray}
Following from (\ref{070201}) and (\ref{061912}) we obtain
\begin{eqnarray} \label{pres2}
	(1-z) S _{yw_2}\Phi  S _{w_2y}=\mathcal{E}+\big((1-z)\mathcal{H}^{-1}-z \mathcal{E}^{-1}\big)^{-1}.
\end{eqnarray}
In the sequel, we use the notation
\begin{align}
	\Psi\equiv \Psi_n(z):=\big((1-z)\mathcal{H}^{-1}-z \mathcal{E}^{-1}\big)^{-1}. \label{16122950}
\end{align}
Then, we can write
\begin{align}
(1-z) S _{yw_2}{\Phi} S _{w_2y}=\mathcal{E}+\Psi. \label{17031801}
\end{align}

  Recall the SVD in (\ref{17032720}). Note that $ W_1, W_2, U_{y}, \Lambda_{y}, V_{y}$ are mutually independent.  According to the definitions of $\mathscr{A}, \mathscr{B}, \mathscr{D}$ in (\ref{17050301}) and (\ref{17032301}) the, definition of $\Phi$ in (\ref{16121220}), and (\ref{17040601}),  we see that $\big( W_1, U_{y}\big)$ is independent of $\mathscr{A}, \mathscr{B}, \mathscr{D}$.  Hence, we can condition on $ W_2, \Lambda_{y}, V_{y}$  and thus condition on $\mathscr{A}, \mathscr{B}, \mathscr{D}$ and use the randomness of $ W_1, U_{y}$ in the  sequel.

 Our first step is to use a $k\times q$ Gaussian matrix to approximate $ T_1 U_{y}$.  For simplicity, we denote
 \begin{align*}
 \mathcal{T}:=\text{diag}(\sqrt{r_1}, \ldots, \sqrt{r_k}).
 \end{align*}
 Apparently, $ \mathcal{T}^2=T_1T_1'$.
 More specifically, we claim the following lemma.
 \begin{lem}\label{lem.17032401} With the above notations, there exists a $k\times q$  Gaussian matrix $ G_1$  with i.i.d. $N(0, \frac{1}{q})$ entries, such that for any sufficiently large  $K>0$,
 \begin{align}
 &\mathscr{M}_2= \mathcal{T} G_1\mathscr{B} W_1'+O(\frac{(\log n)^K}{n}),\nonumber\\
 &  \mathscr{M}_3= \mathcal{T} G_1 \mathscr{D} G_1' \mathcal{T}- \frac{1}{q} \ntr \mathscr{D}\cdot  \mathcal{T}\big( G_1  G_1'-I\big) \mathcal{T}+O(\frac{(\log n)^K}{n}), \label{17052595}
 \end{align}
 hold uniformly in $z\in\mathcal{D}$, with overwhelming probability. Here  $O(\frac{(\log n)^K}{n})$ represents a generic $k\times k$ random matrix with all entries of order $O(\frac{(\log n)^K}{n})$.
 \end{lem}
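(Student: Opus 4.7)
The starting point is the classical representation: if $G_1$ is a $k\times q$ matrix with i.i.d.\ $N(0,1/q)$ entries, then $(G_1G_1')^{-1/2}G_1$ has the same distribution as the first $k$ rows of a Haar-distributed $q\times q$ orthogonal matrix. Since $U_y$ is independent of $\mathscr{B},\mathscr{D},W_1$ (after conditioning on $W_2,\Lambda_y,V_y$), I may assume without loss of generality that $T_1U_y=\mathcal{T}(G_1G_1')^{-1/2}G_1$, with $G_1$ independent of $\mathscr{B},\mathscr{D},W_1$. Setting $S:=G_1G_1'$, standard Wishart concentration (Hanson-Wright or a direct large-deviation argument) gives $\|S-I_k\|=O((\log n)^K/\sqrt n)$ with overwhelming probability, and hence $S^{-1/2}=I_k-\tfrac12(S-I_k)+O((\log n)^K/n)$ in operator norm, uniformly in anything not involving $G_1$.

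For $\mathscr{M}_2$, I write $\mathscr{M}_2=\mathcal{T}G_1\mathscr{B}W_1'+\mathcal{T}(\tilde U-G_1)\mathscr{B}W_1'$ with $\tilde U=S^{-1/2}G_1$. The key gain of one extra factor of $n^{-1/2}$ comes from the independence of $W_1$ from $(\tilde U-G_1)\mathscr{B}$: conditionally on the latter, each entry of $(\tilde U-G_1)\mathscr{B}W_1'$ is a centered Gaussian whose variance is controlled by $\|(\tilde U-G_1)\mathscr{B}\|_{\mathrm{HS}}^2/n\le\|\tilde U-G_1\|^2\|\mathscr{B}\|^2/n=O((\log n)^{2K}/n^2)$. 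Combining with a standard Gaussian tail bound yields $|\mathcal{T}(\tilde U-G_1)\mathscr{B}W_1'|=O((\log n)^K/n)$ entry-wise with overwhelming probability, which is the required first identity.

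For $\mathscr{M}_3$ I cannot use an independence argument because both $\tilde U$ and $G_1$ are functions of the same matrix $G_1$; instead the improvement must come from exact cancellation. I decompose $G_1\mathscr{D}G_1'=\tfrac{1}{q}\mathrm{tr}(\mathscr{D})\,I_k+\mathcal{R}$, where quadratic-form concentration gives $\|\mathcal{R}\|=O((\log n)^K/\sqrt n)$. Then
\begin{align*}
\mathscr{M}_3&=\mathcal{T}S^{-1/2}\bigl(G_1\mathscr{D}G_1'\bigr)S^{-1/2}\mathcal{T}\\
&=\tfrac{1}{q}\mathrm{tr}(\mathscr{D})\cdot\mathcal{T}S^{-1}\mathcal{T}+\mathcal{T}S^{-1/2}\mathcal{R}S^{-1/2}\mathcal{T}.
\end{align*}
In the first term I use $S^{-1}=I_k-(S-I_k)+O((\log n)^{2K}/n)$, so that $\tfrac{1}{q}\mathrm{tr}(\mathscr{D})\mathcal{T}S^{-1}\mathcal{T}=\tfrac{1}{q}\mathrm{tr}(\mathscr{D})\mathcal{T}\mathcal{T}-\tfrac{1}{q}\mathrm{tr}(\mathscr{D})\mathcal{T}(S-I_k)\mathcal{T}+O((\log n)^{2K}/n)$. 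In the second, the Taylor expansion $S^{-1/2}=I_k+O((\log n)^K/\sqrt n)$ and the bound $\|\mathcal{R}\|=O((\log n)^K/\sqrt n)$ give $\mathcal{T}S^{-1/2}\mathcal{R}S^{-1/2}\mathcal{T}=\mathcal{T}\mathcal{R}\mathcal{T}+O((\log n)^{2K}/n)=\mathcal{T}G_1\mathscr{D}G_1'\mathcal{T}-\tfrac{1}{q}\mathrm{tr}(\mathscr{D})\mathcal{T}\mathcal{T}+O((\log n)^{2K}/n)$. Adding, the two $\tfrac{1}{q}\mathrm{tr}(\mathscr{D})\mathcal{T}\mathcal{T}$ contributions cancel and the surviving linear-in-$(S-I_k)$ term is exactly $-\tfrac{1}{q}\mathrm{tr}(\mathscr{D})\,\mathcal{T}(G_1G_1'-I_k)\mathcal{T}$, producing the stated identity. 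Uniformity in $z\in\mathcal{D}$ is obtained in the usual way: the estimates above are pointwise in $z$, and $\mathscr{B}(z),\mathscr{D}(z)$ are holomorphic on $\mathcal{D}$ with bounded derivatives on the event $\Xi_1$, so an $n^{-C}$-net argument plus a Lipschitz bound upgrades pointwise control to uniform control after enlarging the power of $\log n$. The main technical obstacle is precisely this bookkeeping in $\mathscr{M}_3$: obtaining the $O((\log n)^K/n)$ bound requires isolating the deterministic mean $\tfrac{1}{q}\mathrm{tr}(\mathscr{D})I_k$ inside $G_1\mathscr{D}G_1'$ so that its interaction with $S-I_k$ produces the prescribed correction, with only two independent $O(n^{-1/2})$ fluctuations ever being multiplied together.
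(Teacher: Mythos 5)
Your proof is correct, and it takes a genuinely different route from the paper's. The paper represents the first $k$ rows of the Haar matrix $U_y$ via Gram--Schmidt orthogonalization of $k$ independent Gaussian vectors $\mathbf{g}_1,\ldots,\mathbf{g}_k$, which forces an entry-by-entry analysis; the details are written out only for $k=2$, with the general case asserted to follow ``analogously'' but becoming notationally heavy. You instead use the polar-type representation $(G_1G_1')^{-1/2}G_1$ of the first $k$ rows of a Haar orthogonal matrix (both factorizations give Haar by right-invariance). This buys a closed-form, $k$-uniform argument in which the mechanism producing the correction term $-\frac{1}{q}\ntr\mathscr{D}\,\mathcal{T}(G_1G_1'-I)\mathcal{T}$ is completely transparent: it is exactly the linear term in the expansion $S^{-1}=I_k-(S-I_k)+O(\|S-I_k\|^2)$ applied to the deterministic mean $\frac{1}{q}\ntr\mathscr{D}\cdot I_k$ of $G_1\mathscr{D}G_1'$, with the constant parts cancelling identically. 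The remaining ingredients (concentration of $S=G_1G_1'$ around $I_k$ at scale $(\log n)^K/\sqrt{n}$, the conditional-Gaussian gain of a factor $n^{-1/2}$ in $\mathscr{M}_2$ from independence of $W_1$, quadratic-form concentration to control $\mathcal{R}=G_1\mathscr{D}G_1'-\frac{1}{q}\ntr\mathscr{D}\cdot I_k$, and a net-plus-Lipschitz argument for uniformity over $\mathcal{D}$) are the same tools the paper uses, repackaged. One small bookkeeping remark: in the $\mathscr{M}_2$ estimate you write $\|(\tilde U-G_1)\mathscr{B}\|_{\mathrm{HS}}\le\|\tilde U-G_1\|\,\|\mathscr{B}\|$; since $\tilde U-G_1$ has a bounded number of rows, its Hilbert--Schmidt and operator norms are comparable, so the correct bound $\|(\tilde U-G_1)\mathscr{B}\|_{\mathrm{HS}}\le\|\tilde U-G_1\|_{\mathrm{HS}}\|\mathscr{B}\|\le C\|\tilde U-G_1\|\,\|\mathscr{B}\|$ does give the claimed $O((\log n)^{2K}/n^2)$ variance once one uses the operator-norm bound $\|\mathscr{B}\|=O(1)$ (rather than the Hilbert--Schmidt bound $\|\mathscr{B}\|_{\mathrm{HS}}=O(\sqrt{n})$) on $\mathcal{D}$.
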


 The proof of Lemma \ref{lem.17032401} will be postponed to Appendix.
In the sequel, we  condition on $\mathscr{A},\mathscr{B}, \mathscr{D}$, and use the randomness of $ W_1$ and $ G_1$ only. First, we introduce the  following well-known  large deviation inequalities for Gaussian  which  will be  frequently used in the  sequel.
\begin{lem} \label{let} Assume that $\mathbf{x}, \mathbf{y}\in \mathbb{R}^n$  are two i.i.d. $N(0, n^{-1} I_n)$ vectors. Let $A\in \mathbb{C}^{n\times n}$ and $\mathbf{b}\in \mathbb{R}^n$ be independent of $\mathbf{x}$ and $\mathbf{y}$.  The following three statements hold with overwhelming probability for any sufficiently large constant  $K>0$:
\begin{align*}
&(i): \big|\mathbf{x}' A\mathbf{x} -\frac{1}{n}\ntr  A\big|\leq \frac{(\log n)^K}{n}\|A\|_{HS}, \quad (ii): |\mathbf{x}' A\mathbf{y}|\leq \frac{(\log n)^K}{n}\|A\|_{HS},\nonumber\\
 &(iii): |\mathbf{x}'\mathbf{b}|\leq \frac{(\log n)^K}{\sqrt{n}}\|\mathbf{b}\|_2.
\end{align*}
\end{lem}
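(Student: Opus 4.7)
The plan is to prove the three inequalities separately, starting with the scalar case and bootstrapping up to the quadratic form, all via standard Gaussian concentration.

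For (iii), I would condition on the independent vector $\mathbf{b}$. Given $\mathbf{b}$, the scalar $\mathbf{x}'\mathbf{b}$ is $N(0,\|\mathbf{b}\|_2^2/n)$, so the textbook Gaussian tail bound $\mathbb{P}(|Z|>t\sigma)\leq 2e^{-t^2/2}$ with $t=(\log n)^K$ delivers the required estimate with probability at least $1-2\exp(-(\log n)^{2K}/2)$, which decays faster than any inverse polynomial in $n$.

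For (ii), I would first condition on $\mathbf{y}$ and $A$; then $\mathbf{x}'A\mathbf{y}$ is Gaussian with mean zero and (conditional) variance $\|A\mathbf{y}\|_2^2/n = \mathbf{y}'A^*A\mathbf{y}/n$. By part (i) applied to the matrix $A^*A$, whose trace equals $\|A\|_{HS}^2$ and whose Hilbert--Schmidt norm is at most $\|A\|\cdot\|A\|_{HS}\leq \|A\|_{HS}^2$, this quadratic form is bounded by $C\|A\|_{HS}^2/n^2$ with overwhelming probability. Feeding this back into the Gaussian tail bound with $t=(\log n)^K$ then yields (ii).

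The main work is (i), the Hanson--Wright inequality for Gaussian quadratic forms. I would first reduce to the case of real symmetric $A$: split $A$ into real and imaginary parts (each with Hilbert--Schmidt norm at most $\|A\|_{HS}$), and then into symmetric and anti-symmetric parts, noting that $\mathbf{x}'A\mathbf{x}$ annihilates the anti-symmetric component since $\mathbf{x}$ is real. Diagonalizing $A=Q'\mathrm{diag}(\lambda_1,\ldots,\lambda_n)Q$ with orthogonal $Q$ and using rotational invariance $\mathbf{x}\stackrel{d}{=}Q'\mathbf{x}$, one rewrites
\begin{align*}
\mathbf{x}'A\mathbf{x}-\tfrac{1}{n}\ntr A \;=\; \sum_{i=1}^n \lambda_i\bigl(z_i^2 - \tfrac{1}{n}\bigr)
\end{align*}
with i.i.d. $z_i\sim N(0,1/n)$. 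This is a sum of independent centered sub-exponential variables with total variance $2\sum_i\lambda_i^2/n^2 = 2\|A\|_{HS}^2/n^2$ and individual sub-exponential parameter at most $\|A\|/n$, so a Bernstein-type tail bound gives
\begin{align*}
\mathbb{P}\bigl(|\mathbf{x}'A\mathbf{x}-\tfrac{1}{n}\ntr A|>t\bigr)\leq 2\exp\Bigl(-c\min\bigl(\tfrac{n^2 t^2}{\|A\|_{HS}^2},\tfrac{nt}{\|A\|}\bigr)\Bigr).
\end{align*}
Choosing $t=(\log n)^K\|A\|_{HS}/n$ and using $\|A\|\leq \|A\|_{HS}$, the minimum sits in the Gaussian regime $c(\log n)^{2K}$ for $n$ large, which gives overwhelming probability.

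The main obstacle is executing (i) cleanly so that the bound depends only on the Hilbert--Schmidt norm at scale $(\log n)^K/n$: one must verify that the sub-exponential regime of Bernstein is not triggered by our choice of $t$, and carefully track constants when decomposing complex $A$ into its real symmetric pieces. Given (i), parts (ii) and (iii) follow routinely from conditioning and the scalar Gaussian tail bound, as indicated above.
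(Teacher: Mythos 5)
The paper does not actually prove this lemma; it is stated as a ``well-known'' large-deviation estimate for Gaussian quadratic and bilinear forms and invoked without proof. Your proposal correctly reconstructs the standard argument (Hanson--Wright / Bernstein for (i), then conditioning plus the scalar Gaussian tail for (ii) and (iii)), so there is no paper proof to compare against, but the approach is the canonical one.

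Two minor slips are worth flagging, neither of which undermines the conclusion. First, in your treatment of (ii), applying part (i) to $A^*A$ gives $\mathbf{y}'A^*A\mathbf{y}\leq \frac{1}{n}\|A\|_{HS}^2+\frac{(\log n)^{K_1}}{n}\|A\|_{HS}^2$ with overwhelming probability, so the conditional variance is $O\bigl((\log n)^{K_1}\|A\|_{HS}^2/n^2\bigr)$, not $O\bigl(\|A\|_{HS}^2/n^2\bigr)$; this costs an extra $(\log n)^{K_1/2}$ in the final bound, which is harmless because the lemma holds for \emph{any} sufficiently large $K$, but you should make the exponent bookkeeping explicit. Second, in the Bernstein step for (i), with $t=(\log n)^K\|A\|_{HS}/n$ you have $t^2/v\asymp(\log n)^{2K}$ while $t/M=(\log n)^K\|A\|_{HS}/\|A\|$, which can be as small as $(\log n)^K$ when $A$ is nearly rank one; so the exponent is not always ``in the Gaussian regime.'' Nonetheless $e^{-c(\log n)^K}$ still decays faster than any polynomial, so ``overwhelming probability'' is preserved. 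Beyond these bookkeeping points, the reduction of complex $A$ to real symmetric $A$ via real/imaginary and symmetric/anti-symmetric decomposition, and the diagonalization using rotational invariance of the Gaussian law, are exactly right.
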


With the aid of  Lemma \ref{let}, we can derive the following.
\begin{lem}\lb{lem5.4} For any given $\delta>0$ in the definition of $\mathcal{D}$ (c.f. (\ref{16121240})), and any sufficiently large constant  $K>0$, the following inequalities hold uniformly on $\mathcal{D}$, with overwhelming probability,
\begin{align}
&\sup_{i,j}\Big|(\scM_1)_{ij}-\delta_{ij}  (1-r_i)\Big(c_2-(1+c_1)z-(1-z)\frac{1}{n}\ntr\big(E\Phi\big)\Big)\Big|\leq \frac{(\log n)^K}{\sqrt{n}},\nonumber\\
& \sup_{i,j}\Big|(\scM_2)_{ij}\Big|\leq \frac{(\log n)^K}{\sqrt{n}}, \qquad \sup_{i,j}\Big|(\scM_3)_{ij}-\delta_{ij}  r_i(1-z)\frac{1}{q}\ntr(\mathcal{H}-\Psi)\Big|\leq \frac{(\log n)^K}{\sqrt{n}}. \label{17032501}
\end{align}
\end{lem}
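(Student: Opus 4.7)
The plan is to treat each summand in the decomposition $M_n(z) = \scM_1 + \scM_2 + \scM_2' + \scM_3$ from \eqref{17031505} separately, using the Hanson--Wright-type Gaussian large-deviation bounds of Lemma \ref{let} after an appropriate conditioning, and then to promote the fixed-$z$ estimates to a uniform bound on $\mathcal{D}$ by a standard polynomial-net plus union-bound argument. Throughout, the operator norms $\|\mathscr{A}\|,\|\mathscr{D}\|,\|Y\|,\|W\|$ are $O(1)$ with overwhelming probability on $\Xi_1$ (using Theorem \ref{thm.071501} for the null-case largest eigenvalue and \eqref{0726100} for the Wishart extreme eigenvalues), so the various Hilbert-Schmidt norms used in Lemma \ref{let} are controlled uniformly.

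For $\scM_1 = W_1\mathscr{A} W_1'$, I would condition on $(W_2,Y)$, which freezes $\mathscr{A}$ while keeping the rows $\mathbf{w}_i$ of $W_1$ independent Gaussians with covariance $(1-r_i)n^{-1}I_n$. Parts (i)-(ii) of Lemma \ref{let} then immediately produce $(\scM_1)_{ii} = (1-r_i)\tfrac{1}{n}\ntr\mathscr{A} + O((\log n)^K/\sqrt n)$ and $(\scM_1)_{ij} = O((\log n)^K/\sqrt n)$ for $i\neq j$. The remaining task is an algebraic rewriting of $\tfrac{1}{n}\ntr\mathscr{A}$: split $\mathscr{A} = (P_y-z) - (P_y-z)W_2'\Phi W_2(P_y-z)$, use $\ntr(P_y-zI_n)=q-zn$ and $W_2(P_y-z)^2 W_2' = (1-z)^2 E + z^2 H$, and eliminate $\ntr[H\Phi]$ in favor of $\ntr[E\Phi]$ through the trace identity $(1-z)\ntr[E\Phi]-z\ntr[H\Phi]=p-k$ coming from $\Phi^{-1}\Phi = I_{p-k}$. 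This reduces $\tfrac{1}{n}\ntr\mathscr{A}$ to $c_2 - z + z\tfrac{p-k}{n} - (1-z)\tfrac{1}{n}\ntr[E\Phi]$, which is the claimed expression up to an $O(1/n)$ error. For $\scM_2 = T_1Y\mathscr{A} W_1'$, the same conditioning makes $(\scM_2)_{ij} = \sqrt{r_i}\,\mathbf{y}_i\mathscr{A}\mathbf{w}_j'$ a linear form in the independent Gaussian $\mathbf{w}_j$, so Lemma \ref{let}(iii) delivers the $O((\log n)^K/\sqrt n)$ bound directly, using $\|\mathbf{y}_i\|=O(1)$ (from the $\chi^2_n/n$ concentration of $(S_{yy})_{ii}$) and the above operator-norm control on $\mathscr{A}$.

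The piece $\scM_3 = T_1 Y\mathscr{A} Y'T_1'$ is the most delicate, because $U_y$ in the SVD $Y = U_y\Lambda_y V_y$ is only Haar, not Gaussian, so Lemma \ref{let} cannot be applied directly. Here I would first invoke Lemma \ref{lem.17032401} to replace $T_1 U_y$ by $\mathcal{T}G_1$ with $G_1$ an iid $N(0,q^{-1})$ matrix independent of $(\Lambda_y, V_y, W_2)$, paying an additive $O((\log n)^K/n)$ cost. Conditioning on $(W_2,\Lambda_y,V_y)$ then freezes $\mathscr{D}$ with $\|\mathscr{D}\|=O(1)$, and the rows of $G_1$ furnish the independent Gaussians needed for Lemma \ref{let}(i)-(ii). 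This yields $(\scM_3)_{ii}=r_i\tfrac{1}{q}\ntr\mathscr{D}+O((\log n)^K/\sqrt n)$, since the ``correction'' term $\tfrac{1}{q}\ntr\mathscr{D}\cdot \mathcal{T}(G_1 G_1'-I)\mathcal{T}$ appearing in Lemma \ref{lem.17032401} has diagonal entry $r_i((G_1 G_1')_{ii}-1)\cdot\tfrac{1}{q}\ntr\mathscr{D}$, which by LDI is itself $O((\log n)^K/\sqrt n)$; the off-diagonal contributions are handled analogously. To bring $\tfrac{1}{q}\ntr\mathscr{D}$ into the stated $(1-z)\tfrac{1}{q}\ntr[\mathcal{H}-\Psi]$ form, I would note $\ntr\mathscr{D}=\ntr[Y\mathscr{A} Y']$ via the SVD of $Y$, exploit $Y(P_y-z)=(1-z)Y$ to collapse $Y\mathscr{A} Y'$ to $(1-z)S_{yy}-(1-z)^2 S_{yw_2}\Phi S_{w_2y}$, and finally substitute the preliminary identity \eqref{17031801}, namely $(1-z)S_{yw_2}\Phi S_{w_2y}=\mathcal{E}+\Psi$, together with $S_{yy}=\mathcal{E}+\mathcal{H}$, to obtain $Y\mathscr{A} Y' = (1-z)(\mathcal{H}-\Psi)$.

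The main obstacle is the $\scM_3$ step: it depends essentially on the Haar-to-Gaussian replacement of Lemma \ref{lem.17032401}, which is delicate because $U_y$ carries only orthogonal-invariance symmetry, and on the chain of algebraic identities in Section \ref{s. preliminary} that unwinds $Y\mathscr{A} Y'$ into the clean form $(1-z)(\mathcal{H}-\Psi)$. Once the pointwise-in-$z$ estimates are in hand, the uniform-in-$z$ statement follows from a routine argument: on $\Xi_1$ every entry of $\scM_1,\scM_2,\scM_3$ is holomorphic on $\mathcal{D}$ with polynomially bounded derivatives (since $\|(C_{w_2y}-z)^{-1}\|$ is uniformly bounded on $\mathcal{D}$ by the separation from the spectrum), so checking the bound on a $n^{-L}$-net of $\mathcal{D}$ for a large fixed $L$ and invoking the union bound promotes the estimates to the full domain $\mathcal{D}$.
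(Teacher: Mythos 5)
Your plan closely mirrors the paper's proof: condition on $(W_2,Y)$ (equivalently, on $\mathscr{A},\mathscr{B},\mathscr{D}$), apply the Gaussian large-deviation bounds of Lemma~\ref{let} to the remaining randomness of $W_1$ and, after the Haar-to-Gaussian replacement of Lemma~\ref{lem.17032401}, of $G_1$, and finally do the algebraic reduction of the traces. The only substantive deviation is in $\scM_2$: you work directly with the linear form $\sqrt{r_i}\,\mathbf{y}_i\mathscr{A}\mathbf{w}_j'$ and apply Lemma~\ref{let}(iii), whereas the paper routes $\scM_2$ through the replacement $T_1U_y\to\mathcal{T}G_1$ and then uses Lemma~\ref{let}(ii) on the bilinear form in $\mathbf{b}_i,\mathbf{w}_j$; both routes are valid and of comparable length. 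You are also more explicit about the net argument that promotes the fixed-$z$ bounds to uniformity on $\mathcal{D}$, a point the paper leaves implicit, and your treatment of the $\tfrac{1}{q}\ntr\mathscr{D}\cdot\mathcal{T}(G_1G_1'-I)\mathcal{T}$ correction term in the $\scM_3$ step is correct.

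One correction. Your reduction of $\tfrac{1}{n}\ntr\mathscr{A}$ to $c_2 - z + z\tfrac{p-k}{n} - (1-z)\tfrac{1}{n}\ntr(E\Phi)$ is right, but you then assert it equals the lemma's displayed expression ``up to an $O(1/n)$ error.'' It does not: $c_2 - z + z\tfrac{p-k}{n}=c_2 - (1-c_1)z + O(1/n)$, while the lemma as stated has $c_2 - (1+c_1)z$; these differ by $2c_1 z=O(1)$. The mismatch originates in the paper itself: the sign on $c_1 z$ in the statement of Lemma~\ref{lem5.4}, in the definition of $m_i$ in \eqref{17033101}, and in the second line of \eqref{17032620} appears to be a typo, and the derived characteristic equation \eqref{0623002} (equivalently the identity $m_i(\gamma_i)=0$ at the $\gamma_i$ of \eqref{060901}) is consistent only with the coefficient $-(1-c_1)z$ that your algebra produces. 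So your computation is the correct one; you should simply have flagged the discrepancy with the lemma's displayed coefficient rather than declaring agreement.
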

For simplicity, we denote by $\mathbf{w}_i=(w_{i1},\ldots, w_{in})$ and $\mathbf{b}_i:=(b_{i1},\ldots, b_{iq})$ the $i$-th rows of $ W_1$ and $ G_1$, respectively.

\begin{proof}[Proof of Lemma \ref{lem5.4}]  In light of the definitions in (\ref{16121501}) and (\ref{17050301}), and also (\ref{061911}),  it is elementary to check that
\begin{align}
&\frac{1}{n}\ntr\mathscr{A}=\frac{q}{n}-z -\frac{1}{n}\ntr\big(\Phi((1-z)^2E+z^2 H)\big)\nonumber\\
&=\frac{q}{n}-z-z\frac{p}{n} - (1-z)\frac{1}{n}\ntr\big(E\Phi\big). \label{17032620}
\end{align}
In addition, using the fact that $\|W_2\|\leq C$ with overwhelming probability (c.f. (\ref{0726100})), it is not difficult to see that  for all $z\in \mathcal{D}$,
\begin{align}
\|\mathscr{A}\|_{\text{HS}}^2\leq Cn\big(+\| \Phi\|^2\big). \label{17040357}
\end{align}
with overwhelming probability. In addition, we have
\begin{align}
\|\Phi\|\leq \|( C_{w_2y}-z)^{-1}\| \|S_{w_2w_2}^{-1}\|=O(1) \label{17040401}
\end{align}
with overwhelming probability, where we used the fact that the event $ \Xi_1$ holds with overwhelming probability, and also (\ref{0726100}).
Hence, $\|\mathscr{A}\|_{\text{HS}}\leq C\sqrt{n}$  holds with overwhelming probability on $\mathcal{D}$.

Then, conditioning on the matrix $\mathscr{A}$ and use the randomness of $\mathbf{w}_i$'s only, the first statement in (\ref{17032501}) follows from Lemma \ref{let} (i), (ii) immediately.

For the second estimate in (\ref{17032501}), we condition on   $\mathscr{B}$, and use the randomness of $\mathbf{w}_i$'s and $\mathbf{b}_i$'s. Then the conclusion follows from  Lemma \ref{let} (ii) and the fact that
\begin{align}
\|\mathscr{B}\|_{\text{HS}}\leq C\sqrt{n}  \label{17040375}
\end{align}
holds uniformly on $\mathcal{D}$, with overwhelming probability.  To see (\ref{17040375}), we recall the definition of $\mathscr{B}$ from
(\ref{17032301}).  Similarly to (\ref{17040357}),
\begin{align*}
\|\mathscr{B}\|_{\text{HS}}^2 \leq Cn\big(1+\| \Phi\|^2\big).
\end{align*}
 Using (\ref{17040401}) again, we see that (\ref{17040375}) holds.

For the third estimate, we use the randomness of $\mathbf{b}_i$'s and condition on the $\mathscr{D}$. Then the conclusion again follows from Lemma \ref{let} (i), (ii),  the fact
\begin{align}
\frac{1}{q} \ntr \mathscr{D}=  \frac{1-z}{q}\ntr \Big( S _{yy}-(1-z) S _{y w_2}\Phi S _{w_2y}\Big)= \frac{1-z}{q}\ntr(\mathcal{H}-\Psi)
\label{17032621}
\end{align}
and the fact that
\begin{align}
\|\mathscr{D}\|_{\text{HS}}\leq C\sqrt{n} \label{17040380}
\end{align}
holds with overwhelming probability.  The proof of  (\ref{17040380}) is similar to that of (\ref{17040375}). We thus omit the details.

This completes the proof of Lemma \ref{lem5.4}.
\end{proof}

Recall $s(z)$ in (\ref{17040110}). For brevity, we further  introduce the following function
\begin{align}
\varrho(z):=\frac{1-z}{c_2}s(z)-c_1. \label{17040390}
\end{align}
Hence, what remains is to estimate the normalized traces of $ E\Phi$, $\mathcal{H}$ and $\Psi$.  We collect the results in the following lemma, whose proof will be postponed to Appendix A.
\begin{lem} \label{lem.17032510} For any given $z\in \mathcal{D}$, we have  the following estimates:
\begin{align}
&\frac{1}{q} \ntr \mathcal{H}=  \frac{n-p}{n}+o_{\mathrm{p}}(\frac{1}{\sqrt{n}}),\qquad \frac{1}{n} \ntr  E\Phi(z)=s(z)+o_{\mathrm{p}}(\frac{1}{\sqrt{n}}),\nonumber\\
&\frac{1}{q} \ntr \Psi(z)= \varrho(z)+o_{\mathrm{p}}(\frac{1}{\sqrt{n}}). \label{17040610}
\end{align}
Further, for any sufficiently small constant $\varepsilon>0$,  we have the uniform estimate
\begin{align*}
  \sup_{z\in \mathcal{D}} \Big|\frac{1}{n} \ntr E\Phi(z)-s(z)\Big|=O_{\text{a.s.}}(n^{-\varepsilon}),\qquad \sup_{z\in \mathcal{D}}\Big|\frac{1}{q} \ntr \Psi(z)-\varrho(z)\Big|=O_{\text{a.s.}}(n^{-\varepsilon}).
\end{align*}
\end{lem}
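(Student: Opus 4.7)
The plan is to treat the three pointwise estimates separately and reduce each---via the resolvent identities assembled in Section~\ref{s. preliminary}---to Stieltjes-transform statistics of MANOVA-type matrices for which convergence and variance bounds are already available from \cite{BaiH15C}. The uniform bounds on $z\in\mathcal{D}$ then follow by a routine $n^{-L}$-net argument combined with Lipschitz continuity, granted by the uniform operator-norm controls $\|(\widetilde{C}_{w_2y}-zI)^{-1}\|=O(1)$ and $\|(C_{yw_2}-zI)^{-1}\|=O(1)$ on the overwhelming-probability event $\Xi_1$ from (\ref{17040361}). I treat the three items in order, flagging only the last as requiring genuine care.

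The trace $\tfrac{1}{q}\ntr\mathcal{H}$ is essentially a Wishart-moment computation: by Cochran (cf.~(\ref{17031930})), $n\mathcal{H}\sim\mathcal{W}_q(I_q,n-p+k)$, hence $\mathbb{E}[\tfrac{1}{q}\ntr\mathcal{H}]=(n-p+k)/n=(n-p)/n+O(n^{-1})$ and $\operatorname{Var}[\tfrac{1}{q}\ntr\mathcal{H}]=O(n^{-2})$, which already gives an $O_p(n^{-1})$ deviation, far sharper than $o_p(n^{-1/2})$. For $\tfrac{1}{n}\ntr E\Phi(z)$, I would combine (\ref{17050130}) with the factorization $E=S_{w_2w_2}^{1/2}\widetilde{C}_{w_2y}S_{w_2w_2}^{1/2}$ and the cyclic trace to obtain
\[
\tfrac{1}{n}\ntr E\Phi(z)=\tfrac{p-k}{n}+\tfrac{z}{n}\ntr(\widetilde{C}_{w_2y}-zI)^{-1}.
\]
Since $\widetilde{C}_{w_2y}$ is $(p-k)\times(p-k)$ but shares its nonzero spectrum with the $q\times q$ matrix $C_{w_2y}$ (plus $p-k-q$ extra zeros), splitting the resolvent trace accordingly, invoking $\tfrac{1}{q}\ntr(C_{w_2y}-zI)^{-1}\to\tilde s(z)$ with the $O(n^{-1})$ rate from the linear spectral statistics of MANOVA matrices in \cite{BaiH15C}, and applying (\ref{17040615}), collapses the limit to $s(z)$.

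For $\tfrac{1}{q}\ntr\Psi(z)$, the key step is to rearrange (\ref{061912}) as
\[
\Psi=z(1-z)\,S_{yy}\Upsilon S_{yy}-z\mathcal{E}+(1-z)\mathcal{H}
\]
and then decouple the $\mathcal{E}$--$\mathcal{H}$ interaction inside $\Upsilon$. Writing $\Upsilon=(\mathcal{E}-zS_{yy})^{-1}=S_{yy}^{-1/2}(G-zI)^{-1}S_{yy}^{-1/2}$ with $G:=S_{yy}^{-1/2}\mathcal{E}S_{yy}^{-1/2}$ (which has the spectrum of $C_{yw_2}$), the cyclic trace produces $\ntr S_{yy}\Upsilon S_{yy}=\ntr S_{yy}(G-zI)^{-1}$. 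Here I would invoke the classical matrix Beta--Wishart independence (Muirhead~\cite{Muirhead1982}): since $n\mathcal{E}$ and $n\mathcal{H}$ are independent Wisharts with common identity covariance, the pair $(S_{yy},G)$ is independent, while $\mathbb{E}[S_{yy}]=I_q$. Consequently $\mathbb{E}[\tfrac{1}{q}\ntr S_{yy}(G-zI)^{-1}]=\mathbb{E}[\tfrac{1}{q}\ntr(G-zI)^{-1}]\to\tilde s(z)$, and a variance of order $n^{-2}$ follows from the Wishart covariance identity $\operatorname{Cov}[(S_{yy})_{ij},(S_{yy})_{i'j'}]=n^{-1}(\delta_{ii'}\delta_{jj'}+\delta_{ij'}\delta_{ji'})$ applied conditionally on $G$, using $\|(G-zI)^{-1}\|=O(1)$ on $\Xi_1$. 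Combining with Wishart concentration for $\tfrac{1}{q}\ntr\mathcal{E}\to c_1$ and the previous estimate for $\mathcal{H}$, and invoking (\ref{17040615}) to simplify $z(1-z)\tilde s(z)-zc_1+(1-z)(1-c_1)=(1-z)s(z)/c_2-c_1=\varrho(z)$, the claimed limit drops out.

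I expect the principal obstacle to lie precisely in this last step. Every direct algebraic manipulation of (\ref{070201})--(\ref{pres2}) that I have attempted collapses into the trivial tautology $\ntr\Psi=\ntr\Psi$, so the non-obvious move is to recognize that the matrix Beta--Gamma independence decouples $S_{yy}$ from $G$ and turns the otherwise entangled trace $\ntr S_{yy}(G-zI)^{-1}$ into a clean product expectation against $\mathbb{E}[S_{yy}]=I_q$. All remaining ingredients---Wishart moments and variances, the Stieltjes-transform convergence from \cite{BaiH15C}, and the $n^{-L}$-net plus Lipschitz argument delivering the a.s.\ uniform estimates---are then routine.
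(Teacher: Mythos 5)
Your proposal is correct for the first two estimates (where it essentially mirrors the paper, modulo trivial bookkeeping: you wrote ``the $q\times q$ matrix $C_{w_2y}$'' where you mean $C_{yw_2}$), but for the third --- which is the genuinely nontrivial one --- you take a different and in fact cleaner route. The paper computes $\tfrac{1}{q}\E\ntr\Psi$ by a direct Gaussian integration-by-parts calculation on the entries of $\mathcal{X}$ (see (\ref{17040129}) and the surrounding argument, requiring cut-off functions $\tilde{\mathcal{Q}}$ and two rounds of variance estimates via Poincar\'e), reducing eventually to $\tfrac{1}{q}\E\ntr(\mathcal{E}\Upsilon)$ and hence to $\tilde s(z)$. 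You instead observe that after the rearrangement $\Psi = z(1-z)S_{yy}\Upsilon S_{yy}-z\mathcal{E}+(1-z)\mathcal{H}$ and the cyclic identity $\ntr S_{yy}\Upsilon S_{yy}=\ntr S_{yy}(G-zI)^{-1}$ with $G=S_{yy}^{-1/2}\mathcal{E}S_{yy}^{-1/2}$, the matrix Beta--Wishart independence of $S_{yy}$ and $G$ (together with $\E S_{yy}=I_q$) kills the cross-correlation at the level of the mean and shoves all the fluctuation of $S_{yy}$ into a conditional variance of order $n^{-2}$, so the whole problem collapses onto the Stieltjes transform $\tilde s_n(z)=\tfrac{1}{q}\ntr(C_{yw_2}-z)^{-1}$, which must be controlled in both approaches anyway. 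This buys conceptual transparency at the cost of a distributional lemma: note Muirhead's Theorem~3.3.1 states the independence for the upper-triangular Cholesky factor of $A+B$, not the symmetric root $S_{yy}^{1/2}$; the symmetric-root version you need does hold, but you should supply the one-line Jacobian argument (the Jacobian of $A\mapsto S^{-1/2}AS^{-1/2}$ on symmetric matrices is $\det(S)^{-(q+1)/2}$, depending only on $S$, so the joint density of $(S,U)$ factorizes). Two further caveats: (i) you still have to produce the $o_p(n^{-1/2})$ control on $\tilde s_n(z)$ and $\check s_n(z)$ themselves, i.e. $\operatorname{Var}[\E(A_n\mid G)]=\operatorname{Var}[\tilde s_n(z)]=O(n^{-2})$; waving at \cite{BaiH15C} is not enough, since the paper uses that reference only for the mean (and only on $\Im z\geq c$), proving the variance bound itself via Poincar\'e and a smooth cutoff --- your argument needs the same machinery, so your third-step shortcut removes the integration-by-parts computation of the mean but not the concentration work; and (ii) the independence argument needs a truncation (the paper's $\tilde{\mathcal{Q}}$) to make the expectations well defined, but since the truncation event $\Xi_1$ in (\ref{17040361}) is measurable with respect to the spectrum of $G$ alone, it respects the independence, so this is only a technical nuisance.
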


With the aid of Lemmas \ref{lem5.4} and \ref{lem.17032510}, we can now prove Lemma \ref{lem.062802}.

\begin{proof}[Proof of Lemma \ref{lem.062802}] According to the decomposition in (\ref{17031505}), we see from Lemmas \ref{lem5.4} and \ref{lem.17032510} that
\begin{align*}
( M_n)_{ij}(z)&= (\scM_1)_{ij}(z) + (\scM_2)_{ij}(z)+(\scM_2)_{ji}(z)+(\scM_3)_{ij}(z)\nonumber\\
&=\delta_{ij}\Big(\big(c_2-(1+c_1)z-(1-z) s(z)\big) (1-r_i)+(1-z)(1-c_1-\varrho(z))r_i\Big)+O_{\text{a.s.}}(n^{-\varepsilon})\nonumber\\
&=\delta_{ij} m_i(z)+O_{\text{a.s.}}(n^{-\varepsilon})
\end{align*}
uniformly in $z\in \mathcal{D}$ and $i,j$.  Here in the last step we used the definitions in (\ref{17033101}) and (\ref{17040390}). This completes the proof of Lemma \ref{lem.062802}.

\end{proof}

\section{Proofs of Lemma \ref{lem4.3}-\ref{lem4.5}}\lb{sec6}
In this section we prove Lemmas \ref{lem4.3}-\ref{lem4.5}.
First, from (\ref{17031301}),  (\ref{17050801}) and (\ref{17050702}), it is easy to check
\begin{align*}
\sqrt{(\ga_j-d_-)(\ga_j-d_+)}=\frac{\vartheta r_j^2-\varpi}{r_j}.
\end{align*}
This further implies
\begin{align}
s(\ga_j)=-\frac{\varpi}{ \vartheta r_j-\varpi},\qquad s'(\ga_j)= \frac{\varpi\vartheta r_j^2}{\big(\vartheta r_j-\varpi\big)^2\big(\vartheta r_j^2-\varpi\big)}.
\label{s in terms of r}
\end{align}
Moreover, we need the following lemma  whose proof is postponed to Appendix A.
\begin{lem}\lb{lemf_1}
Suppose that the assumptions in Theorem \ref{thm.fluctuation} hold. For any $j\in \{1, \ldots, k_0\}$ such that $r_j$ is well separated from $r_c$ and $1$,   we have the following estimates
\begin{align*}
&\frac{1}{n}\ntr  H\Phi(\ga_j)=   \frac{-c_1+ (1-\ga_j)s(\ga_j)}{\ga_j}+o_{\text{p}}(1),\nonumber\\
&\frac{1}{n}\ntr  (H\Phi(\ga_j) H\Phi(\ga_j)) =  \frac{\ga_j(1-\ga_j)^2s'(\ga_j)+(\ga_j^2-1)s(\ga_j)+c_1}{\ga_j^2}+o_{\text{p}}(1),\nonumber\\
&\frac{1}{n}\ntr (E\Phi (\ga_j) H\Phi(\ga_j))=  (1-\ga_j)s'(\ga_j)-s(\ga_j)+o_{\text{p}}(1),\nonumber\\
&\frac{1}{n}\ntr (E\Phi(\ga_j) E\Phi(\ga_j)) = \ga_js'(\ga_j)+s(\ga_j)+o_{\text{p}}(1).
\end{align*}
\end{lem}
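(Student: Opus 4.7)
The plan is to reduce all four traces to a single input, namely $\frac{1}{n}\ntr E\Phi(z) = s(z) + o$ from Lemma \ref{lem.17032510}, via one algebraic identity and one differentiation identity. The key observation is that, from $\Phi^{-1} = (1-z)E - zH$, we get the resolvent identity
\begin{align*}
(1-z)\,E\Phi \;-\; z\,H\Phi \;=\; I_{p-k},
\end{align*}
which yields $H\Phi = z^{-1}\bigl((1-z)E\Phi - I\bigr)$. Taking normalized trace and invoking $\frac{p-k}{n}\to c_1$ together with Lemma \ref{lem.17032510} immediately gives the first claimed asymptotic $\frac{1}{n}\ntr H\Phi(\ga_j) = (\ga_j^{-1})((1-\ga_j)s(\ga_j) - c_1) + o_p(1)$.

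For the three second-order traces, the engine is a differentiation identity. Since $\partial_z \Phi^{-1} = -(E+H)$, we have $\partial_z \Phi = \Phi(E+H)\Phi$, and hence
\begin{align*}
\partial_z(E\Phi) \;=\; E\Phi\,(E+H)\,\Phi \;=\; (E\Phi)^2 \;+\; E\Phi\cdot H\Phi.
\end{align*}
Substituting $H\Phi = z^{-1}((1-z)E\Phi - I)$ and rearranging produces the clean identity $(E\Phi)^2 = E\Phi + z\,\partial_z(E\Phi)$. Taking normalized traces gives
\begin{align*}
\tfrac{1}{n}\ntr(E\Phi)^2 \;=\; \tfrac{1}{n}\ntr E\Phi \;+\; z\,\partial_z\!\Bigl(\tfrac{1}{n}\ntr E\Phi\Bigr),
\end{align*}
so the fourth trace is obtained once we can pass the derivative to the deterministic limit $s(z)$. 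This is the main technical point and the only real obstacle: it requires showing $\partial_z\frac{1}{n}\ntr E\Phi(\ga_j) = s'(\ga_j) + o_p(1)$. I plan to justify this via Cauchy's integral formula. Both $\frac{1}{n}\ntr E\Phi(z)$ and $s(z)$ are holomorphic on $\mathcal{D}$; choosing a small disk $|z-\ga_j|=\eta$ contained in $\mathcal{D}$ (possible because $r_j$ is well separated from $r_c$ and from $1$, so $\ga_j \in (d_++2\delta,1)$ after shrinking $\delta$), the uniform a.s. estimate in Lemma \ref{lem.17032510} together with $\partial_z f(\ga_j) = \frac{1}{2\pi i}\oint \frac{f(z)}{(z-\ga_j)^2}\,dz$ gives the required transfer of derivatives.

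Finally, the remaining two traces follow by purely algebraic substitution. Using $E\Phi\cdot H\Phi = z^{-1}\bigl((1-z)(E\Phi)^2 - E\Phi\bigr)$ and $(H\Phi)^2 = z^{-2}\bigl((1-z)^2 (E\Phi)^2 - 2(1-z)E\Phi + I\bigr)$, we may insert the known asymptotics $\frac{1}{n}\ntr E\Phi = s + o$ and $\frac{1}{n}\ntr(E\Phi)^2 = zs' + s + o$, together with $\frac{p-k}{n}\to c_1$, and simplify to obtain
\begin{align*}
\tfrac{1}{n}\ntr(E\Phi H\Phi)(\ga_j) &= (1-\ga_j)s'(\ga_j) - s(\ga_j) + o_p(1),\\
\tfrac{1}{n}\ntr(H\Phi)^2(\ga_j) &= \ga_j^{-2}\!\left(\ga_j(1-\ga_j)^2 s'(\ga_j) + (\ga_j^2-1)s(\ga_j) + c_1\right) + o_p(1),
\end{align*}
matching the stated formulas. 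The algebraic identities are short and the Cauchy-integral step is standard, so beyond verifying the uniform control the proof is essentially calculation; no separate concentration arguments are needed, since everything is pulled back to the single scalar estimate on $\tfrac{1}{n}\ntr E\Phi$.
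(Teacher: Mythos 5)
Your proposal is correct, and the overall strategy matches the paper's: reduce everything to $\tfrac1n\ntr E\Phi$ via the relation $(1-z)E\Phi-zH\Phi=I$, then obtain the quadratic traces from a derivative of a linear trace. Two differences are worth noting. First, you pivot consistently around $\tfrac1n\ntr E\Phi$ and $\tfrac1n\ntr(E\Phi)^2$ (using the clean matrix identity $(E\Phi)^2 = E\Phi + z\,\partial_z(E\Phi)$) and derive the remaining two traces by pure algebra; the paper instead differentiates $\tfrac1n\ntr(H\Phi)$, expresses that derivative in terms of $\tfrac1n\ntr(E\Phi H\Phi)$ and $\tfrac1n\ntr((H\Phi)^2)$, and then says to repeat the same for $\tfrac1n\ntr(E\Phi)$. (Incidentally, the paper's display following \eqref{17032701} appears to contain a sign/coefficient slip in the last line — the correct algebra gives $\tfrac{1}{1-z}\tfrac1n\ntr(H\Phi)+\tfrac{1}{1-z}\tfrac1n\ntr((H\Phi)^2)$ — but this does not propagate into the stated formulas, and your presentation sidesteps the issue altogether.) Second, for the derivative-transfer step you invoke Cauchy's integral formula on a fixed-radius disk around $\ga_j$ inside $\mathcal D$, using the uniform $O_{\text{a.s.}}(n^{-\varepsilon})$ estimate of Lemma \ref{lem.17032510}; the paper uses a discrete finite-difference at scale $n^{-1/4}$ combined with the pointwise $o_p(n^{-1/2})$ estimate. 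Both are valid here since only $o_p(1)$ is needed; the Cauchy route is cleaner but gives a weaker $O(n^{-\varepsilon})$ error, while the finite-difference route gives $O_p(n^{-1/4})$. Either way, do note explicitly that holomorphy of $\tfrac1n\ntr E\Phi$ on the disk requires conditioning on the overwhelming-probability event $\Xi_1$, as the paper does.
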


 Then, recall the definitions of $\mathcal{E}$ and $\mathcal{H}$ in (\ref{16121501}), $\Psi$ in (\ref{16122950}). We have the following lemma  whose proof is also postponed to Appendix A.

\begin{lem}\lb{lemf_2}
	Suppose that the assumptions in Theorem \ref{thm.fluctuation} hold. For any $j\in \{1, \ldots, k_0\}$ such that $r_j$ is well separated from $r_c$ and $1$,   we have the following results:
	\begin{align*}
	&\sup_{\alpha,\beta}\Big|\Psi_{\alpha\beta}(\ga_j) -\delta_{\alpha\beta}\varrho(\ga_j)\Big|=o_{\text{p}}(1),
\nonumber\\
	&\sup_{\alpha,\beta}\Big|\big(\Psi(\ga_j)(\mathcal{H}^{-1}+\mathcal{E}^{-1})\Psi(\ga_j)\big)_{\alpha\beta}-\delta_{\alpha\beta} \varrho'(\ga_j)\Big|=o_{\text{p}}(1).
	\end{align*}
\end{lem}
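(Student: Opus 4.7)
The plan is to exploit the orthogonal invariance of the joint law of $(\mathcal{E}, \mathcal{H})$, combined with the trace estimate of Lemma \ref{lem.17032510}, in order to upgrade the latter to entrywise (isotropic) bounds via a Haar concentration argument on $O(q)$. The key preliminary step is an explicit spectral representation: setting $\hat C := S_{yy}^{-1/2}\mathcal{E} S_{yy}^{-1/2}$, so that $I - \hat C = S_{yy}^{-1/2}\mathcal{H} S_{yy}^{-1/2}$, a short manipulation of $\Psi^{-1} = (1-z)\mathcal{H}^{-1} - z\mathcal{E}^{-1}$ yields
\begin{align*}
\Psi(z) = S_{yy}^{1/2}\, g_z(\hat C)\, S_{yy}^{1/2}, \qquad g_z(\lambda) := \frac{\lambda(1-\lambda)}{\lambda - z}.
\end{align*}
Since $\hat C$ shares its spectrum with the null-case CCA matrix $C_{yw_2}$ (with dimensions $p-k$ and $q$), Theorem \ref{thm.071501} gives $\mathrm{Spec}(\hat C) \subset [d_- - \eta, d_+ + \eta]$ with overwhelming probability for any small $\eta > 0$. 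The well separation of $r_j$ from $r_c$ keeps $g_{\ga_j}$ bounded on this spectrum, and, together with $\|S_{yy}\| = O_p(1)$, this yields $\|\Psi(\ga_j)\| = O_p(1)$. The same reasoning applied to the operator identity
\begin{align*}
\Psi(\mathcal{H}^{-1} + \mathcal{E}^{-1})\Psi = \partial_z \Psi(z) = S_{yy}^{1/2}\, h_z(\hat C)\, S_{yy}^{1/2}, \qquad h_z(\lambda) := \frac{\lambda(1-\lambda)}{(\lambda - z)^2},
\end{align*}
obtained by differentiating $\Psi^{-1}$ in $z$, gives $\|\partial_z\Psi(\ga_j)\| = O_p(1)$.

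Next, I would invoke orthogonal invariance: since $Y$ has i.i.d.\ Gaussian entries and is independent of $W_2$, for every fixed $O \in O(q)$ one has $(O\mathcal{E}O', O\mathcal{H}O') \stackrel{d}{=} (\mathcal{E}, \mathcal{H})$. Consequently, if $O_0$ is a Haar element of $O(q)$ independent of $(\mathcal{E}, \mathcal{H})$, each $\mathcal{K} \in \{\Psi(\ga_j),\, \partial_z\Psi(\ga_j)\}$ has the same marginal law as $O_0\mathcal{K}O_0'$, so it suffices to bound $(O_0\mathcal{K}O_0')_{\alpha\beta}$. Using $\mathbb{E}[(O_0)_{\alpha i}(O_0)_{\beta j}] = q^{-1}\delta_{\alpha\beta}\delta_{ij}$, the conditional mean given $\mathcal{K}$ equals $\delta_{\alpha\beta}\, q^{-1}\ntr \mathcal{K}$, and subexponential concentration for bilinear forms in rows of $O_0$ (each uniform on $S^{q-1}$), together with a union bound over the $O(q^2)$ pairs $(\alpha, \beta)$, delivers
\begin{align*}
\sup_{\alpha,\beta}\big|(O_0\mathcal{K}O_0')_{\alpha\beta} - \delta_{\alpha\beta}\, q^{-1}\ntr \mathcal{K}\big| = O_p\big(\|\mathcal{K}\|(\log n)^{c} q^{-1/2}\big) = o_p(1).
\end{align*}
Combined with $q^{-1}\ntr \Psi(\ga_j) = \varrho(\ga_j) + o_p(1)$ from Lemma \ref{lem.17032510}, this proves the first estimate; the second follows identically once one notes $q^{-1}\ntr \partial_z\Psi(\ga_j) = \varrho'(\ga_j) + o_p(1)$, a consequence of the uniform-in-$\mathcal{D}$ trace estimate of Lemma \ref{lem.17032510} via Cauchy's integral formula (Vitali's convergence theorem) on a small complex disk around $\ga_j$.

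The main obstacle will be verifying the operator-norm bounds through the spectral representation; this rests on transferring the edge estimate of Theorem \ref{thm.071501} to $\hat C$ and exploiting $\ga_j - d_+ \geq \mathrm{const} > 0$. The remaining ingredients --- Haar concentration on $O(q)$ and the passage from the trace to its $z$-derivative via Vitali's theorem --- are standard.
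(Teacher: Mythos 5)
Your proposal is correct and rests on the same fundamental idea as the paper's proof---exploiting the orthogonal invariance of the law of $(\mathcal{E},\mathcal{H})$ to upgrade the trace estimate of Lemma \ref{lem.17032510} to an isotropic (entrywise) one, then identifying the second quantity as $\partial_z\Psi$. The implementation, however, differs from the paper in three places, each a clean alternative. First, you conjugate by a fresh Haar $O_0 \in O(q)$ and invoke Haar concentration for bilinear forms of rows of $O_0$ directly; the paper instead starts from $\Psi=(1-z)S_{yw_2}\Phi S_{w_2y}-\mathcal{E}$, writes $Y=U_y\Lambda_yV_y$, and Gaussianizes the top $k$ rows of $U_y$ via the Gram--Schmidt approximation of Lemma \ref{lem.17032401} before applying Lemma \ref{let}. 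Both devices encode the same symmetry; yours is slightly more abstract but avoids re-doing the Gram--Schmidt error accounting. Second, your functional-calculus representation $\Psi = S_{yy}^{1/2}g_z(\hat C)S_{yy}^{1/2}$ with $\hat C = S_{yy}^{-1/2}\mathcal{E}S_{yy}^{-1/2}$ gives $\|\Psi(\ga_j)\|=O_p(1)$ in one stroke (once one transfers the edge bound to $\hat C$), whereas the paper works from $\Psi=\mathcal{H}\Upsilon\mathcal{E}$ and bounds $\|\Upsilon\|$ separately via (\ref{17040411}). Third, for the passage from $q^{-1}\ntr\Psi$ to $q^{-1}\ntr\partial_z\Psi$ you use Cauchy's integral formula on a small circle in $\mathcal{D}$ around $\ga_j$, while the paper uses a finite-difference quotient with step $n^{-1/4}$ as in (\ref{17032705}); both deliver the required $o_{\mathrm{p}}(1)$ error (your route gives the stronger $O_{\mathrm{a.s.}}(n^{-\varepsilon})$ from the uniform estimate in Lemma \ref{lem.17032510}). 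One small remark: the $\sup_{\alpha,\beta}$ in the statement is, from its use in (\ref{le4.3pro3}) via $T_1\Psi T_1'$, effectively over the fixed $k\times k$ top-left block, so your union bound over $O(q^2)$ pairs is more than is needed, though it is of course harmless.
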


\subsection{Proof of Lemma \ref{lem4.3}}
Recall the decomposition of $ M_n$ in (\ref{17031505}). In the sequel, we consider $\mathscr{M}_1, \mathscr{M}_2$ and $\mathscr{M}_3$ separately.

Recall   $\mathscr{M}_1$ in  (\ref{16123001}), $\mathscr{A}(z)$ in (\ref{17050301}) and  $\Phi(z)$ in (\ref{061911}). Using the identity
\begin{align}
\Phi(z_1)-\Phi(z_2)=(z_1-z_2) \Phi(z_1)W_2W_2'\Phi(z_2), \label{resolvent expansion 1}
\end{align}
it is not difficult to write
\begin{align}
\sqrt{n}(\mathscr{M}_1(\la_j)-\mathscr{M}_1(\ga_j))
=\sqrt{n}(\la_j-\ga_j)\sum_{a=0}^4 W_1\mathcal{A}_a W_1'. \label{16123002}
\end{align}
where
\begin{align}
&\mathcal{A}_0:=-I_n,\qquad \mathcal{A}_1:=- P_{y} W_2'\Phi(\ga_j)W_2W_2'\Phi(\la_j) W_2 P_{y},\nonumber\\
&\mathcal{A}_2=\mathcal{A}_3':= P_{y} W_2' \Phi(\la_j)  W_2+\ga_j P_{y} W_2'\Phi(\ga_j)W_2W_2'\Phi(\la_j) W_2,\nonumber\\
&\mathcal{A}_4:=-(\la_j+\ga_j) W_2' \Phi(\la_j)  W_2-\ga_j^2 W_2'\Phi(\ga_j)W_2W_2'\Phi(\la_j) W_2. \label{12163011}
\end{align}

Since $ W_1 W_1'\to  (I_k-T_1T_1')$ almost surely, we have
\begin{align*}
 W_1\mathcal{A}_0 W_1' =-  (I_k-T_1T_1')+o_{\text{p}}(1).
\end{align*}
The equation (\ref{resolvent expansion 1}) together with Theorem \ref{thm.061901}, (\ref{17040401}) and the fact that the event $ \Xi_1$ in (\ref{17040361}) holds with overwhelming probability implies that
\begin{align}
\|\Phi(\lambda_j)\|=O_{p}(1), \qquad \|\Phi(\lambda_j)-\Phi(\ga_j)\|=o_{p}(1). \label{17040402}
\end{align}

By  (\ref{17040402}), Lemma \ref{let} and Lemma \ref{lemf_1}, we have
\begin{align*}
 W_1\mathcal{A}_1 W_1'=&-\frac{1}{n}\ntr \big( P_{y} W_2'\Phi(\ga_j)W_2W_2'\Phi(\ga_j) W_2 P_{y}\big)\cdot  (I_k-T_1T_1')+o_{\text{p}}(1)\\
=&-s'(\ga_j)  (I_k-T_1T_1')+o_{\text{p}}(1).
\end{align*}
Here we used the definition of $E= W_2 P_{y} W_2'$, and the cyclicity of trace.

Analogously,  we obtain for $a=2,3$
\begin{align*}
 W_1\mathcal{A}_a W_1' =\big(s(\ga_j)+\ga_js'(\ga_j)\big)  (I_k-T_1T_1')+o_{\text{p}}(1).
\end{align*}
and
\begin{align*}
 W_1\mathcal{A}_4 W_1'
&=\big(c_1-s(\ga_j)-\ga_js'(\ga_j)\big)  (I_k-T_1T_1')+o_{\text{p}}(1).
\end{align*}
Combining the above estimate yields
\begin{align}\lb{le4.3pro1}
&\sqrt{n}(\mathscr{M}_1(\la_j)-\mathscr{M}_1(\ga_j))\nonumber\\
&=\sqrt{n}(\la_j-\ga_j)\Big(\big(c_1-1+s(\ga_j)+(\ga_j-1)s'(\ga_j)\big)  (I_k-T_1T_1')+o_{\text{p}}(1)
\Big).
\end{align}
Analogously, for  $\mathscr{M}_2$, using Lemma \ref{let},  we  will get
 \begin{align}\lb{le4.3pro2}
\sqrt{n}(\mathscr{M}_2(\la_j)-\mathscr{M}_2(\ga_j))=\sqrt{n}(\la_j-\ga_j)o_{\text{p}}(1),
\end{align}

At the end, we consider $\mathscr{M}_3$ (c.f. (\ref{16123001})).  By (\ref{17031801}), we can write
\begin{align*}
 \mathscr{M}_3&\equiv \mathscr{M}_3(z):=(1-z) T_1\big(\mathcal{H}-\Psi\big) T_1'.
\end{align*}
Notice that by definition in  (\ref{16122950}), we have
\begin{align}
\Psi=\mathcal{H} \Upsilon\mathcal{E}. \label{17040410}
\end{align}
And we also have  the  expansion
\begin{align}
\Psi(z_1)-\Psi(z_2)= (z_1-z_2)\Psi(z_2)\Big(\mathcal{H}^{-1}+ \mathcal{E}^{-1}\Big)\Psi(z_1)=(z_1-z_2)\mathcal{H}\Upsilon(z_2)YY'\Upsilon(z_1)\mathcal{E}. \label{17031805}
\end{align}
Furthermore, analogously to (\ref{17040401}), we have
\begin{align}
\|\Upsilon\| \leq  \|( C_{y w_2}-z)^{-1}\| \|S_{yy}^{-1}\|=O(1) \label{17040411}
\end{align}
with overwhelming probability, for $z\in \mathcal{D}$. Hence, from (\ref{17040410})-(\ref{17040411}), we also have the analogues of (\ref{17040401}) and (\ref{17040402})
\begin{align*}
\|\Psi(\ga_j)\|=O_{\text{p}}(1), \qquad  \|\Psi(\la_j)\|=O_{\text{p}}(1), \qquad \|\Psi(\la_j)-\Psi(\ga_j)\|=o_{\text{p}}(1).
\end{align*}
  Then approximating $\Phi(\lambda_j)$ by $\Phi(\gamma_j)$, and using Lemma \ref{lemf_2}, we obtain
 \begin{align}
 &\sqrt{n}(\mathscr{M}_3(\la_j)-\mathscr{M}_3(\ga_j)) \nonumber\\
 &=-\sqrt{n}(\la_j-\ga_j) T_1(\mathcal{H}-\Psi(\la_j)+(1-\ga_j)\Psi(
\la_j)(\mathcal{H}^{-1}+\mathcal{E}^{-1})\Psi(\ga_i)) T_1'\nonumber\\
&=-\sqrt{n}(\la_j-\ga_j)\Big(\Big(1-c_1-\varrho(\ga_j)+(1-\ga_j)\varrho'(
\ga_j)\Big) T_1 T_1'+o_{\text{p}}(1)\Big)\nonumber\\
&= -\sqrt{n}(\la_j-\ga_j)\Big(\Big( 1-2\frac{(1-\ga_j)s(\ga_j)}{c_2}+\frac{(1-\ga_j)^2s'(\ga_j)}{c_2}\Big) T_1 T_1'+o_{\text{p}}(1)\Big), \lb{le4.3pro3}
 \end{align}
 where in the second step we used the fact $\mathcal{H}_{ij}=\delta_{ij}(1-c_1)+o_p(1)$, and  in the last step we used the definition (\ref{17040390}).

 Therefore, combining (\ref{s in terms of r}) with the equations \eqref{le4.3pro1}, (\ref{le4.3pro2}) and  \eqref{le4.3pro3}, we can conclude the proof of Lemma \ref{lem4.3} by elementary calculation.

 \subsection{Proof of Lemma \ref{lem4.4}}  In this subsection, we prove Lemma \ref{lem4.4}. Recall the matrices defined in (\ref{17032301}). Let
\begin{align*}
\mathring{\mathscr{D}}:=\mathscr{D}-\frac{1}{q}\ntr \mathscr{D}\cdot I.
\end{align*}
In according to (\ref{17031505}), (\ref{17040420}) and Lemma \ref{lem.17032401}, we can write
 \begin{align}
   M_n- \frac{1}{q} \ntr \mathscr{D}\cdot   \mathcal{T}^2=\left( \begin{array}{cc}
   W_1 &  \mathcal{T} G_1
  \end{array}
  \right) \left( \begin{array}{cc}
\mathscr{A} & \mathscr{B}'\\
\mathscr{B} &  \mathring{\mathscr{D}}
  \end{array}
  \right) \left( \begin{array}{cc}
   W_1' \\
  \big(  \mathcal{T} G_1\big)'
  \end{array}
  \right)+o_{\text{p}}(\frac{1}{\sqrt{n}}). \label{17032410}
 \end{align}
In the sequel, we condition on $\mathscr{A},\mathscr{B}, \mathscr{D}$, and use the randomness of $ W_1$ and $ G_1$ only.
Recall the notation $\mathbf{w}_i=(w_{i1},\ldots, w_{in})$ as  the $i$-th rows of $ W_1$ and  further denote by $\mathbf{d}_i:=(d_{i1},\ldots, d_{iq})$ the $i$-th row of  $  \mathcal{T} G_1$.

According to (\ref{17032410}), we see that
\begin{align}
&\sqrt{n}\Big(\big( M_n\big)_{ij}(\ga_j)-\delta_{ij}  \Big((1-r_i)\frac{1}{n}\ntr \mathscr{A}+r_i\frac{1}{q}\ntr\mathscr{D}\Big)\Big)\nonumber\\
&=\sqrt{n}\Big(\mathbf{w}_i\mathscr{A} \mathbf{w}_j'-\delta_{ij} (1-r_i)\frac{1}{n} \ntr \mathscr{A}\Big)+\sqrt{n}\Big(\mathbf{d}_i\mathscr{B} \mathbf{w}_j'+ \mathbf{w}_i\mathscr{B}' \mathbf{d}_j'\Big)\nonumber\\
&\quad+\sqrt{n}\mathbf{d}_i\mathring{\mathscr{D}} \mathbf{d}_j'+o_{\text{p}}(1)=: Q_{ij}+ o_{\text{p}}(1). \label{17032430}
\end{align}
Let $\mathbb{E}_0$ be the conditional expectation w.r.t.  $\mathscr{A},\mathscr{B}, \mathscr{D}$. Set
\begin{align}
f_{\mathbf{c}}(t):= \exp\Big(\mathrm{i} t \sum_{i\leq j}c_{ij} Q_{ij} \Big), \qquad  \phi_{\mathbf{c}}(t):=\mathbb{E}_0 f_{\mathbf{c}}(t) \label{17032431}
\end{align}
for any collection of fixed numbers $\mathbf{c}:=(c_{ij})_{i\leq j}$.  It suffices to show that  for some deterministic  $s\equiv s(\mathbf{c})$,
\begin{align}
\phi_{\mathbf{c}}'(t)=-s^2t \phi_{\mathbf{c}}(t)+o_{\text{p}}(1). \label{17032420}
\end{align}
 From (\ref{17032420}) one can solve $\phi_{\mathbf{c}}(t)=\exp(-\frac{s^2t^2}{2}+o_p(1))$, which further implies
\begin{align*}
\mathbb{E}f_{\mathbf{c}}(t)=\mathbb{E} \phi_{\mathbf{c}}(t)\to \exp(-\frac{s^2t^2}{2}).
\end{align*}
This is known as Tikhomirov's method for CLT \cite{T}.

To establish (\ref{17032420}), we start with the definitions in (\ref{17032430}) and (\ref{17032431}). Then we have
\begin{align}
\phi_{\mathbf{c}}'(t) &= \mathrm{i}\sqrt{n}\sum_{i\leq j}c_{ij} \sum_{\ell=1}^q \; \Big(\mathbb{E}_0  \Big(w_{i\ell} \big(\mathscr{A} \mathbf{w}_j'\big)_{\ell} f_{\mathbf{c}}(t)\Big) - \delta_{ij} (1-r_i) \frac{1}{n} \ntr \mathscr{A}\; \phi_{\mathbf{c}}(t) \Big)\nonumber\\
&\qquad +\mathrm{i}\sqrt{n}\sum_{i\leq j}c_{ij} \sum_{\ell=1}^q \; \mathbb{E}_0  \Big(\Big(w_{j\ell}\big(\mathbf{d}_i\mathscr{B}\big)_{\ell} + w_{i\ell} \big(\mathscr{B}' \mathbf{d}_j'\big)_{\ell}\Big) f_{\mathbf{c}}(t)\Big)\nonumber\\
&\qquad +\mathrm{i}\sqrt{n}\sum_{i\leq j}c_{ij} \sum_{\ell=1}^q \; \mathbb{E}_0  \Big( d_{i\ell}\big(\mathring{\mathscr{D}} \mathbf{d}_j'\big)_{\ell}  f_{\mathbf{c}}(t) \Big) . \label{17032443}
\end{align}
Using integration by parts formula for Gaussian
\begin{align}
\mathbb{E} g h(g)=\sigma^2 \mathbb{E} h'(g), \qquad g\sim N(0, \sigma^2),  \label{17052701}
\end{align}
 we see that
\begin{align}
&\mathbb{E}_0  \Big(w_{i\ell} \big(\mathscr{A} \mathbf{w}_j'\big)_{\ell} f_{\mathbf{c}}(t)\Big)= \frac{1}{n} \delta_{ij} (1-r_i)\mathscr{A}_{\ell\ell} \phi_{\mathbf{c}}(t)+ \frac{\mathrm{i}t}{n}(1-r_i)\sum_{\alpha\leq \beta} c_{\alpha\beta} \mathbb{E}_0 \Big( \frac{\partial Q_{\alpha\beta}}{\partial w_{i\ell}}\big(\mathscr{A} \mathbf{w}_j'\big)_{\ell} f_{\mathbf{c}}(t)\Big)\nonumber\\
&\mathbb{E}_0  \Big(\big(w_{j\ell}\big(\mathbf{d}_i\mathscr{B}\big)_{\ell} + w_{i\ell} \big(\mathscr{B}' \mathbf{d}_j'\big)_{\ell}\big) f_{\mathbf{c}}(t)\Big)\nonumber\\
&\qquad\qquad=\frac{\mathrm{i}t}{n}\sum_{\alpha \leq \beta} c_{\alpha\beta} \mathbb{E}_0 \Big( \big( (1-r_j)\frac{\partial Q_{\alpha\beta}}{\partial w_{j\ell}}\big(\mathbf{d}_i\mathscr{B}\big)_{\ell} + (1-r_i)\frac{\partial Q_{\alpha\beta}}{\partial w_{i\ell}} \big(\mathscr{B}' \mathbf{d}_j'\big)_{\ell}\big) f_{\mathbf{c}}(t)\Big). \nonumber\\
&\mathbb{E}_0  \Big( d_{i\ell}\big(\mathring{\mathscr{D}} \mathbf{d}_j'\big)_{\ell} f_{\mathbf{c}}(t) \Big)
=\frac{1}{q} \delta_{ij} r_i\mathring{\mathscr{D}}_{\ell\ell} \phi_{\mathbf{c}}(t)+ \frac{\mathrm{i}t}{q}\sum_{\alpha \leq \beta} c_{\alpha\beta} r_i \mathbb{E}_0
\Big(\frac{\partial Q_{\alpha\beta}}{\partial d_{i\ell}}\big(\mathring{\mathscr{D} }\mathbf{d}_j'\big)_{\ell}  f_{\mathbf{c}}(t)\Big). \label{17032441}
\end{align}

Plugging (\ref{17032441})  into (\ref{17032443}) yields
\begin{align}
\phi_{\mathbf{c}}'(t) &= -t\mathbb{E}_0 \Big( Z f_{\mathbf{c}}(t)\Big) , \label{17032462}
\end{align}
where
\begin{align}
Z &:= \frac{1}{\sqrt{n}}\sum_{i\leq j}c_{ij}  \sum_{\alpha\leq \beta} c_{\alpha\beta} \sum_{\ell}  \Big(  (1-r_i)\frac{\partial Q_{\alpha\beta}}{\partial w_{i\ell}}\big(\mathscr{A} \mathbf{w}_j'\big)_{\ell} \nonumber\\
&+ \big( (1-r_j)\frac{\partial Q_{\alpha\beta}}{\partial w_{j\ell}}\big(\mathbf{d}_i\mathscr{B}\big)_{\ell} + (1-r_i)\frac{\partial Q_{\alpha\beta}}{\partial w_{i\ell}} \big(\mathscr{B}' \mathbf{d}_j'\big)_{\ell}\big)+ \frac{n}{q}r_i\frac{\partial Q_{\alpha\beta}}{\partial d_{i\ell}}\big(\mathring{\mathscr{D}} \mathbf{d}_j'\big)_{\ell}\Big). \label{17032450}
\end{align}
Now, note that
\begin{align}
&\frac{\partial Q_{\alpha\beta}}{\partial w_{i\ell}}=\delta_{\alpha i}\sqrt{n}\Big(\big(\mathscr{A} \mathbf{w}_\beta'\big)_\ell+(\mathscr{B}' \mathbf{d}_\beta')_\ell \Big)+ \delta_{\beta i}\sqrt{n}\Big(\big(\mathbf{w}_\alpha\mathscr{A} \big)_\ell+(\mathbf{d}_\alpha\mathscr{B})_\ell \Big), \nonumber\\
& \frac{\partial Q_{\alpha \beta }}{\partial d_{i\ell}}= \delta_{\alpha i} \sqrt{n} \Big(\big(\mathscr{B}\mathbf{w}_\beta'\big)_{\ell}+\big(\mathring{\mathscr{D}}\mathbf{d}_\beta'\big)_{\ell} \Big)+ \delta_{\beta i} \sqrt{n} \Big(\big(\mathbf{w}_\alpha \mathscr{B}'\big)_{\ell}+\big(\mathbf{d}_\alpha \mathring{\mathscr{D}}\big)_{\ell} \Big). \label{17032442}
\end{align}
Substituting (\ref{17032442}) into (\ref{17032450}), and using Lemma \ref{let}, together with the bounds $\|\mathscr{A}\|, \|\mathscr{B}\|, \|\mathscr{D}\|\leq C$ with overwhelming probability on $\mathcal{D}$, it is not difficult to get
\begin{align}
Z= &  \sum_{i\leq j} c_{ij}^2 (1+\delta_{ij}) \Big( (1-r_i)(1-r_j)\frac{1}{n}\ntr \mathscr{A}^2 + (r_i(1-r_j)+r_j(1-r_i)) \frac{1}{q}\ntr\mathscr{B}\mathscr{B}' \nonumber\\
&+\frac{n}{q^2} r_ir_j\ntr  \mathring{\mathscr{D}}^2\Big)+o_{\text{p}}(1)
=: s_n^2+o_{\text{p}}(1). \label{17032461}
\end{align}
 Plugging (\ref{17032461}) into (\ref{17032462}) , we get
\begin{align}
\phi_{\mathbf{c}}'(t) &= -ts_n^2 \phi_{\mathbf{c}}(t)+o_{\text{p}}(1).  \label{17040701}
\end{align}
This implies that $Q_{ij}$'s (c.f. (\ref{17032430})) are asymptotically  independent Gaussians (up to symmetry), and the limiting variance of $Q_{ij}$ is given by the limit of
\begin{align*}
(1-r_i)(1-r_j)\frac{1}{n}\ntr \mathscr{A}^2 + (r_i(1-r_j)+r_j(1-r_i)) \frac{1}{q}\ntr\mathscr{B}\mathscr{B}' +\frac{n}{q^2} r_ir_j\ntr  \mathring{\mathscr{D}}^2.
\end{align*}

Hence, in the sequel, it suffices to estimate the normalized traces of $\mathscr{A}^2$, $\mathscr{B}\mathscr{B}' $, and $\mathring{\mathscr{D}}^2$. The result is formulated in the following lemma, whose proof will be also postponed to Appendix A.

\begin{lem} \label{lem. traces of squared matrices} Suppose that the assumptions in Theorem \ref{thm.fluctuation} hold. For any $j\in \{1, \ldots, k_0\}$ such that $r_j$ is well separated from $r_c$ and $1$,   we have
\begin{align}
&\frac{1}{n} \ntr \mathscr{A}^2=-(1-\ga_j)^2s(\ga_j)+\ga_j(1-\ga_j)^2 s'(\ga_j)+\ga_j^2(1-c_1)+(1-2\ga_j) c_2+o_{\text{p}}(1),\nonumber\\
&\frac{1}{q} \ntr \mathscr{B}\mathscr{B}'=(1-\ga_j)^2\Big(1 -\frac{1}{c_2}s(\ga_j)+\frac{\ga_j(1-\ga_j)}{c_2} s'(\ga_j)\Big)+o_{\text{p}}(1) ,\nonumber\\
&\frac{1}{q} \ntr \mathring{\mathscr{D}}^2=(1-\ga_j)^2\Big(-c_1+c_2+\Big(\frac{(\ga_j-1)^2}{c_2}+\frac{(2c_2-1)}{c_2}\ga_j+\frac{c_1-c_2}{c_2}\Big) s(\ga_j)\nonumber\\
&\qquad\qquad-\frac{\big(1+(c_2-1)\ga_j\big)(1-\ga_j)}{c_2^2} s^2(\ga_j)+\frac{\ga_j(1-\ga_j)^2}{c_2}s'(\ga_j)\Big)+o_{\text{p}}(1).
\label{17052702}
\end{align}
\end{lem}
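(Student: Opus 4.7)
All three traces are second-moment quantities in the Wishart-type resolvents $\Phi,\Upsilon,\Psi$, and my plan is to reduce each to the first-moment traces already supplied by Lemma \ref{lem.17032510} and Lemmas \ref{lemf_1}--\ref{lemf_2} by purely deterministic manipulations. The repeatedly used structural identities are $W_2(P_y-z)W_2'=\Phi^{-1}$, $(1-z)S_{yw_2}\Phi S_{w_2y}=\mathcal{E}+\Psi$, and $YK=(1-z)Y$ with $K:=P_y-z$, together with $W_2K^jY'=(1-z)^jS_{w_2y}$ for $j=1,2$ and $W_2K^2W_2'=(1-z)^2E+z^2H$.

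For $\frac{1}{n}\ntr\mathscr{A}^2$, I would write $\mathscr{A}=K-KR$ with $R:=W_2'\Phi W_2K$ and expand
\[
\frac{1}{n}\ntr\mathscr{A}^2=\frac{1}{n}\ntr K^2-\frac{2}{n}\ntr K^2R+\frac{1}{n}\ntr(KR)^2.
\]
Using $K^2=(1-2z)P_y+z^2I$, $K^3=(1-3z+3z^2)P_y-z^3I$, and $W_2K^2W_2'=(1-z)^2E+z^2H$, every piece becomes a linear combination (with polynomial coefficients in $z$) of the five traces $\frac{1}{n}\ntr(E\Phi)$, $\frac{1}{n}\ntr(H\Phi)$, $\frac{1}{n}\ntr(E\Phi)^2$, $\frac{1}{n}\ntr(H\Phi)^2$, $\frac{1}{n}\ntr(E\Phi H\Phi)$. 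Substituting the asymptotics from Lemma \ref{lemf_1} and Lemma \ref{lem.17032510} and regrouping shows that the coefficients of $s(\gamma_j)$ and $s'(\gamma_j)$ collapse to $-(1-\gamma_j)^2$ and $\gamma_j(1-\gamma_j)^2$, respectively, producing the first claim.

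For $\frac{1}{q}\ntr\mathscr{B}\mathscr{B}'$ and $\frac{1}{q}\ntr\mathring{\mathscr{D}}^2$, I would exploit the SVD $Y=U_y\Lambda_yV_y$ (so $V_y'\Lambda_y^2V_y=Y'Y$) to rewrite
\[
\frac{1}{q}\ntr\mathscr{B}\mathscr{B}'=\frac{1}{q}\ntr(Y\mathscr{A}^2Y'),\qquad \frac{1}{q}\ntr\mathscr{D}^2=\frac{1}{q}\ntr(Y\mathscr{A}Y')^2.
\]
A short calculation using $YK=(1-z)Y$ gives $Y\mathscr{A}Y'=(1-z)(\mathcal{H}-\Psi)$ (already implicit in the proof of Lemma \ref{lem5.4}) and
\[
Y\mathscr{A}^2Y'=(1-z)^2(\mathcal{H}-\Psi)+z(1-z)^2\,S_{yw_2}\Phi H\Phi S_{w_2y}.
\]
The trailing trace is then treated by differentiating the identity $(1-z)S_{yw_2}\Phi S_{w_2y}=\mathcal{E}+\Psi$ in $z$, which produces $S_{yw_2}\Phi(E+H)\Phi S_{w_2y}$ in terms of $\Psi$ and $\Psi(\mathcal{H}^{-1}+\mathcal{E}^{-1})\Psi$; the $E$-part is isolated via $E=W_2P_yW_2'$ and $P_y=Y'S_{yy}^{-1}Y$, giving $S_{yw_2}\Phi E\Phi S_{w_2y}=(1-z)^{-2}(\mathcal{E}+\Psi)S_{yy}^{-1}(\mathcal{E}+\Psi)$. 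Applying Lemma \ref{lemf_2} and the elementary limits $\frac{1}{q}\ntr\mathcal{E}\to c_1$, $\frac{1}{q}\ntr\mathcal{H}\to 1-c_1$, $\frac{1}{q}\ntr S_{yy}\to 1$ yields the second claim, while subtracting $\big((1-z)((1-c_1)-\varrho(\gamma_j))\big)^2$ from $(1-z)^2\frac{1}{q}\ntr(\mathcal{H}-\Psi)^2$ gives the third.

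The main technical obstacle is the control of the three second-moment traces $\frac{1}{q}\ntr\mathcal{H}^2$, $\frac{1}{q}\ntr\mathcal{H}\Psi$, $\frac{1}{q}\ntr\Psi^2$ required to expand $\frac{1}{q}\ntr(\mathcal{H}-\Psi)^2$. These are not directly supplied by Lemmas \ref{lem.17032510}--\ref{lemf_2}, but can be extracted by differentiating $\Psi=\mathcal{H}\Upsilon\mathcal{E}$ in $z$ together with $\partial_z\Upsilon=\Upsilon S_{yy}\Upsilon$, reducing them to $\varrho(\gamma_j)$, $\varrho'(\gamma_j)$ and the classical second Mar\v{c}enko--Pastur moment of the Wishart $n\mathcal{H}\sim\mathcal{W}_q(I_q,n-p+k)$. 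Uniformity at $z=\gamma_j$ is guaranteed by the separation of $r_j$ from $r_c$ and $1$ (which places $\gamma_j$ at positive distance from $[d_-,d_+]$, i.e.\ well inside $\mathcal{D}$), together with the Wishart operator-norm bound \eqref{eq4.10}, which gives $\|\Phi\|,\|\Upsilon\|,\|\Psi\|=O(1)$ with overwhelming probability and validates all the inverse-matrix manipulations above.
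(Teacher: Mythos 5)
Your plan for $\frac{1}{n}\ntr\mathscr{A}^2$ (expand $\mathscr{A}=K-KW_2'\Phi W_2K$ and feed the resulting $E\Phi$, $H\Phi$, $E\Phi H\Phi$, $(E\Phi)^2$, $(H\Phi)^2$ traces into Lemma \ref{lemf_1}) matches the paper's (\ref{eqlem4.4.1}) essentially verbatim. For $\frac{1}{q}\ntr\mathscr{B}\mathscr{B}'$ your starting point $\ntr\mathscr{B}\mathscr{B}'=\ntr Y\mathscr{A}^2Y'$ and the identity $Y\mathscr{A}Y'=(1-z)(\mathcal{H}-\Psi)$ are the same as in the paper, but you then deviate by isolating the $H$-block of $\Phi S_{w_2w_2}\Phi$, which forces you to evaluate $\frac{1}{q}\ntr S_{yw_2}\Phi E\Phi S_{w_2y}=(1-z)^{-2}\frac{1}{q}\ntr(\mathcal{E}+\Psi)S_{yy}^{-1}(\mathcal{E}+\Psi)$; this contains the mixed second-moment trace $\frac{1}{q}\ntr\Psi S_{yy}^{-1}\Psi$, which is not supplied by Lemma \ref{lemf_2} (that lemma is entrywise, not tracial, and the matrix is $q\times q$). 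The paper avoids this altogether by keeping $S_{w_2w_2}=E+H$ intact, so the trailing term is exactly the $z$-derivative $\partial_z\big(S_{yw_2}\Phi S_{w_2y}\big)$, whose normalized trace is $\frac{1}{q}\ntr\Psi'+$ first-moment corrections and is then read off from Lemma \ref{lemf_2} and (\ref{17032002})–(\ref{17032011}).

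The decisive gap is in the third trace. You correctly reduce $\frac{1}{q}\ntr\mathring{\mathscr{D}}^2$ to $\frac{1}{q}\ntr\mathcal{H}^2$, $\frac{1}{q}\ntr\mathcal{H}\Psi$ and $\frac{1}{q}\ntr\Psi^2$, but the claim that the latter two "can be extracted by differentiating $\Psi=\mathcal{H}\Upsilon\mathcal{E}$ in $z$ together with $\partial_z\Upsilon=\Upsilon S_{yy}\Upsilon$" does not hold up. Differentiation inserts $S_{yy}=\mathcal{E}+\mathcal{H}$ \emph{between the two $\Upsilon$'s}, producing $\mathcal{H}\Upsilon(\mathcal{E}+\mathcal{H})\Upsilon\mathcal{E}$, whose normalized trace is $\frac{1}{q}\ntr\mathcal{H}\Upsilon\mathcal{E}\Upsilon\mathcal{E}+\frac{1}{q}\ntr\mathcal{H}\Upsilon\mathcal{H}\Upsilon\mathcal{E}$. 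This is \emph{not} $\frac{1}{q}\ntr\Psi^2=\frac{1}{q}\ntr(\mathcal{H}\Upsilon\mathcal{E})^2$, nor is it $\frac{1}{q}\ntr\mathcal{H}\Psi=\frac{1}{q}\ntr\mathcal{H}^2\Upsilon\mathcal{E}$; because $\mathcal{E}$ and $\mathcal{H}$ do not commute, the alternating words $\mathcal{H}\Upsilon\mathcal{E}\mathcal{H}\Upsilon\mathcal{E}$ and $\mathcal{H}^2\Upsilon\mathcal{E}$ cannot be reached by any purely algebraic/deterministic manipulation of $\Upsilon^{-1}=(1-z)\mathcal{E}-z\mathcal{H}$ and its $z$-derivative. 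The paper computes $\mathbb{E}\,\frac{1}{q}\ntr\mathcal{H}\Psi$ and $\mathbb{E}\,\frac{1}{q}\ntr\Psi^2$ by Gaussian integration by parts in the entries of $\mathcal{X}$ (with $\mathcal{H}=\mathcal{X}\mathcal{X}^*$), see the display (\ref{17040231}) and the surrounding variance bounds — it is precisely this Stein/IBP step that generates the relations linking $\ntr\mathcal{H}^2\Upsilon\mathcal{E}$ and $\ntr(\mathcal{H}\Upsilon\mathcal{E})^2$ to $\ntr\mathcal{H}\Upsilon\mathcal{E}\Upsilon\mathcal{E}$, $\ntr\mathcal{H}\Upsilon\mathcal{H}\Upsilon\mathcal{E}$ and the classical Wishart moments. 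Your outline omits this ingredient entirely, so the key formula (\ref{17040250}) is left unproved.
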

With the above lemma and (\ref{17040701}), we conclude the proof of Lemma \ref{lem4.4}.

\subsection{Proof of Lemma \ref{lem4.5}}
It is equivalent to show the following lemma.
\begin{lem} \label{mean value approximation} Suppose that the assumptions in Theorem \ref{thm.fluctuation} hold. For any $j\in \{1, \ldots, k_0\}$ such that $r_j$ is well separated from $r_c$ and $1$,   we have
\begin{align}
&\frac{1}{n} \ntr \mathscr{A}(\ga_j)= c_2-(1+c_1) \ga_j - (1-\ga_j)s(\ga_j)+o_{\mathrm{p}}(\frac{1}{\sqrt{n}}),\nonumber\\
&\frac{1}{q} \ntr \mathscr{D}(\ga_j)= (1-\ga_j)(1-c_1-\varrho(\ga_j))+o_{\mathrm{p}}(\frac{1}{\sqrt{n}}). \label{17040260}
\end{align}
\end{lem}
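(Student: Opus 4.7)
The plan is to reduce \eqref{17040260} to the sharp trace estimates already collected in Lemma \ref{lem.17032510}, after which the original statement of Lemma \ref{lem4.5} drops out by algebra. For the first line of \eqref{17040260}, I would start from the exact (non-asymptotic) identity \eqref{17032620},
\begin{align*}
\frac{1}{n}\ntr\mathscr{A}(z) = \frac{q}{n} - z - z\,\frac{p}{n} - (1-z)\,\frac{1}{n}\ntr\bigl(E\Phi(z)\bigr),
\end{align*}
and then use the non-asymptotic equalities $p/n = c_1$ and $q/n = c_2$ from Assumption \ref{ass.070301} to rewrite the right-hand side as $c_2 - (1+c_1)z - (1-z)\frac{1}{n}\ntr(E\Phi(z))$. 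Specializing to $z = \ga_j$ and substituting the sharp estimate $\frac{1}{n}\ntr(E\Phi(\ga_j)) = s(\ga_j) + o_p(n^{-1/2})$ from Lemma \ref{lem.17032510} gives the first line of \eqref{17040260} immediately.

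For the second line, I would likewise begin with the exact identity \eqref{17032621},
\begin{align*}
\frac{1}{q}\ntr\mathscr{D}(z) = \frac{1-z}{q}\ntr\bigl(\mathcal{H} - \Psi(z)\bigr),
\end{align*}
and combine the remaining two Lemma \ref{lem.17032510} estimates, namely $\frac{1}{q}\ntr\mathcal{H} = (n-p)/n + o_p(n^{-1/2}) = 1 - c_1 + o_p(n^{-1/2})$ and $\frac{1}{q}\ntr\Psi(\ga_j) = \varrho(\ga_j) + o_p(n^{-1/2})$. Subtracting and multiplying by $(1-\ga_j)$ then yields $(1-\ga_j)\bigl(1 - c_1 - \varrho(\ga_j)\bigr) + o_p(n^{-1/2})$, exactly as required.

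Once Lemma \ref{mean value approximation} is in hand, the original Lemma \ref{lem4.5} is a routine check. The matrix $\mathcal{M}(\ga_j) - M(\gamma_j)$ is diagonal of size $k\times k$ with
\begin{align*}
\bigl(\mathcal{M}(\ga_j) - M(\gamma_j)\bigr)_{ii} &= (1-r_i)\Bigl(\tfrac{1}{n}\ntr\mathscr{A}(\ga_j) - \bigl[c_2 - (1+c_1)\ga_j - (1-\ga_j)s(\ga_j)\bigr]\Bigr) \\
&\quad + r_i\Bigl(\tfrac{1}{q}\ntr\mathscr{D}(\ga_j) - (1-\ga_j)\bigl(1-c_1-\varrho(\ga_j)\bigr)\Bigr),
\end{align*}
where the identification of $m_i(\ga_j)$ with the two bracketed deterministic pieces uses the relation $1 - \tfrac{1-z}{c_2}s(z) = 1-c_1-\varrho(z)$ from the definition \eqref{17040390}. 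Both parentheses are $o_p(n^{-1/2})$ by Lemma \ref{mean value approximation}, so $\sqrt{n}\bigl(\mathcal{M}(\ga_j) - M(\gamma_j)\bigr) = o_p(1)$.

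The real obstacle, then, is not in this reduction but in Lemma \ref{lem.17032510}, whose stochastic rate $o_p(n^{-1/2})$ is strictly stronger than its own uniform almost-sure estimate $O_{\text{a.s.}}(n^{-\varepsilon})$. Via the resolvent identities \eqref{17050130}, \eqref{061912} and \eqref{17031801}, the traces $\frac{1}{n}\ntr E\Phi(z)$ and $\frac{1}{q}\ntr\Psi(z)$ can be rewritten as normalized traces of resolvents of the MANOVA matrices $E(E+H)^{-1}$ and $\mathcal{E}(\mathcal{E}+\mathcal{H})^{-1}$, for which a Bai--Silverstein--Zheng-type CLT for linear eigenvalue statistics is needed: it shows that the unnormalized trace fluctuates on the $O(1)$ scale, hence the normalized trace fluctuates on the $O(n^{-1}) = o(n^{-1/2})$ scale. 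The remaining term $\frac{1}{q}\ntr\mathcal{H}$ is the easiest piece: conditionally on $W_2$ it is an average of $q$ i.i.d.\ rescaled chi-squared variables with $n-p+k$ degrees of freedom, whose fluctuation is $O_p((nq)^{-1/2}) = o_p(n^{-1/2})$.
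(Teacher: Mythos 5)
Your reduction of the two trace estimates to Lemma \ref{lem.17032510} via the exact identities \eqref{17032620} and \eqref{17032621} is correct and is precisely the paper's own argument (the paper phrases it as a one-line ``follows immediately''). One small remark on your closing paragraph: the paper does not invoke a Bai--Silverstein--Zheng-type CLT for linear eigenvalue statistics to obtain the $o_p(n^{-1/2})$ rate in Lemma \ref{lem.17032510}; instead it establishes a variance bound of order $n^{-2}$ via the Gaussian Poincar\'e inequality combined with a separate mean estimate with error $o(n^{-1/2})$ (see \eqref{17040631}, \eqref{17032601}, \eqref{17032602}), which is technically lighter than a full fluctuation CLT --- but that concerns the proof of Lemma \ref{lem.17032510}, not the statement under review, for which your argument is complete.
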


\begin{proof} The results follows from  Lemma \ref{lem.17032510}, (\ref{17032620}) and (\ref{17032621})
immediately.
\end{proof}

With Lemma \ref{mean value approximation}, we can conclude the proof of Lemma \ref{lem4.5} easily from the definitions in (\ref{17033101}) and (\ref{17040390}).

\newpage

\appendix

\section{}\label{s.appendix a}
In this appendix, we prove Lemma \ref{lem.17032401}, Lemma \ref{lem.17032510}, Lemma \ref{lemf_1}, Lemma \ref{lemf_2}, and Lemma \ref{lem. traces of squared matrices}.

\begin{proof}[Proof of Lemma \ref{lem.17032401}]Let $\mathbf{u}_i$' be the $i$-th row of $U_{y}$. We only state the details for the proof of the case $k=2$. For general fixed $k$, the proof is just a straightforward extension. It is well known that a Haar orthogonal  matrix can be obtained from a Gaussian matrix with i.i.d. entries via Gram-Schmidt orthogonalization. Especially,  there exists a pair of i.i.d. Gaussian vectors $\mathbf{g}_1,\mathbf{g}_2\in \mathbb{R}^q$ with i.i.d. $N(0, \frac{1}{q})$ entries, such that
\begin{align}
(\mathbf{u}_1, \mathbf{u}_2)=\Big(\frac{\mathbf{g}_1}{\|\mathbf{g}_1\|}, \frac{\mathbf{g}_2-\langle \mathbf{g}_2,\mathbf{g}_1\rangle \frac{\mathbf{g}_1}{\|\mathbf{g}_1\|^2} }{\| \mathbf{g}_2-\langle \mathbf{g}_2,\mathbf{g}_1\rangle \frac{\mathbf{g}_1}{\|\mathbf{g}_1\|^2}\|} \Big). \label{17040270}
\end{align}
Using  Lemma \ref{let}, it is elementary to see that  $\|\mathbf{g}_i\|^2-1=O(\frac{(\log n)^K}{n})$ and
\begin{align}
\|\mathbf{g}_i\|=1-\frac{1}{2} \big(\|\mathbf{g}_i\|^2-1\big)+O(\frac{(\log n)^K}{n}), \qquad  \mathbf{g}_1 A\mathbf{g}_2'= O(\frac{(\log n)^K}{\sqrt{n}}) \label{17040271}
\end{align}
hold with overwhelming probability, for any $q\times q$ matrix $A$ independent of $\mathbf{g}_i$'s with $\|A\|_{\text{HS}}\leq C\sqrt{n}$.  Hence, for any $\mathbf{d}\in \mathbb{R}^q$ independent of $\mathbf{u}_i$'s, with $\|\mathbf{d}\|\leq C$, we have
\begin{align}
\mathbf{u}_i \mathbf{d}'=\Big(1-\frac{1}{2} \big(\|\mathbf{g}_i\|^2-1\big)+O(\frac{(\log n)^K}{n})\Big)\mathbf{g}_i\mathbf{d}'= \mathbf{g}_i\mathbf{d}'+O(\frac{(\log n)^K}{n}), \label{17051020}
\end{align}
with overwhelming probability.  This proves the first approximation in  Lemma \ref{lem.17032401}  if we regard the columns of  $\mathscr{B} W_1'$ as $\mathbf{d}'$ above.

For the second approximation in  Lemma \ref{lem.17032401} , we again use (\ref{17040270}) and (\ref{17040271}) to see that
\begin{align}
&\mathbf{u}_i \mathscr{D}\mathbf{u}_i'= \Big(1-\frac{1}{2} \big(\|\mathbf{g}_i\|^2-1\big)+O(\frac{(\log n)^K}{n})\Big)^2 \mathbf{g}_i\mathscr{D}\mathbf{g}_i',\nonumber\\
&\mathbf{u}_1 \mathscr{D}\mathbf{u}_2=(1+O(\frac{(\log n)^K}{\sqrt{n}}))\mathbf{g}_1 \mathscr{D}\mathbf{g}_2'- (1+O(\frac{(\log n)^K}{\sqrt{n}}))\mathbf{g}_1\mathbf{g}_2'  \mathbf{g}_1D\mathbf{g}_1'. \label{17050101}
\end{align}
Hence, with overwhelming probability, we have
\begin{align*}
&\mathbf{u}_i \mathscr{D}\mathbf{u}_i'=\mathbf{g}_i\mathscr{D}\mathbf{g}_i'-\big(\|\mathbf{g}_i\|^2-1\big) \frac{1}{q} \ntr \mathscr{D}+ O(\frac{(\log n)^K}{n}),\nonumber\\
&\mathbf{u}_1 \mathscr{D}\mathbf{u}_2= \mathbf{g}_1 \mathscr{D}\mathbf{g}_2'- \mathbf{g}_1\mathbf{g}_2'  \frac{1}{q}\ntr \mathscr{D}+O(\frac{(\log n)^K}{n}).
\end{align*}
This proves the second approximation in Lemma \ref{lem.17032401}  in case of $k=2$.

For more general fixed $k$, it suffices to write $(\mathbf{u}_1, \ldots, \mathbf{u}_k)$ in terms of  $(\mathbf{g}_1, \ldots, \mathbf{g}_k)$,  using Gram-Schmidt orthogonalization. The proof is analogous. We omit the details. Hence, we completed the proof of Lemma \ref{lem.17032401} .
\end{proof}

\begin{proof}[Proof of Lemma \ref{lem.17032510}]  Recall  (\ref{17031930}). It is elementary to check
the first estimate of (\ref{17040610}).
For the second estimate, we use the fact
\begin{align}
\Phi  E= \Big(( E+ H)^{-1}  E-z\Big)^{-1} ( E+ H)^{-1}  E=( C_{w_2y}-z)^{-1}  C_{w_2y},  \label{17040430}
\end{align}
 from which we have
\begin{align}
\frac{1}{n}\ntr  E \Phi=\frac{1}{n} \ntr  ( C_{w_2y}-z)^{-1}  C_{w_2y}=c_1+z\frac{1}{n} \ntr ( C_{w_2y}-z)^{-1}+O(\frac{1}{n}).  \label{17050156}
\end{align}
Recall $\check{s}(z)$ defined in (\ref{17040615}).  Now we claim that  for any small constant $\varepsilon>0$,
\begin{align}
\check{s}_n(z):=\frac{1}{p} \ntr ( C_{w_2y}-z)^{-1}=\check{s}(z)+O_p(n^{-1+\varepsilon}). \label{17040441}
\end{align}
for any  $z\in \mathcal{D}$.
This together with (\ref{17040615}) and (\ref{17050156}) implies that
\begin{align}\lb{lim1}
 \frac{1}{n}\ntr E\Phi= s(z)+O_p(n^{-1+\varepsilon})
\end{align}
for any  $z\in \mathcal{D}$.

Hence, what remains is to show (\ref{17040441}).   To this end, we first focus on a sub domain of $\mathcal{D}$:
\begin{align*}
\mathcal{D}_+ \equiv\mathcal{D}_+(\delta):=\Big\{z\in \mathbb{C}: d_{+}+\delta< \Re z\leq 2, \frac{1}{n}\leq |\Im z|\leq 1\Big\}
\end{align*}
Note that for $z\in \mathcal{D}_+$, the imaginary part of $z$ is away from 0 by at least $\frac{1}{n}$. Then, we have  the trivial bound
\begin{align}
\sup_{z\in \mathcal{D}_+}\big\|( C_{w_2y}-z)^{-1}\big\|\leq n,  \label{17040510}
\end{align}
deterministically.
Further, since the event $ \Xi_1$ (c.f. (\ref{17040361})) holds with overwhelming probability, it is easy to see that
\begin{align}
\sup_{z\in \mathcal{D}}\big\|( C_{w_2y}-z)^{-1}\big\|\leq C,  \label{17040511}
\end{align}
with overwhelming probability.

The deterministic bound and the high probability event (\ref{17040511}) together implies that
\begin{align}
\sup_{z\in \mathcal{D}_+}  \mathbb{E}\big\|( C_{w_2y}-z)^{-1}\big\|^\ell \leq C_\ell \label{17040620}
\end{align}
for some positive constant $C_\ell$, for any given $\ell\in \mathbb{N}$.  Further, we introduce a smooth cutoff function
\begin{align}
\mathcal{Q}\equiv \mathcal{Q} ( E,  H)=\chi \Big(\frac{1}{p}\ntr ( E+ H)^{-1}\Big)=\chi \Big(\frac{1}{p}\ntr S_{w_2w_2}^{-1}\Big), \label{17040640}
\end{align}
where $\chi(x)=1$ if $0\leq x \leq n^\varepsilon$, while $\chi(x)=0$ if $x\geq n^\varepsilon+1$,  and $|\chi'(x)|\leq C$ for all $x\in [n^\varepsilon,n^\varepsilon+1]$.
From (\ref{0726100}), we see that $\frac{1}{p}\ntr S_{w_2w_2}^{-1}=O(1)$ with overwhelming probability. Hence, $\mathcal{Q}=1$ with overwhelming probability. In turn, we see that $\mathcal{Q}=1$ implies deterministically  that
$\|S_{w_2w_2}^{-1}\|= O(n^{1+\varepsilon})$.

Now, to prove (\ref{17040441}) for a given $z\in \mathcal{D}_+$, it suffices to show the following two
\begin{align}
\text{Var} (\check{s}_n(z)\mathcal{Q})=O(\frac{1}{n^2}),\qquad \mathbb{E} (\check{s}_n(z)\mathcal{Q})= \check{s}(z)+o(\frac{1}{\sqrt{n}}) \label{17040631}
\end{align}
since $\mathcal{Q}=1$ with overwhelming probability.
We first show the  variance bound above.
We write
\begin{align*}
  E=\mathfrak{X}\mathfrak{X}^*, \qquad  H=\mathfrak{Y}\mathfrak{Y}^*, \qquad \mathfrak{X}=(x_{ij})_{p-k, q},\qquad \mathfrak{Y}=(y_{ij})_{p-k, n-q},
\end{align*}
and all $x_{ij}$'s and $y_{ij}$'s are i.i.d. $N(0,\frac{1}{n})$. We then regard $\check{s}_n(z)\mathcal{Q}=\frac{1}{p}\ntr  E\Phi\mathcal{Q}$ as a function of the Gaussian variables $x_{ij}$'s and $y_{ij}$'s. Using Poincar\'{e} inequality for Gaussian, we have
\begin{align}
\text{Var} (\check{s}_n(z))\leq \frac{1}{n} \mathbb{E}\Big(\sum_{i, j} \Big|\frac{\partial \big(\frac{1}{p}\ntr  E \Phi(z)\big)\mathcal{Q}}{\partial x_{ij}}\Big|^2+\sum_{\alpha, \beta} \Big|\frac{\partial \big(\frac{1}{p}\ntr  E \Phi(z)\big)\mathcal{Q}}{\partial y_{\alpha\beta}}\Big|^2\Big). \label{17040630}
\end{align}
Observe that
\begin{align*}
\frac{\partial \big((\frac{1}{p}\ntr  E \Phi) \mathcal{Q}\big)}{\partial x_{ij}} &=\frac{1}{p} \Big((\mathfrak{X}\Phi)_{ji}+(\Phi\mathfrak{X})_{ij}+z \big(\mathfrak{X}\Phi E\Phi\big)_{ji}+z\big( \Phi E \Phi \mathfrak{X}\big)_{ij}\Big)\mathcal{Q}\nonumber\\
-&  \frac{1}{p^2}\ntr  E \Phi \chi'\Big(\frac{1}{p}\ntr S_{w_2w_2}^{-1}\Big)\Big(\Big(\mathfrak{X}^*S_{w_2w_2}^{-2}\Big)_{ji}+\Big(S_{w_2w_2}^{-2}\mathfrak{X}\Big)_{ij}\Big)
\end{align*}
Then it is not difficult to see that
\begin{align}
\frac{1}{n} \mathbb{E}\Big(\sum_{i, j} \Big|\frac{\partial \frac{1}{p}\ntr  E \Phi(z)}{\partial x_{ij}}\Big|^2\Big)\leq \frac{C}{n^2}  \mathbb{E}\Big( \big( \sum_{\alpha=1}^3\| E\|^\alpha \|\Phi\|^{\alpha+1}\big)\mathcal{Q}^2\Big)\nonumber\\
+\frac{C}{n^2}  \mathbb{E}\Big( \Big(\|  E\|^2 \|S_{w_2w_2}^{-1}\|^4\Big)\Big((\frac{1}{p}\ntr  E \Phi )\chi'\big(\frac{1}{p}\ntr S_{w_2w_2}^{-1}\big)\Big)^2\Big) \label{17040623}
\end{align}
for some positive constant $C$.  Note that $\mathcal{Q}\neq 0$ only when $\frac{1}{p}\ntr S_{w_2w_2}^{-1}\leq n^\varepsilon+1$, and similarly $\chi'\Big(\frac{1}{p}\ntr S_{w_2w_2}^{-1}\Big)\neq 0$ only if $ \frac{1}{p}\ntr S_{w_2w_2}^{-1}\in [n^{\varepsilon}, n^\varepsilon+1]$. Hence, if the random variables in two expectations are nonzero, we have the bound
$
\| S_{w_2w_2}^{-1}\|=O(n^{1+\varepsilon}).
$
In addition, we also know from (\ref{0726100}) that
$
\|  S_{w_2w_2}^{-1}\|=O(1)
$
holds with overwhelming probability. These facts imply that
\begin{align}
\mathbb{E}  \|  S_{w_2w_2}^{-1}\|^a \mathcal{Q}^b=O(1), \label{17040621}
\end{align}
for any fixed $a, b>0$. The same bound holds if we replace $\mathcal{Q}$ by $\chi'\Big(\frac{1}{p}\ntr S_{w_2w_2}^{-1}\Big)$. Furthermore, it is well known that
\begin{align}
\mathbb{E} \| E\|^\ell=O(1) \label{17040622}
\end{align}
for any given $\ell>0$.  Then combining the first inequality in (\ref{17040401}),  with (\ref{17040620}), (\ref{17040621}) and (\ref{17040622}), we can obtain from (\ref{17040623}) that
\begin{align*}
\frac{1}{n} \mathbb{E}\Big(\sum_{i, j} \Big|\frac{\big(\partial \frac{1}{p}\ntr  E \Phi(z)\big)\mathcal{Q}}{\partial x_{ij}}\Big|^2\Big)\leq \frac{C}{n^2}  \mathbb{E}\Big( \big( \sum_{\alpha=1}^3\| E\|^\alpha \|\Phi\|^{\alpha+1}\big)\mathcal{Q}^2\Big)=O(\frac{1}{n^2})
\end{align*}
uniformly in $z\in\mathcal{D}_+$. For the second term in (\ref{17040630}), we can get the same bound analogously. This completes the proof of the variance estimate in (\ref{17040631}).

The estimate of mean in (\ref{17040631}) was obtained in \cite{BaiH15C} for all
$$z\in \big\{w: \Re w\in [d_--c, d_++c], \Im w\geq c\big\}$$
for any small positive constant $c$. The proof in \cite{BaiH15C} take the advantages of the lower bound $\Im z\geq c$, which guarantees an order $1$ upper bound of $\|( C_{w_2y}-z)^{-1}\|$. Here for $z\in \mathcal{D}_+$, we have (\ref{17040620}) instead. This high order moment bound is sufficient for us to extend the discussion in \cite{BaiH15C}  to the region $\mathcal{D}_+$ easily. Hence we omit the proof of (\ref{17040631}), and refer to  \cite{BaiH15C} for the interested readers.

Furthermore, using Chebyshev's inequality,  (\ref{17040631}) and the  fact that $\mathcal{Q}=1$ with overwhelming probability, we see that for any $z\in \mathcal{D}_+$
\begin{align*}
\mathbb{P} \big(| \check{s}_n(z)-\check{s}(z)|\geq n^{-\varepsilon} \big)=O(n^{^{-2+2\varepsilon}}).
\end{align*}
Now, we define the lattice $\mathcal{S}_+=\mathcal{D}\cap (n^{-\varepsilon}\mathbb{Z}\times  n^{-\varepsilon}\mathrm{i}\mathbb{Z})$, where $\mathbb{Z}=\{\pm 1, \pm 2,\ldots\}$.  Then apparently  $\sharp (\mathcal{S}_+)=O(n^{2\varepsilon})$. This implies that
\begin{align*}
\mathbb{P} \Big(\cup_{z\in \mathcal{S}_+}\big\{| \check{s}_n(z)-\check{s}(z)|\geq n^{-\varepsilon}\big\} \Big)=O(n^{^{-2+4\varepsilon}}).
\end{align*}

In addition, for any $z\in\mathcal{D}$, one can find a $z'\in\mathcal{S}_+$ such that $|z-z'|=O(n^{-\varepsilon})$. Then by continuity of $\check{s}_n(z)$ and $\check{s}(z)$ on $\mathcal{D}$, i.e., $|\check{s}'_n(z)|, |\check{s}'(z)|\leq \eta^{-2}$, we can easily get
\begin{align*}
\mathbb{P} \Big(\cup_{z\in \mathcal{D}}\big\{| \check{s}_n(z)-\check{s}(z)|\geq Cn^{-\varepsilon}\big\} \Big)=O(n^{-2+4\varepsilon}).
\end{align*}
By choosing $\varepsilon$ sufficiently small, we  can then show that the second estimate in (\ref{17040610}) holds uniformly in $\mathcal{D}$ if we replace $o_{\text{p}}(\frac{1}{\sqrt{n}})$ by $O_{\text{a.s.}}(n^{-\varepsilon})$.

Next, we show the third estimate in (\ref{17040610}).  Recall  (\ref{17031930}). We write
\begin{align}
\mathcal{H}=\mathcal{X}\mathcal{X}^*,\qquad  \mathcal{E}=\mathcal{Y}\mathcal{Y}^*,\qquad \mathcal{X}=(\tilde{x}_{ij})_{q, n-p+k},\qquad \mathcal{Y}=(\tilde{y}_{ij})_{q, p-k} \label{17032604}
\end{align}
and all $\tilde{x}_{ij}$'s and $\tilde{y}_{ij}$'s are i.i.d. $N(0,\frac{1}{n})$.  Analogously to the proof of the second estimate in (\ref{17040610}), we can  prove the result for any fixed $z\in \mathcal{D}_+$ at first, then extend the proof to whole $\mathcal{D}$ by continuity. For a fixed $z\in \mathcal{D}_+$, we again  split the proof into two parts
\begin{align}
\text{Var}\Big(\frac{1}{q} \ntr \Psi(z) \tilde{\mathcal{Q}}\Big)= O(\frac{1}{n^2}), \label{17032601}
\end{align}
and
\begin{align}
\mathbb{E}\frac{1}{q} \ntr \Psi(z)\tilde{\mathcal{Q}}=\varrho(z)+o(\frac{1}{\sqrt{n}}), \label{17032602}
\end{align}
where
\begin{align}
 \tilde{\mathcal{Q}} ( \mathcal{E},  \mathcal{H})=\chi \Big(\frac{1}{q}\ntr ( \mathcal{E}+ \mathcal{H})^{-1}\Big)=\chi \Big(\frac{1}{q}\ntr S_{yy}^{-1}\Big). \label{17050844}
\end{align}
Similarly, here we used the fact that $\widetilde{Q}=1$ with overwhelming probability.
The proof of (\ref{17032601}) can be obtained by Poincar\'{e} inequality again. We omit the details. In the sequel, we show how to
derive (\ref{17032602}) by integration by parts formula of Gaussian measures. To ease the presentation, we get rid of the factor
$\tilde{\mathcal{Q}}$ in the discussion, and do the reasoning as if $\|S_{yy}^{-1}\|=O(1)$ deterministically. The rigorous justification of the following derivation requires one to put $\tilde{\mathcal{Q}}$ back into each step.

Using the integration by parts formula for Gaussian variables (c.f. (\ref{17052701})), we arrive at
\begin{align}
&\frac{1}{q}\E \ntr\Psi(z) = \frac{1}{q}  \sum_{i=1}^q\sum_{j=1}^{n-p+k}\E  \tilde{x}_{ij} \big(\mathcal{X}^*\Upsilon\mathcal{E}\big)_{ji} = \frac{1}{qn} \sum_{i=1}^q\sum_{j=1}^{n-p+k}  \E \frac{\partial \big(\mathcal{X}^*\Upsilon\mathcal{E}\big)_{ji}}{\partial x_{ij}}\nonumber\\
&= \frac{1}{qn}\E \sum_{i=1}^q\sum_{j=1}^{n-p+k}  \Big( \big(\Upsilon\mathcal{E}\big)_{ii}+  z \big(\mathcal{X}^*\Upsilon \mathcal{X}\big)_{jj}\big(
\Upsilon\mathcal{E}\big)_{ii}+z\big(\mathcal{X}^*\Upsilon\big)_{ji}\big(\mathcal{X}^*\Upsilon\mathcal{E}\big)_{ji}\Big)\nonumber\\
&=\frac{n-p+k}{qn} \mathbb{E} \ntr  \big(\mathcal{E}\Upsilon\big)+\frac{z}{qn} \mathbb{E} \ntr \big(\mathcal{H} \Upsilon\big) \ntr  \big(\mathcal{E}\Upsilon \big) +\frac{z}{qn}\E \ntr \big(\mathcal{H}\Upsilon\mathcal{E}\Upsilon\big). \label{17040129}
\end{align}
 Note that for any $z\in \mathcal{D}_+$, one has
 \begin{align}
\frac{z}{qn}\E \ntr \big(\mathcal{H}\Upsilon\mathcal{E}\Upsilon\big)=O(\frac{1}{n}). \label{17050820}
 \end{align}
 Similarly to the variance bound in (\ref{17040631}), it is not difficult to show
 \begin{align}
\text{Var}  \big(\frac{1}{p} \ntr \big(\mathcal{H}\Upsilon\big)=O(\frac{1}{n^2}), \qquad  \text{Var}  \big(\frac{1}{p} \ntr \big(\mathcal{E}\Upsilon\big)=O(\frac{1}{n^2}). \label{17050821}
 \end{align}
 To be precise, in order to obtain (\ref{17050820}) and (\ref{17050821}), the tracial quantities  shall be multiplied by $\tilde{\mathcal{Q}}$. But as we mentioned before, we omit it from the presentation.
Combining (\ref{17040129})- (\ref{17050821}), we have
\begin{align*}
&\frac{1}{q}\E \ntr\Psi(z) =\frac{n-p+k}{qn} \mathbb{E} \ntr  \big(\mathcal{E}\Upsilon\big)+\frac{z}{qn} \mathbb{E} \ntr \big(\mathcal{H} \Upsilon\big) \mathbb{E}\ntr  \big(\mathcal{E}\Upsilon \big) +O(\frac{1}{n})\nonumber\\
&=  \frac{n-p+k}{qn} \mathbb{E} \ntr  \big(\mathcal{E}\Upsilon\big)+\frac{1}{qn} \Big((1-z)\E\ntr \big(\mathcal{E} \Upsilon\big) -q\Big) \mathbb{E}\ntr  \big(\mathcal{E}\Upsilon \big) +O(\frac{1}{n})\nonumber\\
&=  \Big(\frac{n-p+k}{qn}-\frac{1}{n}\Big) \mathbb{E} \ntr  \big(\mathcal{E}\Upsilon\big)+\frac{1-z}{qn} \big(\E\ntr \big(\mathcal{E} \Upsilon\big) \big)^2 +O(\frac{1}{n})
\end{align*}
where we used the fact
  \begin{align*}
{z}\mathbb{E} \ntr \big(\mathcal{H} \Upsilon \big)=(1-z)\E \ntr\big(\mathcal{E} \Upsilon \big)-q.
  \end{align*}
Analogously to (\ref{17040631}), we can show
\begin{align*}
\frac{1}{q} \mathbb{E}\ntr \big(\mathcal{E}\Upsilon\big)
= \frac{z}{q} \mathbb{E}\ntr  \big( C_{y w_2}-z I_{q}\big)^{-1} +1=z\tilde{s}(z)+1+o(\frac{1}{\sqrt{n}}),
\end{align*}
where ${\tilde{s}}(z)$ is defined in (\ref{17040351}). Then by (\ref{17040615}), we have
\begin{align}
\frac{1}{q} \mathbb{E}\ntr \big(\mathcal{E}\Upsilon\big)= \frac{1}{c_2}s(z)+o(\frac{1}{\sqrt{n}}) . \label{17040201}
\end{align}
Combining the above results we can conclude that  for any given $z\in \mathcal{D}_+$
\begin{align*}
&\frac{1}{q}\E \ntr\Psi(z)=\frac{(1-c_1-c_2)s(z)+(1-z)s^2(z)}{c_2}+o(\frac{1}{\sqrt{n}})=	\varrho(z)+o(\frac{1}{\sqrt{n}}) .
\end{align*}
where we used (\ref{17040301}). The extension of the result to the whole $\mathcal{D}$ and the proof for the uniform bound  are  the same as the second estimate in (\ref{17040610}). We thus omit the details.

This completes the proof of Lemma \ref{lem.17032510}.

\end{proof}

\begin{proof}[Proof of Lemma \ref{lemf_1}]  Using the fact $((1-z) E-z H)\Phi(z)=I$ and  the second estimate of Lemma \ref{lem.17032510}, we see that
\begin{align}
\frac{1}{n} \ntr \big( H \Phi(\ga_j)\big)=\frac{1-\ga_j}{\ga_j n}  \ntr  E \Phi- \frac{p-k}{\ga_j n}=\frac{-c_1+(1-\ga_j)s(\ga_j)}{\ga_j} +o_{\text{p}}(\frac{1}{\sqrt{n}}). \label{17032702}
\end{align}

Now, observe that
\begin{align}
&\frac{\partial }{\partial z}\Big(\frac{1}{n} \ntr \big( H \Phi(z)\big)\Big)\Big{|}_{z=\ga_j}= \frac{1}{n} \ntr \big( H \Phi(\ga_j)W_2W_2'\big)\Phi(\ga_j)\nonumber\\
&\qquad =-\frac{1}{\ga_j n} \ntr  H \Phi(\ga_j) +\frac{1}{\ga_j n} \ntr  H\Phi(\ga_j) E \Phi(\ga_j)\nonumber\\
&\qquad =-\frac{1}{1-\ga_j}\frac{1}{n} \ntr  H \Phi(\ga_j)+\frac{1}{\ga_j n}  \ntr  H\Phi(\ga_j)  H\Phi(\ga_j). \label{17032701}
\end{align}
Hence, to prove the second and third estimate in  Lemma \ref{lemf_1}, it suffices to consider the LHS of (\ref{17032701}). Although the limit and differentiation cannot be exchanged in general, here we can indeed obtain from (\ref{17032702}) that
\begin{align}
\frac{\partial }{\partial z}\Big(\frac{1}{n} \ntr \big( H \Phi(z)\big)\Big)\Big{|}_{z=\ga_j}=\frac{{\rm{d}} }{{\rm{d}} z}\frac{1-z}{z} s(z)\Big{|}_{z=\ga_j}+O_{\text{p}}(n^{-\frac14}).\label{17032705}
\end{align}
To see this, it suffices to consider the non-limiting ratio
\begin{align}
\frac{\frac{1}{n} \ntr \big( H (\Phi(\ga_j+n^{-\frac{1}{4}})\big)- \Phi(\ga_j))\big)}{n^{-\frac{1}{4}}}=\frac{1}{n} \ntr \big( H \Phi(\ga_j)W_2W_2'\big)\Phi(\ga_j)+O_{\text{p}}(n^{-\frac{1}{4}}), \label{17050840}
\end{align}
which follows by using the expansion (\ref{resolvent expansion 1}) twice. Applying the estimate   in (\ref{17032702}) to the LHS of  (\ref{17050840}) leads to  (\ref{17032705}).

Therefore, combining (\ref{17032702}), (\ref{17032701}) and (\ref{17032705}) yields  the second and the third estimates in Lemma \ref{lemf_1}.
To prove the last estimate in Lemma \ref{lemf_1}, we can consider the derivative of $\frac{1}{n}\ntr  E \Phi$ instead of  $\frac{1}{n}\ntr  H \Phi$ and go through the above procedure. We omit the details.

This concludes the proof of Lemma \ref{lemf_1}.

\end{proof}

\begin{proof}[Proof of Lemma \ref{lemf_2}] From  the identity (\ref{17031801}), we have
\begin{align*}
\Psi=(1-z) S _{yw_2}{\Phi} S _{w_2y}-\mathcal{E}.
\end{align*}
Analogously to the proof of the third estimate in  (\ref{17032501}), we can use the spectral decomposition (\ref{17032720}). Then we write
\begin{align*}
 S _{yw_2}{\Phi} S _{w_2y}=U_{y} \Lambda_{y} V_{y}  W_2'  \Phi  W_2 V_{y}' \Lambda_{y}' U_{y}'.
\end{align*}
Again, similarly to the Gaussian approximation used in (\ref{lem.17032401}), we can approximate the $\alpha$-th row and $\beta$-th column of $U_{y}$ by Gaussian vectors. Then Lemma \ref{let} leads us to the estimates
\begin{align}
\Psi_{\alpha\beta}(\ga_j)= \delta_{\alpha\beta}\frac{1}{q}\ntr \Psi(\ga_j)+O_{\text{p}} (\frac{(\log n)^K}{\sqrt{n}})=\varrho(\ga_j)+O_{\text{p}} (\frac{(\log n)^K}{\sqrt{n}}).  \label{17032725}
\end{align}
where the last step follows from Lemma \ref{lem.17032510}.

Next, for the second estimate in  Lemma \ref{lemf_2},  we first notice that
\begin{align*}
&\Psi(\ga_j)(\mathcal{H}^{-1}+\mathcal{E}^{-1})\Psi(\ga_j)=\frac{\partial }{\partial z} \Psi(z) \Big{|}_{z=\ga_j}=\frac{\partial }{\partial z} \Big((1-z) S _{yw_2}{\Phi} S _{w_2y}\Big) \Big{|}_{z=\ga_j}\nonumber\\
&=- S _{yw_2}{\Phi} S _{w_2y}+ (1-z)   S _{yw_2}\Phi   S _{w_2w_2}\Phi   S _{w_2y}.
\end{align*}
Again, applying the SVD of $Y$, we can derive
\begin{align*}
\Big(\Psi(\ga_j)(\mathcal{H}^{-1}+\mathcal{E}^{-1})\Psi(\ga_j)\Big)_{\alpha\beta}=  \delta_{\alpha\beta}\frac{1}{ q} \frac{\partial }{\partial z} \ntr\Psi(z) \Big{|}_{z=\ga_j} +O_{\text{p}} (n^{-\frac12+\varepsilon})
\end{align*}
Then similarly to (\ref{17032705}), we have
\begin{align*}
\Big(\Psi(\ga_j)(\mathcal{H}^{-1}+\mathcal{E}^{-1})\Psi(\ga_j)\Big)_{\alpha\beta}=\delta_{\alpha\beta} \varrho'(\ga_j)+o_{\text{p}}(1).
\end{align*}
This concludes the proof of Lemma \ref{lemf_2}.
\end{proof}

\begin{proof}[Proof of Lemma \ref{lem. traces of squared matrices}] First, we estimate $\frac{1}{n} \ntr \mathscr{A}^2$. From the definition in (\ref{17050301}), we have
 \begin{align}\lb{eqlem4.4.1}
\frac{1}{n} \ntr (\mathscr{A}(\ga_j))^2&= \frac{1}{n}\ntr \big( P_{y}-\ga_j \big)^2+\frac{1}{n}\ntr \big(( P_{y}-\ga_j )^2 W_2'\Phi(\ga_j) W_2\big)^2\nonumber\\
&\qquad-\frac{2}{n}\ntr \big(( P_{y}-\ga_j  )^3 W_2'\Phi(\ga_j) W_2\big).
 \end{align}
 It is easy to check
 \begin{align*}
\frac{1}{n}\ntr \big( P_{y}-\ga_j  \big)^2= (1-2\ga_j)c_2+\ga_j^2+o_{\text{p}}(1).
 \end{align*}
 For the second term of  \eqref{eqlem4.4.1}, using Lemma \ref{lemf_1} we obtain
  \begin{align*}
 &\frac{1}{n}\ntr \big(( P_{y}-\ga_j  )^2 W_2'\Phi(\ga_j) W_2\big)^2=\frac{1}{n}\ntr \big((1-\ga_j)^2\Phi(\ga_j)E
 +\ga_j^2\Phi(\ga_j)  H\big)^2\\
 &=(1-\ga_j)(1-3\ga_j)s(\ga_j)+\ga_j(1-\ga_j)^2s'(\ga_j)+\ga_j^2 c_1+o_{\text{p}}(1).
 \end{align*}

 Similarly,  we have
 \begin{align*}
&\frac{1}{n}\ntr \big(( P_{y}-\ga_j  I_n)^3 W_2'\Phi(\ga_j) W_2\big)=
\frac{1}{n}\ntr \big((1-\ga_j)^3 \Phi(\ga_j)E
-\ga_j^3\Phi(\ga_j)  H\big)\\
&= (1-\ga_j)(1-2\ga_j)s(\ga_j)+\ga_j^2c_1+o_{\text{p}}(1).
 \end{align*}
Substituting the above results to (\ref{eqlem4.4.1}), we get the first estimate in (\ref{17052702}).

 Next, we investigate $\frac{1}{q} \ntr \mathscr{B}\mathscr{B}'$. From the definition in (\ref{17032301}), it is elementary to check
\begin{align}
\ntr \mathscr{B}\mathscr{B}'
&= (1-\ga_j)^2 \ntr \Big( S _{yy} - S _{y w_2}\Phi  S _{w_2y} +(\ga_j-\ga_j^2) S _{y w_2}\Phi  S _{w_2w_2} \Phi   S _{w_2y}\Big). \label{17031919}
\end{align}
Recall (\ref{17031801}). Then, with overwhelming probability, we have
\begin{align}
\frac{1}{q}\ntr  S _{yw_2}{\Phi}(\ga_j) S _{w_2y}=\frac{1}{1-\ga_j} \frac{1}{q}\ntr \big( \mathcal{E}+ \Psi (\ga_j)\big)=\frac{c_1+\varrho(\ga_j)}{1-\ga_j} +o_{\text{p}}(1), \label{17032010}
\end{align}
by (\ref{17031930}) and  Lemma \ref{lem.17032510}.

Observe that
\begin{align}
 S _{y w_2}\Phi(z)  S _{w_2w_2} \Phi(z)   S _{w_2y} &= \frac{\partial }{\partial z} S _{y w_2}\Phi(z) S _{w_2y} = \frac{\partial }{\partial z}  z S _{yy}\Upsilon(z) S _{yy}, \label{17032001}
\end{align}
where in the last step we used (\ref{070201}).  Using (\ref{061912}) and (\ref{16122950}), we see that
\begin{align}
&\frac{\partial }{\partial z}  z S _{yy}\Upsilon(z) S _{yy}=\frac{\partial }{\partial z}  \Big( \frac{z}{1-z}\mathcal{E}-\mathcal{H}+\frac{1}{1-z}\Psi(z)\Big)\nonumber\\
&\qquad =\frac{1}{(1-z)^2} \mathcal{E}+\frac{1}{(1-z)^2} \Psi(z)+\frac{1}{1-z} \Psi(z)\big(\mathcal{E}^{-1}+\mathcal{H}^{-1} \big) \Psi(z).  \label{17032002}
\end{align}
Furthermore, from Lemma \ref{lemf_2} we have
\begin{align}
&\frac{1}{q} \ntr \mathcal{E}= c_1+o_p(1),\qquad \frac{1}{q} \ntr \Psi(\ga_j)= \varrho(\ga_j)+o_p(1),\nonumber\\
&\frac{1}{q}  \ntr  \Psi(\ga_j)\big(\mathcal{E}^{-1}+\mathcal{H}^{-1} \big) \Psi(\ga_j)= \varrho'(\ga_j)+o_p(1).
\label{17032003}
\end{align}
Combining (\ref{17032001}), (\ref{17032002}) and (\ref{17032003}) yields
\begin{align}
 \frac{1}{q} \ntr   S _{y w_2}\Phi(\ga_j)  S _{w_2w_2} \Phi(\ga_j)   S _{w_2y}=\frac{c_1+ \varrho(\ga_j)}{(1-\ga_j)^2}+\frac{\varrho'(\ga_j)}{1-\ga_j}+o_p(1). \label{17032011}
\end{align}
Therefore, we get the second estimate in (\ref{17052702}) by substituting (\ref{17032010}) and  (\ref{17032011}) into (\ref{17031919}).

At the end, we prove the last estimate in Lemma \ref{lem. traces of squared matrices}.  Note that
\begin{align}
\frac{1}{q}\ntr \mathscr{D}_n^2 &=(1-\ga_j^2)\frac{1}{q}\ntr \Big( S _{yy}-(1-\ga_j) S _{y w_2}\Phi S _{w_2y}\Big)^2
\nonumber\\
&= (1-\ga_j)^2\frac{1}{q}\ntr \big(\mathcal{H}-\Psi\big)^2= (1-\ga_j)^2\Big(\frac{1}{q}\ntr \mathcal{H}^2-\frac{2}{q}\ntr \mathcal{H}\Psi+ \frac{1}{q}\ntr\Psi^2\Big).
\label{17032060}
\end{align}
First, it is not difficult to check
\begin{align}
\frac{1}{q} \ntr \mathcal{H}^2 \stackrel{\text{p}} \longrightarrow  \Big(1+\frac{c_2}{1-c_1}\Big) (1-c_1)^2=(1-c_1+c_2)(1-c_1). \label{17032075}
\end{align}
We further claim that
\begin{align}
&\frac{1}{q}\ntr \mathcal{H}\Psi= \frac{1-c_1-c_1c_2}{c_2} (1-\ga_j) s(\ga_j) +\frac{1}{c_2} (1-\ga_j)^2 s^2(\ga_j)-c_1(1-c_1)+o_{\text{p}}(1),\nonumber\\
&\frac{1}{q}\ntr\Psi^2= \Big(\frac{\ga_j(\ga_j-1)}{c_2}-2c_1(1-\ga_j)+\frac{1-c_1-c_2}{c_2}(1-2\ga_j)\Big)s(\ga_j)\nonumber\\
&\quad+\frac{(1-\ga_j)(2-3\ga_j)}{c_2}s^2(\ga_j)+\frac{\ga_j(1-\ga_j)^2}{c_2}s'(\ga_j) -(1-c_1-c_2)c_1+o_{\text{p}}(1). \label{17040250}
\end{align}
The proof of (\ref{17040250}) will be postponed to the end.  Consequently, we have
\begin{align*}
\frac{1}{q}\ntr \mathscr{D}_n^2 &=(1-\ga_j)^2\bigg(\big(1-c_1+c_2\big)+\Big(\frac{\ga_j(\ga_j-1)}{c_2}-2(1-\ga_j)-\frac{1-c_1-c_2}{c_2}\Big) s(\ga_j)\nonumber\\
&-\frac{\ga_j(1-\ga_j)}{c_2} s^2(\ga_j)+\frac{\ga_j(1-\ga_j)^2}{c_2}s'(\ga_j)\bigg)+o_{\text{p}}(1).
\end{align*}

In addition, we see that
\begin{align*}
\frac{1}{q}\ntr \mathscr{D}_n= (1-\ga_j) \frac{1}{q}\ntr \big(\mathcal{H}-\Psi\big)=(1-\ga_j)\Big(1-\frac{1-\ga_j}{c_2} s(\ga_j)\Big)+o_{\text{p}}(1).
\end{align*}
Then, by definition, we get the last estimate in (\ref{lem. traces of squared matrices}).

Hence, what remains is to prove (\ref{17040250}). Analogously to the proof of Lemma \ref{lem.17032510}, by continuity, we can work with
\begin{align*}
\tilde{\ga}_j=\ga_j+\mathrm{i}\frac{1}{n} \in \mathcal{D}_+
\end{align*}
 instead of $\ga_j$.  Recall $\tilde{Q}$ defined in (\ref{17050844}).  Similarly to  (\ref{17040631}), also (\ref{17032601}) and (\ref{17032602}), we can split the tasks into two steps: first, we show that
\begin{align}
\text{Var} \Big( \big(\frac{1}{q}\ntr \mathcal{H}\Psi(\tilde{\ga}_j) \big)\tilde{\mathcal{Q}}\Big)=O(\frac{1}{n^2}), \qquad  \text{Var} \Big(  \big(\frac{1}{q}\ntr\Psi^2(\tilde{\ga}_j)\big) \tilde{\mathcal{Q}}\Big)=O(\frac{1}{n^2})  \label{17040750}
\end{align}
and then we estimate $\mathbb{E} \big(\big(\frac{1}{q}\ntr \mathcal{H}\Psi(\tilde{\ga}_j) \big)\tilde{\mathcal{Q}}\big)$ and $\mathbb{E} \big(\big(\frac{1}{q}\ntr\Psi^2(\tilde{\ga}_j) \big) \tilde{\mathcal{Q}}\big)$. The proof of  (\ref{17040750}) is analogous to the proof of (\ref{17040631}). Hence, we omit it. In the sequel, we briefly state the estimates of $\mathbb{E} \big(\big(\frac{1}{q}\ntr \mathcal{H}\Psi(\tilde{\ga}_j) \big)\tilde{\mathcal{Q}}\big)$ and $\mathbb{E} \big(\big(\frac{1}{q}\ntr\Psi^2(\tilde{\ga}_j) \big) \tilde{\mathcal{Q}}\big)$,
using integration by parts formula for Gaussian measure.   Again, for brevity, we get rid of the factor
$\tilde{\mathcal{Q}}$ in the discussion, and do the reasoning as if $\|S_{yy}^{-1}\|=O(1)$ deterministically. A rigorous justification of the following derivation requires one to put $\tilde{\mathcal{Q}}$ back into each step.

To ease the presentation, we omit $\tilde{\ga}_j$ from the notations in the following discussions. First, from (\ref{17040410}) we have
\begin{align*}
\frac{1}{q} \ntr  \mathcal{H}\Psi=\frac{1}{q} \ntr  \mathcal{H}^2\Upsilon\mathcal{E},\qquad  \frac{1}{q} \ntr \Psi^2= \frac{1}{q} \ntr  \big(\mathcal{H}\Upsilon \mathcal{E}\big)^2.
\end{align*}
 We use integration by parts (c.f. (\ref{17052701})) again. Recall (\ref{17032604}). We have
\begin{align*}
&\frac{1}{q} \mathbb{E} \ntr  \mathcal{H}^2\Upsilon\mathcal{E}= \frac{1}{q} \sum_{\alpha,\beta} \mathbb{E} x_{\alpha\beta} \big(\mathcal{X}'\mathcal{H}\Upsilon\mathcal{E}\big)_{\beta\alpha}\nonumber\\
&= \frac{1}{q n} \sum_{\alpha,\beta} \mathbb{E} \Big( \big(\mathcal{H}\Upsilon\mathcal{E}\big)_{\alpha\alpha}+ \big(\mathcal{X}'\big)_{\beta\alpha} \big(\mathcal{X}' \Upsilon\mathcal{E}\big)_{\beta\alpha}+\big(\mathcal{X}'\mathcal{X}\big)_{\beta\beta} \big(\Upsilon\mathcal{E}\big)_{\alpha\alpha}
\nonumber\\
&\qquad+\ga_j  \big(\big(\mathcal{X}'\mathcal{H}\Upsilon\big)_{\beta\alpha} (\mathcal{X}'\Upsilon\mathcal{E})_{\beta\alpha}+\big(\mathcal{X}'\mathcal{H}\Upsilon\mathcal{X}\big)_{\beta\beta} \big( \Upsilon\mathcal{E}\big)_{\alpha\alpha}\Big)\nonumber\\
&= \frac{n-p+k}{n}\frac{1}{q} \mathbb{E} \ntr \big(\mathcal{H}\Upsilon\mathcal{E}\big)+\frac{1}{qn} \mathbb{E} \ntr \mathcal{H}\; \ntr \mathcal{E} \Upsilon+\ga_j \frac{1}{qn} \mathbb{E} \ntr \mathcal{H}^2  \Upsilon\; \ntr \mathcal{E}  \Upsilon+O(\frac{1}{n})\nonumber\\
&= \frac{n-p+k}{n}\frac{1}{q} \mathbb{E} \ntr \big(\mathcal{H}\Upsilon\mathcal{E}\big)+\frac{1}{qn} \mathbb{E} \ntr \mathcal{H}\; \mathbb{E}\ntr \mathcal{E} \Upsilon+\ga_j \frac{1}{qn} \mathbb{E} \ntr \mathcal{H}^2  \Upsilon\; \mathbb{E}\ntr \mathcal{E}  \Upsilon+O(\frac{1}{n}),
\end{align*}
where the last step follows from the variance bounds $O(\frac{1}{n^2})$ for all the normalized tracial quantities above. The proofs of these variance bounds are again similar to the proof of the first bound in (\ref{17040631}).

Now, note that three terms on the RHS above can be computed easily. More specifically, we first recall (\ref{17040201}). Then, we also note that
\begin{align}
\frac{1}{q} \mathbb{E} \ntr \big(\mathcal{H}\Upsilon\mathcal{E}\big)=\frac{1}{q} \mathbb{E} \ntr \Psi(\ga_j)=\varrho(\ga_j) +o(1) \label{17040230}
\end{align}
and
\begin{align*}
&\frac{1}{n} \mathbb{E}\ntr \cH^2\Upsilon(\ga_j)=\frac{1}{\ga_j n} \mathbb{E}\ntr \mathcal{H} \Upsilon(\ga_j) (\ga_j \mathcal{H}-(1-\ga_j)\mathcal{E})+\frac{1-\ga_j}{\ga_j n} \mathbb{E}\ntr \mathcal{H} \Upsilon(\ga_j) \mathcal{E}\nonumber\\
&\qquad =-\frac{1}{\ga_j n} \mathbb{E}\ntr \mathcal{H}+  \frac{1-\ga_j}{\ga_j n}\mathbb{E}\ntr\Psi(\ga_j)=\frac{c_2(c_1-1)}{\ga_j}+ \frac{1-\ga_j}{\ga_j } c_2 \varrho(\ga_j)+o(1).
\end{align*}
Hence, we have
\begin{align*}
\frac{1}{q} \mathbb{E} \ntr  \mathcal{H}^2\Upsilon\mathcal{E}&=\big((1-c_1) +(1-\ga_j)s(\ga_j)\big) \varrho(\ga_j) +O(\frac{1}{n})\nonumber\\
&= \frac{1-c_1-c_1c_2}{c_2} (1-\ga_j) s(\ga_j) +\frac{1}{c_2} (1-\ga_j)^2 s^2(\ga_j)-c_1(1-c_1)+o(1).
\end{align*}

Analogously, after a tedious calculation using integration by parts, we will get
\begin{align}
& \frac{1}{q} \mathbb{E}\ntr \Psi^2= \frac{1}{q} \sum_{\alpha,\beta} \mathbb{E}x_{\alpha\beta} \Big(\mathcal{X}'\Upsilon \mathcal{E} \mathcal{H}\Upsilon \mathcal{E}\Big)_{\beta\alpha}\nonumber\\
  &=   \frac{n-p+k}{n} \frac{1}{q } \mathbb{E} \ntr   \mathcal{H}\Upsilon \mathcal{E} \Upsilon \mathcal{E}+ \frac{1}{q n}\mathbb{E}\ntr \mathcal{H} \Upsilon\mathcal{E}\; \mathbb{E}\ntr  \mathcal{E}\Upsilon+z\frac{1}{qn} \mathbb{E} \ntr \mathcal{H}\Upsilon \; \mathbb{E}\ntr \mathcal{H}\Upsilon\mathcal{E} \Upsilon\mathcal{E} \nonumber\\
 &\qquad +z\frac{1}{qn} \mathbb{E} \ntr   \mathcal{H}\Upsilon \mathcal{H} \Upsilon \mathcal{E} \;  \mathbb{E}\ntr \mathcal{E} \Upsilon +O(\frac{1}{n}). \label{17040231}
\end{align}
The tracial quantities on the RHS can now be computed. For example, the terms like $\mathcal{H}\Upsilon \mathcal{E} \Upsilon \mathcal{E}$ can be obtained via taking derivative of $\mathcal{H}\Upsilon \mathcal{E}$. More specifically, we have
\begin{align*}
&\frac{\partial}{\partial z}  \frac{1}{q } \mathbb{E} \ntr   \mathcal{H}\Upsilon (z) \mathcal{E}  \Big{|}_{z=\ga_j}= \frac{1}{q } \mathbb{E} \ntr   \mathcal{H}\Upsilon (\ga_j) \big(\mathcal{E}+\mathcal{H}\big) \Upsilon(\ga_j) \mathcal{E}\nonumber\\
&=\frac{1}{1-\ga_j}  \frac{1}{q } \mathbb{E} \ntr \mathcal{H} \Upsilon(\ga_j) \mathcal{E} +\frac{1}{1-\ga_j}  \frac{1}{q } \mathbb{E} \ntr \mathcal{H} \Upsilon(\ga_j)\mathcal{H} \Upsilon(\ga_j) \mathcal{E} \nonumber\\
&=- \frac{1}{\ga_j}  \frac{1}{q } \mathbb{E} \ntr \mathcal{H} \Upsilon(\ga_j) \mathcal{E} +\frac{1}{\ga_j}  \frac{1}{q } \mathbb{E} \ntr \mathcal{H} \Upsilon(\ga_j)\mathcal{E} \Upsilon(\ga_j) \mathcal{E}.
\end{align*}
From the above and (\ref{17040230}), we get
\begin{align*}
\frac{1}{q } \mathbb{E} \ntr \mathcal{H} \Upsilon(\ga_j)\mathcal{H} \Upsilon(\ga_j) \mathcal{E}&=(1-\ga_j)\varrho'(\ga_j)- \varrho(\ga_j)+o(1)\nonumber\\
&=-\frac{2(1-\ga_j)}{c_2}s(\ga_j)+\frac{(1-\ga_j)^2}{c_2} s'(\ga_j)+c_1+o(1), \nonumber\\
 \frac{1}{q } \mathbb{E} \ntr \mathcal{H} \Upsilon(\ga_j)\mathcal{E} \Upsilon(\ga_j) \mathcal{E}
 &=\ga_j  \varrho'(\ga_j)+\varrho(\ga_j)+o(1)\nonumber\\
&=\frac{1-2\ga_j}{c_2} s(\ga_j)+\frac{\ga_j(1-\ga_j)}{c_2} s'(\ga_j)-c_1+o(1).
\end{align*}
Further, we recall (\ref{17040201}), from which we also have
\begin{align*}
\frac{1}{q} \mathbb{E}\ntr \big(\mathcal{H}\Upsilon\big)=-\frac{1}{\ga_j}+\frac{1-\ga_j}{c_2\ga_j} s(\ga_j) +o(1).
\end{align*}

Plugging all these estimates into (\ref{17040231}), we get
\begin{align}
\frac{1}{q} \mathbb{E}\ntr \Psi^2(\ga_j)
&=
 \frac{2(1-\ga_j)(1-2\ga_j)}{c_2}s^2(\ga_j)+\Big(\frac{1-c_1-c_2}{c_2}\ga_j(1-\ga_j)+2\frac{\ga_j(1-\ga_j)^2}{c_2}s(\ga_j)\Big)s'(\ga_j)\nonumber\\
&\quad +\Big(-2c_1(1-\ga_j)+\frac{1-c_1-c_2}{c_2}(1-2\ga_j)\Big)s(\ga_j)+o(1). \label{17050850}
\end{align}
Taking the derivative for (\ref{17040301}), one can derive
\begin{align}
2(\ga_j-1)s(\ga_j)s'(\ga_j)=-s^2(\ga_j)+s(\ga_j)-(c_1+c_2-\ga_j) s'(\ga_j) \label{17050851}
\end{align}
Using (\ref{17050851}) to (\ref{17050850}), we obtain the second estimate in (\ref{17040250}).

This completes the proof of Lemma  \ref{lem. traces of squared matrices}.

\end{proof}

\section{} \label{A.B}

In this section, we prove Lemma \ref{lem.17042902}.  First, from Lemma   \ref{lem.062802}, $( M_n)_{ij}(z)$ can be approximated by $\delta_{ij}m_{i}(z)$ up to an error $O(n^{-\varepsilon})$ if $z\in \mathcal{D}$.  We now claim that this approximation also holds in the following domain $\widetilde{D}$: we set for any $\tau>0, \varepsilon_0$ the domain
\begin{align*}
\widetilde{\mathcal{D}}\equiv \widetilde{\mathcal{D}}(\delta, \varepsilon_0):= \Big\{z=x+\mathrm{i}\eta: d_+-\delta\leq x\leq  1-\delta, n^{-1+\varepsilon_0}\leq \eta\leq 1\Big\}.
\end{align*}

\begin{lem} \label{lem.17043001} For any small positive constants and $\delta$ and $\varepsilon_0$,  there exists some positive constant $c>0$, such that
\begin{align*}
\sup_{z\in \widetilde{D}} \sup_{i,j=1,\ldots, k}|( M_n)_{ij}(z)- \delta_{ij} m_i(z)| \leq n^{-c}
\end{align*}
in probability.
\end{lem}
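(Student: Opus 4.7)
The goal is to extend Lemma \ref{lem.062802}, which controlled $(M_n)_{ij}(z)-\delta_{ij}m_i(z)$ uniformly on $\mathcal{D}=\{\Re z>d_++\delta\}$, down to a much smaller imaginary part. The enlarged domain $\widetilde{\mathcal{D}}$ allows $\Re z$ to cross the edge $d_+$ and descend into the bulk of the limiting MANOVA spectrum, so new input is needed to keep the resolvents under control at scales $\eta\geq n^{-1+\varepsilon_0}$.

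The plan is to refine the three ingredients used in the proof of Lemma \ref{lem.062802}. First, I would upgrade Lemma \ref{lem.17032510} to a \emph{local law} for the Stieltjes transforms $\frac{1}{n}\ntr E\Phi(z)$ and $\frac{1}{q}\ntr\Psi(z)$ valid throughout $\widetilde{\mathcal{D}}$. This is the standard local law for the MANOVA (Jacobi) ensemble: one writes the Schur-complement self-consistent equation for the normalized trace, performs a bootstrap in $\eta$ from $O(1)$ down to $n^{-1+\varepsilon_0}$ using the cutoff $\tilde{\mathcal{Q}}$ in \eqref{17050844} together with a Poincar\'e-type variance bound (as in \eqref{17032601}), and applies a stability analysis of \eqref{17040301} away from $z=d_\pm$. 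The resulting error is of order $(n\eta)^{-1/2+\varepsilon}\leq n^{-\varepsilon_0/2+\varepsilon}$.

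Second, I would prove an \emph{isotropic} (entrywise/bilinear) local law for $\Phi(z)$ and $\Psi(z)$: for any deterministic unit vectors $\mathbf{u},\mathbf{v}$ independent of $W_2$ (resp.\ $Y$),
\[
\big|\langle\mathbf{u},\Phi(z)\mathbf{v}\rangle-\phi(z)\langle\mathbf{u},\mathbf{v}\rangle\big|\;\leq\; (n\eta)^{-1/2+\varepsilon},
\]
and analogously for $\Psi$, where $\phi,\psi$ are the corresponding deterministic equivalents. Combined with the Gaussian approximation for $T_1U_y$ from Lemma \ref{lem.17032401} and Gaussian integration-by-parts (as in \eqref{17032441}), this replaces the naive bounds $\|\mathscr{A}\|_{HS}\lesssim\sqrt{n}/\eta$ that fail near the edge. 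Plugged into the decomposition $M_n=\mathscr{M}_1+\mathscr{M}_2+\mathscr{M}_2'+\mathscr{M}_3$ from \eqref{17031505}, these isotropic bounds reproduce the computation in Lemma \ref{lem5.4} on all of $\widetilde{\mathcal{D}}$, with errors $n^{-c}$ for a $c>0$ depending on $\varepsilon_0$.

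Finally, I would promote pointwise estimates to uniform ones by the usual lattice argument: $M_n(z)$ and $m_i(z)$ are both holomorphic on the upper half of $\widetilde{\mathcal{D}}$ with derivatives bounded by $C\eta^{-2}\leq Cn^{2-2\varepsilon_0}$, so a net of spacing $n^{-K}$ for sufficiently large $K$ contains $n^{O(1)}$ points, and a union bound with continuity gives the claimed uniform $n^{-c}$ control. The main obstacle is the edge-scale local law in Step 1: uniform control of the self-consistent equation near $z=d_+$ requires handling the square-root singularity of $s(z)$ via a quantitative stability estimate, which is the only place where the condition $\eta\geq n^{-1+\varepsilon_0}$ rather than $\eta\geq n^{-2/3+\varepsilon_0}$ is genuinely used. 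Once this is established, the remaining steps are mechanical adaptations of Lemmas \ref{lem.17032401}--\ref{lem.17032510}.
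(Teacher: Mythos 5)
Your plan is a legitimate route to the statement and captures the essential ingredients (a local law for the Stieltjes transform on $\widetilde{\mathcal{D}}$, sharper Hilbert--Schmidt control of the kernels $\mathscr{A},\mathscr{B},\mathscr{D}$, and a net--continuity argument for uniformity), but it is considerably heavier than the paper's. Where you propose to rebuild the local law from scratch via a Schur-complement self-consistent equation and $\eta$-bootstrap, the paper instead exploits the exact algebraic relation $C_{w_2y}=H^{-1}E(I+H^{-1}E)^{-1}$ to \emph{import} the edge local law for the F-matrix $H^{-1}E$ wholesale from \cite{HanP16T} through a Möbius change of variable (this is the whole content of Lemma \ref{lem. local law at the right edge} and Corollary \ref{cor. 17043020}), so no bootstrap or edge-stability analysis needs to be redone. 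Where you propose to prove an isotropic local law for $\Phi$ and $\Psi$, the paper does something cheaper and more targeted: since $W_1$ and the Gaussian proxy $G_1$ for $T_1U_y$ are independent of $\mathscr{A},\mathscr{B},\mathscr{D}$, the entries of $\scM_a$ are already controlled by Lemma \ref{let} once one knows $\|\mathscr{A}\|_{HS},\|\mathscr{B}\|_{HS},\|\mathscr{D}\|_{HS}\lesssim (n/\eta)^{1/2}$ and the corresponding normalized traces; the HS bounds are obtained from the Ward identity $\ntr|\widetilde{C}_{w_2y}-z|^{-2}=\eta^{-1}\Im\ntr(\widetilde{C}_{w_2y}-z)^{-1}$ together with the averaged local law, and an isotropic law is never needed. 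Finally, your "Poincaré-type variance bound" does not directly yield the key trace estimate for $\Psi$ on $\widetilde{\mathcal{D}}$: since $\|\Upsilon\|$ can be as large as $\eta^{-1}$ there, naive Poincaré would only give a variance $O((n\eta)^{-2})$, which is not obviously enough to close the self-consistent loop. The paper circumvents this by decomposing $\Psi=z(1-z)S_{yy}\Upsilon S_{yy}+(1-z)\mathcal{H}-z\mathcal{E}$, reducing $\ntr\Upsilon S_{yy}$ to $\tilde{s}_n(z)$ (handled by the imported local law), and then controlling the residual $\mathfrak{P}=\frac{1}{n}\ntr S_{yy}\Upsilon S_{yy}-\frac{1}{n}\ntr\Upsilon S_{yy}$ via a dedicated Gaussian integration-by-parts moment computation (\ref{17051460})--(\ref{17051480}) that exhibits an exact cancellation; you would have to either reproduce this cancellation or accept a weaker (though still sufficient for $n^{-c}$) bound. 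One small misreading: the constraint $\eta\geq n^{-1+\varepsilon_0}$ is not "genuinely used" only in the edge stability of the self-consistent equation — it is also what makes the local-law error $N^{\varepsilon'}/(N\eta)$ and the HS bounds $O(n^{-c})$; lowering to $\eta\geq n^{-2/3+\varepsilon_0}$ would only strengthen, not weaken, those estimates.
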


For simplicity, in the sequel, we denote
\begin{align*}
\eta_0=n^{-1+\varepsilon_0}.
\end{align*}

To prove Lemma \ref{lem.17043001}, we need a  local law for the Stieltjes transform of MANOVA matrix. Recall $\check{s}(z)$ from (\ref{17040350}).  We also recall the Stieltjes transform $\check{s}_n(z)$  from (\ref{17040441}).
\begin{lem}[Local law of MONOVA at the right edge] \label{lem. local law at the right edge}  For any small $\delta>0$ and $\varepsilon_0>0$,  we have
\begin{align}
\Big| \check{s}_n(z)-\check{s}(z)\Big|\leq \frac{N^{\varepsilon'}}{N\eta} \label{17051030}
\end{align}
uniformly on $\widetilde{\mathcal{D}}$
with overwhelming probability, for any small constant $\varepsilon'>0$.
\end{lem}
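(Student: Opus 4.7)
The plan is to adapt the now-standard resolvent / self-consistent-equation machinery that has been developed for local laws of sample-covariance (Wishart) matrices to the MANOVA ensemble $\widetilde{C}_{w_2y}$ whose Stieltjes transform is $\check{s}_n(z)$. The target is to show that $\check{s}_n(z)$ almost satisfies the same implicit equation (\ref{17040301}) that defines the limit $\check{s}(z)$ (up to the change of variables (\ref{17040615})), and then to invert that equation using edge stability.

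First I would derive a local self-consistent equation. Writing $\widetilde{C}_{w_2y}$ in terms of the independent Wishart factors in (\ref{17031930}) (or equivalently the Gaussian data matrices $\mathcal{X}, \mathcal{Y}$ of (\ref{17032604})) and applying the Schur complement formula row-by-row to the resolvent $G(z):=(\widetilde{C}_{w_2y}-z)^{-1}$, combined with Gaussian integration by parts exactly as in the proof of Lemma \ref{lem.17032510} (compare (\ref{17040129})), one obtains a per-entry identity of the schematic form $G_{ii}(z) = f(z,\check{s}_n(z)) + \mathcal{E}_i(z)$ where $f$ is the rational function dictated by (\ref{17040301}) and each $\mathcal{E}_i$ is a Gaussian quadratic form concentrated around its mean. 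Averaging over $i$ gives $F(\check{s}_n(z),z) = \frac{1}{p}\sum_i \mathcal{E}_i(z) + (\text{quadratic remainder})$, where $F(s,z):=(z-1)s^2+(c_1+c_2-z)s - c_1c_2$.

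Second, I would obtain a priori fluctuation bounds on $\mathcal{E}_i(z)$ via Hanson--Wright / Gaussian large-deviation estimates (the very tools already invoked in Lemma \ref{let} and throughout the proof of Lemma \ref{lem.17032510}), together with the crude deterministic bound $\|G(z)\|\leq \eta^{-1}$. This gives $|\mathcal{E}_i(z)| \leq n^{\varepsilon'/2}/\sqrt{n\eta}$ with overwhelming probability, uniformly on $\widetilde{\mathcal{D}}$ after a standard lattice-plus-Lipschitz argument in $z$ (the Lipschitz constant of $G$ is at most $\eta^{-2}\leq n^{2}$, which a polynomially fine lattice handles easily). A fluctuation-averaging step then upgrades the averaged error $\frac{1}{p}\sum_i \mathcal{E}_i$ to size $n^{\varepsilon'/2}/(n\eta)$: this is obtained by bounding high moments of the sum and exploiting the independence of the rows of $\mathcal{X}$ conditional on $\mathcal{Y}$ (the argument is standard after the work of Erd\H{o}s--Knowles--Schlein--Yau--Pillai for sample covariance).

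The final step inverts the approximate equation $F(\check{s}_n(z),z)-F(\check{s}(z),z) = O(n^{\varepsilon'}/(n\eta))$. This is the most delicate part and the main obstacle: at the spectral edge $z=d_+$ the stability derivative $\partial_s F(\check{s}(z),z)$ degenerates like $\sqrt{|z-d_+|+\eta}$, so a direct inversion only yields a weaker bound. I would close this gap by the now-standard continuity-in-$\eta$ bootstrap from the random-matrix edge-universality literature: start at $\eta=1$, where the stability is trivial and $|\check{s}_n-\check{s}|\ll 1$ follows from Lemma \ref{lem.17032510}; then decrease $\eta$ continuously down to $\eta_0=n^{-1+\varepsilon_0}$, using at each scale the dichotomy that either the linear part of $F(\check{s}_n,z)-F(\check{s},z)$ controls the error (giving the desired bound after dividing by the square-root-vanishing stability factor) or the quadratic part does (forcing $|\check{s}_n-\check{s}|$ to be larger than $\sqrt{|z-d_+|+\eta}$, which is excluded by the previous scale thanks to the continuity in $\eta$). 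Combining the three steps yields (\ref{17051030}). The transfer of this edge analysis from Wishart to MANOVA is book-keeping dictated by (\ref{17040301}), but that book-keeping — and the careful tracking of both the real and imaginary parts of $\check{s}_n - \check{s}$ at the edge — is where the real technical work lies.
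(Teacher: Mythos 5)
Your plan is a valid in-principle strategy --- it is the standard self-consistent-equation, fluctuation-averaging, and continuity-in-$\eta$ bootstrap machinery that has become the default route to local laws --- but it is far heavier than what the paper actually does, and it misses the one structural observation that turns this lemma into a one-line reduction. The paper notes the algebraic resolvent identity
\begin{align*}
\big((E+H)^{-1}E-z\big)^{-1}
= \frac{1}{(1-z)^2}\Big(H^{-1}E-\frac{z}{1-z}\Big)^{-1}+\frac{1}{1-z}I,
\end{align*}
so that taking normalized traces yields $\check{s}_n(z)=\frac{1}{(1-z)^2}\hat{s}_n\big(\frac{z}{1-z}\big)+\frac{1}{1-z}+O(n^{-1})$, where $\hat{s}_n$ is the Stieltjes transform of the F-matrix $H^{-1}E$. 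On $\widetilde{\mathcal{D}}$, since $\Re z\in[d_+-\delta,1-\delta]$ is bounded away from $1$, the M\"obius map $z\mapsto z/(1-z)$ is a bi-Lipschitz change of variables whose image still lies on scale $\Im z\asymp\eta$ near the corresponding F-matrix edge, so the MANOVA edge local law is an immediate corollary of the edge local law for F-matrices already established in \cite{HanP16T} (Theorem~7.1 there). Your route would work, but it essentially re-derives that theorem from scratch: the Schur-complement self-consistent equation, the $(n\eta)^{-1/2}$ entrywise bound, the $(n\eta)^{-1}$ fluctuation-averaged bound, and the square-root edge-stability dichotomy with the $\eta$-bootstrap are precisely the ingredients of the cited F-matrix local law. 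Your approach buys self-containedness and would generalize to settings without an explicit reduction; the paper's buys brevity by offloading all the hard edge analysis to a citation after the change of variables. The M\"obius identity relating the MANOVA and F-matrix resolvents is the key observation your proposal does not use, and it is what lets the paper bypass the entire program you outline.
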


\begin{proof} Observe that
\begin{align*}
 ( (E+H)^{-1}E-z)^{-1}&= ((1-z) H^{-1} E-zI)^{-1} (I+H^{-1}E)\nonumber\\
 &= \frac{1}{(1-z)^2} \big(H^{-1}E-\frac{z}{1-z}I\big)^{-1}+\frac{1}{1-z} I.
\end{align*}
Hence, by (\ref{pres1}) and (\ref{17040441}) we have
\begin{align}
\check{s}_n(z)=   \frac{1}{(1-z)^2} \hat{s}_n\big(\frac{z}{1-z}\big)+\frac{1}{1-z}+O(\frac{1}{n}), \label{17051001}
\end{align}
where
\begin{align*}
\hat{s}_n(\omega):= \frac{1}{p} \ntr \big(H^{-1}E-\omega\big)^{-1}
\end{align*}
is the Stieltjes transform of the F matrix $H^{-1}E$.    Observe that $\Im (\frac{z}{1-z}) \sim \Im z$ when $\Re z\in [d_+-\delta, 1-\delta]$. A local law for $\hat{s}_n(\omega)$ at the right edge of $H^{-1}E$ with $\Im \omega\geq \eta_0$ has been proved in \cite{HanP16T} (see Theorem 7.1 therein). Then by (\ref{17051001}), Lemma \ref{lem. local law at the right edge} follows immediately.
\end{proof}

Denoting by $\widetilde{\gamma}_i\equiv \widetilde{\gamma}_{n,i}$ the $i$-th largest $n$-quantile of $f(x)$ in (\ref{17040310}), i.e.,
\begin{align}
\int_{d_-}^{\widetilde{\gamma}_i} f(x) {\rm d} x=\frac{n-i+1}{n}.  \label{170530100}
\end{align}

\begin{cor}[Rigidity of eigenvalues near the right edge]   \label{cor. 17043020} For any small $\delta$ in Lemma \ref{lem. local law at the right edge}, there exists some $c\equiv c(\delta)\in (0,1)$ such that
\begin{align}
\big|\widetilde{\lambda}_i-\widetilde{\gamma}_i\big|\leq n^{\varepsilon'} n^{-\frac{2}{3}}i^{-\frac{1}{3}}, \qquad \forall i=1, \ldots, \lfloor cn\rfloor. \label{17051005}
\end{align}
holds with overwhelming probability, for any small constant $\varepsilon'>0$
\end{cor}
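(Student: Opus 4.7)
The plan is to apply the standard route from a local law to eigenvalue rigidity, following the approach of Erdős, Knowles, Yau, and Yin and its adaptation to MANOVA-type ensembles in \cite{HanP16T, HPY}. Three ingredients drive the argument: (i) the local law of Lemma \ref{lem. local law at the right edge}, (ii) the square-root vanishing of the density $f$ at $d_+$, and (iii) the Helffer--Sj\"ostrand formula.

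First, I would record the behaviour of the classical locations $\widetilde{\gamma}_i$ near the right edge. Since $f(x)\asymp \sqrt{d_+ - x}$ as $x\uparrow d_+$ by (\ref{17040310}), elementary integration in (\ref{170530100}) yields
\begin{equation*}
d_+ - \widetilde{\gamma}_i \asymp (i/n)^{2/3},\qquad f(\widetilde{\gamma}_i) \asymp (i/n)^{1/3},
\end{equation*}
uniformly in $1\le i \le \lfloor cn\rfloor$ for a sufficiently small $c=c(\delta)>0$. This is where the target scale $n^{-2/3} i^{-1/3}$ enters.

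Second, I would upgrade the Stieltjes transform bound (\ref{17051030}) to a bound on the unnormalized counting function $N_n(E) := \#\{i : \widetilde{\lambda}_i\le E\}$. Concretely, pick a smoothed indicator $\chi_E$ of $(-\infty,E]$ at scale $\eta_0 = n^{-1+\varepsilon_0}$, write
\begin{equation*}
\ntr\chi_E(C_{yw_2}) - n\!\int\!\chi_E(x) f(x)\, \mathrm{d} x
\end{equation*}
as a contour integral of $\check{s}_n - \check{s}$ against the almost-analytic extension of $\chi_E$ via Helffer--Sj\"ostrand, plug in (\ref{17051030}), and split the $z$-integral according to whether $|\Im z|\gtrless \eta_0$. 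After the standard treatment of the discrepancy between $\chi_E$ and the sharp indicator, this yields
\begin{equation*}
\sup_{E\in[d_+-\delta,d_++\delta]}\bigl|N_n(E) - \textstyle n\int_{d_-}^E f(x)\,\mathrm{d} x\bigr|\le n^{\varepsilon'}
\end{equation*}
with overwhelming probability, for any small $\varepsilon'>0$.

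Finally, to pass from this counting bound to the rigidity of individual eigenvalues, fix $i\le \lfloor cn\rfloor$ and set $t_i := n^{\varepsilon'} n^{-2/3} i^{-1/3}$. By the first step,
\begin{equation*}
n\!\int_{\widetilde{\gamma}_i}^{\widetilde{\gamma}_i + t_i}\!f(x)\,\mathrm{d} x \asymp n\, t_i\, f(\widetilde{\gamma}_i)\asymp n^{\varepsilon'},
\end{equation*}
which dominates the counting error (after a harmless redefinition of $\varepsilon'$); combined with the definition of $\widetilde{\gamma}_i$ this forces $N_n(\widetilde{\gamma}_i + t_i) > n - i + 1$, i.e.\ $\widetilde{\lambda}_i \le \widetilde{\gamma}_i + t_i$, and the matching lower bound $\widetilde{\lambda}_i\ge \widetilde{\gamma}_i - t_i$ follows symmetrically. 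A union bound over $1\le i\le \lfloor cn\rfloor$ then gives (\ref{17051005}). The main obstacle will be the extreme edge regime $i = O(1)$, where $d_+ - \widetilde{\gamma}_i \sim n^{-2/3}$ is comparable to, or smaller than, the smoothing scale $\eta_0$: there the Helffer--Sj\"ostrand estimate alone is insufficient and must be supplemented by monotonicity bounds on $\Im\,\ntr(C_{w_2 y} - E - \mathrm{i}\eta)^{-1}$ controlling the number of eigenvalues in windows of length $\eta$ at the edge. The square-root vanishing of $\Im \check{s}$ at $d_+$ is precisely what makes these bounds yield the correct $n^{-2/3} i^{-1/3}$ scaling.
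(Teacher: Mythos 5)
Your outline is correct and follows essentially the same route the paper gestures at: it cites the proof of Theorem~2.2 in \cite{EYY} for exactly the Helffer--Sj\"ostrand counting-function argument you spell out, with the square-root decay of $f$ at $d_+$ producing the $n^{-2/3}i^{-1/3}$ scale. The one place where the paper does something different is precisely the extreme-edge regime $i=O(1)$ that you flag as the main obstacle: rather than supplementing the local law with monotonicity bounds on $\Im\,\ntr(C_{w_2y}-z)^{-1}$, the paper transfers the already-established rigidity of the largest eigenvalue of the F matrix $H^{-1}E$ from \cite{HanP16T}, using the monotone bijection $\widetilde{\lambda}_i=\hat{\lambda}_i/(1+\hat{\lambda}_i)$ between the spectra of $C_{w_2y}$ and $H^{-1}E$, and the matching definition of $\widehat\gamma_i$. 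This gives $|\widetilde{\lambda}_1-\widetilde{\gamma}_1|\le n^{-2/3+\varepsilon'}$ with overwhelming probability outright, which anchors the induction from $i=1$ inward and spares one from redoing the delicate edge estimate from scratch. Your more self-contained route would also work, but the paper's shortcut is cheaper given that \cite{HanP16T} is already being invoked for the local law.
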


\begin{proof} Denoting by $\hat{\lambda}_1\geq \hat{\lambda}_2\geq \cdots \hat{\lambda}_{p-k} $ the ordered eigenvalues of the F matrix $H^{-1}E$. Observe that  $C_{w_2y}= (E+H)^{-1} E=H^{-1}E(H^{-1}E+I)^{-1}$. Hence, we have
\begin{align*}
\widetilde{\lambda}_i= \frac{\hat{\lambda}_i}{1+\hat{\lambda}_i}.
\end{align*}
Further we set $\hat{\gamma}_i$ to be the solution to
\begin{align*}
\widetilde{\gamma}_i=\frac{\hat{\gamma}_i}{1+\hat{\gamma_i}}.
\end{align*}
In \cite{HanP16T}, it is proved that $|\hat{\lambda}_1-\hat{\gamma}_1|\leq n^{-\frac{2}{3}+\varepsilon'}$ with overwhelming probability. This implies directly that
\begin{align}
\big|\widetilde{\lambda}_1-\widetilde{\gamma}_1\big|\leq  n^{-\frac{2}{3}+\varepsilon'}. \label{17051008}
\end{align}
with overwhelming probability. The rigidity for general $i$ then follows from (\ref{17051008}), Lemma \ref{lem. local law at the right edge}, and the square root decay of the density function in (\ref{17040310}).  The proof is standard, we omit the details. For interested readers, we refer to the proof of Theorem 2.2 in \cite{EYY}.
\end{proof}

With the aid of Lemma \ref{lem. local law at the right edge}, we are now at the stage to prove Lemma  \ref{lem.17043001}.
\begin{proof}[Proof of Lemma \ref{lem.17043001}]  Here we only need to argue that the proof of Lemma   \ref{lem.062802} still works well for $z\in \widetilde{D}$.  It suffices to go through the proofs of Lemmas \ref{lem.17032401}, \ref{lem5.4} and \ref{lem.17032510} again, with the aid of Lemma \ref{lem. local law at the right edge} and Corollary  \ref{cor. 17043020}.

First, analogously to Lemma \ref{lem.17032401}, we will show that
\begin{align}
 \mathscr{M}_2=  \mathcal{T} G_1\mathscr{B} W_1'+O(n^{-\frac{1}{2}+2\varepsilon_0}),\qquad   \mathscr{M}_3=  \mathcal{T} G_1 \mathscr{D} G_1' T_1'+O(n^{-\frac{1}{2}+2\varepsilon_0}), \label{17050140}
 \end{align}
 hold with overwhelming probability on $\widetilde{\mathcal{D}}$. Here the second term on the RHS of the second equation in (\ref{17052595}) has been absorbed into the error term $O(n^{-\frac{1}{2}+2\varepsilon_0})$. Similarly to the proof of Lemma \ref{lem.17032401}, we only show the details for the case of $k=2$.  Recall (\ref{17051020}) and (\ref{17050101}). Denote by $\mathbf{w}_\ell$ the $\ell$-th column of $W_1$.  It suffices to show that with overwhelming probability,
 \begin{align}
\mathbf{g}_1\mathscr{D}(z)\mathbf{g}_1'=O(n^{\varepsilon_0}), \qquad \mathbf{g}_1\mathscr{D}(z)\mathbf{g}_2'=O(n^{\varepsilon_0}), \qquad \mathbf{g}_i'\mathscr{B}(z) \mathbf{w}_j=O(n^{\varepsilon_0})\label{17050103}
\end{align}
hold  for $z\in \widetilde{\mathcal{D}}$. We only show the proof for the first two estimates above. The last one can be checked analogously.
 We use
 Lemma \ref{let} again. For the first two estimates in (\ref{17050103}), it suffices to show that
 \begin{align}
 \frac{1}{n}\ntr \mathscr{D}(z)=O(n^{\varepsilon_0}),\qquad \frac{1}{n^2}\|\mathscr{D}(z)\|_{HS}^2=\frac{1}{n^2}\ntr \mathscr{D}(z) (\mathscr{D}(z))^*=O(n^{-\frac{\varepsilon_0}{2}})  \label{17050111}
 \end{align}
 hold with overwhelming probability on $\widetilde{\mathcal{D}}$.  We start with  the first estimate in (\ref{17050111}).  From the first equation of (\ref{17032621}) and (\ref{17050130}),  we have
 \begin{align*}
 &| \frac{1}{n}\ntr \mathscr{D}(z)|\leq  |1-z| | \frac{1}{n}\ntr  S_{yy}|+|1-z|^2|  \frac{1}{n}\ntr  S_{y w_2} \Phi(z) S_{w_2y} |\nonumber\\
 &\leq C\big(1+\frac{1}{n} \ntr | \widetilde{C}_{w_2y}-z|\big)
 \end{align*}
 with overwhelming probability, where we also used (\ref{0726100}).  Observe that $ \widetilde{C}_{w_2y}$ has the same eigenvalues as $ C_{w_2y}$. Hence, we have on $\widetilde{D}$,
 \begin{align}
 \frac{1}{n} \ntr | \widetilde{C}_{w_2y}-z|=\frac{1}{n}\sum_{i=1}^{p-k} \frac{1}{|\widetilde{\lambda}_i-z|}=O((\log n)^K) \label{17050120}
 \end{align}
 with overwhelming probability, for some sufficiently large  $K>0$. The proof of (\ref{17050120}) from Corollary \ref{cor. 17043020} is standard. We refer to the proof of (3.43) in \cite{BPZ} for a similar argument.

  For the second estimate, we have
 \begin{align*}
 \frac{1}{n^2}\ntr \mathscr{D}(z) (\mathscr{D}(z))^*\leq   |1-z| | \frac{1}{n^2}\ntr  S_{yy} (\mathscr{D}(z))^*|+|1-z|^2|  \frac{1}{n^2}\ntr  S_{y w_2} \Phi(z) S_{w_2y} (\mathscr{D}(z))^*|
 \end{align*}
 By Von Neumann's trace inequality, we can easily get
 \begin{align*}
 \big|\frac{1}{n^2}\ntr  S_{yy} (\mathscr{D}(z))^*\big|\leq \frac{1}{n^2}\ntr  S_{yy} \|(\mathscr{D}(z))^*\|_{\text{op}}\leq \frac{C}{ n\eta_0}=Cn^{-\varepsilon_0}
 \end{align*}
 with overwhelming probability. Analogously, using (\ref{0726100}),  (\ref{17050130}) and (\ref{17050120}),  we have
 \begin{align*}
 \big|\frac{1}{n^2}\ntr  S_{y w_2} \Phi(z) S_{w_2y} (\mathscr{D}(z))^*|\leq  \frac{C}{n^2\eta_0} \ntr | \widetilde{C}_{w_2y}-z|\leq n^{-\frac{\varepsilon_0}{2}}
 \end{align*}
 with overwhelming probability.

 Next, we shall check that an analogue of Lemma \ref{lem5.4}   holds as well on $\widetilde{D}$.  Specifically, we shall show,  that on $\widetilde{D}$, the following estimates hold with overwhelming probability.
\begin{align}
&\sup_{i,j}\Big|(\scM_1)_{ij}-\delta_{ij}\Big(c_2-(1+c_1)z-(1-z)\frac{1}{n}\ntr\big(E\Phi\big)\Big)\Big|\leq {n^{-\frac{\varepsilon_0}{8}}},\nonumber\\
& \sup_{i,j}\Big|(\scM_2)_{ij}\Big|\leq n^{-\frac{\varepsilon_0}{8}}, \qquad \sup_{i,j}\Big|(\scM_3)_{ij}-\delta_{ij} t_i^2(1-z)\frac{1}{q}\ntr(\mathcal{H}-\Psi)\Big|\leq n^{-\frac{\varepsilon_0}{8}}. \label{17050135}
\end{align}
Similarly to the proof of Lemma \ref{lem5.4}, we start with  (\ref{17040420}) and  (\ref{17050140}) and use the large deviation inequality in Lemma \ref{let}.   It suffices to show that
\begin{align}
\frac{1}{n^2} \|\mathscr{A}\|_{HS}^2\leq n^{-\frac{\varepsilon_0}{2}}, \qquad \frac{1}{n^2} \|\mathscr{B}\|_{HS}^2\leq n^{-\frac{\varepsilon_0}{2}}, \qquad \frac{1}{n^2} \|\mathscr{D}\|_{HS}^2\leq n^{-\frac{\varepsilon_0}{2}}\label{17050150}
\end{align}
holds on $\widetilde{D}$ with overwhelming probability. Note that the last estimate of (\ref{17050150}) has already been proved in (\ref{17050111}). The first two can be proved analogously. We thus omit the details.

At the end, we show that an analogue of Lemma \ref{lem.17032510} holds on the domain $\widetilde{D}$ as well.  Apparently, the first estimate in (\ref{17040610}) still holds. In the sequel,  we will show that
\begin{align}
&\sup_{z\in \widetilde{D}}|\frac{1}{n} \ntr  E\Phi(z)-s(z)|\leq n^{-\frac{\varepsilon_0}{2}},\nonumber\\
&\sup_{z\in \widetilde{D}}|\frac{1}{q} \ntr \Psi(z)- \varrho(z)|\leq n^{-\frac{\varepsilon_0}{2}}. \label{17050157}
\end{align}
with overwhelming probability.
Note that the first estimate in (\ref{17050157}) follows from (\ref{17050156}) and Lemma \ref{lem. local law at the right edge}  by choosing $\varepsilon'=\frac{\varepsilon_0}{2}$ therein.

Fo the second estimate in (\ref{17050157}), we first define
\begin{align}
\mathfrak{P}\equiv \mathfrak{P}(z):= \frac{1}{n} \ntr  S _{yy}\Upsilon(z) S _{yy}-  \frac{1}{n }\ntr  \Upsilon(z) S _{yy}. \label{17051055}
\end{align}
Our aim is to show that
\begin{align}
\sup_{z\in \widetilde{D}}|\mathfrak{P}(z)|\leq Cn^{-\frac{1}{2}} \label{17051450}
\end{align}
with overwhelming probability.

We postpone the proof of   (\ref{17051450}) to the end, and show how to  prove the second estimate in (\ref{17050157}) with the aid of (\ref{17051450}).  First, from (\ref{070201}), (\ref{pres1}) and (\ref{17031801}), we can write
\begin{align*}
 \Psi(z)= z(1-z) S_{yy} \Upsilon S_{yy}+(1-z) \mathcal{H}-z\mathcal{E}.
\end{align*}
Hence, from (\ref{17051055}) and (\ref{17051450}), we have
\begin{align}
\frac{1}{q}\ntr \Psi(z)= z(1-z) \frac{1}{q }\ntr  \Upsilon(z) S _{yy}+(1-z) \frac{1}{q}\ntr \mathcal{H}- z\frac{1}{q}\ntr\mathcal{E}+O(n^{-\frac{1}{2}}) \label{17052720}
\end{align}
on $\widetilde{D}$, with overwhelming probability.  It is elementary to see that
\begin{align}
\frac{1}{q}\ntr \mathcal{H}=1-c_1+O(n^{-\frac{1}{2}+\varepsilon}), \qquad \frac{1}{q}\ntr \mathcal{E}=c_1+ O(n^{-\frac{1}{2}+\varepsilon}) \label{17052721}
\end{align}
with overwhelming probability. Furthermore, according to (\ref{pres1}) and (\ref{061911}), we can write
\begin{align}
\frac{1}{q }\ntr  \Upsilon(z) S _{yy}=\frac{1}{q}\ntr \big(C_{yw_2}-z\big)^{-1}=:\tilde{s}_n(z). \label{17052715}
\end{align}
Since $C_{w_2y}$ and $C_{yw_2}$ share the same nonzero eigenvalues, in light of  (\ref{17040441}), we have
\begin{align}
\frac{c_1}{c_2} \check{s}_n(z)=\tilde{s}_n(z)+\frac{c_2-c_1}{c_2}\frac{1}{z}.  \label{17052710}
\end{align}
From (\ref{17040615}), we also have
\begin{align}
\frac{c_1}{c_2} \check{s}(z)=\tilde{s}(z)+\frac{c_2-c_1}{c_2}\frac{1}{z}. \label{17052711}
\end{align}
Hence, combining (\ref{17051030}),  (\ref{17052710}) and (\ref{17052711}) yields
\begin{align}
\sup_{z\in\widetilde{\mathcal{D}}}\Big| \tilde{s}_n(z)-\tilde{s}(z)\Big|\leq \frac{N^{\varepsilon'}}{N\eta_0}. \label{17052713}
\end{align}
Plugging (\ref{17052715}), (\ref{17052713}) and (\ref{17052721}) into (\ref{17052720}), we can get (\ref{17050157}) easily  by choosing $\varepsilon'=\frac{\varepsilon_0}{2}$.

Hence, what remains is to prove  (\ref{17051450}).   It suffices to show the one point estimate for any given $z\in \widetilde{D}$. The uniform bound follows from the continuity and the definition of overwhelming probability. More specifically, similarly to the proof of Lemma \ref{lem.17032510}, we introduce the lattice $\widetilde{\mathcal{S}}:=\widetilde{D}\cap (n^{-3}\mathbb{Z}\times n^{-3}\mathbb{Z})$.  According to the definition of with overwhelming probability, we see that if we can prove $|\mathfrak{P}(z)|\leq n^{-\frac{1}{2}}$ for any $z\in\widetilde{\mathcal{S}}$ with overwhelming probability individually, we also have the uniform bound $ \sup_{z\in \widetilde{\mathcal{S}}}|\mathfrak{P}(z)|\leq n^{-\frac{1}{2}} $ with overwhelming probability.  Moreover, for any $z'\in \widetilde{\mathcal{D}}$, there is one $z\in \widetilde{\mathcal{S}}$ such that $|z-z'|\leq n^{-3}$. Further, we have  the following bound uniformly in all $z\in \widetilde{\mathcal{D}}$
\begin{align}
|\mathfrak{P}'(z)|\leq C\|\Upsilon'(z)\|=O(\frac{1}{\eta_0^2})=O(n^2),  \label{170530105}
\end{align}
conditioning on the event $\|S_{yy}\|, \|S_{yy}^{-1}\|\leq C$, which itself holds with overwhelming probability.  By the continuity in (\ref{170530105}), we can further obtain (\ref{17051450}) from (\ref{170530105}).

Therefore, in the sequel, we only show the proof of  $|\mathfrak{P}(z)|\leq n^{-\frac{1}{2}}$ for one $z\in \widetilde{\mathcal{D}}$.
For brevity, we omit $z$ from the notation. We estimate the high order moments of $\mathfrak{P}$. To this end, we set
\begin{align*}
\mathfrak{H}(m_1,m_2)= \mathfrak{P}^{m_1} \overline{\mathfrak{P}}^{m_2}.
\end{align*}
With this notation, we can write
\begin{align}
\mathbb{E} \mathfrak{H}(m,m)= \mathbb{E} \frac{1}{n} \ntr  S _{yy}\Upsilon S _{yy} \mathfrak{H}(m-1, m)- \mathbb{E} \frac{1}{n} \ntr \Upsilon S _{yy} \mathfrak{H}(m-1, m). \label{17051463}
\end{align}
We apply the Gaussian integral by parts (c.f. (\ref{17052701})) to the first  term above. Then we get
\begin{align}
&\mathbb{E} \Big(\frac{1}{n} \ntr  S _{yy}\Upsilon S _{yy} \mathfrak{H}(m-1, m)\Big)=\frac{1}{n} \sum_{i,j}\mathbb{E} \Big(y_{ij} \big(Y'\Upsilon S_{yy} \big)_{ji} \mathfrak{H}(m-1, m)\Big)\nonumber\\
&=\mathbb{E} \big( \mathfrak{c}_1\mathfrak{H}(m-1, m)\big)+\mathbb{E} \big( \mathfrak{c}_2\mathfrak{H}(m-2, m)\big)+\mathbb{E} \big( \mathfrak{c}_3\mathfrak{H}(m-1, m-1)\big) \label{17051461}
\end{align}
where
\begin{align*}
&\mathfrak{c}_1=\frac{1}{n^2 } \sum_{i,j}\frac{\partial \big(Y'\Upsilon S_{yy} \big)_{ji}}{\partial y_{ij}},\qquad \mathfrak{c}_2=   \frac{1}{n^2 } \sum_{i,j}   \big(Y'\Upsilon S_{yy} \big)_{ji} \frac{\mathfrak{P}}{\partial y_{ij}}, \nonumber\\
& \mathfrak{c}_3=   \frac{1}{n^2 } \sum_{i,j}   \big(Y'\Upsilon S_{yy} \big)_{ji} \frac{\overline{\mathfrak{P}}}{\partial y_{ij}}.
\end{align*}
The estimates of  $\mathfrak{c}_2$ and $\mathfrak{c}_3$ all nearly the same. We thus focus on the former. By elementary calculation, it is not difficult to derive
\begin{align}
\mathfrak{c}_1=&  \frac{1}{n^2 } \sum_{i,j}\Big( ( \Upsilon S _{yy})_{ii}-( Y'  \Upsilon)_{ji} \big(( P_{w_2}-z)  Y'\Upsilon S _{yy}\big)_{ji}\nonumber\\
& -  \big( Y'  \Upsilon  Y( P_{w_2}-z)\big)_{jj} (\Upsilon S _{yy})_{ii}+ ( Y'  \Upsilon)_{ji} y_{ij}+( Y' \Upsilon  Y)_{jj}\Big)\nonumber\\
= & \frac{1}{n} \ntr \Upsilon S_{yy}- \frac{1}{n^2} \ntr \Upsilon Y ( P_{w_2}-z)  Y'\Upsilon S _{yy}\nonumber\\
&- \frac{1}{n^2} \ntr   \Upsilon  Y( P_{w_2}-z)Y' \ntr \Upsilon S _{yy}+ \frac{1}{n^2} \ntr \Upsilon S _{yy}+ \frac{q}{n^2}\ntr \Upsilon S_{yy}\nonumber\\
= &\frac{1}{n} \ntr \Upsilon S_{yy}.  \label{17051460}
\end{align}
where in the last step we used the definition of $\Upsilon$ in (\ref{16121220}).  Similarly, we can derive
\begin{align}
\mathfrak{c}_2
= \frac{2}{n^3} \ntr S_{yy}^2\Upsilon S_{yy}\Upsilon=\frac{2}{n^3} \ntr S_{yy} (C_{yw_2}-z)^{-2}  \label{17051470}
\end{align}

Further, substituting (\ref{17051461}) and (\ref{17051460}) into (\ref{17051463}), we have
\begin{align*}
\mathbb{E} \mathfrak{H}(m,m)= \mathbb{E} \big( \mathfrak{c}_2\mathfrak{H}(m-2, m)\big)+\mathbb{E} \big( \mathfrak{c}_3\mathfrak{H}(m-1, m-1)\big)
\end{align*}
Now, by Young's inequality, we have
\begin{align*}
\mathbb{E} \mathfrak{H}(m,m)\leq \frac{(\log n)^{m}}{m} \big(\mathbb{E} |\mathfrak{c}_2|^{m}+\mathbb{E} |\mathfrak{c}_3|^{m}\big)+\frac{2(m-1)}{m} \frac{1}{(\log n)^{\frac{m}{m-1}}} \mathbb{E} \mathfrak{H}(m,m).
\end{align*}
This implies that
\begin{align}
\mathbb{E} \mathfrak{H}(m,m)\leq C\frac{(\log n)^{m}}{m} \big(\mathbb{E} |\mathfrak{c}_2|^{m}+\mathbb{E} |\mathfrak{c}_3|^{m}\big). \label{17051480}
\end{align}
Using (\ref{17051470}), we see that
\begin{align}
\mathbb{E} |\mathfrak{c}_2|^{m}\leq (\mathbb{E} \|S_{yy}\|^{2m})^{\frac{1}{2}} (\mathbb{E} (\frac{2}{n^3} \ntr |C_{yw_2}-z|^{-2})^{2m})^{\frac12}.  \label{17051477}
\end{align}
Applying (\ref{17052713}) and the fact $\tilde{s}(z)=O(1)$,  we  observe that
\begin{align}
\frac{1}{n^3} \ntr |C_{yw_2}-z|^{-2}=\frac{1}{n^2\eta} \frac{1}{n} \Im \ntr  (C_{yw_2}-z)^{-1}= \frac{c_2}{n^2\eta} \Im\tilde{s}_n(z) = O(n^{-1-\varepsilon_0})  \label{17052441}
\end{align}
on $\widetilde{D}$ with overwhelming probability. Moreover, we also have the deterministic bound
\begin{align*}
\frac{1}{n^3} \ntr |C_{yw_2}-z|^{-2}=O( \frac{1}{n^2\eta_0^2})=O(n^{-2\varepsilon_0}).
\end{align*}
Hence, we have
\begin{align}
(\mathbb{E} (\frac{1}{n^3} \ntr |C_{yw_2}-z|^{-2})^{2m})^{\frac12}\leq n^{-m(1+\frac{\varepsilon_0}{2})} \label{17051475}
\end{align}
Using (\ref{0726100}), we also have
\begin{align}
(\mathbb{E} \|S_{yy}\|^{2m})^{\frac{1}{2}}\leq K^{m} \label{17051476}
\end{align}
for some positive constant $K$.  Plugging (\ref{17051475}) and (\ref{17051476}) into (\ref{17051477}), we obtain
\begin{align*}
\mathbb{E} |\mathfrak{c}_2|^{m}\leq n^{-m(1+\frac{\varepsilon_0}{4})}.
\end{align*}
The same bound holds for $\mathfrak{c}_3$. Hence, we obtain from (\ref{17051480}) that
\begin{align*}
\mathbb{E} |\mathfrak{P}|^{2m}=\mathbb{E} \mathfrak{H}(m,m)\leq n^{-m}.
\end{align*}
Since $m$ can be arbitrary fixed positive integer, we get $|\mathfrak{P}(z)|\leq n^{-\frac{1}{2}}$ with overwhelming probability for any $z\in \widetilde{\mathcal{D}}$.

This completes the proof of Lemma \ref{lem.17043001}.
\end{proof}

With the aid of Lemma \ref{lem.17043001}, Lemma \ref{lem. local law at the right edge}, and Corollary \ref{cor. 17043020}, we prove  Lemma \ref{lem.17042902}.
\begin{proof}[Proof of Lemma \ref{lem.17042902}] For any $z\in \Omega$, we set $z_0=z+\mathrm{i}\eta_0$, where $\eta_0=n^{-1+\varepsilon_0}$.  Our strategy is to compare $M_n(z)$ with $M_n(z_0)$, and use  Lemma \ref{lem.17043001} to conclude Lemma \ref{lem.17042902}. We first show that
\begin{align}
\sup_{z\in \Omega} |( M_n)_{ij}(z_0)-(M_n)_{ij}(z)| \leq n^{-c} \label{17052801}
\end{align}
with overwhelming probability.  To see this, we first observe from (\ref{17050302}) that
\begin{align*}
M_n(z_0)- M_n(z)&=\mathcal{Z}_1 \big(\mathscr{A}(z_0)- \mathscr{A}(z)\big) \mathcal{Z}_1.
\end{align*}
For $i=0, \ldots, 4$, let $\mathcal{A}_i(z_0, z)$ be the matrices obtained via replacing $(\lambda_j,\gamma_j)$ by $(z_0,z)$ in $\mathcal{A}_i$ defined in (\ref{12163011}).  Similarly to (\ref{16123002}), we have
\begin{align*}
M_n(z_0)- M_n(z)&=\mathrm{i}\eta_0\sum_{a=0}^4\mathcal{Z}_1 \mathcal{A}_a(z_0,z) \mathcal{Z}_1.
\end{align*}

Since  $\mathcal{A}_0(z_0,z)=-I_n$, apparently we have
\begin{align*}
\eta_0 \big( \mathcal{Z}_1 \mathcal{A}_0(z_0,z) \mathcal{Z}_1\big)_{ij}=O(n^{-1+\varepsilon_0})
\end{align*}
with overwhelming probability, by simply using the definition of $\mathcal{Z}_1$ in (\ref{17050301}) and Lemma \ref{let}. Hence, it suffices to show the estimate the entries of $\eta_0 \mathcal{Z}_1 \mathcal{A}_a(z_0,z) \mathcal{Z}_1$ for $a=1,\ldots,4$.  In the sequel, we only present the details for $a=1$. The others can be handled similarly.  From the definitions of $\mathcal{Z}_1$ (c.f. (\ref{17050301})) and $\mathcal{A}_1(z_0,z)$, it suffices to estimate the entries of
\begin{align}
\eta_0 W_1 \mathcal{A}_1(z_0,z)  W_1', \qquad   \eta_0 T_1 Y \mathcal{A}_1(z_0,z)  W_1', \qquad  \eta_0 T_1 Y \mathcal{A}_1(z_0,z) Y'T_1'.  \label{17042451}
\end{align}
For the first term, observe that $W_1$ is independent of $\mathcal{A}_1(z_0,z)$, and also the event $\Omega$. Hence, by Lemma \ref{let},
 we have
\begin{align*}
\eta_0 \big(W_1 \mathcal{A}_1(z_0,z)  W_1'\big)_{ij}= \frac{\delta_{ij}(1-r_i)}{n^{2-\varepsilon_0}}   \ntr \mathcal{A}_1(z_0,z)+ O\big(\frac{(\log n)^K}{n^{2-\varepsilon_0}}\| \mathcal{A}_1(z_0,z) \|_{\text{HS}}\big)
\end{align*}
uniformly on $\Omega$, with overwhelming probability.  Note that by the definition of the domain $\Omega$ in (\ref{17052410}), we see that $\sup_{z\in \Omega} \|C_{w_2y}-z\|\leq n^{1-\varepsilon}$.  This together with (\ref{0726100}), (\ref{17050130}) and (\ref{17050120}) implies that
\begin{align*}
\frac{1}{n^{2-\varepsilon_0}}|\ntr \mathcal{A}_1(z_0,z)|&= \frac{1}{n^{2-\varepsilon_0}}|\ntr P_y W_2' \Phi(z) W_2W_2' \Phi(z_0) W_2P_y|\nonumber\\
&\leq  \frac{1}{n^{1-\varepsilon_0+\varepsilon}}\ntr |\widetilde{C}_{w_2y}-z_0|^{-1}=O(\frac{(\log n)^K}{ n^{\varepsilon-\varepsilon_0}})
\end{align*}
holds with overwhelming probability, where in the second step we bounded all the matrices except $(\widetilde{C}_{w_2y}-z_0)^{-1}$ by their operator norms.  Similarly, we have
\begin{align*}
&\frac{(\log n)^{2K}}{n^{4-2\varepsilon_0}}\| \mathcal{A}_1(z_0,z) \|_{\text{HS}}^2\leq C  \frac{(\log n)^{2K}}{n^{4-2\varepsilon_0}} \|\Phi(z)\|^2 \ntr   \Phi(z_0) \Phi(\overline{z_0})\nonumber\\
&\leq   C  \frac{(\log n)^{2K}}{n^{2-2\varepsilon_0+2\varepsilon}} \ntr |\widetilde{C}_{w_2y}-z_0|^{-2}=O(\frac{(\log n)^K}{ n^{2\varepsilon-3\varepsilon_0}})
\end{align*}
with overwhelming probability,  where in the last step we used (\ref{17052441}).  Choosing $\varepsilon>3\varepsilon_0$ (say), we obtain
\begin{align}
\eta_0\big(W_1 \mathcal{A}_1(z_0,z)  W_1'\big)_{ij}= O(n^{-\varepsilon_0}) \label{17052457}
\end{align}
with overwhelming probability.

For the entries of the last two matrices in (\ref{17042451}), we first do the approximation  by choosing $\varepsilon>3\varepsilon_0$ (say)
\begin{align}
&\eta_0T_1 Y \mathcal{A}_1(z_0,z)  W_1' =  \eta_0 \mathcal{T} G_1 \Lambda_yV_y \mathcal{A}_1(z_0,z)  W_1'+O(n^{-\varepsilon_0}),\nonumber\\
& \eta_0 T_1 Y \mathcal{A}_1(z_0,z) Y'T_1' =  \eta_0 \mathcal{T} G_1 \mathcal{A}_1(z_0,z) G_1' \mathcal{T}+ O(n^{-\varepsilon_0}). \label{17042456}
\end{align}
The proof of the above estimates is similar to (\ref{17050140}). We omit the details.   The estimates of the entries of the  first term
on the RHS of two equations in (\ref{17042456}) can be derived in the same way as (\ref{17052457}), using  Lemma \ref{let}.  We thus omit the details. Furthermore, the estimates of the entries of $\eta_0\mathcal{Z}_1 \mathcal{A}_a(z_0,z) \mathcal{Z}_1$ for $a=2,3,4$ are similar.

Hence, we completed the proof of (\ref{17052801}).  Observe that for any sufficiently small $\delta>0$, $\Omega\subset [d_+-\delta, d_++\delta]\subset [d_+-\delta, 1-\delta]$ almost surely, in light of (\ref{071501}). Hence, (\ref{17052801}) together with Lemma \ref{lem.17043001}
 implies that
 \begin{align*}
 \sup_{z\in \Omega} \sup_{i,j} |(M_n)_{ij}-\delta_{ij} m_i(z)|\leq n^{-c}
 \end{align*}
 in probability, by using the continuity of the function $m_i(z)$.  Further, when $\delta$ is sufficiently small, we also have
 \begin{align*}
 \sup_{z\in \Omega}\sup_i|m_i(z)-m_i(\delta_+)|\leq \varepsilon'
 \end{align*}
 in probability,
 by the continuity of $m_i(z)$. This concludes the proof of Lemma \ref{lem.17042902}.
 \end{proof}

\end{document}